\documentclass[10,reqno]{amsart}
\usepackage{amssymb}
\usepackage{amsmath}
\usepackage{amsfonts}
\usepackage[mathscr]{eucal}
\usepackage{graphicx}
\usepackage{multicol}
\usepackage{caption2}
\let\phi\varphi

\def\om {\omega }

\def\pd {\partial }

\def\bN{{\Bbb N}}

\newtheorem{deff}{Definition}[section]
\newtheorem{cor}{Corollary}[section]
\newtheorem{lem}{Lemma}[section]

\newtheorem{theorem}{Theorem}[section]
\newtheorem{prop}{Proposition}[section]
\newtheorem{condition}{Condition}[section]

\pagestyle{plain}
\makeatletter
\@addtoreset{equation}{section}
\@addtoreset{theorem}{section}

 \makeatletter\renewcommand{\@@and}{and} \makeatother

\title[Non-local stabilization of NPE] 
      {Nonlocal stabilization by starting control of the normal equation generated from Helmholtz system}

\author{A.\, V.\, Fursikov}
\address{Department of Mechanics \& Mathematics, Moscow State University, Moscow
Russia 119991}
\author{L.\, S.\, Shatina}
\address{Department of Mechanics \& Mathematics, Moscow State University, Moscow}



\keywords{equations of normal type,  stabilization by starting control}

\begin{document}

\def\OldComma{,}
    \catcode`\,=13
        \def,{%
            \ifmmode%
            \OldComma\discretionary{}{}{}%
            \else%
            \OldComma%
        \fi%
        }%
\begin{abstract}
We consider the semilinear parabolic equation
of normal type connected with the 3D Helmholtz equation with periodic boundary
condition.
The problem of stabilization to zero of the solution for normal parabolic equation with
arbitrary initial condition by starting control is studied. This
problem is reduced to establishing three inequalities connected with starting control, one of
which has been proved in \cite{F5}, \cite{FSh}. The proof for the other two is
given here.
\end{abstract}

\maketitle

\keywords
\section{Introduction}

This work is devoted to the construction of  theory of the nonlocal
stabilization by starting control for the equations of
normal type connected with thee-dimensional Helmholtz system,
describing the curl $\omega $ of the velocity vector field $v$ for
the viscous incompressible fluid flow. As well-known, velocity
vector field $v$ is described with Navier-Stokes system. Up to now
there exists extensive literature on the local stabilization of
Navier-Stokes system in the neighborhood of a stationary point
(see for example, \cite{F6}, \cite{BLT}, \cite{R1},  \cite{FG} ,
as well as literature listed in the review \cite{FK})  but
construction of its  nonlocal analog has not been started yet.
Note that for some equations of fluid dynamics there are certain
nonlocal stabilization results: for Burgers equation where exact
formula of its solution was used (see \cite{K}), and for Euler
equations (see \cite{C1},\cite{C2}) where the construction is
based on such properties of its solutions which Navier-Stokes
system does not possess. We have to note also that nonlocal exact
controllability of the Navier-Stokes system by distributed control
supported in sub domain has been proved in \cite{CF} for 2D case
and in \cite{FI} for 3D case. Since settings of exact
controllability and stabilization problems are related in some
sense, this give us the hope that nonlocal stabilization problem
can be solved.

So our  the main goal is to construct nonlocal stabilization
theory by feedback control for Navier-Stokes system, and, as we
hope, the problem we solve in this work is some essential part in
achieving of this goal. As was mentioned above in this work we
have deal with semilinear normal parabolic equations (NPE). Such
equations have been introduced and studied in \cite{F1}-\cite{F4}
to understand better the dynamical structure for hydrodynamical
equations of Navier-Stokes type and first of all for Helmholtz
equations.

Let us explain how NPE arise and recall their definition. As
well-known, the existence proof of weak solutions $v$ to
Navier-Stokes equations is based on energy estimate for fluid
velocity $v$. However, similar existence proof of a strong solution
(proven to be unique) is impeded because the solution of Helmholtz
equation $\omega~=~\mbox{curl}v$ does not satisfy the energy
estimate (see details below, in section \ref{s2}). \footnote{In
two-dimensional case $\omega $ satisfies the energy estimate,
which allowed V.I. Yudovitch \cite{Yud} to prove the existence of
a strong solution even for Euler equations of ideal incompressible
liquid.} The latter is due to the fact that the image $B(\omega )$
of the nonlinear operator $B$, generated by non-linear members of
Helmholtz equation, is not orthogonal to vector $\omega$, i.e. it
contains component $\Phi (\omega )\omega$, collinear to $\omega$
(here $\Phi (\omega)$ is a certain functional).

If the non-linear members of Helmholtz system that define
$B(\omega )$  are substituted by members defining $\Phi (\omega)
\omega$, then the resulting system of equations is called (by
definition) a normal parabolic equation (NPE), corresponding to
Helmholtz system.

Note that everywhere in this paper we consider  the solutions
$y(t,x)$ of NPE that satisfy periodic boundary conditions on
spatial variables $x=(x_1,x_2,x_3)$. In other words we look for
solution $y(t,x)$ of NPE for $x\in \mathbb{T}^3$ where
$\mathbb{T}^3=(\mathbb{R}/2\pi \mathbb{Z})^3$ is 3D torus. In
\cite{F1}-\cite{F4} the structure of dynamics generated by NPE
corresponding to Burgers equation as well as to 3D Helmholtz
system was described. In particular it was established that the solution $y(t,x)$ of NPE
either tends to zero or to infinity as $t\to \infty$, or blows
up, i.e. $\| y(t,\cdot)\| \to \infty$ as $t\to t_0\ne \infty$ depending
 on the initial condition.
This result makes  the following setting of non-local
stabilization problem by starting control for NPE corresponding to
3D Helmholtz system reasonable:

Let $0<a_j<b_j<2\pi , j=1,2,3$ be fixed. Given divergence free
initial condition $y_0(x)\in L_2(\mathbb{T}^3)$ for NPE connected
with 3D Helmholtz system (NPEH), find divergence free starting
control $u_0(x)\in L_2(\mathbb{T}^3)$ supported in
$[a_1,b_1]\times [a_2,b_2]\times [a_3,b_3]\subset \mathbb{T}^3$ such
that the solution $y(t,x)$ of NPEH with initial condition
$y_0+u_0$ satisfies the inequality
\begin{equation}\label{1.0}
      \| y(t,\cdot)\|_{L_2(\mathbb{T}^3)}\le \alpha \|
      y_o+u_0\|_{L_2(\mathbb{T}^3)}e^{-t}\quad \forall t>0
\end{equation}
with some $\alpha >1$.

The problem formulated above was solved in this paper.
 Namely, we proved that the NPEH with arbitrary
initial condition $y_0$ can be stabilized by starting control in
the form
\begin{equation}\label{1.01}
u(x) =Fy_0+ \lambda u_0(x),
\end{equation}
where $Fy_0$ is a certain feedback control with feedback operator
$F$ constructed by some technic of local stabilization theory (see
\cite{F6}, \cite{FG}), $\lambda$ is a constant, depending on
$y_0$, and $u_0$ is a universal function, depending only on a
given arbitrary parallelepiped $[a_1,b_1]\times [a_2,b_2]\times
[a_3,b_3]\subset \mathbb{T}^3$, which contains the support of
control $u_0$. The following estimate is the key one for the proof
of the stabilization result:
\begin{equation}\label{1.1}
   \int_{\mathbb{T}^3}((\mathbf{S}(t,{x};{u}),\nabla)\operatorname{rot}^{-1}\mathbf{S}(t,{x};{u}),\mathbf{S}(t,{x};{u}))dx>
   \beta e^{-18t} \qquad \forall \; t\ge 0
\end{equation}
where $S(t,x;u_0)$ is the solution of the Stokes equation with
initial condition $u_0$  and $\beta
>0$ is some constant.

We should note that the proof of the estimate (\ref{1.1}) is very
complicated, and the largest part of the paper is devoted just to this
proof. Before investigation of nonlocal stabilization of NPEH we
have studied analogous problem for NPE connected with 1D Burgers
equation (see \cite{F5},\cite{FSh}). The most difficult part of
that works was to prove some analog of the bound \eqref{1.1}. It
is complicated but essentially easier than \eqref{1.1} because it
is an estimate for a one-dimensional integral. The first essential
step to the proof of inequality \eqref{1.1} was reducing this
inequality to several similar estimates for
one-dimensional integrals. To prove these bounds for
one-dimensional integrals we used some development of the technic
which had been worked out in \cite{F5}, \cite{FSh}.

In the  section \ref{s1} we remind the definitions and some facts
concerning NPE connected with 3D Helmholtz system, section
\ref{s2} is devoted to formulation of the main stabilization
result and result connected with the estimate \eqref{1.1}.
Besides, in subsection \ref{s2.3} we construct feedback operator
$F$ from \eqref{1.01}, and at last in subsection \ref{s2.4} we
derive the main nonlocal stabilization result from the bound
\eqref{1.1}. The rest part of the paper i.e. sections
\ref{s3}-\ref{s6} are devoted to the proof of the estimate
(\eqref{1.1}).

In conclusion let us note that we hope to use results obtained in
this paper for constructing the  nonlocal stabilization
theory of Navier-Stokes and Helmholtz systems by impulse, or
distributed, or boundary controls. We expect that these results and
technics of local stabilization theory should be enough to do so.

\section{Semilinear parabolic equation of normal type}\label{s1}

In this section we recall basic information  on parabolic
equations of normal type corresponding to 3D Navier-Stokes system:
their derivation, explicit formula for solutions, theorem on
existence and uniqueness of solution for normal parabolic
equations, the structure of their dynamics. These results have
been obtained in \cite{F1}-\cite{F4}

\subsection{Navier-Stokes equations}\label{s1.1}

Let us consider 3D Navier-Stokes system
\begin{equation}\label{NavierStokes}
      \partial_t {v}(t,{x})- \Delta {v}(t,{x})+({v},\nabla){v}+\nabla{p}(t,{x})=0, \, \operatorname{div} {v}=0,
\end{equation}
with periodic boundary conditions
\begin{equation}\label{NavierStokes_boundcond}
   {v}(t,...,x_i,...)={v}(t,...,x_i+2\pi,...), \, i=1,2,3
\end{equation}
and initial condition
\begin{equation}\label{NavierStokes_incond}
  {v}(t,{x})|_{t=0}={v}_0({x})
\end{equation}
where $t\in \mathbb{R}_{+}$, ${x}=(x_1,x_2,x_3)\in\mathbb{R}^3$, ${v}(t,{x})=(v_1,v_2,v_3)$ is the velocity vector field of fluid flow, $\nabla p$ is the gradient of pressure, $\Delta$ is the Laplace operator, ${({v},\nabla){v}=\sum_{j=1}^{3}v_j\partial_{x_j}v}$. Periodic boundary conditions \eqref{NavierStokes_boundcond} mean that Navier-Stokes eqautions \eqref{NavierStokes} and initial conditions \eqref{NavierStokes_incond} are defined on torus $\mathbb{T}^3=(\mathbb{R}/2\pi\mathbb{Z})^3$.

For each $m\in \mathbb{Z}_+=\{j\in \mathbb{Z}:j\geq 0\}$ we define the space
\begin{equation}\label{phase_space}
V^m=V^m(\mathbb{T}^3)=\{v(x)\in(H^m(\mathbb{T})^3)^3:\mbox{div}v=0,\int_{\mathbb{T}^3}v(x)dx=0\}
\end{equation}
where $H^m(\mathbb{T}^3)$ is the Sobolev space.

It is well-known, that the non-linear term $(v,\nabla)v$ in problem \eqref{NavierStokes}-\eqref{NavierStokes_incond} satisfies relation
$$\int_{\mathbb{T}^3}(v(t,x),\nabla)v(t,x)\cdot v(t,x)dx=0.$$

Therefore, multiplying \eqref{NavierStokes} scalarly by $v$ in $L_2(\mathbb{T}^3)$, integrating by parts by $x$,
and then integrating by $t$, we obtain the well-known energy estimate
\begin{equation}\label{en_est}
\int_{\mathbb{T}^3}|v(t,x)|^2dx+2\int_{0}^t\int_{\mathbb{T}^3}|\nabla_x v(\tau, x)|^2dx d\tau\leq\int_{\mathbb{T}^3}|v_0(x)|^2dx,
\end{equation}
which allows to prove the existence of weak solution for
\eqref{NavierStokes}-\eqref{NavierStokes_incond}. But, as is
well-known, scalar multiplication of \eqref{NavierStokes} by $v$
in $V^1(\mathbb{T}^3)$ does not result into an analog of estimate
\eqref{en_est}. Nevertheless, expression of such kind will be
useful for us. More exactly, we will consider the scalar product
in $V^0$ of Helmholtz equations by its unknown vector field (which
is equivalent).


\subsection{Helmholtz equations}
Using problem
\eqref{NavierStokes}-\eqref{NavierStokes_incond} for fluid
velocity $v$, let us derive the similar problem for the curl of velocity
\begin{equation}\label{curl}
{\omega}(t,x)=\operatorname{curl}v(t,x) = (\partial_{x_2}v_3 - \partial_{x_3}v_1,\partial_{x_3}v_1 - \partial_{x_1}v_3,\partial_{x_1}v_2 - \partial_{x_2}v_1)
\end{equation}
from it.

It is well-known from vector analysis, that
\begin{equation}\label{vectan1}
(v,\nabla)v = \omega\times v+\nabla\frac{|v|^2}{2},
\end{equation}
\begin{equation}\label{vectan2}
\operatorname{curl}(\omega\times v)=(v,\nabla)\omega-(\omega,\nabla)v, \text{ if } \operatorname{div}v=0,\, \operatorname{div}\omega=0.
\end{equation}
where $\omega\times v = (\omega_2v_3 - \omega_3 v_1,\omega_3v_1 -
\omega_1v_3,\omega_1v_2 - \omega_2v_1)$ is the vector product of
$\omega$ and $v$, and $|v|^2=v_1^2+v_2^2+v_3^2$. Substituting
\eqref{vectan1} into \eqref{NavierStokes} and applying curl
operator to both sides of the obtained equation, taking into
account \eqref{curl}, \eqref{vectan2} and formula
$\operatorname{curl}\nabla F=0$, we obtain the Helmholtz equations
\begin{equation}\label{Helmholtz}
\partial_t\omega(t,x)-\Delta \omega+(v,\nabla)\omega-(\omega,\nabla)v=0
\end{equation}
with initial conditions
\begin{equation}\label{Helmholtz_incond}
\omega(t,x)|_{t=0}=\omega_0(x):=\operatorname{curl}v_0(x),
\end{equation}
and periodic boundary condition.


\subsection{Derivation of Normal Parabolic Equations (NPE)}\label{s1.2}
Using decomposition into Fourier series
\begin{equation}\label{Fourier_decomp}
v(x)=\sum_{k\in\mathbb{Z}^3}\hat{v}(k)e^{i(k,x)}, \, \, \hat{v}(k)=(2\pi)^{-3}\int_{\mathbb{T}^3}v(x)e^{-i(k,x)}dx,
\end{equation}
where $(k,x)=k_1\cdot x_1+k_2\cdot x_2+k_3\cdot x_3$, $k=(k_1,k_2,k_3)$, and the well-known formula $\mbox{curl}\,\mbox{curl}\,v = -\Delta v$, if $\mbox{div }\,v =0$, we see that inverse operator to curl is well-defined on space $V^m$ and is given by formula
\begin{equation}\label{rotinv}
\mbox{curl}^{-1}\omega(x)=i\sum_{k\in\mathbb{Z}^3}\frac{k\times\hat{\omega}(k)}{|k|^2}e^{i(k,x)}.
\end{equation}
Therefore, operator $\mbox{curl}:V^1\mapsto V^0$ realizes isomorphism of the spaces, thus, a sphere in $V^1$ for
 \eqref{NavierStokes}-\eqref{NavierStokes_incond} is equivalent to a sphere in $V^0$
 for the problem \eqref{Helmholtz}-\eqref{Helmholtz_incond}.

Let us denote the non-linear term in Helmholtz system by $B$:
\begin{equation}\label{B_term}
B(\omega)=(v,\nabla)\omega-(\omega,\nabla)v,
\end{equation}
where $v$ can be expressed in terms of $\omega$ using
\eqref{rotinv}.

Multiplying \eqref{B_term} scalarly by $\omega=(\omega_1, \omega_2, \omega_3)$ and integrating by parts, we get expression
\begin{equation}\label{B_term_mult}
(B(\omega),\omega)_{V_0}=-\int_{\mathbb{T}^3}\sum_{j,k=1}^{3}\omega_j\partial_jv_k\omega_kdx,
\end{equation}
that, generally speaking, is not zero. Hence, energy estimate for
solutions of 3-D Helmholtz system is not fulfilled. In other
words, operator $B$ allows decomposition
\begin{equation}\label{B_term_decomp}
B(\omega) = B_n(\omega)+B_{\tau}(\omega),
\end{equation}
where vector $B_n(\omega)$ is orthogonal to sphere
$\Sigma(\|\omega\|_{V^0})=\{u\in V^0:\|u\|_{V^0}=\|\omega\|_{V^0}
\}$ at the point $\omega$, and vector $B_{\tau}$ is tangent to
$\Sigma(\|\omega\|_{V^0})$ at $\omega$. In general, both terms in
\eqref{B_term_decomp} are not equal to zero. Since the presence of $B_n$,
and not of $B_{\tau}$, prevents the fulfillments of the energy
estimate, it is plausible that the just $B_n$ generates the possible
singularities in the solution. Therefore, it seems reasonable to
omit the $B_{\tau}$ term in Helmholtz system and study the system \eqref{Helmholtz}
with non-linear operator $B(\omega)$ replaced with $B_n(\omega)$ first.
We will call the obtained system the system of normal parabolic
equations (NPE system).

Let us derive the NPE system corresponding to \eqref{Helmholtz}-\eqref{Helmholtz_incond}.

Since summand $(v,\nabla)\omega$ in \eqref{B_term} is  tangential
to vector $\omega$, the normal part of operator $B$ is defined by
the summand $(\omega,\nabla)v$. We shall seek for it in the form
$\Phi(\omega)\omega$, where $\Phi$ is the unknown functional,
which can be found from equation
\begin{equation}\label{Phi_eq}
\int_{\mathbb{T}^3}\Phi(\omega)\omega(x)\cdot\omega(x)dx=\int_{\mathbb{T}^3}(\omega(x),\nabla)v(x)\cdot \omega(x)dx.
\end{equation}
According to \eqref{Phi_eq},
\begin{equation}\label{Phi_def}
\Phi(\omega) = \left\{\begin{array}{rl}
        \int_{\mathbb T^3}(\omega(x),\nabla)\operatorname{curl}^{-1}\omega(x)\cdot \omega(x)dx/\int_{\mathbb T^3}|\omega(x)|^2dx, & \omega\neq 0,\\
        0, & \omega \equiv 0.
            \end{array}\right.
\end{equation}
where $\operatorname{curl}^{-1}\omega(x)$ is defined in \eqref{rotinv}.

Thus, we arrive at the following system of normal parabolic equations corresponding to Helmholtz equations \eqref{Helmholtz}:
\begin{equation}\label{npe}
\partial_t \omega(t,x) - \Delta \omega - \Phi(\omega)\omega = 0,\, \operatorname{div}\omega = 0,
\end{equation}
\begin{equation}\label{NPE_boundcond}
   {\omega}(t,...,x_i,...)={\omega}(t,...,x_i+2\pi,...), \, i=1,2,3
\end{equation}
where $\Phi$ is the functional defined in \eqref{Phi_def}

Further we study problem \eqref{npe}, \eqref{NPE_boundcond} with
initial condition \eqref{Helmholtz_incond}.


\subsection{Explicit formula for solution of NPE}\label{s2.2}

In this subsection we remind the explicit formula for NPE
solution.

\begin{lem}\label{exp_sol_lem}
Let $\mathbf{S}(t,x;\omega_0)$ be the solution of the following Stokes system with periodic boundary conditions:

\begin{eqnarray}
&\partial_t z - \Delta z=0,\, \operatorname{div}z = 0;\label{3Dheat_eq}\\
& z(t,...,x_i+2\pi,...) = z(t,x),\,\,\,\, i=1,2,3;\label{3DS_boundary_cond}\\
&z(0,x)=\omega_0,\label{3Dheat_in_cond}
\end{eqnarray}
i.e. $\mathbf{S}(t,x;\omega_0)=z(t,x)$. \footnote{Note that
because of periodic boundary conditions the Stokes system should
not contain the pressure term $\nabla p$} Then the solution of
problem \eqref{npe} with periodic boundary conditions and initial
condition \eqref{Helmholtz_incond} has the form
\begin{equation}\label{exp_sol}
   \omega(t,x;\omega_0)=\frac{\mathbf{S}(t,x;\omega_0)}{1-\int_0^t\Phi (\mathbf{S}(\tau,x;\omega_0))d\tau}
\end{equation}
\end{lem}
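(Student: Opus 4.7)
The plan is to verify the explicit formula by direct substitution, exploiting the homogeneity of the functional $\Phi$. The ansatz to test is $\omega(t,x) = \mathbf{S}(t,x;\omega_0)/\alpha(t)$, where $\alpha(t)$ is an unknown purely time-dependent scalar function with $\alpha(0)=1$ (so that the initial condition $\omega(0,x)=\omega_0$ is automatically satisfied, given that $\mathbf{S}(0,x;\omega_0)=\omega_0$). Because the Stokes semigroup preserves the divergence-free condition, $\operatorname{div}\omega=0$ also comes for free.

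The key algebraic observation is the degree-one homogeneity of $\Phi$: since $\operatorname{curl}^{-1}$ is linear, the numerator in \eqref{Phi_def} is cubic in $\omega$ while the denominator is quadratic, so $\Phi(\mathbf{S}/\alpha) = \Phi(\mathbf{S})/\alpha$ whenever $\alpha$ does not depend on $x$. This reduces the nonlinear term to
\begin{equation*}
\Phi(\omega)\omega \;=\; \frac{1}{\alpha}\Phi(\mathbf{S})\cdot\frac{\mathbf{S}}{\alpha} \;=\; \frac{\Phi(\mathbf{S})}{\alpha^{2}}\mathbf{S}.
\end{equation*}

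Next I would compute the linear part. Differentiating $\omega=\mathbf{S}/\alpha$ in $t$ and in $x$,
\begin{equation*}
\partial_t\omega - \Delta\omega \;=\; \frac{\partial_t\mathbf{S}-\Delta\mathbf{S}}{\alpha}-\frac{\alpha'(t)\,\mathbf{S}}{\alpha^{2}} \;=\; -\frac{\alpha'(t)}{\alpha^{2}}\mathbf{S},
\end{equation*}
because $\mathbf{S}$ satisfies the Stokes system \eqref{3Dheat_eq}--\eqref{3Dheat_in_cond}. Substituting into \eqref{npe} and cancelling the common factor $\mathbf{S}/\alpha^{2}$ reduces the PDE to the scalar ODE $\alpha'(t) = -\Phi(\mathbf{S}(t,\cdot;\omega_0))$ with $\alpha(0)=1$, which integrates to the denominator in \eqref{exp_sol}. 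Uniqueness of the NPE solution (which I would cite from the existence-uniqueness theorem announced as already established in \cite{F1}--\cite{F4}) then shows the constructed function is the only solution on the time interval where the denominator stays away from zero.

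I do not expect a major obstacle: the whole argument is algebraic once the homogeneity $\Phi(\lambda\omega)=\lambda\Phi(\omega)$ is noted and the heat/Stokes equation for $\mathbf{S}$ is used. The only subtle point worth flagging is the interval of validity: the formula \eqref{exp_sol} holds as long as $1-\int_0^t\Phi(\mathbf{S}(\tau,\cdot;\omega_0))\,d\tau\neq 0$, and the blow-up alternative described in the introduction corresponds precisely to this denominator vanishing in finite time. That dichotomy, however, belongs to the dynamical structure discussion, not to the verification of the representation formula itself.
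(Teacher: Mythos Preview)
Your argument is correct and is exactly the standard verification: use the degree-one homogeneity of $\Phi$, plug the ansatz $\omega=\mathbf{S}/\alpha(t)$ into \eqref{npe}, and reduce to the scalar ODE $\alpha'=-\Phi(\mathbf{S})$ with $\alpha(0)=1$. The paper does not give its own proof of this lemma but simply cites \cite{F2}, \cite{F4}, where precisely this computation is carried out; your write-up matches that approach in all essentials.
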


One can see the proof of this Lemma in \cite{F2}, \cite{F4}. 

\subsection{Unique solvability  of NPE}

Let $Q_T=(0,T)\times \mathbb{T}^3,\;T>0$ or $T=\infty $. The
following space of solutions for NPE will be used:
$$
    V^{1,2(-1)}(Q_T)=L_2(0,T;V^1)\cap H^1(0,T;V^{-1})
$$
We look for solutions $\omega (t,x;\omega_0)$ satisfying

\begin{condition}\label{ex cond} If initial condition $\omega_0\in V^0\setminus
\{ 0\}$ and solution $\omega (t,x;\omega_0)\in V^{1,2(-1)}(Q_T)$
then $ \omega(t,\cdot ,\omega_0)\ne 0\; \forall t\in [0,T]$
\end{condition}

\begin{theorem} For each $\omega_0 \in V^0$ there exists $T>0$ such
that there exists unique solution $\omega (t,x;\omega_0)\in
V^{1,2(-1)}(Q_T)$ of the problem
\eqref{npe},\eqref{NPE_boundcond}, \eqref{Helmholtz_incond}
satisfying Condition \eqref{ex cond}
\end{theorem}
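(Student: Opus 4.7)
The plan is to use the explicit formula provided by Lemma~\ref{exp_sol_lem} both to construct the solution and to derive uniqueness. Set
$$
    d(t):=1-\int_0^t\Phi(\mathbf{S}(\tau,\cdot;\omega_0))\,d\tau,
    \qquad
    \omega(t,x;\omega_0):=\frac{\mathbf{S}(t,x;\omega_0)}{d(t)},
$$
where $\mathbf{S}(\cdot;\omega_0)$ is the Stokes/heat solution of \eqref{3Dheat_eq}--\eqref{3Dheat_in_cond}. The whole theorem then reduces to choosing $T>0$ so small that $d(t)\ge 1/2$ on $[0,T]$, verifying the regularity of the quotient, and arguing uniqueness.

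For $\omega_0\in V^0$ the parabolic smoothing estimate yields $\|\mathbf{S}(t,\cdot;\omega_0)\|_{H^s(\mathbb{T}^3)}\le C_s t^{-s/2}\|\omega_0\|_{V^0}$ for every $s\ge 0$ and $t\in(0,T]$, and $\mathbf{S}\in V^{1,2(-1)}(Q_T)$ by standard linear theory. The functional $\Phi$ is then bounded by integrating by parts in the numerator of \eqref{Phi_def} using $\operatorname{div}\omega=0$ and combining H\"older with the Sobolev embedding $H^s\hookrightarrow L^\infty$ on $\mathbb{T}^3$ for $s>3/2$ (using also that $\operatorname{curl}^{-1}$ gains one derivative); this produces $|\Phi(\omega)|\le C\|\omega\|_{H^s}$ for any $s>3/2$. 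Choosing $s\in(3/2,2)$ and inserting the smoothing bound,
$$
    \int_0^T|\Phi(\mathbf{S}(\tau,\cdot;\omega_0))|\,d\tau\le C\|\omega_0\|_{V^0}\,T^{1-s/2}\to 0 \quad\text{as } T\downarrow 0,
$$
so for $T=T(\|\omega_0\|_{V^0})$ sufficiently small, $d(t)\in[1/2,3/2]$ on $[0,T]$. The quotient $\omega=\mathbf{S}/d$ then inherits $L_2(0,T;V^1)$-regularity from $\mathbf{S}$ since $1/d$ is a bounded scalar; the $H^1(0,T;V^{-1})$-regularity follows from the equation $\partial_t\omega=\Delta\omega+\Phi(\omega)\omega$, which is verified by direct substitution using $d'(t)=-\Phi(\mathbf{S}(t,\cdot;\omega_0))$ and the degree-one homogeneity $\Phi(\lambda u)=\lambda\Phi(u)$ of \eqref{Phi_def}. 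Condition~\ref{ex cond} holds automatically because $\omega$ is a nonvanishing scalar multiple of the nontrivial heat solution $\mathbf{S}$.

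For uniqueness, let $\widetilde\omega\in V^{1,2(-1)}(Q_T)$ be any solution obeying Condition~\ref{ex cond} and set $\phi(t):=\Phi(\widetilde\omega(t,\cdot;\omega_0))$. Then $\widetilde\omega$ solves the \emph{linear} parabolic problem $\partial_t\widetilde\omega-\Delta\widetilde\omega=\phi(t)\widetilde\omega$, $\widetilde\omega|_{t=0}=\omega_0$, whose unique solution in $V^{1,2(-1)}(Q_T)$ is $\widetilde\omega(t,x)=e^{\int_0^t\phi(s)\,ds}\mathbf{S}(t,x;\omega_0)$. Applying $\Phi$ to this representation and using the homogeneity once more shows that $\mu(t):=e^{-\int_0^t\phi(s)\,ds}$ satisfies the ODE $\mu'(t)=-\Phi(\mathbf{S}(t,\cdot;\omega_0))$, $\mu(0)=1$, so $\mu\equiv d$ and $\widetilde\omega=\mathbf{S}/d=\omega$.

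The main technical obstacle will be securing a Sobolev-type bound on $|\Phi(\mathbf{S}(t))|$ whose time singularity near $t=0$ is mild enough to control both the integral defining $d$ and, via the PDE, the $L_2(V^{-1})$-norm of $\partial_t\omega$; every remaining step is either heat-semigroup bookkeeping or the one-line ODE argument above.
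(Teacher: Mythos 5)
The paper itself gives no proof of this theorem; it simply cites \cite{F4}. Your route through the explicit formula of Lemma~\ref{exp_sol_lem}, defining $d(t)=1-\int_0^t\Phi(\mathbf{S}(\tau,\cdot;\omega_0))\,d\tau$ and setting $\omega=\mathbf S/d$, together with the degree--one homogeneity of $\Phi$ and the ODE argument for uniqueness, is the natural one and is almost certainly the strategy used in \cite{F4}; the existence construction, the choice of $T$ keeping $d\ge 1/2$, the verification of Condition~\ref{ex cond}, and the uniqueness step are all sound.

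There is, however, a concrete gap in the claim that $\omega\in V^{1,2(-1)}(Q_T)$, specifically in the $H^1(0,T;V^{-1})$ part. Writing the equation as $\partial_t\omega=\Delta\omega+\Phi(\omega)\omega$ and noting that $\Delta\omega\in L_2(0,T;V^{-1})$ is immediate from $\omega\in L_2(0,T;V^1)$, what remains is to show $\Phi(\omega)\omega=\Phi(\mathbf S)\mathbf S/d^2\in L_2(0,T;V^{-1})$, i.e.\ (since $\|\mathbf S(t)\|_{V^{-1}}$ is bounded and $d$ is bounded away from zero) that $\Phi(\mathbf S(\cdot))\in L_2(0,T)$. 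The bound you propose, $|\Phi(\omega)|\le C\|\omega\|_{H^s}$ with $s\in(3/2,2)$, combined with the smoothing estimate $\|\mathbf S(t)\|_{H^s}\le C_st^{-s/2}\|\omega_0\|_{V^0}$, yields $|\Phi(\mathbf S(t))|\lesssim t^{-s/2}$. Since $s>3/2>1$, this is integrable on $(0,T)$ --- which is exactly what controls $d$ --- but it is \emph{not} square-integrable, since $t^{-s/2}\in L_2(0,T)$ requires $s<1$. Trying the sharper multilinear estimate implicit in the paper, $|\Psi(y)|\le c_2\|y\|_{V^{1/2}}^3$, gives $|\Phi(\mathbf S(t))|\lesssim \|\mathbf S(t)\|_{V^{1/2}}^3/\|\mathbf S(t)\|_{V^0}^2\lesssim t^{-3/4}$ (using that $\|\mathbf S(t)\|_{V^0}$ stays bounded below on $[0,T]$), which is still not in $L_2$ near $t=0$. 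You explicitly flag this as ``the main technical obstacle'' but then leave it unresolved: the estimate you exhibit closes the $L_1$-in-time loop for $d$, not the $L_2$-in-time loop for $\partial_t\omega$. To complete the argument you would need either a genuinely sharper bound on $\Phi(\mathbf S(t))$ exploiting cancellations in the trilinear form (beyond Sobolev/H\"older bookkeeping), or a relaxation of the target space to $\partial_t\omega\in L_q(0,T;V^{-1})$ for some $q<2$, and neither step is present.
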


\begin{theorem} The solution $\omega (t,x;\omega_0)\in
V^{1,2(-1)}(Q_T)$ of the problem
\eqref{npe},\eqref{NPE_boundcond}, \eqref{Helmholtz_incond}
depends continuously on initial condition $\omega_0\in V^0$.
\end{theorem}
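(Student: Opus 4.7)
The plan is to rely entirely on the explicit formula \eqref{exp_sol} from Lemma \ref{exp_sol_lem}. The map $\omega_0 \mapsto \mathbf{S}(\cdot,\cdot;\omega_0)$ is the Stokes (heat) semigroup acting on divergence-free fields, hence a bounded linear operator from $V^0$ into $V^{1,2(-1)}(Q_T)$ for every $T>0$, and also into $C([0,T];V^0)$. So this factor in \eqref{exp_sol} is automatically continuous in $\omega_0$. The entire question therefore reduces to showing that the scalar denominator
\[
D(t;\omega_0)\;:=\;1-\int_0^t\Phi(\mathbf{S}(\tau,\cdot;\omega_0))\,d\tau
\]
depends continuously on $\omega_0\in V^0$ and remains bounded away from zero on $[0,T]$.

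The case $\omega_0=0$ is trivial, so I would fix $\omega_0\neq 0$. First I would observe that $\|\mathbf{S}(\tau,\cdot;\omega_0)\|_{V^0}$ is positive on $[0,T]$: on $\mathbb{T}^3$ the Fourier modes of $\mathbf{S}(\tau)$ decay exponentially but do not vanish, and the bound is stable under perturbations, so there exist $\delta>0$ and $c_0>0$ with $\|\mathbf{S}(\tau,\cdot;\omega_0')\|_{V^0}\ge c_0$ uniformly for $\tau\in[0,T]$ and $\|\omega_0'-\omega_0\|_{V^0}\le\delta$. On the set $\{\|\omega\|_{V^0}\ge c_0\}\cap V^1$ the functional $\Phi$ of \eqref{Phi_def} is locally Lipschitz, because its numerator is a continuous trilinear form in $\omega$ and $\operatorname{curl}^{-1}\omega$ and its denominator is bounded below. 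Combined with the parabolic smoothing of the Stokes semigroup for $\tau>0$, this shows that $\tau\mapsto\Phi(\mathbf{S}(\tau,\cdot;\omega_0'))$ is integrable on $[0,T]$ and depends continuously on $\omega_0'\in V^0$ with a majorant uniform in a neighborhood, so dominated convergence gives $D(\cdot;\omega_0^{(n)})\to D(\cdot;\omega_0)$ uniformly on $[0,T]$ whenever $\omega_0^{(n)}\to\omega_0$ in $V^0$.

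Then I would invoke Condition \ref{ex cond}: by \eqref{exp_sol}, $\omega(t,\cdot;\omega_0)\neq 0$ on $[0,T]$ forces $D(t;\omega_0)\neq 0$ there, and compactness of $[0,T]$ gives $|D(t;\omega_0)|\ge\gamma>0$. The uniform convergence just established then yields $|D(t;\omega_0^{(n)})|\ge\gamma/2$ for all $t\in[0,T]$ and all sufficiently large $n$. Plugging into \eqref{exp_sol}, the desired convergence of $\omega(\cdot,\cdot;\omega_0^{(n)})$ to $\omega(\cdot,\cdot;\omega_0)$ in $V^{1,2(-1)}(Q_T)$ follows by multiplying the $V^{1,2(-1)}$-convergent sequence $\mathbf{S}(\cdot,\cdot;\omega_0^{(n)})$ by the uniformly convergent, uniformly bounded scalar factor $1/D(t;\omega_0^{(n)})$.

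The technical heart of the argument is the second paragraph: one must turn the pointwise-in-$\tau$ continuity of $\Phi\circ\mathbf{S}$ into something strong enough to pass to the limit under the $\tau$-integral and, simultaneously, uniform over $\omega_0'$ near $\omega_0$. This hinges on the quantitative lower bound for $\|\mathbf{S}(\tau,\cdot;\omega_0')\|_{V^0}$ on all of $[0,T]$ and on bounding the trilinear numerator of $\Phi$ via the smoothing inequality $\|\mathbf{S}(\tau,\cdot)\|_{V^k}\lesssim \tau^{-k/2}\|\omega_0\|_{V^0}$; the resulting integrable singularity at $\tau=0$ is harmless for the $\tau$-integration that defines $D$.
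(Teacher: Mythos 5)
The paper itself does not prove this statement; it cites \cite{F4}. So a line-by-line comparison is impossible, but your approach — reading off everything from the explicit formula \eqref{exp_sol} — is exactly the approach the whole framework is built for, and it is almost certainly what \cite{F4} does. The skeleton is right: linearity and boundedness of the Stokes/heat semigroup $V^0\to V^{1,2(-1)}(Q_T)\cap C([0,T];V^0)$ handles the numerator; the lower bound $\|\mathbf{S}(\tau,\cdot;\omega_0')\|_{V^0}\ge c_0>0$ via the contraction property of the semigroup plus $\|\mathbf{S}(T,\cdot;\omega_0)\|_{V^0}>0$ handles the denominator of $\Phi$; parabolic smoothing gives an integrable majorant for $\Phi(\mathbf{S}(\tau,\cdot;\omega_0'))$ uniformly near $\omega_0$; dominated convergence gives uniform-in-$t$ convergence of $D(\cdot;\omega_0^{(n)})$; Condition \ref{ex cond} plus $D(0)=1$ gives $D\ge\gamma>0$ on $[0,T]$.

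The one place where you assert more than you have proved is the final multiplication step. Convergence in $V^{1,2(-1)}(Q_T)=L_2(0,T;V^1)\cap H^1(0,T;V^{-1})$ does not simply ``follow by multiplying'' a $V^{1,2(-1)}$-convergent sequence by a uniformly convergent, uniformly bounded scalar factor of $t$. The $L_2(0,T;V^1)$ half is indeed immediate, but for the $H^1(0,T;V^{-1})$ half you must differentiate the quotient in $t$, and the extra term
\[
\mathbf{S}(t,\cdot;\omega_0^{(n)})\,\frac{D'(t;\omega_0^{(n)})}{D^2(t;\omega_0^{(n)})},
\qquad D'(t;\omega_0)=-\Phi\bigl(\mathbf{S}(t,\cdot;\omega_0)\bigr),
\]
has to be shown to lie in $L_2(0,T;V^{-1})$ and to converge there. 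Your own majorant $|\Phi(\mathbf{S}(\tau))|\lesssim\tau^{-3/4}$ (coming from $\|\mathbf{S}(\tau)\|_{V^{1/2}}\lesssim\tau^{-1/4}\|\omega_0\|_{V^0}$ and the trilinear bound on $\Psi$) is integrable but not square-integrable near $\tau=0$, so this step is not free. The same issue is of course already present in showing $\omega(\cdot,\cdot;\omega_0)\in V^{1,2(-1)}(Q_T)$ at all (Theorem 2.1), so whatever resolves it there — a sharper estimate on $\Psi(\mathbf{S}(\tau))$, or a weak formulation in which only $\int_0^t\Phi$ rather than $\Phi$ itself enters — should be invoked explicitly here rather than being absorbed into the word ``follows.'' Apart from this, the argument is sound and is the natural one.
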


One can see the proof of these Theorems in \cite{F4}.

\subsection{Structure of dynamical flow for NPE}

We will use $V^0(\mathbb{T}^3)\equiv V^0$ as the phase space for
problem \eqref{npe},\eqref{NPE_boundcond},
\eqref{Helmholtz_incond}.

\begin{deff}\label{M-} The set $M_-\subset V^0$ of $\omega_0$, such that the
corresponding solution $\omega(t,x;\omega_0)$ of problem
\eqref{npe},\eqref{NPE_boundcond}, \eqref{Helmholtz_incond}
satisfies inequality
$$
      \|\omega(t,\cdot ;\om_0)\|_0\le \alpha \|\om_0\|_0e^{-t/2}\qquad \forall
      t>0
$$
is called the set of stability. Here $\alpha >1$ is a fixed number
depending on $\| \omega_0\|_0$.
\end{deff}

\begin{deff}\label{M+} The set $M_+\subset V^0$ of $\om_0$, such that the
corresponding solution $\om(t,x;\om_0)$ exists only on a finite
time interval $t\in (0,t_0)$, and blows up at $t=t_0$ is called
the set of explosions.
\end{deff}

\begin{deff}\label{Mg} The set $M_g\subset V^0$ of $\om_0$, such that the corresponding
solution $\om(t,x;\om_0)$ exists for time $t\in \mathbb{R}_+$, and
$\|\om(t,x;\om_0)\|_{0}\to \infty$ as $t\to \infty$ is called the
set of growing.
\end{deff}

\begin{lem}(see \cite{F4}) Sets $M_-,M_+,M_g$ are not empty, and $M_-\cup M_+\cup
M_g=V^0$
\end{lem}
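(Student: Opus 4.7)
The starting point is the explicit solution formula of Lemma \ref{exp_sol_lem}. Writing $I(t;\omega_0) = \int_0^t \Phi(\mathbf{S}(\tau,\cdot;\omega_0))\,d\tau$, we have $\omega(t,x;\omega_0) = \mathbf{S}(t,x;\omega_0)/(1-I(t;\omega_0))$ as long as the denominator stays positive on $[0,t]$, so the whole analysis reduces to studying the scalar $1-I(t)$ while $\mathbf{S}(t,\cdot;\omega_0)$ is completely explicit through Fourier series. Since the smallest eigenvalue of $-\Delta$ on zero-mean divergence-free periodic fields equals $1$, the Stokes semigroup satisfies $\|\mathbf{S}(t)\|_{V^0}\leq e^{-t}\|\omega_0\|_{V^0}$; combined with a smoothing estimate $\|\mathbf{S}(t)\|_{V^1}\leq Ct^{-1/2}e^{-t/2}\|\omega_0\|_{V^0}$ and the Sobolev bound $|\Phi(\omega)|\leq C\|\omega\|_{V^1}$ that follows from the definition \eqref{Phi_def} via H\"older and embedding inequalities, this yields an a priori bound $\int_0^\infty |\Phi(\mathbf{S}(\tau;\omega_0))|\,d\tau\leq C\|\omega_0\|_{V^0}$, so that $I(t;\omega_0)$ is uniformly bounded and converges as $t\to\infty$.

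To show $M_-\ne\emptyset$, I would take any nonzero $\omega_0$ with $C\|\omega_0\|_{V^0}\leq 1/2$; then $|I(t)|\leq 1/2$ uniformly, so $1-I(t)\geq 1/2$, and the explicit formula gives $\|\omega(t)\|_0\leq 2\|\omega_0\|_0 e^{-t}$, placing $\omega_0$ in $M_-$ with $\alpha=2$. To show $M_+\ne\emptyset$ I would exploit the $1$-homogeneity $\Phi(\lambda\omega)=\lambda\Phi(\omega)$: a direct Fourier computation shows that $\Phi\not\equiv 0$ (it is nonzero on suitable trigonometric polynomials supported on a nondegenerate Fourier triple $k_1+k_2+k_3=0$), so I pick $\omega_0'$ with $\Phi(\omega_0')>0$; by continuity $\Phi(\mathbf{S}(t,\cdot;\omega_0'))\geq c>0$ on some $[0,t_0]$. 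The scaling identity $I(t;\lambda\omega_0')=\lambda I(t;\omega_0')$ then makes $I(t_0;\lambda\omega_0')\geq \lambda c t_0>1$ for $\lambda$ large, so the denominator vanishes at some $t^*\leq t_0$ and $\lambda\omega_0'\in M_+$.

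For $M_g\ne\emptyset$ I would run a critical-scaling argument along the ray $\{\lambda\omega_0'\}_{\lambda>0}$. Setting $J(\omega_0')=\sup_{t\geq 0} I(t;\omega_0')$, linearity gives $J(\lambda\omega_0')=\lambda J(\omega_0')$, and at the critical value $\lambda^*=1/J(\omega_0')$ one has $\sup_t I(t;\lambda^*\omega_0')=1$. Choosing $\omega_0'$ so that this supremum is approached only as $t\to\infty$ and not attained at any finite $t$ (for instance by taking $\omega_0'$ for which $\Phi(\mathbf{S}(t,\cdot;\omega_0'))$ is positive for all $t\geq 0$) guarantees $1-I(t)>0$ for all finite $t$ while $1-I(t)=\int_t^\infty \Phi(\mathbf{S})\,d\tau\to 0$; a careful comparison of the decay rate of this tail with the exponential decay of $\|\mathbf{S}(t)\|_0$ then forces $\|\omega(t)\|_0\to\infty$, placing $\lambda^*\omega_0'$ in $M_g$. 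The identity $M_-\cup M_+\cup M_g=V^0$ reduces to a direct case analysis: for any $\omega_0\in V^0$ the continuous function $1-I(t;\omega_0)$ starts at $1$ and either vanishes at some finite $t^*$ (giving $M_+$), stays bounded below by a positive constant (giving $M_-$ by the same estimate as above), or remains strictly positive with infimum zero (giving $M_g$).

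The main obstacle is the asymptotic analysis underlying the $M_g$ case, and with it the exhaustiveness of the decomposition. When $\inf_t(1-I(t;\omega_0))=0$ with the infimum not attained, both numerator and denominator of the explicit formula decay exponentially, and they can in principle decay at matching rates, threatening a fourth ``bounded but non-decaying'' behavior that fits none of the three classes. Ruling this out requires sharp estimates comparing the decay rate of $\|\mathbf{S}(t)\|_0$ (governed by the lowest Fourier mode present in $\omega_0$) against that of the tail $\int_t^\infty\Phi(\mathbf{S})\,d\tau$ (governed by interactions of Fourier triples $k_1+k_2+k_3=0$), and the scaling argument must be performed with a direction $\omega_0'$ chosen so that the critical case does not degenerate, since for highly symmetric data such as a single Fourier mode $\Phi$ vanishes identically along the Stokes trajectory.
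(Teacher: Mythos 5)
The paper does not prove this lemma itself — it refers to \cite{F4} — but it records the geometric framework from \cite{F4} in the subsection on the structure of the phase space (the sets $A_-$, $B_+$, the map $\Gamma$, and the characterizations $M_-=V^0_-$, $M_+=V^0_+\cup B_{+,f}$, $M_g=B_{+,\infty}$). Your overall strategy is faithful to that framework: you work from the explicit formula \eqref{exp_sol}, reduce everything to the scalar $1-I(t;\omega_0)$ with $I(t;\omega_0)=\int_0^t\Phi(\mathbf{S}(\tau,\cdot;\omega_0))\,d\tau$, use the $1$-homogeneity of $\Phi$ to scale along rays, and perform a trichotomy; your critical $\lambda^*=1/J(\omega_0')$ is the paper's map $\Gamma$, and your distinction ``maximum attained/not attained at finite $t$'' is precisely $B_{+,f}$ versus $B_{+,\infty}$. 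So the route is essentially the intended one.

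There are, however, two problems. First, the claimed Sobolev bound $|\Phi(\omega)|\le C\|\omega\|_{V^1}$ does not hold: the numerator in \eqref{Phi_def} is trilinear while the denominator is $\|\omega\|_{V^0}^2$, so H\"older plus $H^{1/2}(\mathbb{T}^3)\hookrightarrow L^3$ give at best $|\Phi(\omega)|\le C\|\omega\|_{V^{1/2}}^3/\|\omega\|_{V^0}^2\le C\|\omega\|_{V^1}^{3/2}/\|\omega\|_{V^0}^{1/2}$, which is unbounded on $V^1$ whenever $\|\omega\|_{V^0}\ll\|\omega\|_{V^1}$. Consequently the uniform a priori bound $\int_0^\infty|\Phi(\mathbf{S}(\tau;\omega_0))|\,d\tau\le C\|\omega_0\|_{V^0}$ is false as stated; the integral does converge, but the constant depends on which Fourier modes are present in $\omega_0$ (through the lower bound for $\|\mathbf{S}(\tau)\|_{V^0}$), not just on the norm. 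For $M_-\ne\emptyset$ this is harmless because $1$-homogeneity alone lets you scale $\sup_t|I(t)|$ below $1/2$ along any fixed ray, but the estimate as written is incorrect. Second, and more importantly, the $M_g$ step and the exhaustiveness $M_-\cup M_+\cup M_g=V^0$ rest exactly on the rate comparison you flag at the end, and the proposal does not establish it. When $I(t)\nearrow 1$ without ever reaching $1$, setting $c=\min\{|k_1|^2+|k_2|^2+|k_3|^2\}$ over the resonant Fourier triples $k_1+k_2+k_3=0$ actually contributing to $\Psi(\mathbf{S}(t))$, one finds $1-I(t)\sim e^{-(c-2|k_0|^2)t}$ while $\|\mathbf{S}(t)\|_0\sim e^{-|k_0|^2 t}$, so $\|\omega(t)\|_0\sim e^{(c-3|k_0|^2)t}$; this tends to infinity only if $c>3|k_0|^2$ strictly. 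One always has $c\ge 3|k_0|^2$, but equality is a priori possible (for instance when $|k_0|^2$ is even, three modes of norm $|k_0|$ can sum to zero), and in that degenerate case $\|\omega(t)\|_0$ would remain bounded without decaying — exactly the ``fourth behavior'' you warn about. Ruling this scenario out is the substantive content of the argument in \cite{F4}, and it is missing here; as it stands your proposal establishes $M_-\ne\emptyset$, $M_+\ne\emptyset$ and the easy branches of the trichotomy, but not $M_g\ne\emptyset$ nor $M_-\cup M_+\cup M_g=V^0$.
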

\subsection{On a geometrical structure of phase space}

Let define the following subsets of unit sphere: $\Sigma =\{ v\in
V^0:\; \| v\|_0=1\}$ in the phase space $V^0$:
$$
  A_-(t)=\{v\in \Sigma : \int_0^t\Phi (S(\tau , v))d\tau \le 0\},
  \quad A_-=\cap_{t\ge 0}A_-(t),
$$
$$
 B_+=\Sigma \setminus A_-\equiv
 \{ v\in \Sigma : \; \exists t_0>0 \; \int_0^{t_0}\Phi (S(\tau ,v))d\tau >0\},
$$
$$
   \partial B_+=\{ v\in \Sigma :\; \forall t>0 \int_0^{t}\Phi (S(\tau ,v))d\tau \le 0 \quad
 \mbox{and}\; \exists t_0>0: \; \int_0^{t_0}\Phi (S(\tau ,v))d\tau =0 \}
$$
We introduce the following function on sphere $\Sigma $:
\begin{equation}\label{func sphere}
   B_+\ni v\to b(v)=\max_{t\ge 0} \int_0^{t}\Phi (S(\tau ,v))d\tau
\end{equation}
Evidently, $b(v)>0$ и $b(v)\to 0$ as $v\to \partial B_+$. Let
define the map $\Gamma (v)$:
\begin{equation}\label{map}
   B_+\ni v\to \Gamma (v)=\frac{1}{b(v)}v\in V^0
\end{equation}
It is clear that $\| \Gamma (v)\|_0\to \infty $ as $v\to \partial
B_+$. The set $\Gamma(B_+)$ divides $V^0$ into two parts:
$$
   V^0_-=\{ v\in V^0:\; [0,v]\cap \Gamma(B_+)=\emptyset \},
$$
$$
   V^0_+=\{ v\in V^0:\; [0,v)\cap \Gamma(B_+)\ne \emptyset \}
$$
Let $B_+=B_{+,f}\cup B_{+,\infty}$ where
$$
   B_{+,f}=\{ v\in B_+:\; \mbox{max in \eqref{func sphere} is achived at}\; t<\infty \}
$$
$$
   B_{+,\infty}=\{ v\in B_+:\; \mbox{max in \eqref{func sphere} is not achived at}\; t<\infty \}
$$
\begin{theorem}(see \cite{F4}) $M_-=V^0_-,\; M_+=V^0_+\cup B_{+,f},\;
M_g=B_{+,\infty}$
\end{theorem}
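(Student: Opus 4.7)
The starting point is the explicit formula \eqref{exp_sol} from Lemma~\ref{exp_sol_lem}. Writing $\omega_0 = rv$ with $r = \|\omega_0\|_0 > 0$ and $v \in \Sigma$, the linearity of the Stokes semigroup together with the homogeneity $\Phi(r\omega) = r\Phi(\omega)$ (immediate from \eqref{Phi_def}) rewrite \eqref{exp_sol} as
\begin{equation}\label{red-formula}
\omega(t,x;\omega_0) = \frac{r\,\mathbf{S}(t,x;v)}{1 - r g_v(t)},\qquad g_v(t) := \int_0^t \Phi(\mathbf{S}(\tau,\cdot;v))\,d\tau.
\end{equation}
Since elements of $V^0$ have vanishing mean, the smallest eigenvalue of $-\Delta$ is $1$, giving $\|\mathbf{S}(t;v)\|_0 \leq e^{-t}$ for $v \in \Sigma$. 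All further analysis reduces to controlling $g_v(t)$ and $1 - rg_v(t)$ in \eqref{red-formula}.

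Next I would translate the geometric decomposition of $V^0$ into arithmetic conditions on the product $rb(v)$. For $v \in B_+$ the ray $\{rv : r>0\}$ meets $\Gamma(B_+)$ at the single point $v/b(v)$, while for $v \in A_-$ it avoids $\Gamma(B_+)$ altogether. Under the natural identification $\Gamma(B_{+,f}),\,\Gamma(B_{+,\infty}) \subset V^0$ implicit in the statement, the sets $V^0_-$, $V^0_+$, $\Gamma(B_{+,f})$, $\Gamma(B_{+,\infty})$ partition $V^0\setminus\{0\}$ according to whether \textbf{(a)} $v \in A_-$, or $v \in B_+$ with $rb(v) < 1$; \textbf{(b)} $v \in B_+$ with $rb(v) > 1$; \textbf{(c)} $v \in B_{+,f}$ with $rb(v) = 1$; \textbf{(d)} $v \in B_{+,\infty}$ with $rb(v) = 1$.

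Cases \textbf{(a)}--\textbf{(c)} follow directly from \eqref{red-formula}. In (a), the denominator obeys $1 - rg_v(t) \geq 1 - rb(v) > 0$ (identically $\geq 1$ when $v \in A_-$), which combined with $\|\mathbf{S}(t;v)\|_0 \leq e^{-t}$ yields $\|\omega(t;\omega_0)\|_0 \leq \alpha\|\omega_0\|_0 e^{-t}$ with $\alpha = (1-rb(v))^{-1}$, so $\omega_0 \in M_-$. In (b), $g_v(0)=0$ and $\sup_{t\geq 0} rg_v(t) = rb(v) > 1$; continuity of $g_v$ and the intermediate value theorem produce a smallest $t_0 \in (0,\infty)$ with $rg_v(t_0) = 1$, at which the denominator of \eqref{red-formula} vanishes while the numerator stays bounded away from $0$, giving blow-up and $\omega_0 \in M_+$. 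In (c), the maximum in \eqref{func sphere} is attained at some finite $t_0$ by the very definition of $B_{+,f}$, and the same argument applies.

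The main obstacle is case \textbf{(d)}, $\omega_0 = v/b(v)$ with $v \in B_{+,\infty}$. On one hand $1 - rg_v(t) > 0$ for every finite $t$, so the solution exists globally; on the other, $\liminf_{t\to\infty}(1-rg_v(t)) = 0$ because $\sup_t g_v(t) = 1/r$ is not attained in finite time. Verifying $\|\omega(t;\omega_0)\|_0 \to \infty$ then requires comparing the asymptotic decay rates of $\|\mathbf{S}(t;v)\|_0$ and of $1 - rg_v(t) = r\int_t^\infty \Phi(\mathbf{S}(\tau;v))\,d\tau$ (when the latter integral converges) and showing the latter is strictly faster. The key is a refined Fourier analysis of $v$: the defining property of $B_{+,\infty}$ forces the contribution of the slowest Fourier modes of $v$ to $\Phi(\mathbf{S}(t;v))$ to be asymptotically negligible compared to $\|\mathbf{S}(t;v)\|_0$, so that $\Phi(\mathbf{S}(t;v)) = o(\|\mathbf{S}(t;v)\|_0)$ as $t \to \infty$, which is exactly what is needed for unbounded growth of \eqref{red-formula}. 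This is the delicate step, and for its completion we appeal to the detailed computations carried out in \cite{F4}.
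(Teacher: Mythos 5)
Your reduction via the explicit formula \eqref{exp_sol} and the cubic/quadratic homogeneity of $\Phi$, and the resulting dichotomy on the ray $\{rv:r>0\}$ according to whether $rb(v)<1$, $rb(v)>1$, or $rb(v)=1$ (the last split by $v\in B_{+,f}$ versus $v\in B_{+,\infty}$), correctly identifies $V^0_-$, $V^0_+$, $\Gamma(B_{+,f})$, and $\Gamma(B_{+,\infty})$; cases (a)--(c) are sound, and since the present paper states the theorem without proof and refers to \cite{F4}, your route is the natural one.

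Case (d), however, contains a real gap. You are right that what is needed is $\int_t^\infty\Phi(\mathbf S(\tau;v))\,d\tau=o(\|\mathbf S(t;v)\|_0)$, but the reason you offer --- that ``the defining property of $B_{+,\infty}$ forces the contribution of the slowest Fourier modes of $v$ to $\Phi(\mathbf S(t;v))$ to be asymptotically negligible'' --- does not follow from $v\in B_{+,\infty}$. Membership in $B_{+,\infty}$ only governs the \emph{sign} of $g_v'(t)=\Phi(\mathbf S(t;v))$ for large $t$, not its decay rate. What actually kills the naive leading order $e^{-3n_0t}$ in $\Psi(\mathbf S(t;v))$ (where $n_0=\min\{|k|^2:\hat v(k)\neq0\}$) is a structural identity for the functional $\Psi$ of \eqref{Psi}: writing $v_\omega=\operatorname{curl}^{-1}\omega$ and using $\operatorname{div}\omega=\operatorname{div}v_\omega=0$ together with $(\omega,\nabla)\omega=\tfrac12\nabla|\omega|^2-\omega\times\operatorname{curl}\omega$ and $\operatorname{curl}\omega=-\Delta v_\omega$, one finds
\begin{equation*}
\Psi(\omega)=-\int_{\mathbb T^3}\bigl(\omega\times\Delta v_\omega\bigr)\cdot v_\omega\,dx,
\end{equation*}
so $\Psi(\omega)=0$ whenever $\omega$ (equivalently $v_\omega$) is an eigenfunction of $-\Delta$, since then the integrand is a multiple of $(\omega\times v_\omega)\cdot v_\omega\equiv0$. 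Applying this to the projection $v_{n_0}$ of $v$ onto its lowest eigenspace shows that the $e^{-3n_0t}$ coefficient of $\Psi(\mathbf S(t;v))$ vanishes, so $\Psi(\mathbf S(t;v))=O(e^{-(2n_0+n_1)t})$ with $n_1>n_0$ the next active frequency, hence $\Phi(\mathbf S(t;v))=O(e^{-n_1t})=o(\|\mathbf S(t;v)\|_0)$ and $\int_t^\infty\Phi=o(\|\mathbf S(t;v)\|_0)$, giving growth of $\|\omega(t)\|_0$. Note this identity holds for \emph{every} $v\in\Sigma$ and has nothing to do with $B_{+,\infty}$; without isolating it, the Fourier analysis you sketch for case (d) would not close, and the appeal to \cite{F4} is covering an essential missing ingredient rather than a routine computation.
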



\section{Stabilization of solution for NPE by starting control}\label{s2}


\subsection{Formulation of the main result on stabilization}

We consider semilinear parabolic equations \eqref{npe}:
\begin{equation}\label{NPE_1}
    \pd_t{y}(t,{x})-\Delta {y}(t,{x})-\Phi ({y}){y}=0
\end{equation}
 with periodic boundary condition
\begin{equation}\label{npe_bound}
   {y}(t,...x_i+2\pi,... )={y}(t,{x}),\, i=1,2,3
\end{equation}
and initial condition
\begin{equation}\label{npe_in_contr}
    {y}(t,{x})|_{t=0}={y}_0({x})+{u}_0({x}).
\end{equation}
Here $\Phi$ is the functional defined in \eqref{Phi_def}, ${y}_0(
{x})\in V^0(\mathbb{T}^3)$ is an arbitrary given initial datum and
${u}_0({x})\in V^0(\mathbb{T}^3)$ is a control. Phase space $V^0$
is defined in \eqref{phase_space}.

We assume that ${u}_0({x})$ is supported on
$[a_1,b_1]\times[a_2,b_2]\times[a_3,b_3]\subset \mathbb{T}^3 =
(\mathbb{R}/2\pi\mathbb{Z})^3$:
\begin{equation}\label{supp_v}
    \mbox{supp}\,{u}_0\subset [a_1,b_1]\times[a_2,b_2]\times[a_3,b_3]
\end{equation}

Our goal is to find for every given ${y}_0(x)\in
V^0(\mathbb{T}^3)$ a control ${u}_0\in V^0(\mathbb{T}^3)$
satisfying \eqref{supp_v} such that there exists unique solution
${y}(t,{x};{y}_0+{u}_0)$ of \eqref{NPE_1}-\eqref{npe_in_contr} and
this solution satisfies the estimate
\begin{equation}\label{stab_est}
    \| {y}(t,\cdot ;{y}_0+{u}_0)\|_0 \le \alpha \|{y}_0+{u}_0\|_0e^{-t}\quad \forall t>0
\end{equation}
with a certain $\alpha >1$.

By Definition \ref{M-} of the set of stability $M_-$ inclusion
$y_0\in M_-$ implies estimate \eqref{stab_est} with $u_0=0$.
Therefore the formulated problem is reach of content only if
$y_0\in V^0\setminus M_-=M_+\cup M_g$.

The following main theorem holds:

\begin{theorem}\label{stabiliz_theorem}
Let ${y}_0\in V^0\setminus M_-$ be given. Then there exists a
control ${u}_0\in V^0$ satisfying \eqref{supp_v} such that there
exists a unique solution ${y}(t,{x};{y}_0+{u}_0)$ of
\eqref{NPE_1}-\eqref{npe_in_contr}, and this solution satisfies
bound \eqref{stab_est} with a certain $\alpha >1$.
\end{theorem}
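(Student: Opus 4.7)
The plan is to reduce the theorem to the uniform lower bound \eqref{1.1}. I would take the control in the structured form $u = Fy_0 + \lambda u_0$ advertised in \eqref{1.01}: $F$ is the feedback operator built in subsection \ref{s2.3} from local stabilization theory, whose job is to make the Stokes evolution $\mathbf{S}(\tau,\cdot;y_0+Fy_0)$ decay sufficiently fast; $u_0$ is the fixed divergence-free ``universal'' profile supported in $[a_1,b_1]\times[a_2,b_2]\times[a_3,b_3]$ for which \eqref{1.1} will be proved; and $\lambda = \lambda(y_0)\in\bR$ is a scalar to be fixed, with prescribed sign and sufficiently large modulus, at the end.

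By the explicit formula \eqref{exp_sol} it is enough to control the denominator
\[
D(t) = 1 - \int_0^t \Phi\big(\mathbf{S}(\tau,\cdot;y_0+u)\big)\,d\tau,
\]
since the numerator $\mathbf{S}(t,\cdot;y_0+u)$ decays at rate $e^{-t}$ by the spectral gap of the Stokes operator on $V^0(\mathbb{T}^3)$. Using linearity of the Stokes semigroup I would split $\mathbf{S}(\tau,\cdot;y_0+u) = w(\tau,\cdot) + \lambda\,\mathbf{S}(\tau,\cdot;u_0)$ with $w(\tau,\cdot)=\mathbf{S}(\tau,\cdot;y_0+Fy_0)$, then expand the trilinear numerator of $\Phi$ (see \eqref{Phi_def}) in powers of $\lambda$. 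The coefficient of $\lambda^3$ is precisely the integral bounded from below in \eqref{1.1}, while the coefficients of $\lambda^0,\lambda^1,\lambda^2$ involve $w$ and therefore enjoy quantitative decay thanks to the feedback $F$. Meanwhile, for $|\lambda|$ large, the denominator $\|\mathbf{S}(\tau,\cdot;y_0+u)\|_0^2$ of $\Phi$ is two-sidedly comparable to $\lambda^2 \|\mathbf{S}(\tau,\cdot;u_0)\|_0^2$, and hence to $\lambda^2 e^{-2\tau}$. Combining these, the dominant behaviour of $\Phi(\mathbf{S}(\tau,\cdot;y_0+u))$ is of the form $c_0\,\lambda\, e^{-16\tau}$ with a correction term uniformly bounded in $\lambda$ and integrable in $\tau$; choosing the sign of $\lambda$ opposite to that of $c_0$ and $|\lambda|$ large depending on $\|y_0\|_0$, one obtains $D(t)\ge 1$ for all $t>0$. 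Feeding this lower bound and the Stokes decay back into \eqref{exp_sol} yields \eqref{stab_est} with a suitable $\alpha>1$.

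The principal obstacle is of course \eqref{1.1} itself: the integrand is a cubic, sign-indefinite trilinear form in the heat-evolved profile, and extracting a positive lower bound that survives spatial integration uniformly in $t$, with the explicit rate $e^{-18t}$, is delicate --- this is the business of sections \ref{s3}--\ref{s6}. A secondary technical issue is the cross-term bookkeeping: to ensure that the $\lambda^3$ contribution dominates the lower-order-in-$|\lambda|$ ones uniformly in $\tau\ge 0$, one needs trilinear estimates for $\Phi$ in terms of $V^0$ and $V^1$ norms together with quantitative decay of $w$ produced by $F$, and these must be calibrated against the rate $e^{-18\tau}$ appearing in \eqref{1.1}.
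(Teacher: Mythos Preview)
Your proposal is correct and follows the paper's approach: control of the form $Fy_0-\lambda u$, reduction via the explicit formula \eqref{exp_sol} to the denominator $D(t)$, cubic expansion of the numerator $\Psi$ of $\Phi$ in $\lambda$, and domination of the cross terms by the $\lambda^3$ contribution using \eqref{1.1} together with the enhanced decay $\|\mathbf{S}(\tau,\cdot;y_0+Fy_0)\|_{V^{1/2}}\lesssim e^{-18\tau}$ produced by $F$. The paper's write-up is slightly more direct in one respect: it never estimates the denominator of $\Phi$ or the size of $\Phi$ itself, but simply observes that once $-\Psi(\mathbf{S}(\tau,\cdot;z_0-\lambda u))>0$ for all $\tau$ (which follows from $\lambda^3\beta e^{-18\tau}$ dominating $c(\lambda^2 e^{-22\tau}+\lambda e^{-38\tau}+e^{-54\tau})$), positivity of the $L_2$-norm in the denominator gives $-\Phi>0$ and hence $D(t)>1$ immediately --- so your detour through two-sided bounds on $\|\mathbf{S}\|_0^2$ and an explicit rate for $\Phi$ is unnecessary (and the exponents you wrote there are slightly off).
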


The rest part of the paper is devoted to the proof of this theorem.


\subsection{Formulation of the main preliminary result}

To rewrite condition \eqref{supp_v} in more convenient form, let us
first perform the change of variables in
\eqref{NPE_1}-\eqref{npe_in_contr}:
$$\tilde{x}_i=x_i - \frac{a_i+b_i}{2}, i=1,2,3$$
and denote

\begin{equation}\label{newvar}
\begin{split}
&\tilde {{y}}(t,\tilde {{x}})= {{y}}\left(t,\tilde {x}_1+\frac{a_1+b_1}{2},\tilde {x}_2+\frac{a_2+b_2}{2},\tilde {x_3}+\frac{a_3+b_3}{2}\right),\\
&\tilde {{y}}_0(\tilde {{x}})= {{y}}_0\left(\tilde {x}_1+\frac{a_1+b_1}{2},\tilde {x}_2+\frac{a_2+b_2}{2},\tilde {x}_3+\frac{a_3+b_3}{2}\right),\\
&\tilde {{u}_0}(\tilde {{x}})= {{u}_0}\left(\tilde
{x}_1+\frac{a_1+b_1}{2},\tilde {x}_2+\frac{a_2+b_2}{2},\tilde
{x}_3+\frac{a_3+b_3}{2}\right).
\end{split}
\end{equation}

Then substituting \eqref{newvar} into relations \eqref{NPE_1}-\eqref{npe_in_contr}, \eqref{stab_est} and omitting the tilde sign leaves these relations unchanged, while inclusion \eqref{supp_v} transforms into
\begin{equation}\label{supp_v_p}
\mbox{supp}\,{u}_0\subset
[-\rho_1,\rho_1]\times[-\rho_2,\rho_2]\times[-\rho_3,\rho_3]
\end{equation}
where $\rho_i = \dfrac{b_i-a_i}{2}\in(0,\pi)$, $i=1,2,3$.

Below we consider stabilization problem \eqref{NPE_1}-\eqref{npe_in_contr}, \eqref{stab_est}
with condition \eqref{supp_v_p} instead of \eqref{supp_v}.

We look for a starting control $u_0(x)$ in a form
\begin{equation}\label{StartContr}
  u_0(x)=u_1(x)-\lambda u(x)
\end{equation}
where the component $u_1(x)$ and the constant $\lambda >0$ will be
defined later and the main component $u(x)$ is defined as follows.
For given $\rho_1,\rho_2,\rho_3\in(0,\pi)$ we choose $p\in \mathbb
N$ such that
\begin{equation}\label{p_def}
\frac{\pi}{p}\le \rho_i, \,i=1,2,3,
\end{equation}
and denote by $\chi_{\frac{\pi}{p}}(\alpha )$ the characteristic
function of interval $(-\frac{\pi}{p},\frac{\pi}{p})$:
\begin{equation}\label{def_chi}
\chi_{\frac{\pi}{p}}(\alpha) = \left\{\begin{array}{rl}
        1, & |\alpha | \leq \frac{\pi}{p},\\
        0, & \frac{\pi}{p}<|\alpha | \leq \pi.
            \end{array}\right.
\end{equation}
Then we set
\begin{equation}\label{u_def}
{u}({x}) = \operatorname{curl}\operatorname{curl}(\chi_{\frac{\pi}{p}}(x_1)\chi_{\frac{\pi}{p}}(x_2)\chi_{\frac{\pi}{p}}(x_3)w(px_1,px_2,px_3),0,0),
\end{equation}
where
\begin{equation}\label{def_w}
w(x_1,x_2,x_3) =\sum_{\genfrac{}{}{0pt}{}{i,j,k=1}{i<j,k\neq i,j}}^{3}a_k(1+\cos x_k)(\sin x_i+\frac{1}{2}\sin 2x_i)(\sin x_j+\frac{1}{2}\sin 2x_j),
\end{equation}
$a_1,a_2,a_3\in \mathbb{R}$.

\begin{prop}\label{prop_u} The vector field $u(x)$ defined in
\eqref{p_def}-\eqref{def_w} possesses the following properties:
\begin{equation}\label{main_est_a}
   {u}({x})\in V^0(\mathbb{T}^3),\qquad \mbox{supp}\,u\, \subset ([-\rho
   ,\rho])^3
\end{equation}
\end{prop}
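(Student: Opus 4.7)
The plan is to verify the three conditions defining $V^0(\mathbb{T}^3)$ (that $u\in L_2$, $\operatorname{div} u=0$, and $u$ has zero mean) and then the support condition. Set
\[
\psi(x)=\chi_{\pi/p}(x_1)\chi_{\pi/p}(x_2)\chi_{\pi/p}(x_3)\,w(px_1,px_2,px_3),
\]
so that by $\operatorname{curl}\operatorname{curl}=\nabla\operatorname{div}-\Delta$ applied to $(\psi,0,0)$ one gets
\[
u=\bigl(-\partial_2^2\psi-\partial_3^2\psi,\ \partial_1\partial_2\psi,\ \partial_1\partial_3\psi\bigr).
\]
Divergence-freeness is automatic from $\operatorname{div}\operatorname{curl}\equiv 0$, and the zero-mean condition is also automatic since each component of $u$ is a partial derivative of a periodic function and thus integrates to zero over $\mathbb{T}^3$.

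The main point is to check that $\psi\in C^1(\mathbb{T}^3)$ with bounded second derivatives, because this immediately gives $u\in L_\infty(\mathbb{T}^3)\subset L_2(\mathbb{T}^3)$. Since $\psi$ vanishes outside the cube $[-\pi/p,\pi/p]^3$ and equals the smooth function $w(px_1,px_2,px_3)$ inside, the only potential loss of regularity is on the faces $x_i=\pm\pi/p$. So the task reduces to verifying that $w(y_1,y_2,y_3)$ together with each first partial derivative $\partial_{y_\ell}w$ vanishes whenever at least one coordinate $y_i$ equals $\pm\pi$. This is a direct check from \eqref{def_w}: for each face one of the three summands of $w$ carries the factor $(1+\cos y_k)$ and the other two carry a factor $(\sin y_i+\frac{1}{2}\sin 2y_i)$ depending on which face is considered, and all three factors, together with their first derivatives $-\sin y_k$ and $\cos y_i+\cos 2y_i$, vanish at $y=\pm\pi$. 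A short case analysis on the six faces $\{y_\ell=\pm\pi\}$, $\ell=1,2,3$, inspecting each of the three summands of \eqref{def_w}, establishes the required vanishing, whence $\psi$ extends $C^1$-smoothly by zero to all of $\mathbb{T}^3$; its second distributional derivatives agree with the pointwise second derivatives in the interior of $[-\pi/p,\pi/p]^3$, which are bounded.

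The support condition is then immediate: since $\operatorname{curl}\operatorname{curl}$ is a local differential operator, $\operatorname{supp} u\subset\operatorname{supp}\psi\subset[-\pi/p,\pi/p]^3\subset([-\rho,\rho])^3$, the last inclusion being the choice of $p$ in \eqref{p_def}. The only substantive step in the proof is thus the finite case check for the vanishing of $w$ and $\nabla w$ on $\partial([-\pi,\pi]^3)$, which is precisely what dictates the peculiar factorized form chosen for $w$ in \eqref{def_w}.
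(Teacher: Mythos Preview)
Your proof is correct and follows essentially the same route as the paper: both hinge on the observation that $w$ and its first partials vanish on the faces $y_j=\pm\pi$, so that the characteristic-function cutoff produces no singular boundary contributions and $u$ is the explicit $L^2$ vector field $p^2\boldsymbol{\chi}_{\pi/p}(-\partial_{22}w-\partial_{33}w,\partial_{12}w,\partial_{13}w)$. Your treatment is slightly cleaner in two places---you get $\operatorname{div}u=0$ directly from the operator identity $\operatorname{div}\operatorname{curl}\equiv 0$ rather than by ``direct calculations in the space of distributions'', and you explicitly verify the zero-mean condition, which the paper leaves implicit.
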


\begin{proof}
For each $j=1,2,3$ function $w(x_1,x_2,x_3)$ defined in
\eqref{def_w}
 and $\partial_jw$ equal to zero at $x_j=\pm \pi$. That is why
using notations $\mathbf{\chi}_{\frac{\pi}{p}}(x)=
\chi_{\frac{\pi}{p}}(x_1)\chi_{\frac{\pi}{p}}(x_2)\chi_{\frac{\pi}{p}}(x_3),\,
w(px)=w(px_1,px_2,px_3)$ we get
\begin{equation}\label{def_cw}
\mbox{curl}(\mathbf{\chi}_{\frac{\pi}{p}}(x)(w(px),0,0))=p\mathbf
{\chi}_{\frac{\pi}{p}}(x)(0,\partial_3w(px),-\partial_2w(px))\in
(H^1(\mathbb{T}^3))^3
\end{equation}
\begin{equation}\label{def_ccw}
  u(x)=p^2{\mathbf
{\chi}}_{\frac{\pi}{p}}(x)(-\partial_{22}w(px)-\partial_{33}w(px),\partial_{12}w(px),-\partial_{13}w(px))\in
(H^0(\mathbb{T}^3))^3
\end{equation}
Applying to vector field \eqref{def_ccw} operator \mbox{div} and
performing  direct calculations in the space of distributions we get
that $\mbox{div}\, u(x)=0$. Hence, $u(x)\in V^0(\mathbb{T}^3)$.
The second inclusion in \eqref{main_est_a} is evident.
\end{proof}

Let consider the boundary value problem for the system of three
heat equations
\begin{equation}\label{heat_eq_u}
    \pd_t{\mathbf{S}}(t,{x};u)-\Delta {\mathbf{S}}(t,{x};u)=0,\qquad  \mathbf{S}(t,{x})|_{t=0}={u}({x})
\end{equation}
with periodic boundary condition. (Since by Proposition
\ref{prop_u} $\mbox{div}u(x)=0$ we get that $\mbox{div}\,
S(t,x;u)=0$ for $t>0$, and therefore system \eqref{heat_eq_u} in
fact is equal to the Stokes system.)

The following theorem is true:
\begin{theorem}\label{main_est_th}
For each $\rho :=\pi /p \in (0,\pi )$ the function ${u}({x})$
defined in \eqref{u_def} by a natural number $p$ satisfying
\eqref{p_def} and characteristic function \eqref{def_chi},
satisfies the estimate:
\begin{equation}\label{main_est_b}
\int_{T^3}((\mathbf{S}(t,{x};{u}),\nabla)\operatorname{curl}^{-1}\mathbf{S}(t,{x};{u}),\mathbf{S}(t,{x};{u}))dx>\beta
e^{-18t} \qquad \forall \; t\ge 0
\end{equation}
with a positive constant $\beta$.
\end{theorem}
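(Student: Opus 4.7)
The plan is to exploit the tensor-product structure of the construction. Both $\chi_{\pi/p}(x_1)\chi_{\pi/p}(x_2)\chi_{\pi/p}(x_3)$ and each summand of $w$ in \eqref{def_w} factor as products of functions of a single variable, and the heat semigroup on $\mathbb{T}^3$ factors as $e^{t\Delta}=e^{t\partial_{11}}\otimes e^{t\partial_{22}}\otimes e^{t\partial_{33}}$. Using the explicit formula \eqref{def_ccw} for $u$, each component of $\mathbf{S}(t,x;u)$ can therefore be written as a finite sum of tensor products $h^{(1)}(t,x_1)\,h^{(2)}(t,x_2)\,h^{(3)}(t,x_3)$, where each $h^{(j)}$ is a one-dimensional periodic heat solution whose initial datum is $\chi_{\pi/p}$ multiplied by a fixed trigonometric polynomial of one variable (possibly differentiated). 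The operator $\operatorname{curl}^{-1}$ is nonlocal through \eqref{rotinv} but still preserves this tensor-product structure after collecting 1D Fourier series componentwise. Substituting these expansions into the trilinear integrand in \eqref{main_est_b} turns the 3D integral into a finite linear combination of products of 1D integrals of exactly the type treated in \cite{F5}, \cite{FSh}.

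\textbf{Main steps.} First, I would expand $u$ by inserting the three-summand form of $w$ into \eqref{def_ccw} and compute the tensor-product expressions for each component of $\mathbf{S}(t,x;u)$ and of $\operatorname{curl}^{-1}\mathbf{S}(t,x;u)$. Second, after substitution and rearrangement, the integral in \eqref{main_est_b} splits into a finite collection of products of 1D integrals; I would single out the group of terms with the slowest decay rate as $t\to\infty$. Taking into account the divergence-free constraint $k\cdot\widehat{u}(k)=0$ and the specific structure of the cubic form, the slowest-decaying surviving terms should combine to give a contribution bounded below by $\beta e^{-18t}$ for a suitable $\beta>0$; the exponent $18$ emerges from the sum of squared magnitudes of the three Fourier modes that dominate in the leading order after cancellations. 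Third, the remaining faster-decaying terms must be controlled uniformly on $[0,\infty)$ so that they cannot dominate the leading contribution. Each of these steps reduces, via the tensor decomposition above, to 1D integral estimates refining those of \cite{F5}, \cite{FSh}.

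\textbf{Main obstacle.} The genuine difficulty is to verify that the leading coefficient of $e^{-18t}$ is strictly positive. Since the 3D integral splits into products over the three coordinate directions, one must establish several 1D lower bounds of the form $\int_{-\pi}^{\pi}\chi_{\pi/p}(x)\,P(x)\,z(t,x)\,dx\ge ce^{-Mt}$, where $z(t,x)$ is a 1D periodic heat solution with explicit initial data and $P$ is a prescribed trigonometric polynomial. Each individual 1D estimate extends the technique of \cite{F5}, \cite{FSh}, but the genuinely new work lies in the combinatorial bookkeeping of how the three summands of $w$ and the three components of $u$ interact, and in choosing the parameters $a_1,a_2,a_3$ in \eqref{def_w} so that these 1D contributions add constructively rather than cancel. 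A secondary obstacle is the uniform control of the remainder terms with decay $e^{-Nt}$, $N>18$, down to $t=0$; because the initial datum $u$ is only distributional due to the $\chi_{\pi/p}$ cutoff and the two derivatives, this requires care at small $t$ even though the heat semigroup smooths things out for $t>0$.
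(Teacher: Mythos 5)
Your high-level plan---exploit the tensor-product structure of $u$, of $w$, and of the periodic heat semigroup to reduce the 3D trilinear integral in \eqref{main_est_b} to one-dimensional heat-solution integrals of the type studied in \cite{F5}, \cite{FSh}---is indeed the strategy of the paper (Lemmas \ref{S_split}, \ref{S_der}, \ref{transform1}, \ref{transform2}). The gap lies in your Steps 2--3. You propose to single out the slowest-decaying products of 1D integrals and then bound the remaining faster-decaying cross-terms uniformly on $[0,\infty)$. That is not how the proof goes, and it is far from clear it could go that way: near $t=0$ the exponential hierarchy offers no help, so sign-indefinite cross-terms could overwhelm a would-be leading term. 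The paper's crucial structural discovery (Theorem \ref{final_expression}) is that, because of the parity structure of the building blocks $A_{ijk},B_{ijk},C_{ijk},D_{ijk}$---integrals of products odd in some $x_i$ vanish identically, and so do certain even products by \eqref{eq41}, \eqref{zero_terms1}--\eqref{zero_terms2}---the entire 3D functional factors \emph{exactly} as
$$
\frac{5}{4}\,a_1(a_3^2-a_2^2)\,J_1(t)\,J_3(t)\bigl(J_2(t)+J_4(t)\bigr),
$$
with no remainder whatsoever. The estimate then reduces to proving positive lower bounds $J_1(t)\ge C_1e^{-6t}$ (Theorem \ref{thJ_1est}), $J_3(t)\ge C_3e^{-6t}$ (from \cite{F5}, \cite{FSh}), and $J_2(t)+J_4(t)\ge C_2e^{-6t}$ (Theorem \ref{thJ_24est}) by delicate sign-by-sign comparison within each Fourier series, and then choosing $a_1(a_3^2-a_2^2)>0$; there are no residual remainder terms to control. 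Without recognizing this exact factorization---and in particular without seeing that the very specific trigonometric form of $w$ in \eqref{def_w} was engineered to produce it---your plan stalls at exactly the step you yourself flag as the ``main obstacle.'' A secondary correction: $u$ is not ``only distributional.'' Proposition \ref{prop_u} and the explicit formula \eqref{def_ccw} show $u\in V^0(\mathbb{T}^3)$ (indeed a bounded, compactly supported function), so the functionals $J_i(t)$ and their Fourier expansions are well-defined and finite all the way down to $t=0$.
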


The proof of Theorem \ref{main_est_th} in fact is the main content
of this paper, and it will be given further.


\subsection{Intermediate Control}\label{s2.3}

To avoid certain difficulties with the proof of Theorem
\ref{stabiliz_theorem}, we have to include additional control that
eliminates some Fourier coefficients in given initial condition
$y_0$ of our stabilization problem. We will use the techniques
developed in local stabilization theory (see \cite{F6}, \cite{FG}
and references therein).

Let us consider the following decomposition of the phase  space:
$V^0=V_+\oplus V_-$, where
\begin{equation}\label{V+def}
V_+=\{v\in V^0: v(x)=\sum_{0<|k|^2<18}v_ke^{i({k},{x})}, v_k\in
\mathbb{C}^3,r\cdot v_k=0, v_{-k}=\overline{v_k}\},
\end{equation}
where, recall $\overline v_k$ means complex conjugation of $v_k$,
$k=(k_1,k_2,k_3), |k|^2=k_1^2+k_2^2+k_3^2$, and
\begin{equation}\label{V-def}
V_-=V^0\ominus V_+.
\end{equation}

\begin{theorem}\label{intermediate_control}
There exists a linear feedback operator $F$,
\begin{equation}\label{F_operator}
F: V^0(\mathbb{T}^3)\mapsto V^1_{00}(\Omega) := \{ {y}({x})\in V^1(\mathbb{T}^3):
\mbox{supp}\;{y}\subset \Omega\},
\end{equation}
where $\Omega =\{x\in ([-\rho ,\rho])^3: |x|^2\le \rho^2\}\subset
\mathbb{T}^3$, $\rho =\pi/p, p\in \mathbb{N}\setminus \{1\}$, such
that for every $y\in V^0$
\begin{equation}\label{F_operator1}
     y+Fy\in V_-\quad,
\end{equation}
where $V_-$ is the subset of $V^0$ defined in
\eqref{V+def}-\eqref{V-def}
\end{theorem}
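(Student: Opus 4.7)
The plan is to reduce Theorem~\ref{intermediate_control} to a surjectivity statement for an orthogonal projection, and then settle the latter by a unique-continuation argument exploiting the finite Fourier support of elements of $V_+$.

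First, observe that $V_+$ is finite-dimensional: by \eqref{V+def} its elements are divergence-free trigonometric polynomials whose Fourier modes lie in the finite set $\{k\in\mathbb{Z}^3:0<|k|^2<18\}$. Let $P_+\colon V^0\to V_+$ denote the orthogonal projection with respect to the $V^0$-inner product. It suffices to construct a bounded linear $R\colon V_+\to V^1_{00}(\Omega)$ satisfying $P_+\circ R=\mathrm{Id}_{V_+}$; indeed, setting $Fy:=-R(P_+y)$ yields $P_+(y+Fy)=P_+y-P_+R(P_+y)=0$, hence $y+Fy\in V_-$, and $F$ inherits linearity and continuity. Since $V_+$ is finite-dimensional, any algebraic right inverse of $P_+\big|_{V^1_{00}(\Omega)}$ is automatically continuous, so the entire theorem reduces to showing that
$$
T:=P_+\big|_{V^1_{00}(\Omega)}\colon V^1_{00}(\Omega)\to V_+
$$
is surjective.

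Suppose for contradiction that $T$ is not onto. As $V_+$ is finite-dimensional, there exists $w\in V_+\setminus\{0\}$ that is $V^0$-orthogonal to the range of $T$; equivalently, since $w\in V_+$, we have $(w,\phi)_{L_2(\mathbb{T}^3)}=(w,P_+\phi)_{L_2}=0$ for every $\phi\in V^1_{00}(\Omega)$. Now $\mathrm{int}\,\Omega$ is a nonempty open ball (as $\rho>0$), so for any scalar $\psi\in C_0^\infty(\mathrm{int}\,\Omega)$ and any constant $a\in\mathbb{R}^3$ the field $\phi:=\operatorname{curl}(\psi a)$ lies in $V^1_{00}(\Omega)$: it is smooth, divergence-free (being a curl), supported in $\operatorname{supp}\psi\subset\Omega$, and of zero mean on $\mathbb{T}^3$ (the torus-integral of a curl vanishes). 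Integration by parts on $\mathbb{T}^3$ yields
\begin{equation*}
0=(w,\operatorname{curl}(\psi a))_{L_2}=(\operatorname{curl} w,\psi a)_{L_2}=a\cdot\int_{\mathbb{T}^3}\psi\,\operatorname{curl} w\,dx,
\end{equation*}
and varying $\psi$ and $a$ forces $\operatorname{curl} w\equiv 0$ on $\mathrm{int}\,\Omega$.

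The final step invokes real-analyticity: as $w$ is a trigonometric polynomial, so is $\operatorname{curl} w$, and vanishing on the open set $\mathrm{int}\,\Omega$ forces $\operatorname{curl} w\equiv 0$ on all of $\mathbb{T}^3$. Combined with $\operatorname{div} w=0$ and $\int_{\mathbb{T}^3}w\,dx=0$ (both built into $V^0$), the standard Helmholtz decomposition on the torus gives $w\equiv 0$, contradicting $w\neq 0$. This establishes surjectivity of $T$ and hence the theorem. The main subtlety I foresee is not any individual step but the bookkeeping in the first paragraph: one must ensure the test fields $\phi=\operatorname{curl}(\psi a)$ genuinely belong to $V^1_{00}(\Omega)$, which rests on the three automatic properties of periodic curls just mentioned, together with the nonemptiness of $\mathrm{int}\,\Omega$.
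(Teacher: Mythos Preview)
Your proof is correct and takes a genuinely different route from the paper's. The paper constructs $F$ explicitly: it introduces the Dirichlet inverse $R_\Omega$ of $-\Delta$ on the ball $\Omega$ (extended by zero to $\mathbb{T}^3$), sets $Fy=\sum_{0<|j|^2<18}c_j(y)\,R_\Omega e^{i(j,\cdot)}$, and determines the coefficients $c_j(y)$ by solving a finite linear system $Mc=-\hat y$. The heart of that argument is showing the Gram-type matrix $M=(m_{k,j})$, $m_{k,j}=(2\pi)^{-3}\int_\Omega(R_\Omega e^{i(j,\cdot)})e^{-i(k,x)}\,dx$, is nonsingular; this is done by proving $M$ is positive definite, which in turn rests on the fact that a trigonometric polynomial $\sum_j j\,e^{i(j,x)}c_j$ vanishing on $\Omega$ must vanish identically. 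Your argument bypasses the explicit construction entirely: you reduce to surjectivity of $P_+$ restricted to $V^1_{00}(\Omega)$, produce divergence-free test fields directly as curls $\phi=\operatorname{curl}(\psi a)$, and deduce $\operatorname{curl} w\equiv 0$ on $\operatorname{int}\,\Omega$ for any hypothetical $w\in V_+$ orthogonal to the range. Both proofs ultimately hinge on the same unique-continuation principle for trigonometric polynomials, but yours packages it more cleanly and sidesteps any checking of the divergence-free constraint, since your test fields are curls by design. The paper's approach, on the other hand, yields an explicit inversion formula for $F$, which may be useful if one later wants to compute or estimate the control.
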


\begin{proof}
For $f(x)\in L_2(\Omega )$ let us consider the Poisson problem
\begin{equation}\label{op_eq}
    -\Delta v(x)=f(x),\; x\in \mbox{int}\{\Omega\} ,\quad v|_{\partial \Omega}=0,
\end{equation}
where $\mbox{int}\{ \Omega \}$ is the interior of the set
$\Omega$. Define operator $R_\Omega$ by the formula
\begin{equation}\label{R_operator}
      (R_\Omega f)(x)=v(x),\; \mbox{for}\; x\in \Omega ,\quad (R_\Omega f)(x)=0,\; \mbox{for} \; x\in \mathbb{T}^3\setminus
      \Omega
\end{equation}

Using notations
$$
    {g}=(g(k)\in \mathbb{C},\; 0<|k|^2<18,\; g(-k)=\overline{g(k)}),
$$
$$
    {c(g)}=(c_j(g)\in \mathbb{C},\; 0<|j|^2<18,\; c_{-j}(g)=\overline{c_j(g)}),
$$
where overline means complex conjugation, and
$$
     m_{k,j}=(2\pi)^{-3}\int_\Omega (R_\Omega e^{i(j,\cdot )})(x)e^{-i(k,x)}dx,\quad M=\|
     m_{k,j}\|_{0<|k|^2<18,),0<|j|^2<18},
$$
 we consider the following system of linear algebraic
equations with respect to $c_j(g), 0<|j|^2<18$:
\begin{equation}\label{F_constr2}
     Mc(g)=g
\end{equation}
Let prove that this system is uniquely solvable. We show first
that matrix $M$ is complex symmetric:
\begin{equation*}
\begin{split}
 &m_{k,j}=\int_\Omega (R_\Omega e^{i(j,\cdot )})(x)\overline{(-\Delta)(R_\Omega e^{i(k,\cdot))}(x)}dx=\\
 & \int_\Omega (-\Delta)(R_\Omega e^{i(j,\cdot )})(x)(R_\Omega e^{i(-k,\cdot))(x)}dx=\overline{m}_{j,k}
\end{split}
\end{equation*}
 Next we show that  $M$ is positive definite.
Indeed, for each vector ${c=(c_j, 0<|j|^2<18)}$
$$
    <Mc,c>=\sum_{j,k}m_{j,k}c_j\overline{c}_{k}=
    \sum_{j,k}c_j\overline{c}_{k}\int_\Omega (\nabla(R_\Omega e^{i(j,\cdot )})(x))\cdot(\nabla \overline{(R_\Omega
   e^{i(k,\cdot))(x)}})dx=
$$
$$
\int_\Omega|\sum_{j}((R_\Omega je^{i(j,\cdot
)})(x)c_j)|^2dx\ge 0
$$

Equality here can be attained only if $\sum_{j}((R_\Omega
je^{i(j,\cdot)})(x)c_j)=0\; \forall x\in \Omega$, or, as it
follows from definition \eqref{op_eq}, \eqref{R_operator} of
operator $R_\Omega$, only if $$\sum_{j} je^{i(j,\cdot)}(x)c_j=0\;\,
\forall {x}\in \Omega$$ The last equality implies that ${c}_j=0$
for all ${j}: 0<|{j}|^2<18$.

Thus, we have proved that $\det{M}\ne 0$. This means that we can
find ${c}(g)$ from equation \eqref{F_constr2}.

Let $y(x)\in V^0$ and $\hat{y}(k)$ be Fourier coefficients of $y$.
We look for operator \eqref{F_operator}, \eqref{F_operator1} in
the form
\begin{equation}\label{F_op_def}
      Fy=\sum_{0<|j|^2<18}c_j(y)R_\Omega e^{i(j,\cdot )}
\end{equation}
where operator $R_\Omega$ is defined in \eqref{op_eq},
\eqref{R_operator}, and $c_j(y)=(c^1_j(y),c^2_j(y),c^3_j(y))\in
\mathbb{C}^3$, satisfying $c_{-j}(y)=\overline{c_j(y)}$,  $c_j\cdot
j=0$, are defined from the system
\begin{equation}\label{F_constr1}
-\hat {{y}}({k})=(2\pi)^{-3}\sum_{0<|j|^2<18}c_j({y})\int_\Omega
(R_\Omega e^{i({j},\cdot )})({x})e^{-i(k,x)}dx,\, \forall k:
0<|k|^2<18
\end{equation}
Indeed, for each $m=1,2,3$ system \eqref{F_constr1} can be
rewritten in the form \eqref{F_constr2} with $g=(-g^m(k),
0<|k|<18)$. Since there exists unique solution of system
\eqref{F_constr2}, solution $\{c_j(y)=(c^m_j(y), m=1,2,3),
0<|j|<18\}$ of \eqref{F_constr1} is well defined. Moreover, it is
easy to see that that $c_{-j}(y)=\overline{c_j(y)}$ and $c_j\cdot
j=0$. Hence, we have proved that operator \eqref{F_op_def},
\eqref{F_constr1} satisfies \eqref{F_operator}and
\eqref{F_operator1}.
\end{proof}

We define component $u_1(x)$ from \eqref{StartContr} as follows:
\begin{equation}\label{StartContr1}
    u_1(x)=(Fy_0)(x)
\end{equation}
where $F$ is feedback operator \eqref{F_operator},
\eqref{F_operator1}.


\subsection{ Proof of the stabilization result}\label{s2.4}
In this subsection we prove Theorem \ref{stabiliz_theorem} using
Theorems \ref{main_est_th}, \ref{intermediate_control}. We take
control \eqref{StartContr} as a desired one where vector-functions
$u_1(x)$, $u(x)$ are defined in \eqref{StartContr1} and in
\eqref{u_def}, \eqref{def_w} correspondingly, and $\lambda \gg 1$
is a parameter.

In virtue of explicit formula \eqref{exp_sol} for solution of NPE,
in order to prove the desired result it is enough to choose
parameter $\lambda$ in such way that the function
\begin{equation}\label{denominator}
    1-\int_0^t\Phi (\mathbf{S}(\tau ,\cdot ;y_0+u_1-\lambda u))d\tau
\end{equation}
for each $t>0$ is bounded from below by a positive constant
independent of $t$. For this aim we estimate the function $-\Phi
(\mathbf{S}(t,\cdot ;z_0-\lambda u))$ where
$z_0=y_0+u_1=y_0+Fy_0$. In virtue of Theorem
\ref{intermediate_control} $z_0\in V_-$, where $V_-$ is the subset
of the phase space defined in \eqref{V-def}, i.e. Fourier
coefficients of $z_0$ satisfy the condition
\begin{equation}\label{y_k_zero}
\hat{z}_0(k)=0\, \text{ for } \, |k|^2=k_1^2+k_2^2+k_3^2<18.
\end{equation}
Let us denote the nominator of functional $\Phi$ defined in
\eqref{Phi_def} as follows:
\begin{equation}\label{Psi}
  \Psi(y_1,y_2,y_3)=\int_{T^3}((y_1,\nabla)\mbox{curl}^{-1}y_2,y_3)dx,
  \quad
  \Psi(y)=\Psi(y,y,y)
\end{equation}
and prove the estimate
\begin{equation}\label{psi_est}
-\Psi(\mathbf{S}(t,\cdot;z_0-\lambda u)) > c_1\lambda^3
e^{-18t}\quad \mbox{for } \lambda\gg 1,
\end{equation}
where $c_1$ is some positive constant.

According to Theorem \ref{main_est_th},
\begin{equation}\label{psi_u_est}
\Psi(\mathbf{S}(t,\cdot;u)) \geq \beta e^{-18t}, \, \beta >0.
\end{equation}

From definition \eqref{Psi} of $\Psi$,
\begin{equation}\label{psi_est_proof1}
\begin{split}
&-\Psi (\mathbf{S}(t,z_0-\lambda u))=\lambda^3\Psi(\mathbf{S}(t,u))- \lambda^2(\Psi(\mathbf{S}(t,u),\mathbf{S}(t,u),\mathbf{S}(t,z_0))+\\
&\Psi(\mathbf{S}(t,u),\mathbf{S}(t,z_0),\mathbf{S}(t,u))+\Psi(\mathbf{S}(t,z_0),\mathbf{S}(t,u),\mathbf{S}(t,u)))+\\
&\lambda(\Psi(\mathbf{S}(t,u),\mathbf{S}(t,z_0),\mathbf{S}(t,z_0))+\Psi(\mathbf{S}(t,z_0),\mathbf{S}(t,u),\mathbf{S}(t,z_0))+\\
&\Psi(\mathbf{S}(t,z_0),\mathbf{S}(t,z_0),\mathbf{S}(t,u)))-\Psi(\mathbf{S}(t,z_0))
\end{split}
\end{equation}

In virtue of Sobolev embedding theorem and definition \eqref{Psi} we get
\begin{equation}
|\Psi (y_1,y_2,y_3)|\leq\| y_1\|_{L_3(\mathbb{T}^3)}\|\nabla
     \mbox{curl}^{-1}y_2\|_{L_3}\|y_3\|_{L_3}\leq c_2\|y_1\|_{V^{1/2}}\|y_2\|_{V^{1/2}}\|y_3\|_{V^{1/2}}
\end{equation}
Using this inequality and \eqref{psi_u_est}, \eqref{psi_est_proof1} we get
\begin{equation}\label{psi_est_proof2}
\begin{split}
&-\Psi(\mathbf{S}(t,z_0-\lambda u))>\beta \lambda^3e^{-18t}-\\
&c_2\left(\lambda^2\|\mathbf{S}(t,u)\|^2_{V^{1/2}}\|
\mathbf{S}(t,z_0)\|_{V^{1/2}}+\lambda
 \|\mathbf{S}(t,u)\|_{V^{1/2}}\| \mathbf{S}(t,z_0)\|^2_{V^{1/2}}+\|\mathbf{S}(t,z_0)\|^3_{V^{1/2}}\right)
\end{split}
\end{equation}

According to the definition \eqref{u_def}, \eqref{def_w} of
function $u$, all the Fourier coefficients of $u$ corresponding to
$k$ with less than two non-zero components are equal to zero, so
\begin{equation}\label{S_u_est}
\|\mathbf{S}(t,u)\|_{V^{1/2}}\le c_3e^{-2t}
\end{equation}

In virtue of \eqref{y_k_zero},
\begin{equation}\label{S_y_est}
\|\mathbf{S}(t,z_0)\|_{V^{1/2}}\le c_4e^{-18t}.
\end{equation}
Relations \eqref{psi_est_proof2}, \eqref{S_u_est}, \eqref{S_y_est} imply
\begin{equation}
-\Psi(\mathbf{S}(t,z_0-\lambda u))>
c\lambda^3e^{-18t}-c_2(\lambda^2e^{-22t}+\lambda
e^{-38t}+e^{-54t}),
\end{equation}
which completes the proof of estimate \eqref{psi_est}

The denominator of $\Phi (\mathbf{S}(\tau ,\cdot ;z_0-\lambda u))$
is positive, i.e.
\begin{equation}\label{denom_est}
\int_{\mathbb{T}^3}|\mathbf{S}(t,z_0-\lambda u)|^2dx>0
\end{equation}
Bounds \eqref{psi_est},\eqref{denom_est} imply that the function
\eqref{denominator} is more than 1, which completes the proof of
Theorem \ref{stabiliz_theorem}.


\section{Bound \eqref{main_est_b} reduction to estimation of some functionals on scalar functions of one variable}\label{s3}
The rest part of the paper is devoted to the proof of Theorem
\ref{main_est_th}. In this section we reduce the bound
\eqref{main_est_b} for the functional with respect to vector
fields on three space variables to estimate of three functionals
with respect to scalar functions on one space dimension.

\subsection{Preliminaries}\label{s1Dcase}
Let us recall that the function $u$ is defined as follows (see
\eqref{u_def}):
\begin{equation}\label{u_def_recall}
{u}({x}) = \mbox{curl}\,\mbox{curl}\left(\chi_{\frac{\pi}{p}}(x_1)\chi_{\frac{\pi}{p}}(x_2)\chi_{\frac{\pi}{p}}(x_3)w(px_1,px_2,px_3),0,0\right),
\end{equation}
where
\begin{equation}\label{def_w_recall}
\begin{split}
&w(x_1,x_2,x_3) =\sum_{\genfrac{}{}{0pt}{}{i,j,k=1}{i<j,k\neq i,j}}^{3}a_k(1+\cos x_k)(\sin x_i+\frac{1}{2}\sin 2x_i)(\sin x_j+\frac{1}{2}\sin 2x_j), \\
&\hphantom{w(x_1,x_2,x_3) =\sum_{\genfrac{}{}{0pt}{}{i,j,k=1}{i<j,k\neq i,j}}^{3}a_k(1+\cos x_k)}a_1, a_2, a_3\in \mathbb{R}.
\end{split}
\end{equation}
and $\chi_{\frac{\pi}{p}}(x)$ is the characteristic function of interval $[-\pi/p;\pi/p]$, $p\in \mathbb{N}$, defined in \eqref{def_chi}.

Our goal is to prove estimate
\begin{equation}\label{main_est}
\int_{T^3}((\mathbf{S}(t,{x};{u}),\nabla)\operatorname{curl}^{-1}\mathbf{S}(t,{x};{u}),\mathbf{S}(t,{x};{u}))dx>\beta e^{-18t} \qquad \forall \; t\ge 0
\end{equation}
with a positive constant $\beta$, where $\mathbf{S}(t,x;u)$ is the
solution of the system of three heat equations
\begin{equation}\label{heat_eq_u_1}
    \pd_t\mathbf{S}(t,{x};{u})-\Delta \mathbf{S}(t,{x};{u})=0,\qquad  \mathbf{S}(t,{x};{u})|_{t=0}={u}({x})
\end{equation}
with periodic boundary conditions.

As is well-known, the solution of system \eqref{heat_eq_u_1} is given by
\begin{equation}\label{S_fourier}
    \mathbf{S}(t,{x};u) = \sum_{k\in\mathbb{Z}^3\setminus\{0\}}\hat{u}(k)e^{i(k,x)}e^{-|k|^2t},
\end{equation}
where $\hat{u}(k)$ are the Fourier coefficients of function $u$:
\begin{equation}\label{u_fourier}
    u(x) = \sum_{k\in\mathbb{Z}^3\setminus\{0\}}\hat{u}(k)e^{i(k,x)}.
\end{equation}

It obviously follows from \eqref{S_fourier}, \eqref{u_fourier}, that
\begin{equation}\label{S_vect_der}
    \partial_{x_i}\mathbf{S}(t,{x};u) = \mathbf{S}(t,{x};\partial_{x_i}u).
\end{equation}

Let us denote by $S(t,x;\phi(\xi))$ the solution of scalar heat
equation with initial condition $S(t,x;\phi
(\xi))|_{t=0}=\phi(x)$, where $\phi$ is a periodic scalar function
of ${x=(x_1,x_2,x_3)\in \mathbb{R}^3}$.
Further we will need the following lemmas:

\begin{lem}\label{S_split}
Let $S(t,x;f_1(\xi_1)f_2(\xi_2)f_3(\xi_3))$ be the solution of the heat equation
\begin{equation}\label{heat_split_lem}
\partial_t S-\Delta S=0
\end{equation}
with periodic boundary condition
\begin{equation}\label{heat_split_lem_bound}
S(t,...,x_i+2\pi,...)=S(t,x;...), i=1,2,3
\end{equation}
and periodic initial condition
\begin{equation}\label{heat_split_lem_init}
S(t,x;f_1f_2f_3)|_{t=0}=f_1(x_1)f_2(x_2)f_3(x_3)
\end{equation}
Then $$S(t,x;f_1(\xi_1)f_2(\xi_2)f_3(\xi_3)) =
S(t,x_1;f_1)S(t,x_2;f_2)S(t,x_3;f_3)$$, where $S(t,x_i;f_i)$ is the
solution of problem
\begin{equation*}
\partial_t S-\partial_{x_ix_i}S=0,\,\, S(t,x_i+2\pi ;f_i)=S(t,x_i;f_i),\,\, S(t,x_i;f_i)|_{t=0}=f_i(x_i)
\end{equation*}
\end{lem}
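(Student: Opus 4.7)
The plan is to prove Lemma \ref{S_split} by a direct separation-of-variables argument, combined with uniqueness of the initial-boundary value problem for the periodic heat equation. Either of two routes works cleanly; I would present the verification route as the main line and mention the Fourier route as a check.

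First I would define the candidate solution
\[
U(t,x) := S(t,x_1;f_1)\,S(t,x_2;f_2)\,S(t,x_3;f_3),
\]
where each factor solves the one-dimensional periodic heat problem stated in the lemma. The core computation is to differentiate the product: $\partial_t U = \sum_i (\partial_t S_i)\prod_{j\neq i} S_j = \sum_i (\partial_{x_i}^2 S_i)\prod_{j\neq i} S_j = \Delta U$, since each factor $S_i$ depends only on $x_i$. Thus $U$ satisfies \eqref{heat_split_lem}. Periodicity \eqref{heat_split_lem_bound} follows because each one-dimensional factor is $2\pi$-periodic in its own variable, and the initial condition \eqref{heat_split_lem_init} is immediate from $S(0,x_i;f_i)=f_i(x_i)$.

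Next I would invoke uniqueness of the periodic heat equation (standard; either via the energy identity $\tfrac{d}{dt}\|v\|_{L_2(\mathbb{T}^3)}^2 = -2\|\nabla v\|_{L_2(\mathbb{T}^3)}^2$ applied to the difference of two solutions, or by matching Fourier coefficients term-by-term) to conclude that $U(t,x) = S(t,x;f_1f_2f_3)$, which is exactly the claim. As a cross-check I would note that if each $f_i$ has Fourier expansion $f_i(x_i)=\sum_{k_i\in\mathbb{Z}}\widehat{f}_i(k_i)e^{ik_ix_i}$, then
\[
S(t,x;f_1f_2f_3) = \sum_{k\in\mathbb{Z}^3}\widehat{f}_1(k_1)\widehat{f}_2(k_2)\widehat{f}_3(k_3)e^{i(k,x)}e^{-|k|^2 t},
\]
and the factorization $|k|^2 = k_1^2+k_2^2+k_3^2$ splits the sum into a product $\prod_i S(t,x_i;f_i)$, giving the same identity.

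There is no substantive obstacle here; the statement is a routine separation-of-variables fact for the heat semigroup on $\mathbb{T}^3$. The only points requiring care are (i) verifying that enough regularity of the product solution is available to justify the uniqueness invocation (which is automatic since the heat semigroup on the torus smooths any $L_2$ initial data for $t>0$), and (ii) ensuring the one-dimensional factors $S(t,x_i;f_i)$ are interpreted as solutions on $\mathbb{T}^1$ with the same convention, so that periodicity of $U$ in each variable is genuine.
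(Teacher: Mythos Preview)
Your proof is correct and follows precisely the same route as the paper's: define the product of the one-dimensional solutions, verify by direct calculation that it satisfies the three-dimensional heat equation together with the periodic boundary and initial conditions, and then invoke uniqueness. The paper's argument is terser (it merely says ``straightforward calculations show'' the product works), but your added detail and the Fourier cross-check are entirely consistent with it.
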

\begin{proof}
The statement of lemma is true because solution of problem
\eqref{heat_split_lem} - \eqref{heat_split_lem_init} is unique,
and straightforward calculations show that the function
$${S(t,x_1;f_1)S(t,x_2;f_2)S(t,x_3;f_3)}$$ satisfies
\eqref{heat_split_lem} - \eqref{heat_split_lem_init}.
\end{proof}

\begin{lem}\label{S_der}
Let $S(t,x;\chi_{\frac{\pi}{p}}\phi(p\xi))$ be the solution of the
one dimensional heat equation
\begin{equation}\label{heat_1D}
\partial_t S-\partial_{xx}S=0
\end{equation}
with periodic boundary condition
\begin{equation}\label{heat_1D_bound}
S(t,x+2\pi)=S(t,x)
\end{equation}
and initial condition
\begin{equation}\label{heat_1D_init}
S(t,x)|_{t=0}=\chi_{\frac{\pi}{p}}\phi(px)
\end{equation}
where $\phi (x+2\pi)=\phi (x)$, and $\chi_{\frac{\pi}{p}}$ is the
characteristic function of interval ${[-\frac{\pi}{p},\frac{\pi}{p}]}$, defined in
\eqref{def_chi}, $p\in \mathbb{N}$. Then

i) For all  $t\geq 0$,
\begin{equation}\label{Int_S_const}
\int_{-\pi}^{\pi} S(t,x;\chi_{\frac{\pi}{p}}\phi(p\xi))dx = \int_{-\pi}^{\pi} S(0,x;\chi_{\frac{\pi}{p}}\phi(p\xi))dx = const;
\end{equation}

ii) If function $\phi(x)\in C^1[-\pi,\pi]$, and $\phi(\pm\pi)=0$, then
\begin{equation}\label{S_x}
\partial_x S(t,x;\chi_{\frac{\pi}{p}}\phi(p\xi)) = pS(t,x;\chi_{\frac{\pi}{p}}\phi'(p\xi)),
\end{equation}

iii) If function $\phi(x)\in C^{2}[-\pi,\pi]$, $\phi(\pm\pi) = 0$ and $\phi'(\pm\pi) = 0$ then
\begin{equation}\label{S_t}
\partial_t S(t,x;\chi_{\frac{\pi}{p}}\phi(p\xi)) = p^2S(t,x;\chi_{\frac{\pi}{p}}\phi''(p\xi)).
\end{equation}
\end{lem}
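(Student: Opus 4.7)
For part (i), the approach is standard: integrate the heat equation \eqref{heat_1D} over the period $[-\pi,\pi]$. By the fundamental theorem of calculus and the periodic boundary condition \eqref{heat_1D_bound}, $\int_{-\pi}^{\pi}\partial_{xx}S(t,x)\,dx = \partial_x S(t,\pi)-\partial_x S(t,-\pi) = 0$, so $\frac{d}{dt}\int_{-\pi}^{\pi}S(t,x)\,dx = 0$, which yields \eqref{Int_S_const}.

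For part (ii), the key point is that $\partial_x S$ also solves the heat equation \eqref{heat_1D} with periodic boundary condition and initial data $\partial_x\bigl(\chi_{\pi/p}(x)\phi(px)\bigr)$, understood in the distributional sense. The plan is to compute this distributional derivative explicitly. The Leibniz rule gives
\begin{equation*}
\partial_x\bigl(\chi_{\pi/p}(x)\phi(px)\bigr) = \bigl(\delta(x+\pi/p)-\delta(x-\pi/p)\bigr)\phi(px) + p\,\chi_{\pi/p}(x)\phi'(px).
\end{equation*}
The crucial observation is that $\phi(p\cdot(\pm\pi/p))=\phi(\pm\pi)=0$ by hypothesis, so the delta contributions vanish and the distributional derivative reduces to the regular function $p\,\chi_{\pi/p}(x)\phi'(px)$. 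By uniqueness of the solution to the periodic heat equation, $\partial_x S(t,x;\chi_{\pi/p}\phi(p\xi)) = S(t,x;p\,\chi_{\pi/p}\phi'(p\xi)) = p\,S(t,x;\chi_{\pi/p}\phi'(p\xi))$, which is \eqref{S_x}.

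For part (iii), the plan is to iterate part (ii). Using \eqref{heat_1D} and then part (ii) applied to $\phi$, one gets
\begin{equation*}
\partial_t S(t,x;\chi_{\pi/p}\phi(p\xi)) = \partial_{xx}S(t,x;\chi_{\pi/p}\phi(p\xi)) = p\,\partial_x S(t,x;\chi_{\pi/p}\phi'(p\xi)).
\end{equation*}
Now $\phi'$ satisfies the hypothesis of part (ii) by the additional assumption $\phi'(\pm\pi)=0$ (and $\phi'\in C^1$ since $\phi\in C^2$), so a second application of \eqref{S_x} with $\phi$ replaced by $\phi'$ yields $\partial_x S(t,x;\chi_{\pi/p}\phi'(p\xi)) = p\,S(t,x;\chi_{\pi/p}\phi''(p\xi))$, which combined with the previous line gives \eqref{S_t}.

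The only delicate step is the justification that the boundary (delta) terms in the distributional derivative actually vanish; this is precisely where the hypotheses $\phi(\pm\pi)=0$ in (ii) and additionally $\phi'(\pm\pi)=0$ in (iii) are used, and after this observation both identities follow from uniqueness for the heat equation. No further estimation is needed.
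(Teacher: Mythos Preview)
Your proof is correct and follows essentially the same approach as the paper: part (i) is identical, and for (ii)--(iii) the paper writes the solution via the periodic heat kernel $S(t,x)=\int G(t,x-\xi)\chi_{\pi/p}(\xi)\phi(p\xi)\,d\xi$ and integrates by parts (the boundary terms at $\xi=\pm\pi/p$ vanish precisely because $\phi(\pm\pi)=0$, resp.\ $\phi'(\pm\pi)=0$), which is exactly your distributional computation phrased in integral form. The only difference is cosmetic---you invoke uniqueness for the heat equation instead of manipulating the Green's function explicitly---and the key observation (vanishing of the boundary/delta contributions under the stated hypotheses) is the same in both arguments.
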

\begin{proof}
As is well known, the solution of the \eqref{heat_1D} with initial condition \eqref{heat_1D_init} and boundary condition \eqref{heat_1D_bound} is given by formula
\begin{equation}\label{S_via_G}
S(t,x; \chi_{\frac{\pi}{p}}\phi(p\xi)) = \int_{-{\pi}}^{{\pi}}G(t,x-\xi)\chi_{\frac{\pi}{p}}\phi(p\xi)d\xi,
\end{equation}
where $G(t,y)$ is the Green function, defined as follows:
\begin{equation}\label{def_G}
G(t,y)=\sum_{k\in \mathbb Z}\frac{1}{2\sqrt{\pi t}}e^{-\frac{(y+2\pi k)^2}{4t}}.
\end{equation}

i) Since $S(t,x; \phi(p\xi))$ is the solution of the heat equation \eqref{heat_1D} with boundary condition \eqref{heat_1D_bound}, the following equality is true:
\begin{equation}\label{Int_S_t}
\begin{split}
&\partial_t\int_{-\pi}^{\pi}S(t,x;\chi_{\frac{\pi}{p}} \phi(p\xi))dx = \int_{-\pi}^{\pi}\partial_t S(t,x;\chi_{\frac{\pi}{p}} \phi(p\xi))dx = \\
&\int_{-\pi}^{\pi}\partial_{xx} S(t,x;\chi_{\frac{\pi}{p}} \phi(p\xi))dx=\partial_x S(t,x;\chi_{\frac{\pi}{p}} \phi(p\xi))\bigg\vert_{x=-\pi}^{\pi} = 0.
\end{split}
\end{equation}
Therefore, $\int_{-\pi}^{\pi}S(t,x;\chi_{\frac{\pi}{p}} \phi(p\xi))dx = const$

ii) Differentiating \eqref{S_via_G} with respect to $x$ and then integrating the right side by
parts, taking into account that $\phi(\pm\pi) = 0$, we
get
\begin{equation*}
\begin{split}
&\partial_x S(t,x;\chi_{\frac{\pi}{p}}\phi(p\xi))=\int_{-\frac{\pi}{p}}^{\frac{\pi}{p}}\partial_{x}G(t,x-\xi)\phi(p\xi)d\xi=\\
&-\int_{-\frac{\pi}{p}}^{\frac{\pi}{p}}\partial_{\xi}G(t,x-\xi)\phi(p\xi)d\xi= p\int_{-\frac{\pi}{p}}^{\frac{\pi}{p}}G(t,x-\xi)\phi'(p\xi)d\xi=pS(t,x;\phi'(p\xi)).
\end{split}
\end{equation*}

iii) Equations \eqref{heat_1D} and \eqref{S_via_G} after
integrating by parts twice with consideration of $\phi(\pm\pi) =
0$ and $\phi'(\pm\pi)=0$ imply the following relations:
\begin{equation*}
\begin{split}
&\partial_t S(t,x;\chi_{\frac{\pi}{p}}\phi(p\xi))=\partial_{xx}S(t,x;\chi_{\frac{\pi}{p}}\phi (p\xi))=\int_{-\frac{\pi}{p}}^{\frac{\pi}{p}}\partial_{xx}G(t,x-\xi)\phi(p\xi)d\xi=\\
&\int_{-\frac{\pi}{p}}^{\frac{\pi}{p}}\partial_{\xi\xi}G(t,x-\xi)\phi(p\xi)d\xi= p^2\int_{-\frac{\pi}{p}}^{\frac{\pi}{p}}G(t,x-\xi)\phi''(p\xi)d\xi=p^2S(t,x;\phi''(p\xi)).
\end{split}
\end{equation*}
\end{proof}

\subsection{Some transformation in left side of \eqref{main_est}}
Now, let us return to the expression in the left hand side of
\eqref{main_est}. First of all we express there solution
$\mathbf{S}(t,{x};{u})$ of system of equations via scalar heat
equation solutions of the form
${S(t,x;\boldsymbol{\chi}_{\frac{\pi}{p}}\partial_{ij}w(p\xi))}$
where $w$ is the function \eqref{def_w_recall}:
\begin{lem}\label{transform1}
The following relation is true:
\begin{equation}\label{S_w}
\begin{split}
&\int_{T^3}((\mathbf{S}(t,{x};{u}),\nabla)\operatorname{curl}^{-1}\mathbf{S}(t,{x};{u}),\mathbf{S}(t,{x};{u}))dx=\\
&p^6\int_{T^3}\left(S^2(t,x;\boldsymbol{\chi}_{\frac{\pi}{p}}\partial_{12}w(p\xi))-S^2(t,x;\boldsymbol{\chi}_{\frac{\pi}{p}}\partial_{13}w(p\xi)\right)S(t,x;\boldsymbol{\chi}_{\frac{\pi}{p}}\partial_{23}w(p\xi))+\\
&S(t,x;\boldsymbol{\chi}_{\frac{\pi}{p}}\partial_{12}w(p\xi))S(t,x;\boldsymbol{\chi}_{\frac{\pi}{p}}\partial_{13}w(p\xi))\cdot\\
&\hphantom{S(t,x;\boldsymbol{\chi}_{\frac{\pi}{p}}\partial_{12}w(p\xi))}\left(S(t,x;\boldsymbol{\chi}_{\frac{\pi}{p}}\partial_{33}w(p\xi))-S(t,x;\boldsymbol{\chi}_{\frac{\pi}{p}}\partial_{22}w(p\xi))\right)dx,
\end{split}
\end{equation}
where $x=(x_1,x_2,x_3)$,
$\boldsymbol{\chi}_{\frac{\pi}{p}}=\chi_{\frac{\pi}{p}}(\xi_1){\chi}_{\frac{\pi}{p}}(\xi_2){\chi}_{\frac{\pi}{p}}(\xi_3)$,
$w$ is the function  \eqref{def_w_recall}.
\end{lem}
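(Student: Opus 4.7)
My plan is to reduce the three-dimensional vector integrand pointwise to an algebraic combination of scalar one-dimensional heat solutions, using only linearity of the heat equation, the tensor-product property of Lemma~\ref{S_split}, and the derivative-pull-through identity of Lemma~\ref{S_der}(ii). To begin I record two explicit formulas. Formula \eqref{def_ccw} already gives
$$u(x) = p^2\boldsymbol{\chi}_{\frac{\pi}{p}}(x)\bigl(-\partial_{22}w(px)-\partial_{33}w(px),\;\partial_{12}w(px),\;\partial_{13}w(px)\bigr),$$
and since $u = \operatorname{curl}(\operatorname{curl}A)$ for $A = (\boldsymbol{\chi}_{\frac{\pi}{p}}w(p\cdot),0,0)$, the field
$$\operatorname{curl}A \;=\; p\boldsymbol{\chi}_{\frac{\pi}{p}}(x)\bigl(0,\partial_3 w(px),-\partial_2 w(px)\bigr)$$
from \eqref{def_cw} is divergence-free with zero mean, so by the defining formula \eqref{rotinv} it coincides with $\operatorname{curl}^{-1}u$.

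Next, applying the heat semigroup termwise and invoking Lemma~\ref{S_split}, every scalar component of $\mathbf{S}(t,\cdot;u)$ and $\mathbf{S}(t,\cdot;\operatorname{curl}^{-1}u)$ becomes a tensor product of one-variable heat solutions whose initial data have the shape $\chi_{\frac{\pi}{p}}(\xi_i)\phi(p\xi_i)$. Because $w$ and its first derivatives vanish at $x_i=\pm\pi$, Lemma~\ref{S_der}(ii) lets me commute every spatial derivative with $\mathbf{S}$ at the cost of a factor $p$; hence each entry of $\nabla\operatorname{curl}^{-1}\mathbf{S}(t,\cdot;u)$ is, up to a sign and an overall $p^2$, one of the six solutions $S(t,x;\boldsymbol{\chi}_{\frac{\pi}{p}}\partial_{ij}w(p\xi))$ that appear on the right-hand side of \eqref{S_w}.

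Finally I expand the integrand as
$$\bigl((\mathbf{S},\nabla)\operatorname{curl}^{-1}\mathbf{S},\mathbf{S}\bigr) \;=\; \sum_{j,k=1}^{3}\mathbf{S}_j\,\partial_j(\operatorname{curl}^{-1}\mathbf{S})_k\,\mathbf{S}_k,$$
substitute the nine expressions from the two previous paragraphs, and collect the common prefactor $p^6$. The $k=1$ column vanishes because $(\operatorname{curl}^{-1}\mathbf{S})_1\equiv 0$; in the $k=2$ column, the contribution $\mathbf{S}_1\,\partial_1(\operatorname{curl}^{-1}\mathbf{S})_2\,\mathbf{S}_2$ combines with $\mathbf{S}_3\,\partial_3(\operatorname{curl}^{-1}\mathbf{S})_2\,\mathbf{S}_2$ so that the crossed $S(\boldsymbol{\chi}\partial_{33}w)S(\boldsymbol{\chi}\partial_{13}w)$ terms cancel, and an entirely analogous cancellation eliminates the $S(\boldsymbol{\chi}\partial_{22}w)S(\boldsymbol{\chi}\partial_{12}w)$ pair in the $k=3$ column. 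What remains is precisely the combination on the right-hand side of \eqref{S_w}. The only real obstacle is bookkeeping: tracking signs, indices, and the full nine-term expansion carefully; there is no analytic subtlety, since every boundary contribution from the jumps of the characteristic factors has already been absorbed at the stage of computing $u$ and $\operatorname{curl}^{-1}u$.
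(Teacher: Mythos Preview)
Your proof is correct and follows essentially the same route as the paper's: write $u$ and $\operatorname{curl}^{-1}u=\operatorname{curl}A$ explicitly, commute $\operatorname{curl}^{-1}$ and the spatial derivatives with the heat semigroup, and expand the nine-term sum $\sum_{j,k}\mathbf S_j\,\partial_j(\operatorname{curl}^{-1}\mathbf S)_k\,\mathbf S_k$. The only difference is bookkeeping: the paper isolates the $j=1$ row and observes that its $k=2$ and $k=3$ contributions cancel against each other, whereas you organize by columns, cancelling the $S_{33}S_{13}$ piece of $(j,k)=(1,2)$ against $(3,2)$ and the $S_{22}S_{12}$ piece of $(1,3)$ against $(2,3)$; both schemes leave exactly the four terms on the right of \eqref{S_w}. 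Your sign for $u_3=p^2\boldsymbol\chi_{\frac{\pi}{p}}\partial_{13}w$ is in fact the correct one (the minus sign printed in \eqref{def_ccw} is a typo), which is why your column cancellations work out cleanly.
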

\begin{proof}
Comparing \eqref{def_cw} with \eqref{u_def} we see that
\eqref{def_cw} can be rewritten as follows:
\begin{equation}\label{def_cw1}
(\mbox{curl}^{-1}u)(x)=p\mathbf{\chi}_{\frac{\pi}{p}}(x)(0,\partial_3w(px),-\partial_2w(px)):=(v_1(x),v_2(x),v_3(x))
\end{equation}
Rewrite also \eqref{def_ccw} by such a way:
\begin{equation}\label{def_ccw1}
  u(x)=p^2{\mathbf
{\chi}}_{\frac{\pi}{p}}(-\partial_{22}w(px)-\partial_{33}w(px),\partial_{12}w(px),-\partial_{13}w(px)):=(u_1(x),u_2(x),u_3(x))
\end{equation}
Note that
$$
\operatorname{curl}^{-1}\mathbf{S}(t,{x};{u})=
\operatorname{curl}^{-1}\mathbf{S}(t,{x};\operatorname{curl}\operatorname{curl}^{-1}{u})
=\mathbf{S}(t,{x};\operatorname{curl}^{-1}{u})
$$
Taking this into account, we substitute  expression for $u$ and
$\operatorname{curl}^{-1}{u}$ from \eqref{def_ccw1} and
\eqref{def_cw1} into the left part of
\eqref{main_est}. Performing some transformations we get
$$
\int_{T^3}((\mathbf{S}(\cdot;{u}),\nabla)\operatorname{curl}^{-1}\mathbf{S}(\cdot;{u}),\mathbf{S}(\cdot;{u}))dx=
\int_{T^3}((\mathbf{S}(\cdot;{u}),\nabla)\mathbf{S}(\cdot;\operatorname{curl}^{-1}{u}),\mathbf{S}(\cdot;{u}))dx=
$$
$$
\int_{T^3}\sum_{j=1}^3S(\cdot;u_j)\sum_{k=2}^3S(\cdot;\partial_jv_k),S(\cdot;u_k))dx=
p^6\int_{T^3}[S(\cdot;u_1)(S(\cdot;\boldsymbol{\chi}_{\frac{\pi}{p}}\partial_{13}w)
S(\cdot;\boldsymbol{\chi}_{\frac{\pi}{p}}\partial_{12}w)-
$$
$$
S(\cdot;\boldsymbol{\chi}_{\frac{\pi}{p}}\partial_{12}w)S(\cdot;\boldsymbol{\chi}_{\frac{\pi}{p}}\partial_{13}w))+
S^2(\cdot;u_2)S(\cdot;\partial_{2}v_2)+S^2(\cdot;u_3)S(\cdot;\partial_{3}v_3)+
$$
$$
S(\cdot;u_2)S(\cdot;u_3)(S(\cdot;\partial_2v_3)+S(\cdot;\partial_3v_2))]dx=$$
$$
p^6\int_{T^3}S(\cdot;\boldsymbol{\chi}_{\frac{\pi}{p}}\partial_{23}w)\left(S^2(\cdot;\boldsymbol{\chi}_{\frac{\pi}{p}}\partial_{12}w)-
S^2(\cdot;\boldsymbol{\chi}_{\frac{\pi}{p}}\partial_{13}w)\right)+
$$
$$
S(\cdot;\boldsymbol{\chi}_{\frac{\pi}{p}}\partial_{12}w)S(\cdot;\boldsymbol{\chi}_{\frac{\pi}{p}}\partial_{13}w)
\left(S(\cdot;\boldsymbol{\chi}_{\frac{\pi}{p}}\partial_{33}w)-S(\cdot;\boldsymbol{\chi}_{\frac{\pi}{p}}
\partial_{22}w)\right)dx.
$$
\end{proof}

Our next step  is to replace function $w$ in \eqref{S_w} by the sum
of terms it consists of. Let prepare this step.
Differentiating function $w$, defined in \eqref{def_w_recall}, we
obtain

\begin{equation}\label{w_ij}
\begin{split}
\partial_{ij}w = &-a_i\sin x_i(\cos x_j+\cos 2x_j)(\sin x_k+\frac{1}{2}\sin 2x_k)-\\
&a_j\sin x_j(\cos x_i+\cos 2x_i)(\sin x_k+\frac{1}{2}\sin 2x_k)+\\
&a_k(1+\cos x_k)(\cos x_i+\cos 2x_i)(\cos x_j+\cos 2x_j), \, i<j,k\neq i,j,
\end{split}
\end{equation}
\begin{equation}\label{w_ii}
\begin{split}
\partial_{ii}w = &-a_i\cos x_i(\sin x_j+\frac{1}{2}\sin 2x_j)(\sin x_k+\frac{1}{2}\sin 2x_k)-\\
&a_j(1+\cos x_j)(\sin x_i+2\sin 2x_i)(\sin x_k+\frac{1}{2}\sin 2x_k)-\\
&a_k(1+\cos x_k)(\sin x_i+2\sin 2x_i)(\sin x_j+\frac{1}{2}\sin 2x_j), \, i\neq j,k, j<k.
\end{split}
\end{equation}

Let us introduce the following notation:

\begin{equation}\label{A_def}
\begin{split}
&A_{ijk}:=A(x_i,x_j,x_k)=  \\
&S(t,x_i,x_j,x_k;a_k\boldsymbol{\chi}_{\frac{\pi}{p}}(1+\cos p\xi_k)(\cos p\xi_i+\cos 2p\xi_i)
(\cos p\xi_j+\cos 2p\xi_j));
\end{split}
\end{equation}
\begin{equation}\label{B_def}
\begin{split}
&B_{ijk}:=B(x_i,x_j,x_k)=\\
&S(t,x_i,x_j,x_k;-a_i\boldsymbol{\chi}_{\frac{\pi}{p}}\sin p\xi_i(\cos p\xi_j+\cos 2p\xi_j)
(\sin p\xi_k+\frac{1}{2}\sin 2p\xi_k));
\end{split}
\end{equation}
\begin{equation}\label{C_def}
\begin{split}
&C_{ijk}:=C(x_i,x_j,x_k)=\\
&S(t,x_i,x_j,x_k;-a_i\boldsymbol{\chi}_{\frac{\pi}{p}}\cos p\xi_i(\sin p\xi_j+\frac{1}{2}\sin 2p\xi_j)
(\sin p\xi_k+\frac{1}{2}\sin 2\xi_k));
\end{split}
\end{equation}
\begin{equation}\label{D_def}
\begin{split}
&D_{ijk}:=D(x_i,x_j,x_k)=\\
&S(t,x_i,x_j,x_k;-a_j\boldsymbol{\chi}_{\frac{\pi}{p}}(1+\cos
p\xi_j)(\sin p\xi_i+2\sin 2p\xi_i)(\sin p\xi_k+\frac{1}{2}\sin
2p\xi_k));
\end{split}
\end{equation}

Then, denoting $x=(x_i,x_j,x_k)$ we get
\begin{equation}\label{S_ij}
S(t,x;\boldsymbol{\chi}_{\frac{\pi}{p}}\partial_{ij}w) =
A_{ijk}+B_{ijk}+B_{jik}, i,j,k = 1,2,3, i<j, k\neq i,j;
\end{equation}
\begin{equation}\label{S_ii}
S(t,x;\boldsymbol{\chi}_{\frac{\pi}{p}}\partial_{ii}w) =
C_{ijk}+D_{ijk}+D_{ikj}, i=2,3, j,k = 1,2,3, i\neq j,k, j<k.
\end{equation}

Now we are in position to prove the following
\begin{lem}\label{transform2}
The following relation holds:
\begin{equation}\label{S_w_summands}
\begin{split}
\int_{T^3}&((\mathbf{S}(t,{x};{u}),\nabla)\operatorname{curl}^{-1}\mathbf{S}(t,{x};{u}),\mathbf{S}(t,{x};{u}))dx= \\
p^6\int_{T^3}&\left(({A_{231}A_{123}^2}+{2B_{321}A_{123}B_{123}})-({A_{231}A_{132}^2}+{2B_{231}A_{132}B_{132}})\right)dx+\\
p^6\int_{T^3}&\left({B_{123}A_{132}D_{321}}+{B_{213}A_{132}D_{312}}+{B_{213}B_{132}D_{321}}\right)dx- \\
p^6\int_{T^3}&\left({A_{123}B_{132}D_{231}}+{A_{123}B_{312}D_{213}}+{B_{123}B_{312}D_{231}}\right)dx,
\end{split}
\end{equation}
where notations inserted above are used.
\end{lem}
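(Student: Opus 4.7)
The plan is to substitute the expansions \eqref{S_ij} and \eqref{S_ii} into the right-hand side of \eqref{S_w} and then eliminate, term by term, every cubic monomial in $A_{ijk}, B_{ijk}, C_{ijk}, D_{ijk}$ that integrates to zero over $\mathbb{T}^3$. Concretely I will write $S_{12} = A_{123}+B_{123}+B_{213}$, $S_{13} = A_{132}+B_{132}+B_{312}$, $S_{23} = A_{231}+B_{231}+B_{321}$, $S_{22} = C_{213}+D_{213}+D_{231}$ and $S_{33} = C_{312}+D_{312}+D_{321}$, plug these into the bracketed integrand of \eqref{S_w}, and expand. This produces a large but finite polynomial sum in the symbols $A, B, C, D$, each monomial of which is a product $XYZ$ of three heat solutions whose initial data (by construction of $u$ and of $w$) is a product of one-variable functions in $\xi_1,\xi_2,\xi_3$. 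Lemma \ref{S_split} then says each $X,Y,Z$ factors as a product of three one-dimensional heat solutions in the separate variables $x_1,x_2,x_3$, so the integral over $\mathbb{T}^3$ factors as a product of three one-dimensional integrals over $[-\pi,\pi]$.

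The first reduction is a parity argument. Every one-dimensional initial condition that appears — $1+\cos p\xi$, $\cos p\xi+\cos 2p\xi$, $\cos p\xi$ on the one hand; $\sin p\xi$, $\sin p\xi+\tfrac12\sin 2p\xi$, $\sin p\xi+2\sin 2p\xi$ on the other — is either purely even or purely odd on $[-\pi,\pi]$, and the heat kernel on the torus preserves parity, so the same holds for the evolved one-dimensional factors. Recording the parity patterns in $(x_1,x_2,x_3)$ — $A_{ijk}$ is $(+,+,+)$; $B_{ijk}$ is odd in $x_i$ and $x_k$, even in $x_j$; $C_{ijk}$ is even in $x_i$, odd in $x_j$ and $x_k$; $D_{ijk}$ is odd in $x_i$ and $x_k$, even in $x_j$ — I discard every triple product whose total parity in some coordinate is odd, since such a triple integrates to zero. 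This already kills the vast majority of monomials produced by the expansion.

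The surviving triples must then be grouped and reduced further by using the transposition $\sigma:x_2\leftrightarrow x_3$. Under $\sigma$ the $a_1$-symmetric functions $A_{231}, A_{123}\leftrightarrow A_{132}$ (up to $a_2\leftrightarrow a_3$) behave in a controlled way, while one checks directly that $\sigma$ sends $B_{123}\to B_{132}$, $B_{231}\to B_{321}$, $D_{213}\to D_{312}$, $D_{231}\to D_{321}$, $C_{213}\to C_{312}$ on the $a$-independent parts, and similarly for $B_{213},B_{312}$ up to the factor $a_2/a_3$. Grouping the parity-surviving triples according to their homogeneity in $a_1,a_2,a_3$ and applying change of variables $x_2\leftrightarrow x_3$ in the integral, pairs of terms coming from $S_{12}^2S_{23}$ and $S_{13}^2S_{23}$, and analogously from the two contributions $S_{12}S_{13}(S_{33}-S_{22})$, cancel in pairs. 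What is left is exactly the ten triple products displayed in \eqref{S_w_summands}.

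The main obstacle is the sheer bookkeeping: the expansion produces on the order of seventy cubic monomials, for each of which one must determine (i) its parity pattern in $(x_1,x_2,x_3)$, (ii) its coefficient as a monomial in $a_1,a_2,a_3$, and (iii) which other monomial it pairs with under $x_2\leftrightarrow x_3$. The identity \eqref{S_w_summands} then follows mechanically once this table is assembled, but the organization of the table is where the real work lies.
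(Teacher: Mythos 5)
Your plan uses two vanishing mechanisms: parity of the one-dimensional factors, and the transposition $x_2\leftrightarrow x_3$. The paper uses parity, but its second mechanism is different and is the one you are missing: the total-derivative identity
$$
\int_{-\pi}^{\pi} S\bigl(t,x;\chi_{\frac{\pi}{p}}(\cos p\xi+\cos 2p\xi)\bigr)\,S^2\bigl(t,x;\chi_{\frac{\pi}{p}}(\sin p\xi+\tfrac12\sin 2p\xi)\bigr)\,dx
=\frac{1}{3p}\int_{-\pi}^{\pi} d\Bigl(S^3\bigl(t,x;\chi_{\frac{\pi}{p}}(\sin p\xi+\tfrac12\sin 2p\xi)\bigr)\Bigr)=0,
$$
which holds because $\cos p\xi+\cos 2p\xi=\tfrac1p\tfrac{d}{d\xi}(\sin p\xi+\tfrac12\sin 2p\xi)$ and the derivative relation survives the heat flow (Lemma \ref{S_der}(ii)). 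This is neither a parity cancellation nor an $x_2\leftrightarrow x_3$ cancellation, and you cannot reproduce it by either.

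Concretely, consider the monomial $A_{231}B_{213}^2$ surviving the parity reduction in $\int S(\cdot;\chi\partial_{23}w)S^2(\cdot;\chi\partial_{12}w)$. It is even in every coordinate, its $a$-weight is $a_1a_2^2$, and its $x_3$-factor after applying Lemma \ref{S_split} is exactly the vanishing integral above. Your transposition argument relates it to $A_{231}B_{312}^2$ from $S_{23}S_{13}^2$, but because $B_{213}|_{x_2\leftrightarrow x_3}=\frac{a_2}{a_3}B_{312}$, the change of variables only gives $\int A_{231}B_{213}^2\,dx=\frac{a_2^2}{a_3^2}\int A_{231}B_{312}^2\,dx$, which is not a cancellation for generic $a_2,a_3$. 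The same obstruction affects $B_{231}B_{123}B_{213}$, $B_{321}B_{132}B_{312}$, and all of the $C$-terms such as $A_{123}B_{132}C_{312}$: they are killed only because one one-dimensional factor is the total derivative above. The transposition argument does dispose of the $a_1^3$ pair $A_{231}B_{123}^2-A_{231}B_{132}^2$, but it leaves everything in the cross homogeneity classes untouched. As written, your expansion table would therefore retain a large number of ostensibly nonzero monomials, and the claimed reduction to the ten terms of \eqref{S_w_summands} would not go through. You need to add the identity \eqref{eq41} (and the families \eqref{zero_terms1}--\eqref{zero_terms2} it generates) as a third vanishing criterion applied to each surviving one-dimensional factor.
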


\begin{proof}
Using \eqref{S_ij} let us consider the term
$\int_{T^3}S(t,x;\boldsymbol{\chi}_{\frac{\pi}{p}}\partial_{23}w)
(S(t,x;\boldsymbol{\chi}_{\frac{\pi}{p}}\partial_{12}w))^2dx$ in
\eqref{S_w}:
\begin{equation}\label{w23w12}
\begin{split}
&\int_{T^3}S(t,x;\boldsymbol{\chi}_{\frac{\pi}{p}}\partial_{23}w)
(S(t,x;\boldsymbol{\chi}_{\frac{\pi}{p}}\partial_{12}w))^2dx=\\
&\int_{T^3}\left(A_{231}+B_{231}+B_{321}\right)\left(A_{123}^2+B_{123}^2+B_{213}^2+\right.\\
&\left.\hphantom{A_{231}+B_{231}+B_{321}A_{123}^2}2A_{123}B_{123}+2A_{123}B_{213}+2B_{123}B_{213}\right)dx 
\end{split}
\end{equation}
Multiplying terms in \eqref{w23w12} we get two kinds of summands:
the first one are the functions even on all variables and the
second one are the functions that are odd on some variables.
Indeed, since $A_{123}=A(x_1,x_2,x_3)$ is even as function on all
variables and ${B_{123}=B(x_1,x_2,x_3)}$ is an even function on
$x_2$ and an odd one on $x_1$ and $x_3$ we get that for instance
the term $A_{231}A_{123}^2$ is even on all variables, but the term
$2A_{231}A_{123}B_{123}$ is odd on $x_1$ and $x_3$. Evidently,
integral of an odd function over symmetrical interval is equal to
zero, and that is why the integrals of all summands in
\eqref{w23w12} that are odd on at lest one variable will
disappear. Therefore, performing multiplication in \eqref{w23w12} and
leaving even summands only, we get
\begin{equation}\label{w23w12_fin}
\begin{split}
&\int_{T^3}S(t,x;\boldsymbol{\chi}_{\frac{\pi}{p}}\partial_{23}w)
S^2(t,x;\boldsymbol{\chi}_{\frac{\pi}{p}}\partial_{12}w)dx=\\
&\int_{T^3}\left({A_{231}A_{123}^2}+{A_{231}B_{123}^2}+{A_{231}B_{213}^2}+2B_{231}B_{123}B_{213}
+{2B_{321}A_{123}B_{123}}\right)dx.
\end{split}
\end{equation}
Moreover, since, according to Lemma \ref{S_split} and relation
\eqref{S_x} from lemma \ref{S_der},
\begin{equation}\label{eq41}
\begin{split}
&\int_{-\pi}^{\pi}S(t,x;\chi_{\frac{\pi}{p}}\cdot(\cos p\xi+\cos
2p\xi))
S^2(t,x;\chi_{\frac{\pi}{p}}\cdot(\sin p\xi+\frac{1}{2}\sin 2p\xi))dx = \\
&\frac{1}{3}\int_{-\pi}^{\pi}d\left(S^3(t,x;\chi_{\frac{\pi}{p}}\cdot
(\sin p\xi + \frac{1}{2}\sin 2p\xi))\right) = 0,
\end{split}
\end{equation}
all summands in \eqref{w23w12_fin} containing such a multiplier
are equal to zero. In other words
\begin{equation}\label{zero_terms1}
\begin{split}
\int_{\mathbb{T}_3}A_{ijk}B_{ikj}^2dx =
\int_{\mathbb{T}_3}&A_{ijk}B^2_{kij}dx =
\int_{\mathbb{T}_3}A_{ijk}B_{jki}^2dx = \int_{\mathbb{T}_3}A_{ijk}B^2_{kji}dx=\\
&\int_{\mathbb{T}_3}B_{ijk}B_{ikj}B_{kij}dx=0,
\end{split}
\end{equation}
Omitting all terms belonging to
\eqref{zero_terms1} in \eqref{w23w12_fin}  we get the final result:
\begin{equation}\label{w23w12_fin1}
\begin{split}
&\int_{T^3}S(t,x;\boldsymbol{\chi}_{\frac{\pi}{p}}\partial_{23}w)
S^2(t,x;\boldsymbol{\chi}_{\frac{\pi}{p}}\partial_{12}w)dx=\\
&\int_{T^3}\left({A_{231}A_{123}^2}+{2B_{321}A_{123}B_{123}}\right)dx.
\end{split}
\end{equation}

Note that relation \eqref{eq41} also implies the following
relations similar to \eqref{zero_terms1}:
\begin{equation}\label{zero_terms2}
\begin{split}
\int_{\mathbb{T}_3}A_{ijk}B_{ikj}C_{kij}dx
=&\int_{\mathbb{T}_3}A_{ijk}B_{kij}D_{kij}dx=
\int_{\mathbb{T}_3}B_{ijk}B_{ikj}D_{jik}dx=\\
&\int_{\mathbb{T}_3}B_{ijk}B_{jki}C_{ijk}dx=0.
\end{split}
\end{equation}

Similarly we transform all other summands in \eqref{S_w}: we
rewrite them using \eqref{S_ij}, \eqref{S_ii}, perform
multiplication and leave only summands which are even on all variables,
and after that omit summands from \eqref{zero_terms1},
\eqref{zero_terms2}. As the result we get:
\begin{equation}\label{w23w13_fin}
\begin{split}
&\int_{T^3}S(t,x;\boldsymbol{\chi}_{\frac{\pi}{p}}\partial_{23}w)
S^2(t,x;\boldsymbol{\chi}_{\frac{\pi}{p}}\partial_{13}w)dx=\\
&\int_{T^3}\left({A_{231}A_{132}^2}+{A_{231}B_{132}^2}+{A_{231}B_{312}^2}+
{2B_{231}A_{132}B_{132}}+{2B_{321}B_{132}B_{312}}\right)dx=\\
&\int_{T^3}\left({A_{231}A_{132}^2}+{2B_{231}A_{132}B_{132}}\right)dx
\end{split}
\end{equation}
\begin{equation}\label{w12w13w33_fin}
\begin{split}
&\int_{T^3}S(t,x;\boldsymbol{\chi}_{\frac{\pi}{p}}\partial_{12}w)
S(t,x;\partial_{13}w)S(t,x;\boldsymbol{\chi}_{\frac{\pi}{p}}\partial_{33}w)wdx=\\
&\int_{T^3}\left({A_{123}B_{132}C_{312}}+{{A_{123}B_{312}D_{312}}+
B_{123}A_{132}D_{321}}+{B_{123}B_{132}D_{312}}+\right.\\
&\left.\hphantom{\int_{T^3}}{{B_{123}B_{312}C_{312}}+B_{213}A_{132}D_{312}}+{B_{213}B_{132}D_{321}}\right)dx=\\
&\int_{T^3}\left({B_{123}A_{132}D_{321}}+{B_{213}A_{132}D_{312}}+{B_{213}B_{132}D_{321}}\right)dx
\end{split}
\end{equation}
\begin{equation}\label{w12w13w22_fin}
\begin{split}
&\int_{T^3}S(t,x;\boldsymbol{\chi}_{\frac{\pi}{p}}\partial_{12}w)
S(t,x;\boldsymbol{\chi}_{\frac{\pi}{p}}\partial_{13}w)S(t,x;\partial_{22}w)dx=\\
&\int_{T^3}\left({A_{123}B_{132}D_{213}}+{{A_{123}B_{312}D_{213}}+
B_{123}A_{132}C_{213}}+{B_{123}B_{132}D_{213}}+\right.\\
&\left.\hphantom{\int_{T^3}}{{B_{123}B_{312}D_{231}}+B_{213}A_{132}D_{213}}+{B_{213}B_{132}C_{213}}\right)dx=\\
&\int_{T^3}\left({A_{123}B_{132}D_{231}}+{A_{123}B_{312}D_{213}}+{B_{123}B_{312}D_{231}}\right)dx
\end{split}
\end{equation}
Summing up \eqref{w23w12_fin1}, \eqref{w23w13_fin},
\eqref{w12w13w33_fin} and \eqref{w12w13w22_fin}, we get
\eqref{S_w_summands}
\end{proof}

\subsection{The final reduction}\label{Finals1Dcase}
Let now express the functional from \eqref{main_est} via three
functionals on scalar functions of one variable. These
functionals are as follows:
\begin{equation}\label{J_1}
J_1(t)=\int_{-\pi}^{\pi}S^2(t,x;\chi_{\frac{\pi}{p}}\cdot(1+\cos p\xi))S(t,x;\chi_{\frac{\pi}{p}}\cdot(\cos p\xi+\cos 2p\xi))dx
\end{equation}

\begin{equation}\label{J_2}
J_2(t)=\int_{-\pi}^{\pi}S(t,x;\chi_{\frac{\pi}{p}}\cdot(1+\cos p\xi))S^2(t,x;\chi_{\frac{\pi}{p}}\cdot(\cos p\xi+\cos 2p\xi))dx
\end{equation}

\begin{equation}\label{J_3}
J_3(t)=\int_{-\pi}^{\pi}S^3(t,x;\chi_{\frac{\pi}{p}}\cdot(\cos p\xi+\cos 2p\xi))dx
\end{equation}

\begin{equation}\label{J_4}
\begin{split}
&J_4(t)=\int_{-\pi}^{\pi}S(t,x;\chi_{\frac{\pi}{p}}\cdot(\cos p\xi+\cos 2p\xi))\cdot\\
&\hphantom{S(t,x;\chi_{\frac{\pi}{p}}\cdot(\cos p\xi+\cos 2p\xi))}S(t,x;\chi_{\frac{\pi}{p}}\cdot(\sin p\xi))S(t,x;\chi_{\frac{\pi}{p}}\cdot(\sin p\xi+\frac{1}{2}\sin 2p\xi))dx
\end{split}
\end{equation}
where $S(t,x;\chi_{\frac{\pi}{p}}\phi(p\xi))$ is the solution of
heat equation \eqref{heat_1D} with periodic boundary condition and
initial condition $S(t,x)|_{t=0}=\chi_{\frac{\pi}{p}}\phi(p\xi)$.

\begin{theorem}\label{final_expression}
The functional from \eqref{main_est} can be represented as follows
\begin{equation}\label{expression_final}
\begin{split}
\int_{T^3}&((\mathbf{S}(t,{x};{u}),\nabla)\operatorname{curl}^{-1}\mathbf{S}(t,{x};{u}),\mathbf{S}(t,{x};{u}))dx=\\
&\frac{5}{4}(a_3^2a_1-a_2^2a_1)J_1(t)J_3(t)\left(J_2(t)+J_4(t)\right),
\end{split}
\end{equation}
where functionals $J_1,J_2,J_3,J_4$ are defined in \eqref{J_1}-\eqref{J_4} and $a_1,a_2,a_3$ are constants from
\eqref{def_w_recall}.
\end{theorem}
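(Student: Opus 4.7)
The plan is to combine the reduction formula \eqref{S_w_summands} with a systematic factorization of every three-dimensional heat solution into products of one-dimensional ones, and then to reduce all resulting one-dimensional integrals to $J_1,J_2,J_3,J_4$ by integration by parts based on Lemma \ref{S_der}(ii). It is convenient to introduce the shorthand one-dimensional heat solutions
\begin{align*}
\alpha(t,x) &= S(t,x;\chi_{\pi/p}(1+\cos p\xi)), & \beta(t,x) &= S(t,x;\chi_{\pi/p}(\cos p\xi+\cos 2p\xi)),\\
\gamma(t,x) &= S(t,x;\chi_{\pi/p}\sin p\xi), & \delta(t,x) &= S(t,x;\chi_{\pi/p}(\sin p\xi+\tfrac12\sin 2p\xi)),\\
\eta(t,x) &= S(t,x;\chi_{\pi/p}(\sin p\xi+2\sin 2p\xi)).
\end{align*}
Since each initial datum in \eqref{A_def}--\eqref{D_def} has the factored form $\chi_{\pi/p}(\xi_1)\chi_{\pi/p}(\xi_2)\chi_{\pi/p}(\xi_3)\,f(p\xi_1)g(p\xi_2)h(p\xi_3)$, Lemma \ref{S_split} yields explicit factorizations such as $A_{ijk}(x)=a_k\,\beta(x_i)\beta(x_j)\alpha(x_k)$, $B_{ijk}(x)=-a_i\,\gamma(x_i)\beta(x_j)\delta(x_k)$, and $D_{ijk}(x)=-a_j\,\eta(x_i)\alpha(x_j)\delta(x_k)$.

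After substituting these factorizations into the ten three-dimensional integrals of \eqref{S_w_summands}, Fubini turns each of them into a product of three one-dimensional integrals. Most of these are already of the form $J_1=\int\alpha^2\beta\,dx$, $J_2=\int\alpha\beta^2\,dx$, $J_3=\int\beta^3\,dx$ or $J_4=\int\beta\gamma\delta\,dx$. The only non-elementary ones that survive parity are $\int\alpha\gamma\delta\,dx$ and $\int\beta\delta\eta\,dx$, and I would record as an auxiliary lemma the two identities
\[
\int_{-\pi}^{\pi}\alpha\gamma\delta\,dx=\tfrac{1}{2}J_1(t),\qquad \int_{-\pi}^{\pi}\beta\delta\eta\,dx=\tfrac{1}{2}J_3(t).
\]
Both follow from Lemma \ref{S_der}(ii), which gives the derivative identities $\partial_x\alpha=-p\gamma$, $\partial_x\delta=p\beta$, and $\partial_x\beta=-p\eta$ (the hypothesis $\phi(\pm\pi)=0$ being satisfied for $\phi(\xi)=1+\cos\xi$, $\sin\xi+\tfrac12\sin 2\xi$ and $\cos\xi+\cos 2\xi$, respectively). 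Indeed, writing $\alpha\gamma=-\tfrac{1}{2p}\partial_x(\alpha^2)$ and integrating by parts against $\delta$ with $\partial_x\delta=p\beta$ gives the first identity, and similarly $\beta\eta=-\tfrac{1}{2p}\partial_x(\beta^2)$ gives the second.

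With these identities in hand, every one of the ten terms in \eqref{S_w_summands} reduces to a scalar multiple of $J_1J_2J_3$ or of $J_1J_3J_4$. It remains to collect the coefficients. The guiding symmetry is that the entire right-hand side of \eqref{S_w_summands} is antisymmetric under the swap $(a_2,x_2)\leftrightarrow(a_3,x_3)$: the first parenthesized block of \eqref{S_w_summands} exchanges its two summands under this swap, while the second block is mapped term by term onto the third. Consequently the coefficients of $J_1J_2J_3$ and of $J_1J_3J_4$ both turn out to be proportional to $a_1(a_3^2-a_2^2)$ with equal numerical weights, and factoring produces $J_1J_3(J_2+J_4)$ with the constant asserted in \eqref{expression_final}.

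The main obstacle is the sign-and-index bookkeeping. The ten integrals distribute over six distinct $B_{ijk}$'s and four $D_{ijk}$'s, each with its own arrangement of $\gamma,\beta,\delta$ (resp.\ $\eta,\alpha,\delta$) among $(x_1,x_2,x_3)$ and its own sign from $-a_i$ or $-a_j$. One must verify along the way that no integral of type $\int\alpha\eta\delta\,dx$ actually enters the sum, since an elementary computation using $\partial_x\alpha=-p\gamma$ and $\partial_x\delta=p\beta$ gives $\int\alpha\eta\delta\,dx=J_2-J_4$, which would destroy the clean $J_2+J_4$ factorization; and that integrals like $\int\gamma\delta^2\,dx$ vanish by parity. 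The cleanest way to do the tabulation is to write each of the ten integrands as a single numerical coefficient times a triple of 1-D integrals, apply the two reduction identities uniformly, and then collect.
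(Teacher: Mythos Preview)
Your approach is correct and is essentially the same as the paper's: both start from \eqref{S_w_summands}, factor each $A_{ijk},B_{ijk},D_{ijk}$ via Lemma~\ref{S_split}, and reduce the resulting one-dimensional integrals to $J_1,\dots,J_4$, the paper simply recording the ten final values in \eqref{term1}--\eqref{terms2-10} without explanation. Your explicit auxiliary identities $\int\alpha\gamma\delta=\tfrac12 J_1$ and $\int\beta\delta\eta=\tfrac12 J_3$ are exactly the hidden ingredients behind the fractional coefficients in \eqref{terms2-10}, and your observation of the $(a_2,x_2)\leftrightarrow(a_3,x_3)$ antisymmetry of \eqref{S_w_summands} is a clean structural reason for the factor $a_1(a_3^2-a_2^2)$ that the paper does not articulate.
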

\begin{proof}
To prove theorem we have to transform terms from right side of
\eqref{S_w_summands}. Using \eqref{A_def},Lemma \ref{S_split} and
\eqref{J_1} -\eqref{J_4} we can transform the first term as
follows:
\begin{equation}\label{term1}
\begin{split}
 &\int_{\mathbb{T}^3}A_{231}A^2_{123}dx=\\
&\int_{\mathbb{T}^3}S(t,x_2,x_3,x_1;a_1\boldsymbol{\chi}_{\frac{\pi}{p}}
 (1+\cos p\xi_1)(\cos p\xi_2+\cos 2p\xi_2)(\cos p\xi_3+\cos2p\xi_3))\\
&S^2(t,x_1,x_2,x_3;a_3\boldsymbol{\chi}_{\frac{\pi}{p}}(1+\cos
p\xi_3)(\cos p\xi_1+\cos 2p\xi_1) (\cos p\xi_2+\cos 2p\xi_2))dx=\\
&a_1a^2_3\int_{-\pi}^\pi S(t,x_1;{\chi}_{\frac{\pi}{p}}(1+\cos
p\xi_1))S^2(t,x_1;{\chi}_{\frac{\pi}{p}}(\cos p\xi_1+\cos
2p\xi_1))dx_1\cdot \\
&\hphantom{a_1a^2_3}\int_{-pi}^\pi S^3(t,x_2;{\chi}_{\frac{\pi}{p}}(\cos p\xi_2+\cos
2p\xi_2))dx_2\cdot\\
&\hphantom{a_1a^2_3}\int_{-pi}^\pi S(t,x_3){\chi}_{\frac{\pi}{p}}(\cos
p\xi_3+\cos2p\xi_3))S^2(t,x_3;{\chi}_{\frac{\pi}{p}}(1+\cos
p\xi_3))=\\
&a_1a^2_3J_1(t)J_2(t)J_3(t)
\end{split}
\end{equation}
Similarly, using  \eqref{A_def}-\eqref{D_def}, Lemma \ref{S_split}
and \eqref{J_1}-\eqref{J_4} we get the following equalities
\begin{equation}\label{terms2-10}
\begin{split}
&\int_{\mathbb{T}^3}B_{321}A_{123}B_{123}dx=\frac12a_1a_3^2J_1J_3J_4,
\int_{\mathbb{T}^3}A_{231}A^2_{132}dx=a_1a_2^2J_1J_2J_3, \\
&\int_{\mathbb{T}^3}B_{231}A_{132}B_{132}dx=\frac12a_1a_2^2J_1J_3J_4,
\int_{\mathbb{T}^3}B_{123}A_{132}D_{321}dx=\frac12a_1a_2^2J_1J_3J_4,\\
&\int_{\mathbb{T}^3}B_{213}A_{132}D_{312}dx=\frac14a_1a_2^2J_1J_2J_3,
\int_{\mathbb{T}^3}B_{213}B_{132}D_{321}dx=-\frac14a_1a_2^2J_1J_3J_4,\\
&\int_{\mathbb{T}^3}A_{123}B_{132}D_{231}dx=\frac12a_1a_3^2J_1J_3J_4,
\int_{\mathbb{T}^3}A_{123}B_{312}D_{213}dx=\frac14a_1a_3^2J_1J_2J_3,\\
&\int_{\mathbb{T}^3}B_{123}B_{312}D_{231}dx=-\frac14a_1a_3^2J_1J_3J_4,
\end{split}
\end{equation}
Substituting \eqref{term1}, \eqref{terms2-10} into
\eqref{S_w_summands}, we get \label{expression_final}.
\end{proof}

\section{Estimate for $J_1(t)$}\label{s4}


Previously in \cite{F5}, \cite{FSh} it was established, that

\begin{equation}\label{J_3_est}
J_3(t) > C_3\cdot e^{-6t},
\end{equation}
where $C_3$ is a positive constant.

Our goal is to prove analogous estimates for $J_1(t)$ and
$J_2(t)+J_4(t)$.

In this section we shall prove the following

\begin{theorem}\label{thJ_1est}
For any $t\geq 0$ function $J_1(t)$ defined in \eqref{J_1}, satisfies the following inequality:

\begin{equation}\label{J1est}
J_1(t)\geq C _1 e^{-6t},
\end{equation}
where $C_1$ is some positive constant.
\end{theorem}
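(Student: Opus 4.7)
My plan is to use Fourier expansion to isolate the slowest decaying contribution to $J_1(t)$, compute it explicitly, and bound the remainder. Write the initial data as $f_1(\xi) = \chi_{\pi/p}(\xi)(1+\cos p\xi)$ and $f_2(\xi) = \chi_{\pi/p}(\xi)(\cos p\xi+\cos 2p\xi)$. Both are real, even, and continuous on the torus (they vanish at $\xi = \pm \pi/p$), so the Fourier coefficients $\hat f_j(k)$ are real; the second derivatives have only jumps at $\pm\pi/p$, so $|\hat f_j(k)| = O(1/k^2)$ and $\hat f_j \in \ell^1$. Substituting $S(t,x;\phi) = \sum_k \hat\phi(k) e^{ikx} e^{-k^2 t}$ into \eqref{J_1} yields
\[
J_1(t) \;=\; 2\pi \sum_{k_1+k_2+k_3=0} \hat f_1(k_1)\,\hat f_1(k_2)\,\hat f_2(k_3)\, e^{-(k_1^2 + k_2^2 + k_3^2)t}.
\]

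Next I would isolate the $t\to\infty$ leading term. The constraint $k_1+k_2+k_3=0$ forces $\psi := k_1^2+k_2^2+k_3^2$ to be even; since $\int_{-\pi}^{\pi}(\cos y + \cos 2y)\,dy = 0$ gives $\hat f_2(0)=0$, the triples with $k_3=0$ drop out. The minimum of $\psi$ subject to $k_3 \ne 0$ is $\psi=2$, attained only on the four triples $(0,\pm 1,\mp 1)$ and $(\pm 1,0,\mp 1)$. Using $\hat f_1(0) = 1/p$, these four triples together contribute
\[
A_2\, e^{-2t} \;:=\; \frac{8\pi}{p}\,\hat f_1(1)\,\hat f_2(1)\, e^{-2t}.
\]
A direct evaluation of $\hat f_j(1)$ via sine integrals (after the substitution $y = p\xi$) gives closed forms $\sin(\pi/p)$ times a strictly positive rational function of $1/p$ in each case, so $A_2 > 0$ for every $p \geq 2$. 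All remaining admissible triples have $\psi \geq 6$, so the remainder $R(t) := J_1(t) - A_2 e^{-2t}$ satisfies $|R(t)| \leq C'\, e^{-6t}$ with $C' := 2\pi \|\hat f_1\|_{\ell^1}^2 \|\hat f_2\|_{\ell^1} < \infty$.

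Combining, and using $e^{-2t} \geq e^{-6t}$ for $t\geq 0$, one obtains $J_1(t) \geq (A_2 - C')\, e^{-6t}$. For $t$ large enough (say $t \geq T^* := \tfrac14 \log(2 C'/A_2)$) this already gives $J_1(t) \geq (A_2/2) e^{-6t}$; and $J_1(0) = \tfrac{5\pi}{2p} > 0$ by direct evaluation, so continuity of $J_1$ in $t$ yields a positive minimum on $[0, T^*]$ and hence the bound $J_1(t) \geq C_1 e^{-6t}$.

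The main obstacle is ruling out zero crossings of $J_1$ on the intermediate interval $[0, T^*]$: the crude $\ell^1$ estimate of $C'$ may exceed $A_2$, so the combined bound alone does not certify positivity there. I expect this to be handled in the spirit of \cite{F5,FSh}, either by sharpening the remainder estimate by grouping Fourier terms into positive-definite blocks (exploiting the explicit sign pattern of $\hat f_j(k) \sim \sin(\pi k/p)\cdot(\mathrm{rational})$ until $A_2 > C'$ genuinely), or by showing that $e^{6t} J_1(t)$ is monotonically nondecreasing via a Fourier analysis of $(6 J_1 + J_1')(t)$, whose leading contribution is $4 A_2\, e^{-2t} > 0$ with only $O(e^{-8t})$ corrections.
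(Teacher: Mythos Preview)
Your asymptotic analysis is correct: the leading contribution to $J_1(t)$ is indeed $A_2 e^{-2t}$ with $A_2>0$, and the direct evaluation $J_1(0)=\tfrac{5\pi}{2p}>0$ is fine. But the sentence ``continuity of $J_1$ in $t$ yields a positive minimum on $[0,T^*]$'' is a non sequitur: continuity together with positivity at the endpoints does not exclude interior zeros. You acknowledge this yourself in the final paragraph, so the proposal as written is a sketch, not a proof; the entire difficulty of the theorem lies in the interval $[0,T^*]$ you have left open. Your second proposed remedy (monotonicity of $e^{6t}J_1(t)$) is attractive because the $\psi=6$ level drops out of $6J_1+J_1'$, but it replaces one uncontrolled remainder by another: the tail $\sum_{\psi\ge 8}(6-\psi)a_\psi e^{-\psi t}$ carries the unbounded weight $\psi-6$, so the crude $\ell^1$ bound is \emph{worse} than before, and you are back to the same problem.

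The paper's proof is exactly your first proposed remedy, carried out in full. It expands $J_1(t)=\tfrac{2\pi}{p}J_{10}(t)+\tfrac{\pi}{2}(2J_{11}(t)+J_{12}(t))$ with $J_{10},J_{11},J_{12}$ explicit double Fourier sums, then establishes positivity not by a leading-term-plus-remainder split but by a combinatorial sign analysis: the lattice $\mathbb N\times\mathbb N$ is partitioned into triangles bounded by lines $m+l=pa$, the sign pattern of $c(m)d(l)c(m+l)$ and $c(m)c(l)d(m+l)$ on each triangle is read off from the factor $\sin(\pi k/p)$, and negative triangles are paired with positive ones via explicit monotonicity estimates on the rational parts $c_1(k)=1/(k(k^2-p^2))$, $d_1(k)=k/((k^2-p^2)(k^2-4p^2))$. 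Each such pairing is an elementary but delicate inequality (nine lemmas in all), and only after every negative block has been absorbed does one extract the single term $\tfrac{\pi}{2}(2F_{11}(1,1;t)+F_{12}(1,1;t))=C_1e^{-6t}$ as a lower bound. There is no shortcut here that avoids this term-by-term work; your outline correctly anticipates its shape but does none of it.
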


Let us divide the proof into several steps.

\subsection{Fourier decomposition of $J_1(t)$}\label{s4.1}
The solutions of the one dimensional heat equation
 $$\partial_t S-\partial_{xx}S=0$$
with periodic boundary condition and initial conditions $S(t,x)|_{t=0}=\chi_{\frac{\pi}{p}}\cdot(1+\cos p\xi)$ and $S(t,x)|_{t=0}=\chi_{\frac{\pi}{p}}\cdot(\cos p\xi+\cos 2p\xi)$ can be represented as

\begin{equation}\label{S_1cos}
S(t, x;\chi_{\frac{\pi}{p}}\cdot(1+\cos p\xi))=\frac{1}{p}+\sum_{k=1}^{\infty}c(k)\cos kx e^{-k^2t};
\end{equation}

\begin{equation}\label{S_coscos2}
S(t, x;\chi_{\frac{\pi}{p}}\cdot(\cos p\xi+\cos 2p\xi))=\sum_{k=1}^{\infty}d(k)\cos kx e^{-k^2t},
\end{equation}
where

\begin{equation}\label{c_k}
c(k)=\left\{\begin{array}{rl}
        &\dfrac{2p^2 \sin\frac{\pi k}{p}}{\pi k(p^2-k^2)},  k\neq p,\\
        &\dfrac{1}{p}, k = p,
            \end{array}\right.
\end{equation}
and
\begin{equation}\label{d_k}
d(k)=\left\{\begin{array}{rl}
        &\dfrac{6p^2k \sin\frac{\pi k}{p}}{\pi(p^2-k^2)(4p^2-k^2)}, k\neq p, k\neq 2p,\\
        &\dfrac{1}{p}, k = p, 2p
            \end{array}\right.
\end{equation}
are the Fourier coefficients of $\chi_{\frac{\pi}{p}}\cdot(1+\cos p\xi)$ and $\chi_{\frac{\pi}{p}}\cdot(\cos p\xi+\cos 2p\xi)$, where $\chi_{\frac{\pi}{p}}=\chi_{\frac{\pi}{p}}(\xi)$ was defined in \eqref{def_chi}, correspondingly.

Therefore, in virtue of \eqref{J_1} functional $J_1(t)$ is equal to
\begin{equation*}
\begin{split}
J_1(t)&=\int_{-\pi}^{\pi}\left(\frac{1}{p}+\sum_{k=1}^{+\infty}c(k)\cos kx\cdot e^{-k^2t}\right)^2\cdot\left(\sum_{l=1}^{+\infty}d(l)\cos lx\cdot e^{-l^2t}\right)dx=\\
&\int_{-\pi}^{\pi}\left(\frac{1}{p^2}\sum_{l=1}^{+\infty}d(l)\cos lx\cdot e^{-l^2t}\right.+\\
&\hphantom{\frac{1}{4}\sum_{k,m,l=1}^{+\infty}}\frac{1}{p}\sum_{m,l=1}^{+\infty}c(m)d(l)(\cos (m+l)x+\cos(m-l)x) e^{-(m^2+l^2)t}+\\
&\hphantom{\frac{1}{4}\sum_{k,m,l=1}^{+\infty}}\frac{1}{4}\sum_{k,m,l=1}^{+\infty}c(k)c(m)d(l)(\cos (k+m+l)x+\cos(k-m-l)x+\\
&\left.\hphantom{\frac{1}{4}\sum_{k,m,l=1}^{+\infty}}\cos(k+m-l)x+\cos(k-m+l)x)e^{-(k^2+m^2+l^2)t}\right)dx
\end{split}
\end{equation*}

Since
\begin{equation*}
\int_{-\pi}^{\pi}\cos nxdx=\left\{\begin{array}{rl}
        &0, n\neq 0,\\
        &2\pi, n = 0,
            \end{array}\right.
\end{equation*}
we get that
\begin{equation}\label{J1_series}
J_1(t)=\frac{2\pi}{p}J_{10}(t)+\frac{\pi}{2}(2J_{11}(t)+J_{12}(t)),
\end{equation}
where
\begin{equation}\label{J10_series}
J_{10}(t)=\sum_{m=1}^{\infty}c(m)d(m)e^{-2m^2t},
\end{equation}
\begin{equation}\label{J11_series}
J_{11}(t)=\sum_{m,l=1}^{\infty}c(m)d(l)c(m+l)e^{-2(m^2+l^2+ml)t}=:\sum_{m,l=1}^{\infty}F_{11}(m,l;t),
\end{equation}
and
\begin{equation}\label{J12_series}
J_{12}(t)=\sum_{m,l=1}^{\infty}c(m)c(l)d(m+l)e^{-2(m^2+l^2+ml)t}=:\sum_{m,l=1}^{\infty}F_{12}(m,l;t).
\end{equation}
Below we will prove below the positiveness of functions
\eqref{J10_series} - \eqref{J12_series}.


\subsection{Positiveness of $J_{10}$}

\begin{lem}\label{J1series_sum1pos}
The following inequality is true:
\begin{equation}\label{J1ineq1}
\sum_{m=1}^{\infty}c(m)d(m)e^{-2m^2t}>0.
\end{equation}
\end{lem}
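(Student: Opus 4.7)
The plan is to split the series into its positive and negative parts and exploit a strict separation of the exponents to prove monotonicity of their ratio, combined with a direct computation at $t=0$ via Parseval's identity.

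First, I would analyse the sign of $c(m)d(m)$ using \eqref{c_k}, \eqref{d_k}. For $m \notin \{p, 2p, 3p, \dots\}$ one computes
\[
c(m)d(m) = \frac{12 p^4 \sin^2(\pi m/p)}{\pi^2 (p^2 - m^2)^2 (4p^2 - m^2)},
\]
whose only factor that can change sign is $4p^2 - m^2$. Hence $c(m)d(m) > 0$ for $1 \le m < 2p$ with $m\neq p$, and $c(m)d(m) < 0$ for $m > 2p$ with $p \nmid m$. The limiting cases give $c(p)d(p) = 1/p^2 > 0$ and $c(kp)d(kp) = 0$ for every $k \ge 2$. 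I therefore write $J_{10}(t) = P(t) - N(t)$, where
\[
P(t) := \sum_{m=1}^{2p-1} c(m)d(m) e^{-2m^2 t}, \qquad N(t) := \sum_{\substack{m \ge 2p+1 \\ p \nmid m}} |c(m)d(m)| e^{-2m^2 t}.
\]
Both are sums of \emph{positive} exponentials; the key structural fact is that every exponent appearing in $P$ is at most $2(2p-1)^2$, while every exponent in $N$ is at least $2(2p+1)^2$, giving a uniform gap of at least $16p$.

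Writing $P(t) = \sum_i a_i e^{-\alpha_i t}$ and $N(t) = \sum_j b_j e^{-\beta_j t}$ with $a_i, b_j > 0$ and $\alpha_i < \beta_j$ for all pairs $(i,j)$, a direct computation yields
\[
\frac{d}{dt}\!\left(\frac{P}{N}\right) = \frac{1}{N(t)^2}\sum_{i,j} a_i b_j (\beta_j - \alpha_i)\, e^{-(\alpha_i+\beta_j) t} > 0,
\]
so $P/N$ is \emph{strictly increasing} on $[0,\infty)$. The bound $|c(m)d(m)| = O(m^{-6})$ gives uniform absolute convergence of the series and justifies the term-by-term differentiation. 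To evaluate $J_{10}(0)$, I apply Parseval to the Fourier series $S(t,x;\chi_{\frac{\pi}{p}}(1+\cos p\xi)) = \frac{1}{p} + \sum c(m)\cos(mx) e^{-m^2 t}$ and $S(t,x;\chi_{\frac{\pi}{p}}(\cos p\xi + \cos 2p\xi)) = \sum d(m)\cos(mx) e^{-m^2 t}$; the constant $1/p$ is orthogonal to the second series, which has zero mean, giving
\[
J_{10}(0) = \frac{1}{\pi}\int_{-\pi/p}^{\pi/p}(1+\cos p\xi)(\cos p\xi + \cos 2p\xi)\, d\xi = \frac{1}{p},
\]
where the integrand is expanded as $\tfrac{1}{2} + \tfrac{3}{2}\cos p\xi + \tfrac{3}{2}\cos 2p\xi + \tfrac{1}{2}\cos 3p\xi$ and only the constant term survives integration.

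Since $N(0) > 0$ (for instance the term $m = 2p+1$ contributes strictly), $P(0) - N(0) = 1/p$ forces $P(0)/N(0) > 1$; combined with strict monotonicity, $P(t)/N(t) > 1$ for every $t \ge 0$, which is exactly $J_{10}(t) > 0$. The main obstacle is completing the sign analysis cleanly — in particular handling the exceptional indices $m \in \{p, 2p, 3p, \dots\}$ where the closed-form expression for $c(m)d(m)$ degenerates — after which the separation-of-exponents monotonicity argument is essentially mechanical.
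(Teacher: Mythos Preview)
Your argument is correct and takes a genuinely different route from the paper's. The paper groups the series by residue classes modulo $p$: for each $m\in\{1,\dots,p-1\}$ it shows, via explicit rational-function estimates, that the single positive term $c(p-m)d(p-m)$ already dominates the tail $\sum_{a\ge 2}|c(ap+m)d(ap+m)|$ (the exponential factors only strengthening the inequality), summing these ratios to a number strictly below $1$. Your approach instead exploits the strict gap between the exponents in $P(t)$ (at most $2(2p-1)^2$) and in $N(t)$ (at least $2(2p+1)^2$) to get monotonicity of $P/N$, and replaces all the ratio bookkeeping by the single evaluation $J_{10}(0)=1/p$ via Parseval. This is considerably cleaner for the purpose at hand; the paper's termwise domination gives a bit more (the inequality would survive even if the exponentials were dropped), but that extra information is not used anywhere. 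One small remark: your monotonicity step requires $N\not\equiv 0$, which fails for $p=1$; however in the paper's setting $p\ge 2$, so the case distinction is harmless.
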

\begin{proof}
Let us write sum \eqref{J1ineq1} as
\begin{equation}\label{J1ineq11}
\begin{split}
&\sum_{m=1}^{\infty}c(m)d(m)e^{-2m^2t}=\sum_{m=1}^{p-1}c(p-m)d(p-m)e^{-2(p-m)^2t}+\frac{1}{p^2}e^{-2p^2t}+\\
&\sum_{m=p+1}^{2p-1}c(m)d(m)e^{-2m^2t}+\sum_{a=2}^{+\infty}\sum_{m=1}^{p-1}c(ap+m)d(ap+m)e^{-2(ap+m)^2t}.
\end{split}
\end{equation}
According to \eqref{c_k}-\eqref{d_k}, all summands but the last one in \eqref{J1ineq11} are positive.

Let us consider the following relation:
\begin{equation}\label{J10rel1}
\begin{split}
&\frac{|c(ap+m)d(ap+m)|}{c(p-m)d(p-m)}<\\
&\frac{m^2(2p-m)^2(p+m)(3p-m)}{((a-1)p+m)^2((a+1)p+m)^2((a-2)p+m)((a+2)p+m)}
\end{split}
\end{equation}

For $a=2$ the right hand side of \eqref{J10rel1} turns into
\begin{equation}\label{J10rel2}
\frac{m(2p-m)^2(3p-m)}{(p+m)(3p+m)^2(4p+m)}=\frac{x}{1+x}\cdot\frac{(2-x)^2(3-x)}{(3+x)^2(4+x)},
\end{equation}
where $x=m/p$, $x\in(0,1)$.

It is easy to see, that the first multiple in the right hand side of \eqref{J10rel2} is an increasing function of $x$, and the second is a decreasing function of $x$, therefore,
\begin{equation}\label{a_eq_2}
\frac{m(2p-m)^2(3p-m)}{(p+m)(3p+m)^2(4p+m)}<\frac{1}{2}\cdot\frac{4\cdot 3}{9\cdot 4}=\frac{1}{6}.
\end{equation}

For $a\geq 3$ we get
\begin{equation}\label{J10rel3}
\begin{split}
&\frac{m^2(2p-m)^2(p+m)(3p-m)}{((a-1)p+m)^2((a+1)p+m)^2((a-2)p+m)((a+2)p+m)}=\\
&\frac{x^2(1+x)}{(a-1+x)^2(a-2+x)}\cdot\frac{(2-x)^2(3-x)}{(a+1+x)^2(a+2+x)},
\end{split}
\end{equation}
where $x=m/p$, $x\in(0,1)$. Due to monotonicity,
\begin{equation}\label{J10rel4}
\frac{x^2(1+x)}{(a-1+x)^2(a-2+x)}<\frac{2}{a^2(a-1)},\,\, \frac{(2-x)^2(3-x)}{(a+1+x)^2(a+2+x)}<\frac{12}{(a+1)^2(a+2)}.
\end{equation}
Relations \eqref{J10rel1}-\eqref{J10rel4} and inequality
$(a-1)(a+2)>a^2$ for $a\ge 3$ imply
\begin{equation}\label{5.17}
\begin{split}
&\frac{|\sum_{a=2}^{+\infty}c(ap+m)d(ap+m)|}{c(p-m)d(p-m)}<
\frac{1}{6}+\sum_{a=3}^{+\infty}\frac{24}{(a-1)a^2(a+1)^2(a+2)}<\\
&\frac{1}{6}+\sum_{a=3}^{+\infty}\frac{24}{a^6}<\frac{1}{6}+\int_2^\infty
\frac{24}{x^6}dx=\frac{1}{6}+\frac{3}{20}<1.
\end{split}
\end{equation}
Hence \eqref{J1ineq1} follows from \eqref{5.17}.
\end{proof}

\subsection{Sign distribution in $J_{11}$ and $J_{12}$}

Let us determine how the signs of the summands in $J_{11}(t)$ and
$J_{12}(t)$ are distributed. We will need the following lemma, that
is a direct corollary of definitions \eqref{c_k} and \eqref{d_k} of functions
$c(k)$ and $d(k)$:
\begin{lem}\label{cd_signs}
Let $c(k), d(k), k\in \mathbb{N}$ be the functions, defined in
\eqref{c_k}, \eqref{d_k} correspondingly. Then
\begin{equation*}
\begin{split}
   i&)\;  c(k)=0\quad \text {\rm for}\quad k=pl \quad\text{\rm where}\quad l\in \mathbb{N}\setminus\{1\},\\
   \hphantom{i)\;} &d(k)=0 \quad \text {\rm for}\quad k=pl \quad\text{\rm where}\quad l\in \mathbb{N}\setminus\{1,2\},\\
   ii&)\;  c(k)>0\quad \text{\rm for}\quad k\in (0,2p)\cup \{ \cup_{l\in \mathbb{N}}((2l+1)p,(2l+2)p)\\
   &d(k)>0\quad \text{\rm for}\quad k\in (0,3p)\cup\{\cup_{l\in \mathbb{N}}((2l+2)p,(2l+3)p)\}\\
   iii&)\;  c(k)<0\quad \text{\rm for} \quad k \in \cup_{l\in \mathbb{N}}(2lp,(2l+1)p)\\
   &d(k)<0\quad \text{\rm for}\quad k\in\cup_{l\in \mathbb{N}}((2l+1)p,(2l+2)p)
\end{split}
\end{equation*}
\end{lem}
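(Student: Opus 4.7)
The plan is to reduce the lemma to a sign analysis of the explicit formulas \eqref{c_k} and \eqref{d_k}. Away from the exceptional values $k=p$ (for $c$) and $k=p,2p$ (for $d$), both functions are rational multiples of $\sin(\pi k/p)$: the accompanying rational factor has the sign of $k(p^2-k^2)$ for $c$ and of $(p^2-k^2)(4p^2-k^2)$ for $d$, and each of these sign-carrying factors is constant on every interval $(mp,(m+1)p)$. The whole lemma is therefore a short tabulation.

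For part (i), I would observe that $\sin(\pi k/p)$ vanishes precisely when $p$ divides $k$, while $k(p^2-k^2)\neq 0$ for $k=lp$ with $l\geq 2$, and $(p^2-k^2)(4p^2-k^2)\neq 0$ for $k=lp$ with $l\geq 3$. Hence $c(lp)=0$ for $l\geq 2$ and $d(lp)=0$ for $l\geq 3$, as claimed.

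For parts (ii) and (iii) I would tabulate on each open interval $(mp,(m+1)p)$ the signs of $\sin(\pi k/p)$ (positive for $m$ even, negative for $m$ odd), of $p^2-k^2$ (positive on $(0,p)$, negative on $(p,\infty)$) and of $4p^2-k^2$ (positive on $(0,2p)$, negative on $(2p,\infty)$). On $(0,p)$ every factor is positive, so $c,d>0$. On $(p,2p)$ the sine and $p^2-k^2$ both flip sign, leaving $c,d>0$. On $(2p,3p)$ the factor $4p^2-k^2$ flips, giving $c<0$ but $d>0$. At the patched points $k=p,2p$ the definitions return $1/p>0$, which is consistent with their inclusion in the positivity sets. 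Once $k>2p$, both polynomial factors in each denominator have fixed sign (both negative for $c$, both positive for $d$), so the sign of each function just follows $\sin(\pi k/p)$ and alternates on successive intervals; this matches the unions in the lemma under the convention $\mathbb{N}=\{1,2,\ldots\}$, so those unions begin with $(3p,4p)$ and $(4p,5p)$.

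There is essentially no serious obstacle here, in keeping with the paper's description of the lemma as a direct corollary of \eqref{c_k}, \eqref{d_k}. The only points requiring mild care are consistently excluding the patched values $k=p,2p$ from the zero and negativity sets, and the starting index convention for $\mathbb{N}$ in the unions; both are already built into the statement.
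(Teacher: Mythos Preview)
Your proposal is correct and matches the paper's approach: the paper does not actually give a proof but simply describes the lemma as ``a direct corollary of definitions \eqref{c_k} and \eqref{d_k},'' and your argument is exactly that direct sign tabulation. One small expository slip: in your parenthetical ``(both negative for $c$, both positive for $d$)'' the signs are misstated---for $c$ the factors $k$ and $p^2-k^2$ have signs $+,-$ for $k>p$, and for $d$ the factors $p^2-k^2$ and $4p^2-k^2$ are both negative for $k>2p$---but in each case the product has the fixed sign you need, so the conclusion that the sign of $c$ and $d$ alternates with $\sin(\pi k/p)$ on successive intervals is unaffected.
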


The next two statements follow directly from Lemma \ref{cd_signs}:

\begin{lem}\label{lem_cdsigns1}
The signs of function $c(m)d(l)c(m+l)$ from \eqref{J11_series} are distributed as follows
(see Figure \ref{J11fig}):

i) For all $(m,l)\in(0,2p)\times(0,2p)$, $m+l<2p$ $c(m)d(l)c(m+l)>0$;

ii) In every square $${\{(m,l)\in (pa, p(a+1))\times
(pb,p(b+1))\}},$$ where ${a\in \mathbb{N}}$, ${b\in \{0\}\cup(\mathbb{N}\setminus\{1\})}$ or $a=0, b=1$
\begin{equation}\label{J11reg1}
     {\rm sign}\,c(m)d(l)c(m+l)=
       \begin{cases}
   +, & {\rm if}\; m+l<p(a+b+1) \\
   -, & {\rm if}\; m+l>p(a+b+1) \\
 \end{cases}
\end{equation}

iii) In every set $\{ (m,l)\in (pa, p(a+1))\times(pb,p(b+1))\}$, where $a=0, b\in \mathbb{N}\setminus\{1\}$ or $a\in \mathbb{N}, b = 1$,
\begin{equation}\label{J11reg2}
     {\rm sign}\,c(m)d(l)c(m+l)=
       \begin{cases}
   -, & {\rm if}\; m+l<p(a+b+1) \\
   +, & {\rm if}\; m+l>p(a+b+1) \\
 \end{cases}
\end{equation}
\end{lem}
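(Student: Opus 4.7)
The approach is direct case analysis on the intervals in which $m$, $l$, and $m+l$ lie. For each integer $a\ge 0$ let $\sigma_c(a)$ denote the sign of $c(k)$ on $(ap,(a+1)p)$ and similarly $\sigma_d(b)$ for $d(k)$. Lemma \ref{cd_signs} yields $\sigma_c(a)=+1$ when $a=0$ or $a$ is odd and $\sigma_c(a)=-1$ when $a\ge 2$ is even; $\sigma_d(b)=+1$ when $b\in\{0,1,2\}$ or $b$ is even, and $\sigma_d(b)=-1$ when $b\ge 3$ is odd. In particular, $\sigma_c$ changes sign at every multiple $kp$ with $k\ge 2$.

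For part i), if $(m,l)\in(0,2p)\times(0,2p)$ with $m+l<2p$, then $m$, $l$, and $m+l$ all lie in $(0,2p)$; by Lemma \ref{cd_signs} each of $c(m)$, $d(l)$, $c(m+l)$ is strictly positive, so the product is positive.

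For parts ii) and iii), fix a square $(pa,p(a+1))\times(pb,p(b+1))$ with $a+b\ge 1$. Then $m+l$ lies in one of the two open intervals $(p(a+b),p(a+b+1))$ or $(p(a+b+1),p(a+b+2))$, according to whether $m+l<p(a+b+1)$ or $m+l>p(a+b+1)$. Since $a+b+1\ge 2$, the sign flip of $c$ at $p(a+b+1)$ gives $\sigma_c(a+b+1)=-\sigma_c(a+b)$. Hence the sign of $c(m)d(l)c(m+l)$ is constant in each triangle and reverses across the diagonal $m+l=p(a+b+1)$, which is exactly the alternation asserted by \eqref{J11reg1}, \eqref{J11reg2}.

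It remains to verify that the lower-triangle sign $\sigma_c(a)\sigma_d(b)\sigma_c(a+b)$ equals $+1$ exactly on the $(a,b)$-ranges listed in part ii) and $-1$ exactly on those of part iii). I would do this by tabulating the four residue classes of $(a,b)\bmod 2$, separately handling the low-index exceptions $a\in\{0,1\}$ of $\sigma_c$ and $b\in\{1,2\}$ of $\sigma_d$. The main obstacle is nothing conceptual but the bookkeeping: the partition in parts ii)--iii) is asymmetric in $a$ and $b$ precisely because $\sigma_c$ and $\sigma_d$ have different anomalous initial intervals, which is why, for instance, $(a,b)=(0,1)$ falls under part ii) while $(a,b)=(0,2)$ belongs to part iii). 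Once the sub-cases are enumerated, each sign agrees with the claimed classification and the lemma follows.
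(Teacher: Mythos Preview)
Your approach is correct and is essentially what the paper does: it states that Lemma~\ref{lem_cdsigns1} follows directly from Lemma~\ref{cd_signs}, without giving any further detail. Your introduction of the sign functions $\sigma_c(a)$ and $\sigma_d(b)$, the observation that $c$ flips sign at every multiple $kp$ with $k\ge 2$ (so the product reverses across the diagonal $m+l=p(a+b+1)$ whenever $a+b\ge 1$), and the residual tabulation of $\sigma_c(a)\sigma_d(b)\sigma_c(a+b)$ is exactly the direct case analysis the paper has in mind; you have merely written it out more explicitly than the paper does.
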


\begin{lem}\label{lem_cdsigns2}
The signs of function $c(m)c(l)d(m+l)$ from \eqref{J12_series} are distributed as follows (see Figure \ref{J12fig}):


i) In every square $\{(k,l)\in (pa, p(a+1))\times
(pb,p(b+1))\}$, where $a,b\in \mathbb{N}$
\begin{equation}\label{sq_ab_g2}
     {\rm sign}c(m)c(l)d(m+l)=
       \begin{cases}
   +, & {\rm if}\; m+l<p(a+b+1) \\
   -, & {\rm if}\; m+l>p(a+b+1) \\
 \end{cases}
\end{equation}
ii) In every set $\{(m,l)\in (pa, p(a+1))\times
(0,p)\cup(0,p)\times (pa,p(a+1))\}$, where $a\in \mathbb{N}, a\geq2$,
\begin{equation}\label{2.2}
     {\rm sign}c(m)c(l)d(m+l)=
       \begin{cases}
   -, & {\rm if}\; m+l<p(a+1) \\
   +, & {\rm if}\; m+l>p(a+1) \\
 \end{cases}
\end{equation}


iii) For all $(k,l)\in (0,2p)\times (0,2p), k+l<3p$
\begin{equation}\label{2.4}
     c(m)c(l)d(m+l)>0.
     \end{equation}
\end{lem}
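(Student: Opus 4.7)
The plan is a direct case analysis driven by Lemma \ref{cd_signs}. First I would read off a uniform parity rule for $c$ and $d$ on the dyadic intervals $(pa, p(a+1))$, and then for each of the three regions in the statement I would multiply the three signs and check that the announced sign agrees.

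From parts (ii)--(iii) of Lemma \ref{cd_signs} (or equivalently from the explicit formulas \eqref{c_k}--\eqref{d_k}) I would extract the following rules. The function $c$ is positive on both of the initial intervals $(0,p)$ and $(p,2p)$, and for $a \geq 1$ the sign of $c$ on $(pa, p(a+1))$ equals $(-1)^{a+1}$; the function $d$ is positive on the entire initial block $(0, 3p)$, and for $a \geq 2$ the sign of $d$ on $(pa, p(a+1))$ equals $(-1)^a$. These two rules are what powers the whole argument.

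With these in hand, claim (i) falls out as follows. For $a, b \geq 1$ the bound $a+b \geq 2$ puts all three factors in the alternating regime, so $\operatorname{sign}\, c(m)c(l) = (-1)^{a+1}(-1)^{b+1} = (-1)^{a+b}$, while $m+l \in (p(a+b), p(a+b+2))$ gives $\operatorname{sign}\, d(m+l) = (-1)^{a+b}$ if $m+l < p(a+b+1)$ and $(-1)^{a+b+1}$ otherwise; the product is therefore $+$ or $-$ exactly as stated. Claim (ii) is symmetric, so it suffices to treat $(m,l) \in (pa, p(a+1)) \times (0, p)$ with $a \geq 2$: then $c(l) > 0$, $\operatorname{sign}\, c(m) = (-1)^{a+1}$, and the bound $a \geq 2$ is precisely what is needed for the alternating rule for $d$ to apply on both $(pa, p(a+1))$ and $(p(a+1), p(a+2))$, yielding product sign $-$ when $m+l < p(a+1)$ and $+$ when $m+l > p(a+1)$. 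Claim (iii) is immediate: $m, l \in (0, 2p)$ places $c(m), c(l)$ in their initial positive intervals, and $m+l < 3p$ keeps $d(m+l)$ in the initial positive block of $d$.

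There is essentially no obstacle beyond careful bookkeeping; the only nontrivial observation is that the initial intervals of $d$ (the whole block $(0, 3p)$) form the exception to the alternating pattern, so the three hypotheses $a \geq 1$ and $b \geq 1$ in (i), $a \geq 2$ in (ii), and $m+l < 3p$ in (iii) are exactly the conditions that match each region with the correct regime of $d$. The boundary lines $m = pa$, $l = pb$, or $m+l = pk$ where strict inequality fails lie in the vanishing set of $c$ or $d$ by part (i) of Lemma \ref{cd_signs}, and hence do not need to be tracked separately.
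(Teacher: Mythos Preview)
Your proposal is correct and follows exactly the route the paper itself takes: the paper states that this lemma ``follows directly from Lemma~\ref{cd_signs}'' and gives no further argument, so your parity bookkeeping is simply the spelled-out version of that one-line justification. The only minor imprecision is your closing remark that the boundary values always lie in the vanishing set---$c(p)$, $d(p)$, and $d(2p)$ are in fact nonzero---but since the regions in the statement are open and those exceptional points are either excluded or keep the correct sign, this does not affect the argument.
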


\begin{figure}[h!]
\begin{multicols}{2}
\hfill
\includegraphics[scale=0.7]{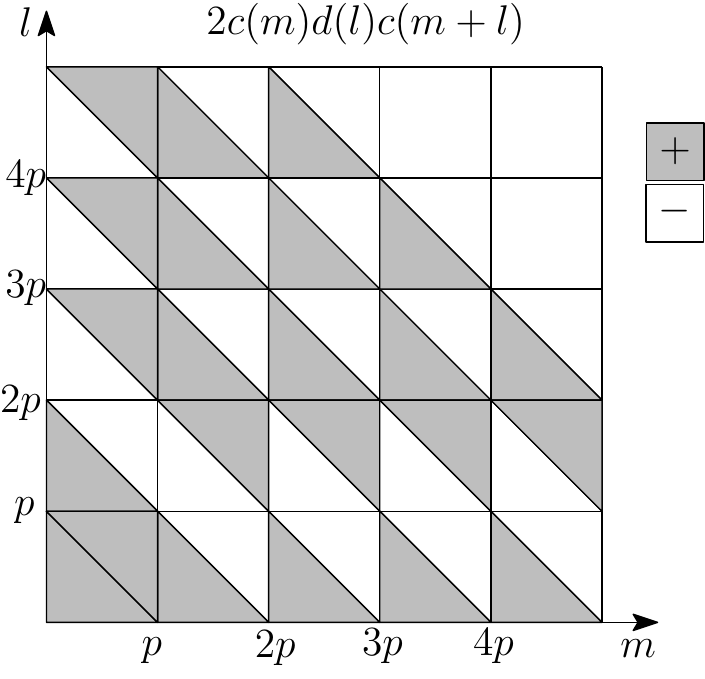}
\hfill
\caption{Signs of $c(m)d(l)c(m+l)$}
\label{J11fig}
\hfill
\includegraphics[scale=0.7]{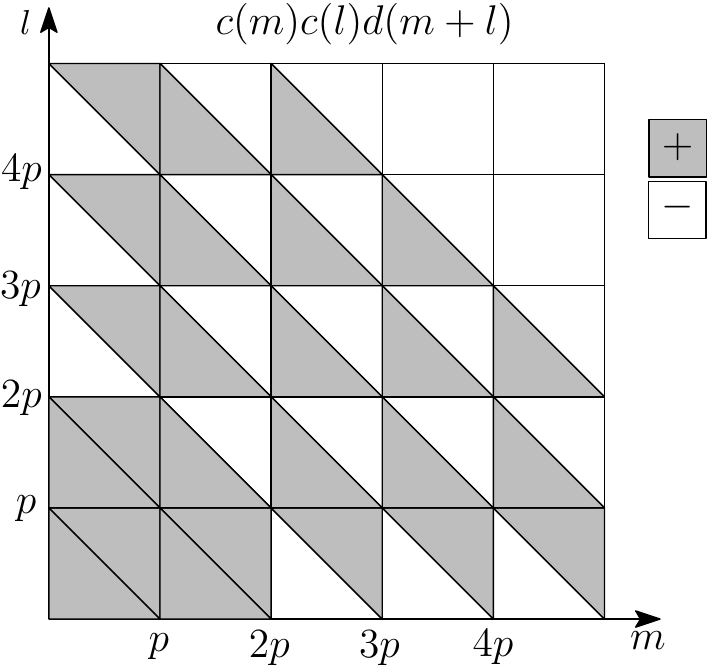}
\hfill
\caption{Signs of $c(m)c(l)d(m+l)$}\label{J12fig}
\end{multicols}
\end{figure}




\subsection{Positiveness of certain part of sum $J_{12}$ }\label{s4.4}
Let us denote the set of all points
from $\bN \times \bN$ belonging to the triangle with vertices at points
${(m_1,l_1)}$, ${(m_2,l_2)}$, ${(m_3,l_3)}$  by $\{(m_1,l_1),(m_2,l_2), (m_3,l_3)\}$, and use notation $$J_{11}(\{(m_1,l_1),(m_2,l_2), (m_3,l_3)\};t)$$, $$J_{12}(\{(m_1,l_1),(m_2,l_2), (m_3,l_3)\};t)$$ for part of the sums $J_{11}$, $J_{12}$, corresponding to this set.

First let us prove the following lemma:
\begin{lem}\label{J_12poslem1}
The following inequality is true:
\begin{equation}\label{J_12pos1}
\sum_{m,l=p+1}^{\infty}F_{12}(m,l;t) > 0,
\end{equation}
where $F_{12}(m,l;t)$ was defined in \eqref{J12_series}.
\end{lem}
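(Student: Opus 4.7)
The argument is a pairing inside each integer square $Q_{a,b}=(pa,p(a+1))\times(pb,p(b+1))$ of the lattice region $\{m,l\ge p+1\}$. By part~(i) of Lemma~\ref{lem_cdsigns2}, only the squares with $a,b\ge 1$ lie in the summation region, and within each such square $F_{12}(m,l;t)=c(m)c(l)d(m+l)e^{-2(m^2+ml+l^2)t}$ is positive on the triangle $T^+_{a,b}=\{m+l<p(a+b+1)\}$ and negative on $T^-_{a,b}=\{m+l>p(a+b+1)\}$. On the anti-diagonal $m+l=p(a+b+1)$ one has $a+b+1\ge 3$, hence $d(m+l)=0$ by \eqref{d_k}; lattice points with $m$ or $l$ a positive multiple of $p$ have $c(m)c(l)=0$ by \eqref{c_k}. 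I introduce the involution
$$\tau(m,l)=\bigl(p(a+b+1)-l,\ p(a+b+1)-m\bigr),$$
which maps $Q_{a,b}$ onto itself, swaps the two open triangles, and has no fixed points off the anti-diagonal. The goal is to show $|F_{12}(\tau(m,l);t)|<|F_{12}(m,l;t)|$ for every $(m,l)\in T^+_{a,b}$ and every $t\ge 0$; pairwise positivity then follows immediately.

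Write $s=m+l$, $s_*=m^*+l^*=2p(a+b+1)-s$, where $(m^*,l^*)=\tau(m,l)$. Starting from the identities $m^*+l=l^*+m=p(a+b+1)$ one checks $m^*l^*-ml=p(a+b+1)(p(a+b+1)-s)$, and combining with $s_*^2-s^2=4p(a+b+1)(p(a+b+1)-s)$ gives
$$\bigl((m^*)^2+m^*l^*+(l^*)^2\bigr)-(m^2+ml+l^2)=3p(a+b+1)\bigl(p(a+b+1)-s\bigr),$$
which is strictly positive on $T^+_{a,b}$; thus the exponential weight in $F_{12}$ is strictly larger at $(m,l)$ than at $(m^*,l^*)$ for every $t\ge 0$. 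For the Fourier coefficients, the same congruences $m^*+l,\ l^*+m,\ s_*+s\in p\mathbb Z$ combined with $\sin(N\pi-\theta)=(-1)^{N+1}\sin\theta$ yield $|\sin(\pi m^*/p)|=|\sin(\pi l/p)|$, $|\sin(\pi l^*/p)|=|\sin(\pi m/p)|$, $|\sin(\pi s_*/p)|=|\sin(\pi s/p)|$. All sines then cancel in the magnitude ratio, reducing $|c(m^*)c(l^*)d(s_*)|/|c(m)c(l)d(s)|$ to the rational product $R_mR_lR_s$ with
$$R_m=\frac{m(m^2-p^2)}{m^*((m^*)^2-p^2)},\quad R_l=\frac{l(l^2-p^2)}{l^*((l^*)^2-p^2)},\quad R_s=\frac{s_*(s^2-p^2)(s^2-4p^2)}{s(s_*^2-p^2)(s_*^2-4p^2)}.$$
Since $m^*-m=l^*-l=p-((m-pa)+(l-pb))>0$ on $T^+_{a,b}$ and the map $t\mapsto t(t^2-p^2)$ is strictly increasing on $(p,\infty)$ (derivative $3t^2-p^2>0$), both $R_m<1$ and $R_l<1$. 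For $R_s$, the function $h(t)=t^3-5p^2t+4p^4/t=(t^2-p^2)(t^2-4p^2)/t$ has $h'(t)=3t^2-5p^2-4p^4/t^2$, which is positive on $(2p,\infty)$; since $s<s_*$ and both exceed $2p$, the monotonicity of $h$ gives $R_s<1$.

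Multiplying the coefficient and exponential bounds yields $|F_{12}(m^*,l^*;t)|<|F_{12}(m,l;t)|$ uniformly in $t\ge 0$, so every paired sum $F_{12}(m,l;t)+F_{12}(m^*,l^*;t)$ is strictly positive. Summing over all pairs in each $Q_{a,b}$ and over all $a,b\ge 1$, while noting that the anti-diagonal and the axis-multiple lattice points contribute zero, proves \eqref{J_12pos1}. The main technical obstacle is establishing $R_mR_lR_s<1$ uniformly: the three monotonicity facts are elementary, but each one depends critically on the lower bounds $m,l>p$ and $s>2p$, which hold precisely because of the restriction $m,l\ge p+1$ and $a,b\ge 1$; near the boundary of the summation region the factors $R_m,R_l,R_s$ individually approach $1$, so the argument is tight and cannot be weakened by removing the hypothesis $m,l\ge p+1$.
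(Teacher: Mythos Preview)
Your proof is correct and follows essentially the same approach as the paper: both decompose the region $\{m,l\ge p+1\}$ into squares $Q_{a,b}$ with $a,b\ge 1$, pair each positive summand in the lower triangle with a negative one in the upper triangle via the \emph{same} bijection (your reflection $\tau(m,l)=(p(a+b+1)-l,\,p(a+b+1)-m)$ coincides with the paper's substitutions $s=p-m_1-l_1$, $m=m_1$ versus $s=m_1+l_1-p$, $m=m_1-s$), and conclude by the monotonicity of $k(k^2-p^2)$ on $(p,\infty)$ and of $(k^2-p^2)(k^2-4p^2)/k$ on $(2p,\infty)$ together with the exponential weight. Your description of the pairing as a reflection about the anti-diagonal is tidier, but the mathematical content is identical.
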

\begin{proof}

Let us consider sets $\{(pa,p(a+1))\times(pb,p(b+1))\}, a, b\in \mathbb{N}$.
According to Lemma \ref{lem_cdsigns2}, all summands in triangle $\{(pa,pb),(pa,p(b+1)),(p(a+1),pb)\}$ are positive,
and all summands in triangle ${\{(pa,p(b+1)),(p(a+1),p(b+1)),(p(a+1),pb)\}}$ are negative.
If we show that the value of positive summands exceeds the value of negative summands, that would prove the
positiveness of the part of the sum $J_{12}$, corresponding to $m, l> p$.
Let us perform the change of variables $m=pa+m_1,\; l=pb+l_1$. Then
\begin{equation}\label{J12_1pos}
\begin{split}
&J_{12}(\{(pa, pb),(p(a+1), pb), (pa,p(b+1))\},t)= \\
&\sum^{p-1}_{\genfrac{}{}{0pt}{}{m_1,l_1=1}{m_1+l_1<p}}
 c(pa+m_1)c(pb+l_1)d(p(a+b)+m_1+l_1),
\end{split}
\end{equation}
\begin{equation}\label{J12_1neg}
\begin{split}
&J_{12}(\{(pa, p(b+1)),(p(a+1), pb), (p(a+1),p(b+1))\},t) = \\
&\sum^{p-1}_{\genfrac{}{}{0pt}{}{m_1,l_1=1}{m_1+l_1>p}}
 c(pa+m_1)c(pb+l_1)d(p(a+b)+m_1+l_1)
\end{split}
\end{equation}
Next, let us switch to variables  $s=p-m_1-l_1$, $m=m_1$ in \eqref{J12_1pos} and to variables $s=m_1+l_1-p$, $m=m_1-s$ in \eqref{J12_1neg} and consider the following relation:
\begin{equation} \label{J12_pos_to_J12_neg}
\begin{split}
&\frac{F_{12}(pa+m,p(b+1)-m-s;t)}{|F_{12}(pa+m+s,p(b+1)-m;t)|}=\\
&\frac{c_1(pa+m)c_1(p(b+1)-m-s)d_1(p(a+b+1)-s)}{c_1(pa+m+s)c_1(p(b+1)-m)d_1(p(a+b+1)+s)}\cdot e^{6(a+b+1)ps}>\\
&\frac{c_1(pa+m)c_1(p(b+1)-m-s)d_1(p(a+b+1)-s)}{c_1(pa+m+s)c_1(p(b+1)-m)d_1(p(a+b+1)+s)},
\end{split}
\end{equation}
where
\begin{equation}\label{c1_k}
c_1(k)=\dfrac{1}{k(k^2-p^2)},
\end{equation}
and
\begin{equation}\label{d1_k}
d_1(k)=\dfrac{k}{(k^2-p^2)(k^2-4p^2)}.
\end{equation}

Since $c_1(k)$ is decreasing for $k>p$ and $d_1(k)$ is decreasing for $k>2p$, the numerator of the fraction in the last line of \eqref{J12_pos_to_J12_neg} is greater than the denominator. Therefore, the fraction is greater than one, and $${J_{12}(\{(pa, pb),(p(a+1),pb), (pa,p(b+1))\};t)}$$ is greater than $${|J_{12}(\{(pa,p(b+1)),(p(a+1), pb), (p(a+1),p(b+1))\};t)|}.$$
\end{proof}

Next, let us prove the following statement:
\begin{lem}\label{J_12poslem2}
The following inequality is true:
\begin{equation}\label{J12_2pos1}
\frac{3}{10}J_{12}(\{(p,0),(2p,0),(p,p)\},t)+\sum_{a=2}^{+\infty}J_{12}(\{(pa,0),(p(a+1),0),(pa,p)\},t)>0,
\end{equation}
\begin{equation}\label{J12_2pos2}
\frac{3}{10}J_{12}(\{(0,p),(0,2p),(p,p)\},t)+\sum_{a=2}^{+\infty}J_{12}(\{(0,pa),(0,p(a+1)),(p,pa)\},t)>0.
\end{equation}
\end{lem}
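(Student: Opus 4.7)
The two inequalities \eqref{J12_2pos1}--\eqref{J12_2pos2} are symmetric to one another: since $F_{12}(m,l;t)=c(m)c(l)d(m+l)e^{-2(m^2+l^2+ml)t}$ is invariant under $(m,l)\mapsto(l,m)$, and this swap bijects the triangles $\{(pa,0),(p(a+1),0),(pa,p)\}$ onto $\{(0,pa),(0,p(a+1)),(p,pa)\}$, it suffices to prove \eqref{J12_2pos1}.

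My plan is a pointwise comparison. I parametrize each interior lattice point of the $a$-th triangle by $(m,l)=(pa+m_1,l_1)$ with $m_1,l_1\geq 1$ and $m_1+l_1\leq p-1$. By Lemma \ref{lem_cdsigns2}(iii) the $a=1$ summand $F_{12}(p+m_1,l_1;t)$ is positive, while by Lemma \ref{lem_cdsigns2}(ii) the summands for $a\geq 2$ are negative. Thus \eqref{J12_2pos1} follows once I show, for each admissible pair $(m_1,l_1)$ and every $t\geq 0$,
\begin{equation*}
\sum_{a=2}^{\infty}|F_{12}(pa+m_1,l_1;t)|\;\leq\;\tfrac{3}{10}\,F_{12}(p+m_1,l_1;t).
\end{equation*}
Since $(pa+m_1)^2+(pa+m_1)l_1-(p+m_1)^2-(p+m_1)l_1=(a-1)p\bigl((a+1)p+2m_1+l_1\bigr)>0$ for $a\geq 2$, each ratio in the sum is a decreasing function of $t$, so the worst case occurs at $t=0$ and the task reduces to the $t$-independent inequality
\begin{equation*}
\sum_{a=2}^{\infty}\frac{|c(pa+m_1)|\,|d(pa+m_1+l_1)|}{c(p+m_1)\,d(p+m_1+l_1)}\;\leq\;\tfrac{3}{10}.
\end{equation*}

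Using the identity $|\sin(\pi(pa+k)/p)|=\sin(\pi k/p)$ for $0<k<p$, the sine factors in \eqref{c_k}, \eqref{d_k} cancel and each ratio becomes an explicit rational function of $x:=m_1/p$ and $y:=(m_1+l_1)/p$ with $0<x<y<1$. A direct computation yields
\begin{equation*}
\frac{|c(pa+m_1)|}{c(p+m_1)}=\frac{x(1+x)(2+x)}{(a-1+x)(a+x)(a+1+x)},
\end{equation*}
and an analogous (slightly longer) formula for the $d$-ratio, in which the factor $(1-y)$ from the denominator of $d(p+m_1+l_1)$ appears in the numerator of each ratio. In particular the dominant $a=2$ contribution equals $\frac{x}{3+x}\cdot\frac{(2+y)^2(1-y)}{(1+y)^2(4+y)}$, while the terms with $a\geq 3$ decay like $O(a^{-6})$ uniformly in $(x,y)$. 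Using monotonicity to replace $x$ by $y$ in the $c$-ratio, the $a=2$ term is bounded by $g_2(y):=\frac{y(1-y)(2+y)^2}{(3+y)(1+y)^2(4+y)}$, whose maximum on $(0,1)$ lies below $0.05$ (attained near $y\approx 0.4$); the tail $\sum_{a\geq 3}$ is then controlled by a simple integral comparison in the spirit of \eqref{5.17}, producing a negligible additional contribution. Adding these pieces yields a total strictly less than $3/10$ with substantial slack.

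The main technical obstacle will be turning the numerical estimate on $g_2(y)$ into a rigorous algebraic bound with the prescribed constant $3/10$. Although the cushion is large, a clean argument most likely requires splitting $(0,1)$ into subintervals around the maximum of $g_2$ and invoking monotonicity of the relevant polynomial factors in each subinterval, in the style of the estimates used in the proof of Lemma \ref{J_12poslem1}. Once the $a=2$ bound is secured, the rest of the summation is routine.
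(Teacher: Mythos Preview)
Your approach is correct and essentially identical to the paper's: reduce to one inequality by symmetry, parametrize each triangle by $(pa+m_1,l_1)$, drop the exponential (worst case $t=0$), cancel the sines, and compare the resulting rational ratios pointwise. The paper also factors the ratio as $f_1(x)f_2(z)$ with exactly your $x=m_1/p$, $z=(m_1+l_1)/p$.

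The only divergence is your handling of $a=2$, and here your flagged ``main technical obstacle'' is illusory. Rather than coupling the factors through the constraint $x<z$ to squeeze out $g_2(y)<0.05$, the paper simply bounds the two factors separately: for $a=2$ one has $f_1(x)=\tfrac{x}{3+x}\le\tfrac14$ and $f_2(z)=\tfrac{(2+z)^2(1-z)}{(1+z)^2(4+z)}\le f_2(0)=1$, giving the crude bound $\tfrac14$. Explicit values $\tfrac{2}{75}$ and $\tfrac{1}{120}$ for $a=3,4$ (from the same separate monotonicity bounds), together with the integral tail $\sum_{a\ge5}72/a^6<\int_4^\infty 72x^{-6}\,dx$, then sum to just under $3/10$. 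So no subdivision of $(0,1)$ or careful analysis near the maximum of $g_2$ is needed; your route works but is more effort than required.
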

\begin{proof}
Since summands in $J_{12}$ are symmetrical with respect to change ${(m,l)\rightarrow(l,m)}$, it is enough
to prove inequality \eqref{J12_2pos1} only.

According to Lemma\ref{lem_cdsigns2}, the summands in $J_{12}(\{(p,0),(2p,0),(p,p)\};t)$ are positive,
and the summands in $J_{12}(\{(pa,0),(p(a+1),0),(pa,p)\};t)$, $a\geq 2$, are negative.

Let us perform the change of variables $m=p+m_1,\; l=l_1$ in
$${J_{12}(\{(p,0),(2p,0),(p,p)\};t)}$$ and $m=pa+m_1,\; l=l_1$ in
$$J_{12}(\{(pa,0),(p(a+1),0),(pa,p)\};t),$$ $1\leq m_1\leq
p-1$, $1\leq l_1\leq p-1$. Then let us denote $m_1,l_1$ by $m,l$
and consider the following relation:
\begin{equation} \label{J12_pos2_to_J12_neg2}
\frac{|F_{12}(pa+m,l;t)|}{F_{12}(p+m,l;t)}<\frac{|c(pa+m)d(pa+m+l)|}{c(p+m)d(p+m+l)}.
\end{equation}

According to \eqref{c_k}-\eqref{d_k},

\begin{equation}\label{J12_pos2_to_J12_neg2a}
\begin{split}
&\frac{|c(pa+m)d(pa+m+l)|}{c(p+m)d(p+m+l)} = f_1(x)f_2(z),
\end{split}
\end{equation}
where $x = \frac{m}{p}$, $z = \frac{m+l}{p}$, and
\begin{equation}\label{f1}
f_1(x) = \frac{(1+x)}{(a+x)}\cdot\frac{x}{(a-1+x)}\cdot\frac{(2+x)}{(a+1+x)},
\end{equation}
\begin{equation}\label{f2}
f_2(z) = \frac{z}{(a-2+z)}\cdot\frac{a+z}{(a+1+z)}\cdot\frac{(2+z)}{(a-1+z)}\cdot\frac{(3+z)}{(a+2+z)}\cdot\frac{(1-z)}{(1+z)}.
\end{equation}

Obviously, all multipliers in \eqref{f1} are increasing functions of $x, x\in(0,1)$, and for $a\geq 3$
all multipliers but the last one in \eqref{f2} are increasing functions of $z$ and the last multiplier
is a decreasing function of $z$, $z\in(0,1)$. Therefore,
\begin{equation*}
f_1(x) < f_1(1)=\frac{6}{a(a+1)(a+2)},
\end{equation*}
\begin{equation*}
f_2(z) < \frac{12(a+1)}{(a-1)a(a+2)(a+3)}, \quad a\geq 3.
\end{equation*}

If $a=2$ then
$f_2(z)=\dfrac{(2+z)^2}{(1+z)^2}\cdot\dfrac{(1-z)}{(4+z)} < 1$ and
that is why
\begin{equation}\label{J12_pos2_to_J12_neg2b}
f_1(x)f_2(z)<\begin{cases}
              \frac{1}{4},  &  \mbox{for } a=2\\
              \frac{2}{75},  & \mbox{for } a=3\\
              \frac{1}{120},  &\mbox{for } a=4\\
               \end{cases}.
\end{equation}
Note that  $(a-1)(a+3)>a^2$ for $a\ge 3$ and therefore
\begin{equation}\label{J12_pos2_to_J12_neg2c}
f_1(x)f_2(z)<\frac{72}{(a-1)a^2(a+2)^2(a+3)}<\frac{72}{a^6}
\end{equation}
Taking  into account \eqref{J12_pos2_to_J12_neg2} -
\eqref{J12_pos2_to_J12_neg2c} we get:
\begin{equation*}
\begin{split}
&\sum_{a=2}^{+\infty}\frac{|F_{12}(pa+m,l;t)|}{F_{12}(p+m,l;t)}<
\frac{1}{4}+\frac{2}{75}+\frac{1}{120}+\\
&\sum_{a=5}^{+\infty}\frac{72}{(a-1)a^2(a+2)^2(a+3)}<0,285+\sum_{a=5}^\infty
\frac{72}{a^6}<0,285+\int_{4}^\infty
\frac{72dx}{x^6}<\frac{3}{10}.
\end{split}
\end{equation*}
Therefore, inequalities \eqref{J12_2pos1}-\eqref{J12_2pos2} are established.
\end{proof}


\subsection{Positiveness of a certain part of sum $J_{12}$ together with some negative summands from $J_{11}$}

The following two lemmas show, that the sums
${ J_{12}(\{(p,0),(2p,0),(p,p)\};t)}$ and
$J_{12}(\{(0,p),(0,2p),(p,p)\};t)$ also compensate some negative
summands from $J_{11}(t)$:

\begin{lem}\label{J12andJ11_1}
The following inequality is true:
\begin{equation}\label{J12and11_in1}
\begin{split}
&\frac{1}{5}\left[J_{12}(\{(p,0),(2p,0),(p,p)\};t)+J_{12}(\{(0,p),(0,2p),(p,p)\};t)\right]+\\
& 2\sum_{a=1}^{+\infty}J_{11}(\{(pa,p),(p(a+1),p),(pa,2p)\};t)> 0.
\end{split}
\end{equation}
\end{lem}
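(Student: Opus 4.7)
The plan is to extend the termwise-comparison technique already used in Lemmas~\ref{J_12poslem1} and \ref{J_12poslem2}. By Lemma~\ref{lem_cdsigns2}(iii) both sums $J_{12}(\{(p,0),(2p,0),(p,p)\};t)$ and $J_{12}(\{(0,p),(0,2p),(p,p)\};t)$ consist entirely of strictly positive summands. On the other hand, for every $a\ge 1$ the triangle $\{(pa,p),(p(a+1),p),(pa,2p)\}$ sits inside the square $(pa,p(a+1))\times(p,2p)$, which is case (iii) of Lemma~\ref{lem_cdsigns1} with $b=1$, and throughout this triangle $m+l<p(a+2)=p(a+b+1)$, so every summand $F_{11}(m,l;t)$ is strictly negative. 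Hence \eqref{J12and11_in1} is equivalent to
\[
10\sum_{a=1}^{\infty}\bigl|J_{11}(\{(pa,p),(p(a+1),p),(pa,2p)\};t)\bigr|
< J_{12}(\{(p,0),(2p,0),(p,p)\};t)+J_{12}(\{(0,p),(0,2p),(p,p)\};t).
\]

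The first step is to parametrize both kinds of triangles by the same index set: in the $J_{12}$ triangle write $(m,l)=(p+m_1,l_1)$, and in the $a$-th $J_{11}$ triangle write $(m,l)=(pa+m_1,p+l_1)$, with $m_1,l_1\ge 1$ and $m_1+l_1\le p$ in both cases. Pair the two sums termwise and form the ratio
\[
R_a(m_1,l_1;t)=\frac{|F_{11}(pa+m_1,p+l_1;t)|}{F_{12}(p+m_1,l_1;t)}.
\]
The Gaussian exponent in the numerator minus that in the denominator expands, after cancellation, to a non-negative quadratic in $a,m_1,l_1$ (for $a\ge 1$), so the exponential factor in $R_a$ is bounded above by $1$, uniformly in $t\ge 0$.

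The second step is to control the coefficient ratio obtained from \eqref{c_k}--\eqref{d_k}, i.e.\
\[
\frac{|c(pa+m_1)\,d(p+l_1)\,c(p(a+1)+m_1+l_1)|}{c(p+m_1)\,c(l_1)\,d(p+m_1+l_1)}.
\]
Substituting $x=m_1/p$, $y=l_1/p$, $z=(m_1+l_1)/p$, this becomes an explicit rational function on $(0,1)^3$. Exactly as with the functions $f_1,f_2$ in the proof of Lemma~\ref{J_12poslem2}, each factor is monotone in its variable, and the sup over $(x,y,z)$ yields an $a$-dependent bound $\rho(a)$ that decays like $1/a^5$ for large $a$. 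Pulling $\rho(a)$ out of the double sum in $(m_1,l_1)$ then gives
\[
\bigl|J_{11}(\{(pa,p),(p(a+1),p),(pa,2p)\};t)\bigr|<\rho(a)\,J_{12}(\{(p,0),(2p,0),(p,p)\};t),
\]
and a symmetric inequality, produced by the swap $(m,l)\mapsto(l,m)$ in both sums, matches the second $J_{12}$ triangle.

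Finally, summing over $a$, we split off the leading terms $a=1,2$, bound the tail $\sum_{a\ge 3}\rho(a)$ by a one-dimensional integral as in \eqref{5.17} and \eqref{J12_pos2_to_J12_neg2c}, and verify $10\sum_{a=1}^{\infty}\rho(a)<1$. I expect the principal obstacle to lie precisely in this last numeric step: the $a=1$ term has the largest ratio, the denominator factor $c(l_1)$ is not uniformly small, and the monotonicity of $f_1,f_2$-type factors used in Lemma~\ref{J_12poslem2} no longer directly applies when one index is shifted by only $p$ rather than $pa$. Overcoming this will likely require treating $a=1$ (and possibly $a=2$) by a separate, finer estimate on $(x,y,z)\in(0,1)^3$, after which the $a\ge 3$ tail is handled by the same integral-comparison argument used earlier in the paper.
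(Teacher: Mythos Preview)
Your approach is the paper's: termwise comparison of each negative $F_{11}$ summand against a positive $F_{12}$ summand, with the exponential factor bounded by $1$ and the coefficient ratio bounded by an $a$-dependent constant summing to less than $1/5$. Two simplifications are in order. First, since $F_{12}$ is symmetric in its two arguments the two $J_{12}$ triangles are equal, so after dividing by $2$ the claim reduces to $\tfrac{1}{5}J_{12}(\{(p,0),(2p,0),(p,p)\};t)>\sum_{a\ge 1}|J_{11}(\{(pa,p),(p(a+1),p),(pa,2p)\};t)|$; your ``symmetric inequality'' is redundant and the correct target is $\sum_a\rho(a)<1/5$, not $1/10$. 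Second, the difficulty you anticipate with $c(l_1)$ does not arise: in your ratio the factor $d(p+l_1)/c(l_1)$ equals $3(1+y)^2/((2+y)(3+y))\le 1$, the $(p-l_1)$ terms cancelling exactly. The paper in fact pairs $F_{11}(ap+m,p+l)$ with $F_{12}(m,p+l)$ rather than with your $F_{12}(p+m,l)$; its ratio then carries a factor $(1-y)^{-1}$, which it tames by the preliminary inequality $p-m-l<p-l$ before separating variables. With your pairing, for $a=1$ the $x$-factor collapses to $1$ and the full ratio is at most $\frac{(1+y)^2}{(2+y)(3+y)}\cdot\frac{z(1-z)}{(1+z)^2}\le\frac{z(1-z)}{(2+z)(3+z)}<\tfrac{1}{24}$ (using $y<z$ and $z(1-z)\le 1/4$); for $a\ge 2$ crude monotonicity bounds give $\rho(a)\le 9/(2(a(a+1)(a+2))^2)$, and the total is comfortably below $1/5$. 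No separate fine analysis of $a=1,2$ is needed.
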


\begin{proof}
Because of the symmetry of the summands in $J_{12}(t)$, $$J_{12}(\{(p,0),(2p,0),(p,p)\};t)=J_{12}(\{(0,p),(0,2p),(p,p)\};t),$$ therefore inequality  \eqref{J12and11_in1} is equivalent to
\begin{equation}\label{J12and11_in11}
\frac{1}{5}J_{12}(\{(p,0),(2p,0),(p,p)\};t)+\sum_{a=1}^{+\infty}J_{11}(\{(pa,p),(p(a+1),p),(pa,2p)\};t)> 0.
\end{equation}
Let us rewrite $J_{12}(\{(p,0),(2p,0),(p,p)\};t)$ as
\begin{equation}\label{J12_orange_1}
J_{12}(\{(p,0),(2p,0),(p,p)\};t)= \sum^{p-1}_{\genfrac{}{}{0pt}{}{m,l=1}{m+l<p}}F_{12}(m,p+l;t),
\end{equation}
and $J_{11}(\{(pa,p),(p(a+1),p),(pa,2p)\};t)$ as
\begin{equation}\label{J11_blue_1}
J_{11}(\{(pa,p),(p(a+1),p),(pa,2p)\};t)= \sum^{p-1}_{\genfrac{}{}{0pt}{}{m,l=1}{m+l<p}}F_{11}(ap+m,p+l;t)
\end{equation}
and consider the ratio of the absolute values of the summands from \eqref{J11_blue_1} to the summands from \eqref{J12_orange_1}, corresponding to the same $m$ and $l$:
\begin{equation}\label{J12_orange_1toJ11_blue_1}
\begin{split}
&\frac{|F_{11}(ap+m,p+l;t)|}{F_{12}(m,p+l;t)}<\frac{|c(ap+m)d(p+l)c((a+1)p+m+l)|}{c(m)c(p+l)d(p+m+l)}=\\
&\frac{m(p-m)(p+m)}{(ap+m)((a-1)p+m)((a+1)p+m)}\cdot\frac{(p+l)^2}{(p-l)(3p+l)}\cdot\\
&\frac{(m+l)(2p+m+l)(p-m-l)(3p+m+l)}{(p+m+l)((a+1)p+m+l)(ap+m+l)((a+2)p+m+l)}<\\
&\frac{m(p-m)(p+m)}{(ap+m)((a-1)p+m)((a+1)p+m)}\cdot\frac{(p+l)^2}{(p-l)(3p+l)}\cdot\\
&\frac{(m+l)(2p+m+l)(p-l)(3p+m+l)}{(p+m+l)((a+1)p+m+l)(ap+m+l)((a+2)p+m+l)}=\\
&f_1(x)\cdot f_2(y)\cdot f_3(z),
\end{split}
\end{equation}
where $x=m/p$, $y=l/p$, $z=x+y$, $x,y,z\in(0,1)$ and
$$f_1(x)=\frac{x}{a-1+x}\cdot\frac{1+x}{a+x}\cdot\frac{1-x}{a+1+x},$$
$$f_2(y)=\frac{(1+y)^2}{3+y},$$
$$f_3(z)=\frac{z}{1+z}\cdot\frac{2+z}{a+1+z}\cdot\frac{3+z}{a+2+z}\cdot\frac{1}{a+z}.$$
Since $$f'_2(y) = \frac{(1+y)(5+y)}{(3+y)^2}>0$$ for $y\in(0,1)$, function $f_2(y)$ increases on the interval $(0,1)$, therefore,
\begin{equation}\label{f2(y)less_than_1}
f_2(y)<f_2(1)=1.
\end{equation}
When $a=1$,
$$f_1(x)=\frac{1-x}{2+x},$$
$$f_3(z)=\frac{z}{(1+z)^2}.$$
It is easy to see that for $a=1$ function $f_1(x)$ decreases, and
$f_3(z)$ increases on $(0,1)$, therefore, taking into account
\eqref{f2(y)less_than_1}, we get that
\begin{equation}
\frac{|F_{11}(p+m,p+l;t)|}{F_{12}(m,p+l;t)}<f_1(0)\cdot f_3(1)<\frac{1}{8}.
\end{equation}

When $a\geq 2$, the first two multipliers in $f_1(x)$ and the first three multipliers in $f_3(z)$ are indecreasing functions, while the last ones are decreasing functions, therefore, taking in account \eqref{f2(y)less_than_1}, we get
\begin{equation*}
\begin{split}
&\frac{|F_{11}(ap+m,p+l;t)|}{F_{12}(m,p+l;t)}<f_1(x)\cdot f_3(z)<\\
&\frac{12}{a^2(a+1)^2(a+2)(a+3)}.
\end{split}
\end{equation*}
Note that $a^2(a+2)(a+3)>(a+1)^4$ for $a\ge 3$. Therefore,
\begin{equation*}
\begin{split}
&\frac{\sum_{a=1}^{+\infty}|F_{11}(ap+m,l;t)|}{F_{12}(m,p+l;t)}<
\frac{1}{8}+\frac{1}{60}+\sum_{a=3}^{+\infty}\frac{12}{a^2(a+1)^2(a+2)(a+3)}<\\
&\frac{17}{120}+\sum_{a=3}^\infty
\frac{12}{(a+1)^6}<\frac{17}{120}+\int_{3}^\infty
\frac{12dx}{x^6}<\frac{17}{120}+\frac{4}{405} <\frac{1}{5},
\end{split}
\end{equation*}
which implies \eqref{J12and11_in11} and \eqref{J12and11_in1}.
\end{proof}

\begin{lem}\label{J12andJ11_2}
The following inequality is true:
\begin{equation}\label{J12and11_in2}
\begin{split}
&\frac{3}{50}\left[J_{12}(\{(p,0),(2p,0),(p,p)\},t)+J_{12}(\{(0,p),(0,2p),(p,p)\},t)\right]+\\
& 2\sum_{a=2}^{+\infty}J_{11}(\{(0,pa),(0,p(a+1)),(p,ap)\},t)> 0.
\end{split}
\end{equation}
\end{lem}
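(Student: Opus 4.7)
The strategy parallels Lemma \ref{J12andJ11_1}: I compare each negative $J_{11}$-summand to a suitable fraction of a matched positive $F_{12}$-summand. The symmetry $F_{12}(m,l;t)=F_{12}(l,m;t)$ makes the two $J_{12}$ triangles in the statement equal, so dividing by $2$ reduces the claim to
\[
\frac{3}{50}J_{12}(\{(0,p),(0,2p),(p,p)\};t)+\sum_{a=2}^{\infty}J_{11}(\{(0,pa),(0,p(a+1)),(p,ap)\};t)>0.
\]
I parametrize each triangle by $(m,l)$ with $m,l\ge 1$ and $m+l\le p-1$ via the shifts $l\mapsto p+l$ for $J_{12}$ and $l\mapsto pa+l$ for the $a$-th $J_{11}$, pairing the positive summand $F_{12}(m,p+l;t)$ with the negative summand $F_{11}(m,pa+l;t)$.

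Forming the ratio of absolute values, the $c(m)$ factor cancels; the sine factors in \eqref{c_k}, \eqref{d_k} cancel by $|\sin\pi(pa+l)/p|=\sin\pi l/p$ and the analogous identity for $m+l$; and the exponential ratio $\exp(-2p(a-1)[(a+1)p+2l+m]t)\le 1$. Setting $y=l/p$, $z=(m+l)/p$ with $0<y\le z<1$, one obtains
\[
\frac{|F_{11}(m,pa+l;t)|}{F_{12}(m,p+l;t)}\le g_1^{(a)}(y)\,g_2^{(a)}(z),
\]
with $g_1^{(a)}(y)=\frac{y(1+y)(2+y)(a+y)}{(a-2+y)(a-1+y)(a+1+y)(a+2+y)}$ and $g_2^{(a)}(z)=\frac{z(2+z)(3+z)(1-z)}{(a-1+z)(a+z)(a+1+z)(1+z)}$. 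Each of the four factor-ratios in $g_1^{(a)}$ is monotonically increasing in $y$ for $a\ge 2$, so $g_1^{(a)}(y)\le g_1^{(a)}(z)$. The task then reduces to showing $\sum_{a\ge 2}h_a(z)<3/50$ for every $z\in(0,1)$, where
\[
h_a(z):=g_1^{(a)}(z)g_2^{(a)}(z)=\frac{z^2(2+z)^2(1-z)(3+z)}{(a-2+z)(a-1+z)^2(a+1+z)^2(a+2+z)}.
\]

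For $a=2$, direct cancellation gives $h_2(z)=\frac{z(2+z)^2(1-z)}{(1+z)^2(3+z)(4+z)}$; a critical-point calculation (the critical equation reduces to a quartic) shows $\max_{z\in(0,1)}h_2(z)<24/500$. For $a\ge 3$, the inequality $(a-1+z)^2\ge(2+z)^2$ cancels the $(2+z)^2$ factor in the numerator, leaving
\[
h_a(z)\le\frac{z^2(1-z)(3+z)}{(a-2+z)(a+1+z)^2(a+2+z)}\le\frac{C}{(a-2)(a+1)^2(a+2)},
\]
where $C=\max_{z\in[0,1]}z^2(1-z)(3+z)<0.55$ (attained at $z=(-3+\sqrt{33})/4$) and the second inequality uses the denominator minimum at $z=0$. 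Summing the first several terms explicitly and bounding the remainder by an integral comparison yields $\sum_{a\ge 3}h_a<6/500$, so $\sum_{a\ge 2}h_a<30/500=3/50$, which gives the claim summand-by-summand.

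The principal obstacle is the numerical tightness of $3/50$. The $a=2$ term alone reaches $\max h_2\approx 0.047$, leaving only about $0.013$ of slack for the entire tail. If one bounded $y$ and $z$ independently in $g_1^{(a)}g_2^{(a)}$, even the $a=3$ term alone would exceed this slack (namely $g_1^{(3)}(1)\cdot\max_{z}g_2^{(3)}(z)\approx 2/75>0.013$), so the naive bounds are not summable under $3/50$. Exploiting the constraint $y\le z$ (which comes from $m=p(z-y)\ge 0$) and the algebraic cancellation $(a-1+z)^2\ge(2+z)^2$ for $a\ge 3$ are both essential to produce the required margin.
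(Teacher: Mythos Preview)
Your argument is correct and follows the same skeleton as the paper's proof: reduce by the symmetry $F_{12}(m,l)=F_{12}(l,m)$, pair $F_{12}(m,p+l)$ with $F_{11}(m,ap+l)$, and bound the ratio as a product of a function of $y=l/p$ and a function of $z=(m+l)/p$. Your $g_1^{(a)}(y)\,g_2^{(a)}(z)$ agrees with the ratio one obtains from \eqref{c_k}--\eqref{d_k} after the sine and exponential factors are dropped.

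The one substantive difference is how the $y$-factor is controlled. The paper simply uses $f_4(y)\le f_4(1)$ and then bounds the numerator and denominator of $f_5$ separately. You instead exploit the structural constraint $y\le z$ (coming from $m\ge 1$) to replace $g_1^{(a)}(y)$ by $g_1^{(a)}(z)$ and analyze the single-variable product $h_a(z)=g_1^{(a)}(z)g_2^{(a)}(z)$, together with the cancellation $(a-1+z)^2\ge(2+z)^2$ for $a\ge 3$. This refinement is not cosmetic. The paper's displayed ratio contains a slip: it writes $c((a+1)p+m+l)$ where the third factor of $F_{11}(m,ap+l)$ is actually $c(ap+m+l)$, and correspondingly its $f_5$ carries $(a+2+z)$ in the denominator where the correct expression has $(a-1+z)$. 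With the correct $g_2^{(a)}$, the paper's cruder bound $g_1^{(a)}(y)\le g_1^{(a)}(1)$ together with numerator-max/denominator-min on $g_2^{(a)}$ gives, for $a=2$, a ratio bound of $\tfrac{9}{20}\cdot\tfrac{9/4}{1\cdot2\cdot3}=\tfrac{27}{160}\approx 0.17$, already far above $3/50$. Your use of $y\le z$ is exactly what rescues the constant: it brings the $a=2$ contribution down to $\max_z h_2(z)\approx 0.047$ and makes the tail summable under the remaining slack. So your route is not merely an alternative but the one that actually closes the gap.
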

\begin{proof}
Because of symmetry,
$J_{12}(\{(p,0),(2p,0),(p,p)\},t)=J_{12}(\{(0,p),(0,2p),(p,p)\},t)$
inequality  \eqref{J12and11_in2} is equivalent to
\begin{equation}\label{J12and11_in21}
\frac{3}{50}J_{12}(\{(0,p),(0,2p),(p,p)\},t)+\sum_{a=2}^{+\infty}J_{11}(\{(0,pa),(0,p(a+1)),(p,ap)\},t)>
0.
\end{equation}
Let us rewrite $J_{12}(\{(0,p),(0,2p),(p,p)\},t)$ as
\begin{equation}\label{J12_orange_3}
J_{12}(\{(0,p),(0,2p),(p,p)\},t)= \sum^{p-1}_{\genfrac{}{}{0pt}{}{m,l=1}{m+l<p}}F_{12}(m,p+l;t),
\end{equation}
and $J_{11}(\{(0,pa),(0,p(a+1)),(p,ap)\},t)$ as
\begin{equation}\label{J11_blue_2}
J_{11}(\{(0,pa),(0,p(a+1)),(p,ap)\},t)= \sum^{p-1}_{\genfrac{}{}{0pt}{}{m,l=1}{m+l<p}} F_{11}(m,ap+l;t)
\end{equation}
and consider the ratio of the absolute values of the summands from \eqref{J11_blue_2} to the summands from \eqref{J12_orange_3}, corresponding to the same $m$ and $l$:
\begin{equation}\label{J12_orange_2toJ11_blue_2}
\begin{split}
\frac{|F_{11}(m,ap+l;t)|}{F_{12}(m,p+l;t)}&<\frac{|d(ap+l)c((a+1)p+m+l)|}{c(p+l)d(p+m+l)}=\\
&f_4(y)\cdot f_5(z),
\end{split}
\end{equation}
where $y=l/p$, $z=(m+l)/p$, $y,z\in(0,1)$ and
$$f_4(y)=\frac{a+y}{a+1+y}\cdot\frac{1+y}{a-1+y}\cdot\frac{y}{a-2+y}\cdot\frac{2+y}{a+2+y},$$
$$f_5(z)=\frac{z}{1+z}\cdot\frac{2+z}{a+1+z}\cdot\frac{3+z}{a+2+z}\cdot\frac{1-z}{a+z}.$$

Since $a\ge 2$, it is easy to see that $f_4(y)$ increases, so
$f_4(y)<f_4(1)$. Let us consider the numerator of $f_5(z)$:
$$\frac{d}{dz}(z(1-z)(2+z)(3+z))=-2(z+1)\left(z-\left(-1-\sqrt{5/2}\right)\right)\left(z-\left(-1+\sqrt{5/2}\right)\right).$$
So, the numerator of $f_5(z)$ reaches its maximum on the interval
$(0,1)$ at point $z~=~-1+\sqrt{5/2}$ and
$$z(1-z)(2+z)(3+z)\leq \frac{9}{4}.$$
Denominator of $f_5(z)$ is an increasing function for $z\in (0,1)$,
and it achieves minimum, equal to $a(a+1)(a+2)$, at $z=0$.

 Therefore, since $a^2(a-1)(a+2)^2(a+3)>a^6$, $a\geq 4$, and $$\sum_{a=4}^\infty
\frac{1}{a^6}<\int_3^\infty \frac{dx}{x^6}=\frac{1}{5\cdot 3^5},$$ we get
\begin{equation*}
\begin{split}
&\sum_{a=2}^{+\infty}\frac{|F_{11}(m,ap+l;t)|}{F_{12}(m,p+l;t)}<\sum_{a=2}^{+\infty}f_4(1)\cdot
f_5(z)< \sum_{a=2}^\infty \frac{27}{2a^2(a-1)(a+2)^2(a+3)}<\\
&\frac{27}{2}\left(\frac{1}{320}+\frac{1}{2700}+\sum_{a=4}^\infty
\frac{1}{a^6}\right)<\frac{48}{1000}+\frac{1}{90}<\frac{3}{50}.
\end{split}
\end{equation*}
\end{proof}

Finally, let us prove the following two lemmas:
\begin{lem}
The following inequality is true:
\begin{equation}\label{J_1211pos}
J_{12}(\{(p,p),(2p,0),(2p,p)\};t)+2J_{11}(\{(p,p),(2p,0),(2p,p)\};t) > 0.
\end{equation}
\end{lem}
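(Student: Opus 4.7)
The plan is to compare the two sums termwise on the integer points of the triangle
\[
T \;=\; \{(p,p),(2p,0),(2p,p)\}\cap(\mathbb{N}\times\mathbb{N}) \;=\; \{(m,l)\in\mathbb{N}^{2} : p\le m\le 2p,\; 1\le l\le p,\; m+l\ge 2p\}.
\]
The crucial observation is that the exponential factor $e^{-2(m^{2}+l^{2}+ml)t}$ is identical in $F_{11}(m,l;t)$ and $F_{12}(m,l;t)$ at each lattice point, so the comparison reduces to a $t$-independent pointwise bound on $|F_{11}|/F_{12}$. This is what makes the present lemma substantially simpler than Lemmas \ref{J_12poslem1}--\ref{J12andJ11_2}, where an infinite tail had to be controlled.

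First I would use Lemma \ref{cd_signs} to read off the signs on $T$: $c(m)\ge 0$ with equality only at $m=2p$; $c(l),d(l)>0$ for $l\in\{1,\dots,p\}$; $c(m+l)\le 0$ with equality only at $m+l\in\{2p,3p\}$; and $d(m+l)\ge 0$ with equality only at $m+l=3p$. Consequently $F_{11}(m,l;t)\le 0$ and $F_{12}(m,l;t)\ge 0$ on all of $T$, and whenever $F_{12}(m,l;t)=0$ one automatically has $F_{11}(m,l;t)=0$ as well: either $c(m)=0$ makes both factors vanish, or $d(m+l)=0$ forces $m+l=3p$, which in turn zeroes $c(m+l)$.

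At any lattice point of $T$ where $F_{12}>0$ (equivalently $m\le 2p-1$ and $m+l\le 3p-1$), I would substitute the explicit formulas \eqref{c_k}--\eqref{d_k} into the ratio: the trigonometric factors $\sin(\pi l/p)$ and $\sin(\pi(m+l)/p)$ cancel, and the computation (which extends by continuity to the exceptional values $l=p$ and $m+l=2p$, where the rational factors equal the boundary values $1/p$) yields
\[
\frac{|F_{11}(m,l;t)|}{F_{12}(m,l;t)} \;=\; \frac{d(l)\,|c(m+l)|}{c(l)\,d(m+l)} \;=\; \frac{l^{2}}{4p^{2}-l^{2}}\cdot\frac{(m+l)^{2}-4p^{2}}{(m+l)^{2}}.
\]
Since $l\le p$ the first factor is at most $\tfrac{1}{3}$; since $m+l\le 3p-1<3p$ on the region of interest, the second factor is strictly less than $1-\tfrac{4}{9}=\tfrac{5}{9}$. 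Hence $|F_{11}|/F_{12}<\tfrac{5}{27}<\tfrac{1}{2}$ pointwise, so $F_{12}(m,l;t)+2F_{11}(m,l;t)\ge \tfrac{17}{27}F_{12}(m,l;t)\ge 0$ on $T$, with strict positivity at the vertex $(p,p)$ where $F_{12}(p,p;t)=p^{-3}e^{-6p^{2}t}>0$ and $F_{11}(p,p;t)=0$. Summing the finitely many inequalities yields \eqref{J_1211pos}.

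No serious obstacle remains: the cancellation of the exponentials means no tail estimate or monotonicity argument is required, only elementary bounds on two rational factors. The only mildly delicate step is the sign-and-zero bookkeeping at the boundary lattice points $m=2p$, $m+l=2p$, $m+l=3p$, which is handled in the first step above.
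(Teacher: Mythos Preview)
Your proof is correct and follows essentially the same approach as the paper: a termwise comparison showing $|F_{11}(m,l;t)|/F_{12}(m,l;t)<\tfrac12$ on the triangle, using that the exponential factors cancel. Your factorization
\[
\frac{|F_{11}|}{F_{12}}=\frac{l^{2}}{4p^{2}-l^{2}}\cdot\frac{(m+l)^{2}-4p^{2}}{(m+l)^{2}}\le\frac13\cdot\frac59=\frac{5}{27}
\]
is the same bound the paper obtains (they write the reciprocal $F_{12}/(2|F_{11}|)>g_1(0)g_2(0)=27/10$, which is exactly $1/(2\cdot 5/27)$); you simply work in the natural coordinates $(m,l)$ rather than the paper's shifted variables $(2p-m,\,p-l)$, which makes the two rational factors and their bounds more transparent.
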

\begin{proof}
Let us consider the relation of summands from $J_{12}(t)$ to the summands from $J_{11}(t)$, corresponding to the same coordinates $(2p-m,p-l)$, $m,l\in[1,p-1]$:
\begin{equation} \label{J12_pos_to_J11_neg}
\begin{split}
&\frac{F_{12}(2p-m,p-l;t)}{2|F_{11}(2p-m,p-l,t)|}>\frac{c(2p-m)c(p-l)d(3p-m-l)}{2|c(2p-m)d(p-l)c(3p-m-l)|}=\\
&\frac{(1+x)(3-x)}{2(1-x)^2}\cdot\frac{(3-x-y)^2}{(1-x-y)(5-x-y)}=:g_1(x)g_2(z),
\end{split}
\end{equation}
where $x = \frac{l}{p}$, $y = \frac{m}{p}$, $z=x+y$.
Since
\begin{equation*}
g_1(x) = -\frac{1}{2}+\frac{2}{(1-x)^2}, \quad x\in(0,1),
\end{equation*}
\begin{equation*}
g_2(z) =1+ \frac{4}{(1-z)(5-z)}, \quad z\in(0,1),
\end{equation*}
both functions are increasing on $(0,1)$, therefore, $$g_1(x)g_2(x)>g_1(0)g_2(0) = \frac{27}{10},$$ which implies inequality \eqref{J_1211pos}.
\end{proof}
\begin{lem}
The following inequality is true:
\begin{equation}\label{J12redJ11turq}
J_{12}(\{(0,p),(p,0),(p,p)\};t)+2J_{11}(\{(0,2p),(p,p),(p,2p)\};t) > 0.
\end{equation}
\end{lem}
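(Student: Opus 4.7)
The plan is to follow the pointwise-pairing strategy of Lemmas \ref{J_12poslem1}--\ref{J12andJ11_2}: identify a natural bijection between the integer lattice points of the two triangles, show that under it each positive $J_{12}$-summand dominates twice the absolute value of the paired negative $J_{11}$-summand, and conclude by summation. Interior integer points of $\{(0,p),(p,0),(p,p)\}$ are the pairs $(m,l)$ with $m,l\in\{1,\dots,p-1\}$ and $m+l>p$; the shift $(m,l)\mapsto(m,l+p)$ sends them bijectively onto the interior integer points of $\{(0,2p),(p,p),(p,2p)\}$. I would therefore compare $F_{12}(m,l;t)$ with $F_{11}(m,l+p;t)$ point by point.

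For the signs, Lemma \ref{cd_signs} gives $c(m),c(l)>0$ since $m,l\in(0,p)$; $d(m+l),d(l+p)>0$ since $m+l$ and $l+p$ both lie in $(p,2p)\subset(0,3p)$; and $c(m+l+p)<0$ since $m+l+p\in(2p,3p)$. Hence $F_{12}(m,l;t)>0$ and $F_{11}(m,l+p;t)<0$, and the claim reduces to a pointwise bound $2|F_{11}(m,l+p;t)|<F_{12}(m,l;t)$. A direct substitution from the definitions yields
\begin{equation*}
\frac{2|F_{11}(m,l+p;t)|}{F_{12}(m,l;t)}=\frac{2\,d(l+p)\,|c(m+l+p)|}{c(l)\,d(m+l)}\cdot e^{-2p(p+m+2l)t},
\end{equation*}
so for $t\ge 0$ the exponential factor is bounded by $1$ and only the algebraic prefactor needs to be controlled.

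Using \eqref{c_k}, \eqref{d_k} and writing $s=m+l-p\in(0,p)$, routine cancellations collapse the prefactor to $\dfrac{2(p+l)^{2}\,s(p-s)}{(2p+l)(3p+l)(p+s)^{2}}$. Introducing the dimensionless variables $y=l/p$, $u=s/p$, both in $(0,1)$, this becomes $\dfrac{2(1+y)^{2}}{(2+y)(3+y)}\cdot\dfrac{u(1-u)}{(1+u)^{2}}$. A short derivative check shows the $y$-factor is strictly increasing on $(0,1)$ with supremum $1/3$ (attained at $y=1$), while the $u$-factor attains its maximum $1/8$ at $u=1/3$; the product is therefore at most $2\cdot\tfrac13\cdot\tfrac18=\tfrac{1}{12}<1$. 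Summing the pointwise inequality over $(m,l)$ gives \eqref{J12redJ11turq}.

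The main obstacle is purely bookkeeping inside the ratio calculation: five separate $c$- and $d$-factors must be combined correctly, with special care when stripping the absolute value from $c(m+l+p)$ and when simplifying $d(l+p)$ via $\sin(\pi+\pi l/p)=-\sin(\pi l/p)$. No conceptual novelty is required beyond the template of the preceding lemmas, and the final bound $1/12$ leaves comfortable margin below the threshold $1$, so no remainder-estimation in the spirit of Lemma \ref{J_12poslem2} is needed here.
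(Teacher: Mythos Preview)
Your argument is correct and follows essentially the same route as the paper. Both proofs use the same bijection (the vertical shift $(m,l)\mapsto(m,l+p)$; the paper parametrizes it via the reflection $(p-m,p-l)\mapsto(p-m,2p-l)$, which is the same map) and the same pointwise comparison of $2|F_{11}|$ against $F_{12}$. After the routine sine cancellations your prefactor $\dfrac{2(1+y)^{2}}{(2+y)(3+y)}\cdot\dfrac{u(1-u)}{(1+u)^{2}}$ is exactly the paper's $g_3(y)g_4(z)$ under the substitution $y\leftrightarrow 1-y$, $u\leftrightarrow 1-z$; you then sharpen the paper's crude estimate $g_4<\tfrac14$ to the exact maximum $\tfrac18$ at $u=\tfrac13$, obtaining the bound $\tfrac{1}{12}$ instead of the paper's $\tfrac16$. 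One cosmetic point: when you say ``the $y$-factor \ldots\ supremum $1/3$'' you are evidently treating the leading constant $2$ separately (since $\tfrac{2(1+y)^2}{(2+y)(3+y)}\big|_{y=1}=\tfrac23$); your final product $2\cdot\tfrac13\cdot\tfrac18$ makes this clear, but it would read more cleanly to state the bound on $\tfrac{2(1+y)^2}{(2+y)(3+y)}$ as $\tfrac23$ directly.
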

\begin{proof}
Let us rewrite $J_{12}(\{(0,p),(p,0),(p,p)\},t)$ as
\begin{equation}\label{J12_red}
J_{12}(\{(0,p),(p,0),(p,p)\};t)= \sum^{p-1}_{\genfrac{}{}{0pt}{}{m,l=1}{m+l<p}}F_{12}(p-m,p-l;t),
\end{equation}
and $J_{11}(\{(0,2p),(p,p),(p,2p)\};t)$ as
\begin{equation}\label{J11_turq}
J_{11}(\{(0,2p),(p,p),(p,2p)\};t)= \sum^{p-1}_{\genfrac{}{}{0pt}{}{m,l=1}{m+l<p}}F_{11}(p-m,2p-l;t)
\end{equation}
and consider the relation of the absolute values of summands from \eqref{J11_turq} to the summands from \eqref{J12_red}, corresponding to the same  $m,l\in[1,p-1]$:
\begin{equation*}
\begin{split}
&\frac{2|F_{11}(p-m,2p-l;t)|}{F_{12}(p-m,p-l;t)|}<\frac{2c(p-m)d(2p-l)c(3p-m-l)}{c(p-m)c(p-l)d(2p-m-l)}=\\
&\frac{2(2-y)^2}{(3-y)(4-y)}\cdot\frac{z(1-z)}{(2-z)^2}:=g_3(y)g_4(z),
\end{split}
\end{equation*}
where $y = \frac{l}{p}$, $z = \frac{l}{p}+\frac{m}{p}$, $y,z\in(0,1)$.

It is easy to see, that $g_3(y)$ decreases on the interval $(0,1)$, so $g_3(y)<g_3(0)=\dfrac{2}{3}$.

Since $z(1-z)\leq \dfrac{1}{4}$, and $(2-z)^2>1$, $z\in(0,1)$, we finally get, that
\begin{equation*}
\begin{split}
&\frac{2|F_{11}(p-m,2p-l;t)|}{F_{12}(p-m,p-l,t)|}<\frac{2}{3}\cdot\frac{1}{4}=\frac{1}{6},
\end{split}
\end{equation*}
which implies \eqref{J12redJ11turq}.
\end{proof}

\subsection{Positiveness of the remaining part of $J_{11}$}

In this paragraph we shall prove the following statement:
\begin{lem}\label{J_11poslem}
Part of the sum $J_{11}(t)$, defined in \eqref{J11_series}, corresponding to the sets ${\{(m,l):m>p, l>2p\}}$ and $\{(m,l)\in([1,+\infty]\times[1,p])\setminus(\{(p,p),(2p,0),(2p,p)\})\}$, is positive $\forall t>0$.
\end{lem}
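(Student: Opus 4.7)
The plan is to decompose the index set of $J_{11}(t)$ under consideration into blocks in which I either show every summand is positive or pair positive and negative summands one-to-one in a favorable way. Region~1 is the disjoint union of the squares $(pa,p(a+1))\times(pb,p(b+1))$ with $a\ge 1$, $b\ge 2$; by Lemma~\ref{lem_cdsigns1}(ii) each such square is bisected by the diagonal $m+l=p(a+b+1)$ into a positive triangle (below) and a negative triangle (above). Region~2 decomposes into columns $(pa,p(a+1))\times(0,p)$, $a\ge 0$: the column $a=0$ lies in $(0,2p)\times(0,2p)$ with $m+l<2p$ and is entirely positive by Lemma~\ref{lem_cdsigns1}(i); the column $a=1$ minus the excluded triangle $\{(p,p),(2p,0),(2p,p)\}$ is exactly the positive triangle $\{(p,0),(2p,0),(p,p)\}$ (Lemma~\ref{lem_cdsigns1}(ii)); and each column with $a\ge 2$ splits along $m+l=p(a+1)$ into a positive and a negative triangle.

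For every block with both a positive and a negative triangle I would employ the change-of-variables bijection from the proof of Lemma~\ref{J_12poslem1}, pairing the positive point $(pa+m,\,p(b+1)-m-s)$ with the negative point $(pa+m+s,\,p(b+1)-m)$, where $m,s\in\{1,\dots,p-1\}$ satisfy $m+s\le p-1$ (with the convention $b=0$ for Region~2 columns). A direct expansion gives
\begin{equation*}
\bigl[(pa+m+s)^2+(p(b+1)-m)^2+(pa+m+s)(p(b+1)-m)\bigr]-\bigl[(pa+m)^2+(p(b+1)-m-s)^2+(pa+m)(p(b+1)-m-s)\bigr]=3ps(a+b+1),
\end{equation*}
so the exponential factor in the ratio $|F_{11}(\mathrm{neg})|/F_{11}(\mathrm{pos})$ is $e^{-6ps(a+b+1)t}\le 1$ for all $t\ge 0$. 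The $\sin(\pi k/p)$ factors in $c(k)$ and $d(k)$ depend only on $k\pmod p$ and so cancel between paired terms, leaving a product of quotients of the envelopes $c_1,d_1$ from \eqref{c1_k}--\eqref{d1_k}. For $b\ge 2$ all six relevant arguments lie above $2p$ and the negative-side arguments exceed the positive-side ones by $s$; since $c_1$ is decreasing on $(p,\infty)$ and $d_1$ is decreasing on $(2p,\infty)$, the algebraic quotient is termwise strictly less than $1$, and positivity of every Region~1 block follows.

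The main obstacle is the case $b=0$ for Region~2 columns with $a\ge 2$, where $d$ is evaluated at arguments in $(0,p)$, outside the monotone decreasing range of $d_1$. Here I would bound the quotient $d(p-m)/d(p-m-s)$ directly from the explicit formula \eqref{d_k}, producing an explicit rational expression in $x=m/p$ and $y=s/p$; combined with the two $c$-quotients (whose arguments exceed $2p$ since $a\ge 2$, so $c_1$-monotonicity applies) and the strict exponential gain $e^{-6ps(a+1)t}\le 1$, the full product still drops below $1$ — the style of calculation is identical to the ones already carried out in the proofs of Lemmas~\ref{J_12poslem2} and \ref{J12andJ11_2}. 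Summing the pairings within each block of Region~1 and each column with $a\ge 2$ of Region~2, and then adding the always-positive columns $a=0$ and $a=1$ of Region~2, yields the claimed positivity for every $t\ge 0$.
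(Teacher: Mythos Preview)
Your Region~1 argument is correct and coincides with the paper's: the within-square bijection together with the monotonicity of $c_1$ on $(p,\infty)$ and of $d_1$ on $(2p,\infty)$ gives each block a positive sum.

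The gap is in Region~2 for the columns $(pa,p(a+1))\times(0,p)$ with $a\ge 2$. Your within-column pairing of the positive point $(pa+m,\,p-m-s)$ with the negative point $(pa+m+s,\,p-m)$ does \emph{not} make the positive term dominate. After the sine factors cancel, the algebraic ratio $F_{11}(\mathrm{pos})/|F_{11}(\mathrm{neg})|$ contains, besides two $c$-quotients that are bounded (each of size $O(1)$ in $p$), the $d$-quotient
\[
\frac{d(p-m-s)}{d(p-m)}
=\frac{(p-m-s)}{(p-m)}\cdot
\frac{m(2p-m)(p+m)(3p-m)}{(m+s)(2p-m-s)(p+m+s)(3p-m-s)},
\]
which can be made as small as $O(1/p^2)$. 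For instance, take $a=2$, $m=1$, $s=p-2$: then the two $c$-quotients tend to $4$ and $10$ as $p\to\infty$, while the $d$-quotient behaves like $3/(2p^2)$, so the full ratio is $\approx 60/p^2<1$ for all $p\ge 8$. Hence the negative paired term is strictly larger in absolute value, and the block sum need not be positive. The ``style of calculation'' you cite from Lemmas~\ref{J_12poslem2} and~\ref{J12andJ11_2} does not rescue this, because those lemmas compare a fixed positive triangle against a family of negative triangles indexed by $a$ and sum over~$a$; they do not attempt to beat a single negative triangle by its own column's positive triangle.

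This is exactly where the paper proceeds differently. It abandons the within-column idea altogether for Region~2 and instead shows
\[
J_{11}(\{(0,p),(p,p),(p,0)\};t)\;>\;\sum_{a\ge 2}\bigl|J_{11}(\{(pa,p),(p(a+1),p),(p(a+1),0)\};t)\bigr|,
\]
pairing the fixed positive upper triangle of the first square against the \emph{entire} tail of negative triangles. With the parametrisations $(p-m,p-l)$ and $(p(a+1)-m,p-l)$ one gets a termwise ratio bounded by $12/\bigl(a^3(a+1)^2(a+2)\bigr)$, whose sum over $a\ge 2$ is below~$1$. The positive lower triangles in columns $a\ge 1$ are then simply leftover positive contribution. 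You should replace your within-column argument for $b=0$ by this cross-column estimate.
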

\begin{proof}
1) To prove the positiveness of  part of $J_{11}$, defined in \eqref{J11_series}, corresponding to sets $\{(m,l):m>p, l>2p\}$, let us consider squares
 $$\{(pa,p(a+1))\times(pb,p(b+1))\}, a, b\in \mathbb{N}, b\geq 2.$$ According to Lemma\ref{lem_cdsigns1}, all summands in triangle $$\{(pa,pb),(pa,p(b+1)),(p(a+1),pb)\}$$ are positive,
and all summands in triangle $$\{(pa,p(b+1)),(p(a+1),p(b+1)),(p(a+1),pb)\}$$ are negative. If we show, that the value of positive summands exceeds the value of negative summands, that would prove the positiveness of the part of the sum $J_{11}(t)$, corresponding to $m > p, l>2p$. Let us perform the change of variables $m=pa+m_1,\; l=pb+l_1$. Then
\begin{equation}\label{J11_1pos}
\begin{split}
&J_{11}(\{(pa, pb),(p(a+1), pb), (pa,p(b+1))\};t)= \\
&\sum^{p-1}_{\genfrac{}{}{0pt}{}{m_1,l_1=1}{m_1+l_1<p}}
 c(pa+m_1)d(pb+l_1)c(p(a+b)+m_1+l_1),
\end{split}
\end{equation}
\begin{equation}\label{J11_1neg}
\begin{split}
&J_{11}(\{(pa, p(b+1)),(p(a+1), pb), (p(a+1),p(b+1))\};t) = \\
&\sum^{p-1}_{\genfrac{}{}{0pt}{}{m_1,l_1=1}{m_1+l_1>p}}
 c(pa+m_1)d(pb+l_1)c(p(a+b)+m_1+l_1)
\end{split}
\end{equation}
Next, let us switch to variables  $s=p-m_1-l_1$, $m=m_1$ in \eqref{J11_1pos} and to
variables $s=m_1+l_1-p$, $m=m_1-s$ in \eqref{J11_1neg} and consider the relation of the
summand from \eqref{J11_1pos} to the summand from \eqref{J11_1neg} with the same $s$, $m$:
\begin{equation} \label{J11_pos_to_J11_neg}
\begin{split}
&\frac{F_{11}(pa+m,p(b+1)-m-s;t)}{|F_{11}(pa+m+s,p(b+1)-m;t)|}=\\
&\frac{c_1(pa+m)d_1(p(b+1)-m-s)c_1(p(a+b+1)-s)}{c_1(pa+m+s)d_1(p(b+1)-m)c_1(p(a+b+1)+s)}\cdot e^{6t(a+b+1)ps}>\\
&\frac{c_1(pa+m)d_1(p(b+1)-m-s)c_1(p(a+b+1)-s)}{c_1(pa+m+s)d_1(p(b+1)-m)c_1(p(a+b+1)+s)},
\end{split}
\end{equation}
where $c_1(k)$ and $d_1(k)$ were defined in \eqref{c1_k}-\eqref{d1_k}.

Since $c_1(k)$ is decreasing for $k>p$ and $d_1(k)$ is decreasing for $k>2p$, the numerator of the fraction in the last line of \eqref{J11_pos_to_J11_neg} is greater than the denominator, therefore, the fraction is greater than one and
\begin{equation}\label{J11pos_fin1}
\begin{split}
&J_{11}(\{(pa, pb),(p(a+1),pb), (pa,p(b+1))\})>\\
&\hphantom{(pa, pb),(p(a+1),pb)}|J_{11}(\{(pa,p(b+1)),(p(a+1), pb), (p(a+1),p(b+1))\})|.
\end{split}
\end{equation}

2) According to Lemma \ref{lem_cdsigns1}, all summands in $J_{11}(\{(0,p),(p,p),(p,0)\},t)$ are positive, and all summands in $J_{11}(\{(pa,p),(p(a+1),p),(p(a+1),0)\},t)$. $a\geq1$, are negative.
Therefore, in order to prove that
\begin{equation}\label{J11_pos2}
J_{11}(\{(0,p),(p,p),(p,0)\},t)+\sum_{a=2}^{+\infty}J_{11}(\{(pa,p),(p(a+1),p),(p(a+1),0)\},t)>0,
\end{equation}
it is sufficient to show that the values of positive summands are not less that the absolute values of negative summands.

Let us rewrite $J_{11}(\{(0,p),(p,p),(p,0)\};t)$ as
\begin{equation*}
J_{11}(\{(0,p),(p,p),(p,0)\};t)=\sum_{m,l=1}^{p-1}F_{11}(p-m,p-l;t)
\end{equation*}
and  $J_{11}(\{(pa,p),(p(a+1),p),(p(a+1),0)\};t)$ as
\begin{equation*}
J_{11}(\{(pa,p),(p(a+1),p),(p(a+1),0)\};t)=\sum_{m,l=1}^{p-1}F_{11}(pa-m,p-l;t)
\end{equation*}
and consider the following relation:
\begin{equation} \label{J11_pos2_to_J11_neg2}
\frac{|F_{11}(pa-m,p-l;t)|}{F_{11}(p-m,p-l;t)}<\frac{|c(pa-m)c(p(a+1)-m-l)|}{c(p-m)c(2p-m-l)} =h_1(x)h_2(z),
\end{equation}
where $x = \frac{m}{p}$, $z = \frac{m+l}{p}$, and
\begin{equation}\label{h1}
h_1(x) = \frac{x}{(a-1-x)}\cdot\frac{1-x}{a-x}\cdot\frac{2-x}{a+1-x},
\end{equation}
\begin{equation}\label{h2}
h_2(z) = \frac{1-z}{a-z}\cdot\frac{2-z}{a+1-z}\cdot\frac{3-z}{a+2-z}.
\end{equation}

Obviously, all multipliers in \eqref{h2} are decreasing functions of $z, z\in(0,1)$, all multipliers but the first one in \eqref{h1} are decreasing functions of $x$ and the first multiplier is an increasing function of $x$, $x\in(0,1)$. Therefore,
\begin{equation*}
h_1(x) < \frac{2}{a^2(a+1)},
\end{equation*}
\begin{equation*}
h_2(z) < \frac{6}{a(a+1)(a+2)},
\end{equation*}
and
\begin{equation*}
\begin{split}
&\sum_{a=2}^{+\infty}\frac{|F_{11}(pa-m,p-l;t)|}{F_{11}(p-m,p-l;t)}<\sum_{a=2}^{+\infty}\frac{12}{a^3(a+1)^2(a+2)}<\\
&\frac{1}{24}+\int_{2}^{+\infty}\frac{12dx}{x^3(x+1)^2(x+2)} <
\frac{1}{24}+\int_{2}^{+\infty}\frac{12dx}{x^6}=\frac{7}{60} < 1.
\end{split}
\end{equation*}
Therefore, inequality \eqref{J11_pos2} is established. Together with \eqref{J11pos_fin1} this completes the proof of the lemma.
\end{proof}

According to Lemmas \ref{J1series_sum1pos}-\ref{J_11poslem} and equality \eqref{J1_series},
\begin{equation*}
\begin{split}
J_1(t)&=\frac{2\pi}{p}J_{10}(t)+\frac{\pi}{2}(2J_{11}(t)+J_{12}(t))>\\
&\frac{\pi}{2}(2F_{11}(1,1;t)+F_{12}(1,1;t))=C_1\cdot e^{-6t},
\end{split}
\end{equation*}
where $C_1>0$, which completes the proof of theorem \ref{thJ_1est}.

\section{Estimate for $J_2(t)+J_4(t)$}\label{s5}

In this section we begin the proof of the following theorem:
\begin{theorem}\label{thJ_24est}
For any $t\geq 0$ function $J_2(t)+J_4(t)$ defined in \eqref{J_2}, \eqref{J_4}, satisfies the following inequality:

\begin{equation}\label{J24est}
J_2(t)+J_4(t)\geq C_2 e^{-6t},
\end{equation}
where $C_2$ is some positive constant.
\end{theorem}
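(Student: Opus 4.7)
\section{Proof plan for Theorem \ref{thJ_24est}}

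The plan is to mimic the Fourier-series strategy used for $J_1(t)$ in Section \ref{s4}, now applied simultaneously to both $J_2$ and $J_4$. First, I would expand each solution appearing inside $J_2$ and $J_4$ into its one-dimensional Fourier series. For the cosine initial data $\chi_{\pi/p}(1+\cos p\xi)$ and $\chi_{\pi/p}(\cos p\xi + \cos 2p\xi)$, I already have the coefficients $c(k)$ and $d(k)$ from \eqref{c_k}--\eqref{d_k}. The two new ingredients are
\[
    S(t,x;\chi_{\pi/p}\sin p\xi)=\sum_{k=1}^\infty e(k)\sin(kx)e^{-k^2t},\qquad
    S(t,x;\chi_{\pi/p}(\sin p\xi + \tfrac12 \sin 2p\xi))=\sum_{k=1}^\infty f(k)\sin(kx)e^{-k^2t},
\]
whose coefficients $e(k)$, $f(k)$ admit closed forms analogous to $c(k),d(k)$: explicit rational expressions in $k,p$ with sines vanishing at $k=pl$ for $l\ge 2$, together with special values at $k=p,2p$.

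Next I would carry out the integrations $\int_{-\pi}^{\pi}$. For $J_2$, using the same product-to-sum identity that gave \eqref{J1_series}, I obtain a decomposition $J_2(t)=\frac{2\pi}{p}J_{20}(t)+\frac{\pi}{2}(2J_{21}(t)+J_{22}(t))$ with
\[
    J_{20}(t)=\sum_m d(m)^2 e^{-2m^2t},\qquad J_{21}(t)=\sum_{m,l} d(m)d(l)c(m+l)e^{-2(m^2+l^2+ml)t},
\]
\[
    J_{22}(t)=\sum_{m,l} c(m)d(l)d(m+l)e^{-2(m^2+l^2+ml)t}.
\]
For $J_4$, the identities $\sin(kx)\sin(lx)=\tfrac12[\cos((k-l)x)-\cos((k+l)x)]$ and $\cos(mx)\sin(kx)\sin(lx)=\tfrac12\cos(mx)[\cos((k-l)x)-\cos((k+l)x)]$ produce a decomposition whose leading (slowest-decaying) contributions pair up with terms of $J_2$ at the same $(m,l)$ lattice points, but with opposite sign structure coming from the $\cos$--$\cos$--$\cos$ vs.\ $\cos$--$\sin$--$\sin$ integrations. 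In particular $J_{20}(t)$ is manifestly positive, while the off-diagonal pieces of $J_2$ and $J_4$ should combine so that the destructive ones partially cancel.

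The bulk of the argument is then a sign-distribution analysis on the lattice $\bN\times\bN$ analogous to Lemmas \ref{lem_cdsigns1}--\ref{lem_cdsigns2}, followed by pairing-and-ratio estimates as in Lemmas \ref{J_12poslem1}--\ref{J_11poslem}. In each square $(pa,p(a+1))\times(pb,p(b+1))$ the sign of $d(m)d(l)c(m+l)$, $c(m)d(l)d(m+l)$, and the two sine-type analogues from $J_4$ alternates across the diagonal $m+l=p(a+b+1)$, and I would show, by comparing absolute values of reflected pairs and using monotonicity of the rational factors $c_1,d_1$ (and their sine analogues $e_1,f_1$) for $k$ past the appropriate thresholds, that on each triangle the positive contributions dominate. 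The time weight $e^{6p(a+b+1)st}\ge 1$ that appeared in \eqref{J12_pos_to_J12_neg} and \eqref{J11_pos_to_J11_neg} works in our favour identically. After all cancellations, the leading surviving term at $(m,l)=(1,1)$ — a sum of the form $[2d(1)^2 c(2)+2c(1)d(1)d(2)+\text{sine contributions from }J_4]\,e^{-6t}$ — must be shown to be strictly positive, yielding the bound $J_2(t)+J_4(t)\ge C_2 e^{-6t}$.

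The main obstacle I expect is the sine-cosine coupling inside $J_4$. Unlike in Section \ref{s4}, where sign patterns of $c,d$ were controlled by a single pair of rules, here the products $\cos(mx)\sin(kx)\sin(lx)$ produce two different convolutions, $(k-l)$ and $(k+l)$, and the $(k-l)$-type terms decay only as $e^{-2(k^2+l^2)t}$ (slowest when $k=l$), so they mix with the diagonal $J_{20}$-like terms. The delicate step will be matching these slow-decay sine contributions of $J_4$ against the positive diagonal part of $J_2$ and showing that whatever negative share remains is absorbed by the fast-decay off-diagonal positives. Once this pairing is set up, the ratio estimates (bounding geometric tails in $a,b$ by $\sum 1/a^6$-type integrals, as in \eqref{5.17} and the proofs of Lemmas \ref{J12andJ11_1}--\ref{J12andJ11_2}) should close the argument.
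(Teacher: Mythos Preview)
Your plan takes a genuinely different route from the paper. You propose to attack $J_2+J_4$ directly by Fourier expansion and lattice sign analysis, in the style of Section~\ref{s4}. The paper instead uses an ODE reduction: it computes $\frac{d}{dt}(J_2+J_4)$ via Lemma~\ref{S_der} and, after several integrations by parts, obtains a differential inequality of the form
\[
\frac{d}{dt}(J_2+J_4)+24p^2(J_2+J_4)=8p^2J_2+9p^2Q(t)+12p^2R(t)+\text{(manifestly positive terms)},
\]
where $Q,R$ are two auxiliary integrals. A second round of differentiation gives $\frac{d}{dt}(12R+9Q)+8p^2(12R+9Q)=48p^2\tilde J(t)+\text{(positive)}$ with $\tilde J(t)=-\int S^2(\chi_{\pi/p}\sin p\xi)S(\chi_{\pi/p}\cos 2p\xi)\,dx$. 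Only this much simpler quantity $\tilde J$ is then subjected to Fourier analysis (Theorem~\ref{th_tJ_est}), and even that analysis occupies two full sections. The payoff of the ODE approach is that it sheds all the awkward cross-terms you anticipate---in particular the $(k-l)$-type contributions from $J_4$---before any lattice combinatorics begins; the function $\tilde J$ involves only two initial data instead of four, and its Fourier structure $2A(k)A(k+l)B(l)-A(k)A(l)B(k+l)$ is clean.

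Your direct approach is not wrong in principle, but the obstacle you flag is real and you have not indicated how to resolve it. The $(k-l)$-branch of $J_4$ has decay $e^{-((k-l)^2+k^2+l^2)t}=e^{-2(k^2+l^2-kl)t}$, not $e^{-2(k^2+l^2)t}$ as you wrote; these terms live on a different index set (pairs with $m=|k-l|$) from the $(k+l)$-terms, so the triangle-pairing scheme of Section~\ref{s4} does not apply to them directly, and you would need a separate mechanism to control their signs. Until you exhibit that mechanism and check the leading coefficient at $(1,1)$ and $(1,2)$ explicitly, the plan remains a hope rather than a proof. The paper's differential-equation trick is the missing idea that sidesteps this entire difficulty.
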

\begin{proof}
In order to prove the estimate \eqref{J24est}, let us find the derivative of $J_2(t)+J_4(t)$. According to Lemma\ref{S_der},

\begin{equation}\label{J4der1}
\begin{split}
\frac{d}{dt}J_4(t)&=\int_{-\pi}^{\pi}\partial_{xx}S(t,x;\chi_{\frac{\pi}{p}}\sin p\xi)S(t,x;\chi_{\frac{\pi}{p}}\cdot(\sin p\xi+1/2\sin 2p\xi))\cdot\\
&\hphantom{\int_{-\pi}^{\pi}\partial_{xx}}S(t,x;\chi_{\frac{\pi}{p}}\cdot(\cos p\xi+\cos 2p\xi))dx-\\
&-p^2\int_{-\pi}^{\pi}S(t,x;\chi_{\frac{\pi}{p}}\sin p\xi)S(t,x;\chi_{\frac{\pi}{p}}\cdot(\sin p\xi+2\sin 2p\xi))\cdot\\
&\hphantom{\int_{-\pi}^{\pi}\partial_{xx}}S(t,x;\chi_{\frac{\pi}{p}}\cdot(\cos p\xi+\cos 2p\xi))dx-\\
&-p^2\int_{-\pi}^{\pi}S(t,x;\chi_{\frac{\pi}{p}}\sin p\xi)S(t,x;\chi_{\frac{\pi}{p}}\cdot(\sin p\xi+1/2\sin 2p\xi))\cdot\\
&\hphantom{\int_{-\pi}^{\pi}\partial_{xx}}S(t,x;\chi_{\frac{\pi}{p}}\cdot(\cos p\xi+4\cos 2p\xi))dx=\\
&\int_{-\pi}^{\pi}\partial_{xx}S(t,x;\chi_{\frac{\pi}{p}}\sin p\xi)S(t,x;\chi_{\frac{\pi}{p}}\cdot(\sin p\xi+1/2\sin 2p\xi))\cdot\\
&\hphantom{\int_{-\pi}^{\pi}\partial_{xx}}S(t,x;\chi_{\frac{\pi}{p}}\cdot(\cos p\xi+\cos 2p\xi))dx-8p^2J_4(t)+\\
&3p^2\int_{-\pi}^{\pi}S(t,x;\chi_{\frac{\pi}{p}}\sin p\xi)S(t,x;\chi_{\frac{\pi}{p}}\cdot(\sin p\xi+1/2\sin 2p\xi))\cdot\\
&\hphantom{\int_{-\pi}^{\pi}\partial_{xx}}S(t,x;\chi_{\frac{\pi}{p}}\cos p\xi)dx+\\
&3p^2\int_{-\pi}^{\pi}S^2(t,x;\chi_{\frac{\pi}{p}}\sin
p\xi)S(t,x;\chi_{\frac{\pi}{p}}\cdot(\cos p\xi+\cos 2p\xi))dx.
\end{split}
\end{equation}

Let us consider the first summand in the right hand side of \eqref{J4der1}:
\begin{equation*}
\begin{split}
&\int_{-\pi}^{\pi}\partial_{xx}S(t,x;\chi_{\frac{\pi}{p}}\sin p\xi)S(t,x;\chi_{\frac{\pi}{p}}\cdot(\sin p\xi+1/2\sin 2p\xi))\cdot\\
&\hphantom{\int_{-\pi}^{\pi}\partial_{xx}}S(t,x;\chi_{\frac{\pi}{p}}\cdot(\cos p\xi+\cos 2p\xi))dx=\\
&\int_{-\pi}^{\pi}\partial_{xx}[S(t,x;\chi_{\frac{\pi}{p}}\cdot(\sin p\xi+1/2\sin 2p\xi))S(t,x;\chi_{\frac{\pi}{p}}\cdot(\cos p\xi+\cos 2p\xi))]\cdot\\
&\hphantom{\int_{-\pi}^{\pi}\partial_{xx}}S(t,x;\chi_{\frac{\pi}{p}}\sin p\xi)dx =\\
&-p^2\int_{-\pi}^{\pi}S(t,x;\chi_{\frac{\pi}{p}}\cdot(\sin p\xi+2\sin 2p\xi))S(t,x;\chi_{\frac{\pi}{p}}\cdot(\cos p\xi+\cos 2p\xi))\cdot\\
&\hphantom{\int_{-\pi}^{\pi}\partial_{xx}}S(t,x;\chi_{\frac{\pi}{p}}\sin p\xi)dx -\\
&2p^2\int_{-\pi}^{\pi}S(t,x;\chi_{\frac{\pi}{p}}\cdot(\cos p\xi+\cos 2p\xi))S(t,x;\chi_{\frac{\pi}{p}}\cdot(\sin p\xi+2\sin 2p\xi))\cdot\\
&\hphantom{\int_{-\pi}^{\pi}\partial_{xx}}S(t,x;\chi_{\frac{\pi}{p}}\sin p\xi)dx -\\
&p^2\int_{-\pi}^{\pi}S(t,x;\chi_{\frac{\pi}{p}}\cdot(\sin p\xi+1/2\sin 2p\xi))S(t,x;\chi_{\frac{\pi}{p}}\cdot(\cos p\xi+4\cos 2p\xi))\cdot\\
&\hphantom{\int_{-\pi}^{\pi}\partial_{xx}}S(t,x;\chi_{\frac{\pi}{p}}\sin p\xi)dx=\\
&-16p^2J_4(t)+9p^2\int_{-\pi}^{\pi}S^2(t,x;\chi_{\frac{\pi}{p}}\sin p\xi)S(t,x;\chi_{\frac{\pi}{p}}\cdot(\cos p\xi+\cos 2p\xi))dx+\\
&3p^2\int_{-\pi}^{\pi}S(t,x;\chi_{\frac{\pi}{p}}\sin p\xi)S(t,x;\chi_{\frac{\pi}{p}}\cdot(\sin p\xi+1/2\sin 2p\xi))
S(t,x;\chi_{\frac{\pi}{p}}\cos p\xi)dx.\\
\end{split}
\end{equation*}

Therefore,
\begin{equation*}
\begin{split}
\frac{d}{dt}&J_4(t)=-24p^2J_4(t)+\\
&6p^2\int_{-\pi}^{\pi}S(t,x;\chi_{\frac{\pi}{p}}\sin p\xi)S(t,x;\chi_{\frac{\pi}{p}}\cdot(\sin p\xi+1/2\sin 2p\xi))
S(t,x;\chi_{\frac{\pi}{p}}\cos p\xi)dx+\\
&12p^2\int_{-\pi}^{\pi}S^2(t,x;\chi_{\frac{\pi}{p}}\sin
p\xi)S(t,x;\chi_{\frac{\pi}{p}}\cdot(\cos p\xi+\cos 2p\xi))dx.
\end{split}
\end{equation*}

Integrating by parts, we get that
\begin{equation*}
\begin{split}
&\int_{-\pi}^{\pi}S(t,x;\chi_{\frac{\pi}{p}}\sin p\xi)S(t,x;\chi_{\frac{\pi}{p}}\cdot(\sin p\xi+1/2\sin 2p\xi))\cdot\\
&\hphantom{\int_{-\pi}^{\pi}\partial_{xx}}S(t,x;\chi_{\frac{\pi}{p}}\cos p\xi)dx=\\
&\frac{1}{2p}\int_{-\pi}^{\pi}S(t,x;\chi_{\frac{\pi}{p}}\cdot(\sin p\xi+1/2\sin 2p\xi))d(S^2(t,x;\chi_{\frac{\pi}{p}}\sin p\xi))=\\
&-\frac{1}{2}\int_{-\pi}^{\pi}S^2(t,x;\chi_{\frac{\pi}{p}}\sin p\xi)S(t,x;\chi_{\frac{\pi}{p}}\cdot(\cos p\xi+\cos 2p\xi))dx.\\
\end{split}
\end{equation*}
Therefore, finally,
\begin{equation*}
\begin{split}
\frac{d}{dt}J_4(t)&=-24p^2J_4(t)+\\
&9p^2\int_{-\pi}^{\pi}S^2(t,x;\chi_{\frac{\pi}{p}}\sin
p\xi)S(t,x;\chi_{\frac{\pi}{p}}\cdot(\cos p\xi+\cos 2p\xi))dx.
\end{split}
\end{equation*}

Next, let us find the derivative of $J_2(t)$:
\begin{equation*}
\begin{split}
\frac{d}{dt}&J_2(t)=\\
&\int_{-\pi}^{\pi}\partial_{xx}S(t,x;\chi_{\frac{\pi}{p}}\cdot(1+\cos p\xi))S^2(t,x;\chi_{\frac{\pi}{p}}\cdot(\cos p\xi+\cos 2p\xi))dx+\\
&2\int_{-\pi}^{\pi}S(t,x;\chi_{\frac{\pi}{p}}\cdot(1+\cos p\xi))S(t,x;\chi_{\frac{\pi}{p}}\cdot(\cos p\xi+\cos 2p\xi))\cdot\\
&\hphantom{\int_{-\pi}^{\pi}\partial_{xx}}\partial_{xx}(S(t,x;\chi_{\frac{\pi}{p}}\cdot(\cos p\xi+\cos 2p\xi)))dx=\\
&2p^2\int_{-\pi}^{\pi}S(t,x;\chi_{\frac{\pi}{p}}\cdot(1+\cos p\xi))S^2(t,x;\chi_{\frac{\pi}{p}}\cdot(\sin p\xi+2\sin 2p\xi))dx-\\
&-4p^2\int_{-\pi}^{\pi}S(t,x;\chi_{\frac{\pi}{p}}\cdot(1+\cos p\xi))S(t,x;\chi_{\frac{\pi}{p}}\cdot(\cos p\xi+\cos 2p\xi))\cdot\\
&\hphantom{\int_{-\pi}^{\pi}\partial_{xx}}S(t,x;\chi_{\frac{\pi}{p}}\cdot(\cos p\xi+4\cos 2p\xi))dx=-16p^2J_2(t)+\\
&2p^2\int_{-\pi}^{\pi}S(t,x;\chi_{\frac{\pi}{p}}\cdot(1+\cos p\xi))S^2(t,x;\chi_{\frac{\pi}{p}}\cdot(\sin p\xi+2\sin 2p\xi))dx+\\
&12p^2\int_{-\pi}^{\pi}S(t,x;\chi_{\frac{\pi}{p}}\cdot(1+\cos p\xi))S(t,x;\chi_{\frac{\pi}{p}}\cdot(\cos p\xi+\cos 2p\xi))\cdot\\
&\hphantom{\int_{-\pi}^{\pi}\partial_{xx}}S(t,x;\chi_{\frac{\pi}{p}}\cos
p\xi)dx.
\end{split}
\end{equation*}
Therefore, $J_2(t)+J_4(t)$ satisfies the following differential equation:
\begin{equation}\label{J24eq}
\begin{split}
\frac{d}{dt}&(J_2(t)+J_4(t))+24p^2(J_2+J_4)=\\
&8p^2J_2(t)+9p^2\int_{-\pi}^{\pi}S^2(t,x;\chi_{\frac{\pi}{p}}\sin p\xi)S(t,x;\chi_{\frac{\pi}{p}}\cdot(\cos p\xi+\cos 2p\xi))dx+\\
&2p^2\int_{-\pi}^{\pi}S(t,x;\chi_{\frac{\pi}{p}}\cdot(1+\cos p\xi))S^2(t,x;\chi_{\frac{\pi}{p}}\cdot(\sin p\xi+2\sin 2p\xi))dx+\\
&12p^2\int_{-\pi}^{\pi}S(t,x;\chi_{\frac{\pi}{p}}\cdot(1+\cos p\xi))S(t,x;\chi_{\frac{\pi}{p}}\cdot(\cos p\xi+\cos 2p\xi))\cdot\\
&\hphantom{\int_{-\pi}^{\pi}\partial_{xx}}S(t,x;\chi_{\frac{\pi}{p}}\cos
p\xi)dx.
\end{split}
\end{equation}

The initial value of $J_2(t)+J_4(t)$ is equal to
$\dfrac{11\pi}{4p}$. Therefore, if we prove that the right-hand
side of \eqref{J24eq} is greater than zero, it would mean that
\begin{equation}\label{J24pos}
J_2(t)+J_4(t)\geq \frac{11\pi}{4p}e^{-24p^2t}>0.
\end{equation}

All summands in the right hand side of \eqref{J24eq}, except, possibly,
\begin{equation*}
Q(t):=\int_{-\pi}^{\pi}S^2(t,x;\chi_{\frac{\pi}{p}}\sin
p\xi)S(t,x;\chi_{\frac{\pi}{p}}\cdot(\cos p\xi+\cos 2p\xi))dx,
\end{equation*}
and
\begin{equation*}
R(t):=\int_{-\pi}^{\pi}S(t,x;\chi_{\frac{\pi}{p}}\!\cdot\!(1+\cos
p\xi))S(t,x;\chi_{\frac{\pi}{p}}\cos
p\xi)S(t,x;\chi_{\frac{\pi}{p}}\!\cdot\!(\cos p\xi+\cos 2p\xi))
\end{equation*}
are obviously positive. Let us consider these two summands.

Differentiating function $Q(t)$, we get the following relation:
\begin{equation*}
\begin{split}
\frac{d}{dt}&Q(t)=-4p^2Q(t)+\\
&3p^2\int_{-\pi}^{\pi}S^2(t,x;\chi_{\frac{\pi}{p}}\sin p\xi)S(t,x;\chi_{\frac{\pi}{p}}\cos p\xi)dx+\\
&2\int_{-\pi}^{\pi}S(t,x;\chi_{\frac{\pi}{p}}\sin
p\xi)\partial_{xx}S(t,x;\chi_{\frac{\pi}{p}}\sin
p\xi)S(t,x;\chi_{\frac{\pi}{p}}\cdot(\cos p\xi+\cos 2p\xi))dx.
\end{split}
\end{equation*}

Let us note, that
\begin{equation}\label{zeroint}
\begin{split}
&\int_{-\pi}^{\pi}S^2(t,x;\chi_{\frac{\pi}{p}}\sin p\xi)S(t,x;\chi_{\frac{\pi}{p}}\cos p\xi)dx=\\
&\frac{1}{p}\int_{-\pi}^{\pi}S^2(t,x;\chi_{\frac{\pi}{p}}\sin p\xi)d(S(t,x;\chi_{\frac{\pi}{p}}\sin p\xi))=\\
&\frac{1}{3p}\int_{-\pi}^{\pi}d(S^3(t,x;\chi_{\frac{\pi}{p}}\sin
p\xi))=0.
\end{split}
\end{equation}

Therefore,
\begin{equation*}
\begin{split}
\frac{d}{dt}&Q(t)=\\%
&-4p^2Q(t)+2p\int_{-\pi}^{\pi}S(t,x;\chi_{\frac{\pi}{p}}\sin p\xi)\partial_{x}S(t,x;\chi_{\frac{\pi}{p}}\cos p\xi)\cdot\\
&\hphantom{\int_{-\pi}^{\pi}\partial_{xx}}S(t,x;\chi_{\frac{\pi}{p}}\cdot(\cos p\xi+\cos 2p\xi))dx=\\
&-4p^2Q(t)-2p\int_{-\pi}^{\pi}\partial_{x}[S(t,x;\chi_{\frac{\pi}{p}}\sin p\xi)S(t,x;\chi_{\frac{\pi}{p}}\cdot(\cos p\xi+\cos 2p\xi))]\cdot\\
&\hphantom{\int_{-\pi}^{\pi}\partial_{xx}}S(t,x;\chi_{\frac{\pi}{p}}\cos p\xi)dx=\\
&-4p^2Q(t)-2p^2\int_{-\pi}^{\pi}S^2(t,x;\chi_{\frac{\pi}{p}}\cos p\xi)S(t,x;\chi_{\frac{\pi}{p}}\cdot(\cos p\xi+\cos 2p\xi))dx-\\
&2p^2\int_{-\pi}^{\pi}S(t,x;\chi_{\frac{\pi}{p}}\sin
p\xi)S(t,x;\chi_{\frac{\pi}{p}}\cdot(\sin p\xi+2\sin
2p\xi))S(t,x;\chi_{\frac{\pi}{p}}\cos p\xi)dx.
\end{split}
\end{equation*}
Integrating by parts and taking into account \eqref{zeroint}, we get that
\begin{equation*}
\begin{split}
&2p^2\int_{-\pi}^{\pi}S(t,x;\chi_{\frac{\pi}{p}}\sin p\xi)S(t,x;\chi_{\frac{\pi}{p}}\cdot(\sin p\xi+2\sin 2p\xi))
S(t,x;\chi_{\frac{\pi}{p}}\cos p\xi)dx=\\
&-p^2\int_{-\pi}^{\pi}S^2(t,x;\chi_{\frac{\pi}{p}}\sin p\xi)S(t,x;\chi_{\frac{\pi}{p}}\cdot(\cos p\xi+4\cos 2p\xi))dx=\\
&-4p^2\int_{-\pi}^{\pi}S^2(t,x;\chi_{\frac{\pi}{p}}\sin
p\xi)S(t,x;\chi_{\frac{\pi}{p}}\cdot(\cos p\xi+\cos
2p\xi))dx=-4p^2Q(t).
\end{split}
\end{equation*}

Finally, we get that
\begin{equation*}
\begin{split}
\frac{d}{dt}Q(t)&=-8p^2Q(t)-\\
&-2p^2\int_{-\pi}^{\pi}S^2(t,x;\chi_{\frac{\pi}{p}}\cos
p\xi)S(t,x;\chi_{\frac{\pi}{p}}\cdot(\cos p\xi+\cos 2p\xi))dx.
\end{split}
\end{equation*}

Similarly, let us find the derivative of $R(t)$:
\begin{equation*}
\begin{split}
\frac{d}{dt}R(t)&=-p^2\int_{-\pi}^{\pi}S^2(t,x;\chi_{\frac{\pi}{p}}\cos p\xi)S(t,x;\chi_{\frac{\pi}{p}}\cdot(\cos p\xi+\cos 2p\xi))dx-\\
&4p^2R(t)+3p^2\int_{-\pi}^{\pi}S^2(t,x;\chi_{\frac{\pi}{p}}\cos p\xi)S(t,x;\chi_{\frac{\pi}{p}}\cdot(1+\cos p\xi))dx+\\
&\int_{-\pi}^{\pi}S(t,x;\chi_{\frac{\pi}{p}}\cos p\xi)\partial_{xx}[S(t,x;\chi_{\frac{\pi}{p}}\cdot(1+\cos p\xi))\cdot\\
&\hphantom{\int_{-\pi}^{\pi}\partial_{xx}}S(t,x;\chi_{\frac{\pi}{p}}\cdot(\cos p\xi+\cos 2p\xi))]dx=\\
&-8p^2R(t)+6p^2\int_{-\pi}^{\pi}S^2(t,x;\chi_{\frac{\pi}{p}}\cos p\xi)S(t,x;\chi_{\frac{\pi}{p}}\cdot(1+\cos p\xi))dx-\\
&2p^2\int_{-\pi}^{\pi}S^2(t,x;\chi_{\frac{\pi}{p}}\cos p\xi)S(t,x;\chi_{\frac{\pi}{p}}\cdot(\cos p\xi+\cos 2p\xi))dx+\\
&2p^2\int_{-\pi}^{\pi}S(t,x;\chi_{\frac{\pi}{p}}\cos p\xi)S(t,x;\chi_{\frac{\pi}{p}}\sin p\xi)\cdot\\
&\hphantom{\int_{-\pi}^{\pi}\partial_{xx}}S(t,x;\chi_{\frac{\pi}{p}}\cdot(\sin p\xi+2\sin 2p\xi))dx
\end{split}
\end{equation*}

Taking into account relation \eqref{zeroint}, we finally get that
\begin{equation*}
\begin{split}
\frac{d}{dt}&R(t)=-8p^2R(t)+\\
&6p^2\int_{-\pi}^{\pi}S^2(t,x;\chi_{\frac{\pi}{p}}\cos p\xi)S(t,x;\chi_{\frac{\pi}{p}}\cdot(1+\cos p\xi))dx-\\
&2p^2\int_{-\pi}^{\pi}S^2(t,x;\chi_{\frac{\pi}{p}}\cos p\xi)S(t,x;\chi_{\frac{\pi}{p}}\cdot(\cos p\xi+\cos 2p\xi))dx-\\
&4p^2\int_{-\pi}^{\pi}S^2(t,x;\chi_{\frac{\pi}{p}}\sin
p\xi)S(t,x;\chi_{\frac{\pi}{p}}\cos 2p\xi)dx.
\end{split}
\end{equation*}

Finally, we arrive to the following differential equation:
\begin{equation}\label{ABeq}
\begin{split}
&\frac{d}{dt}(12R(t)+9Q(t))+8p^2(12R(t)+Q(t))=\\
&p^2\int_{-\pi}^{\pi}S^2(t,x;\chi_{\frac{\pi}{p}}\cos p\xi)S(t,x;\chi_{\frac{\pi}{p}}\cdot(30(1+\cos p\xi)+42(1-\cos 2p\xi)))dx-\\
&48p^2\int_{-\pi}^{\pi}S^2(t,x;\chi_{\frac{\pi}{p}}\sin
p\xi)S(t,x;\chi_{\frac{\pi}{p}}\cos 2p\xi)dx.
\end{split}
\end{equation}


Let us show, that Theorem \ref{thJ_24est} follows from the following:
\begin{theorem}\label{th_tJ_est}
Let $\tilde{J}(t)$ be the function
\begin{equation}\label{J_def}
\tilde{J}(t):=-\int_{-\pi}^{\pi}S^2(t,x;\chi_{\frac{\pi}{p}}\sin
p\xi)S(t,x;\chi_{\frac{\pi}{p}}\cos 2p\xi)dx
\end{equation}
from the right hand side of \eqref{ABeq}. Then
\begin{equation}\label{tJ_est0}
\tilde{J}(t)>\alpha\cdot e^{-6t} \quad \forall t>0,
\end{equation}
where
\begin{equation}\label{alpha}
\alpha=\frac{9\sin^2{\frac{\pi}{p}}\sin{\frac{2\pi}{p}}}{2\pi^2p^6}.
\end{equation}
\end{theorem}

The proof of Theorem \ref{th_tJ_est} is complicated and will be given in the next section.

Let us now consider equation \eqref{ABeq}.
Obviously,
$$\int_{-\pi}^{\pi}S^2(t,x;\chi_{\frac{\pi}{p}}\cos p\xi)S(t,x;\chi_{\frac{\pi}{p}}(30(1+\cos p\xi)+42(1-\cos 2p\xi)))dx>0.$$

Let us denote $12R(t)+9Q(t)$ by $g(t)$. According to
\eqref{tJ_est0}, equation \eqref{ABeq} implies that
\begin{equation*}
\frac{d}{dt}g(t)+8p^2g(t)\ge \alpha e^{-6t} + h(t),
\end{equation*}
where $\alpha$ is defined in \eqref{alpha}, and $h(t)\geq 0$ $\forall t\geq 0$.

Since
\begin{equation*}
\begin{split}
g(0) = 9\int_{-\pi/p}^{\pi/p}&\sin^2{px}(\cos px+\cos 2px)dx+\\
&12\int_{-\pi/p}^{\pi/p}(1+\cos px)\cos px(\cos px+\cos 2px)dx =\dfrac{27\pi}{2p},
\end{split}
\end{equation*}
we get, that
\begin{equation*}
g(t)=\frac{\alpha}{8p^2-6}e^{-6t} + \left(\dfrac{27\pi}{2p}-\frac{\alpha}{8p^2-6}\right)e^{-8p^2t}+\int_{0}^{t}e^{8p^2(\tau-t)}h(\tau)d\tau,
\end{equation*}
which obviously implies
\begin{equation}\label{QR_est}
12R(t)+9Q(t)>\frac{\alpha}{8p^2-6}e^{-6t}.
\end{equation}

Analogously, let us consider \eqref{J24eq}:
\begin{equation*}
\begin{split}
\frac{d}{dt}&(J_2(t)+J_4(t))+24p^2(J_2+J_4)=\\
&8p^2J_2(t)+9p^2Q(t)+12p^2R(t) +\\
&2p^2\int_{-\pi}^{\pi}S(t,x;\chi_{\frac{\pi}{p}}(1+\cos p\xi))S^2(t,x;\chi_{\frac{\pi}{p}}(\sin p\xi+2\sin 2p\xi))dx,
\end{split}
\end{equation*}
According to \eqref{QR_est},
\begin{equation*}
\frac{d}{dt}(J_2(t)+J_4(t))+24p^2(J_2+J_4)\ge
\alpha_1e^{-6t} + h_1(t),
\end{equation*}
where $\alpha_1 = \dfrac{\alpha}{8p^2-6}$ and $h_1(t)\geq 0$ $\forall t\geq 0$.

Since $(J_2+J_4)(0)=\dfrac{11\pi}{4p}$,
\begin{equation}\label{J24finest}
\begin{split}
J_2(t)+&J_4(t)=\\
&\frac{\alpha_1}{24p^2-6}e^{-6t} + \left(\dfrac{11\pi}{4p}-\frac{\alpha_1}{24p^2-6}\right)e^{-24p^2t}+\int_{0}^{t}e^{24p^2(\tau-t)}h(\tau)d\tau.
\end{split}
\end{equation}
Estimate \eqref{J24est} follows directly from \eqref{J24finest}.

\end{proof}

\section{Proof of Theorem \ref{th_tJ_est}: the first step}\label{s6}

In this section we begin to prove Theorem \ref{th_tJ_est}.

\subsection{Fourier decomposition.}

The solutions of the heat equation
\begin{equation*}
\partial_t S-\partial_{xx}S=0
\end{equation*}
with periodic boundary condition and initial conditions
$S|_{t=0}=\chi_{\frac{\pi}{p}}(x)\sin px $ and
$S|_{t=0}=\chi_{\frac{\pi}{p}}(x)\cos 2px$ can be represented as

\begin{equation}\label{S_sin}
S(t, x;\chi_{\frac{\pi}{p}}\sin p\xi)=\sum_{k=1}^{\infty}a_1(k)\sin kx e^{-k^2t};
\end{equation}

\begin{equation}\label{S_cos2}
S(t, x;-\chi_{\frac{\pi}{p}}\cos 2p\xi)=\sum_{k=1}^{\infty}b_1(k)\cos kx e^{-k^2t},
\end{equation}

where

\begin{equation}\label{a1_k}
a_1(k)=\left\{\begin{array}{rl}
        &\dfrac{2p \sin\frac{\pi k}{p}}{\pi(p^2-k^2)},  k\neq p,\\
        &\dfrac{1}{p}, k = p
            \end{array}\right.
\end{equation}
and
\begin{equation}\label{b1_k}
b_1(k)=\left\{\begin{array}{rl}
        &\dfrac{2k \sin\frac{\pi k}{p}}{\pi(4p^2-k^2)}, k\neq 2p,\\
        &-\dfrac{1}{p}, k = 2p
            \end{array}\right.
\end{equation}
are the Fourier coefficients of $\chi_{\frac{\pi}{p}}(x)\sin px$
and $(-\chi_{\frac{\pi}{p}}(x)\cos 2px)$ correspondingly.

Therefore, $\tilde{J}(t)$ is equal to
\begin{equation*}
\begin{split}
&\tilde{J}(t)=\sum_{k,l,m=1}^{\infty}a_1(k)a_1(m)b_1(l)e^{-(k^2+l^2+m^2)t}\int_{-\pi}^{\pi}\sin kx \sin mx\cos lx dx=\\
&\frac{1}{4}\sum_{k,l,m=1}^{\infty}a_1(k)a_1(m)b_1(l)e^{-(k^2+l^2+m^2)t}\int_{-\pi}^{\pi}\left(\cos(l+k-m)x+\right.\\
&\hphantom{\sum_{k,l,m=1}^{\infty}a(k)}\left.\cos(l-k+m)x-\cos(l+k+m)x-\cos(l-k-m)x\right)dx
\end{split}
\end{equation*}

Since
\begin{equation*}
\int_{-\pi}^{\pi}\cos nxdx=\left\{\begin{array}{rl}
        &0, n\neq 0,\\
        &2\pi, n = 0,
            \end{array}\right.
\end{equation*}
we get that
\begin{equation*}
\tilde{J}(t)=\frac{4 p^2}{\pi^2}{J}(t),
\end{equation*}
where
\begin{equation}\label{J_series}
{J}(t)=\sum_{k,l=1}^{\infty}\left(2A(k)A(k+l)B(l)-A(k)A(l)B(k+l)\right)e^{-(k^2+l^2+(k+l)^2)t},
\end{equation}
\begin{equation}\label{A_k}
A(k)=\left\{\begin{array}{rl}
        &\dfrac{\sin\frac{\pi k}{p}}{p^2-k^2},  k\neq p,\\
        &\dfrac{\pi}{2p^2}, k = p
            \end{array}\right.
\end{equation}

\begin{equation}\label{B_k}
B(k)=\left\{\begin{array}{rl}
        &\dfrac{k \sin\frac{\pi k}{p}}{4p^2-k^2}, k\neq 2p,\\
        &-\dfrac{\pi}{2p}, k = 2p
            \end{array}\right.
\end{equation}

Theorem \ref{th_tJ_est} follows directly from the following statement:
\begin{theorem}\label{th_J_est}
Let ${J}(t)$ be the function defined in \eqref{J_series}-\eqref{B_k}.
Then
\begin{equation}\label{tJ_est}
{J}(t)>\frac{9\sin^2{\frac{\pi}{p}}\sin{\frac{2\pi}{p}}}{4p^8}e^{-6t}\,\,
\forall t>0,
\end{equation}
\end{theorem}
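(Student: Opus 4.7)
The plan is to mirror the strategy that worked for Theorem \ref{thJ_1est}: identify the unique double-sum index producing the slowest exponential decay, compute that term exactly, and then prove that the remainder of the series contributes nonnegatively by sign analysis and pairing. The function $(k,l)\mapsto k^2+l^2+(k+l)^2$ on $\mathbb{N}\times\mathbb{N}$ attains its minimum value $6$ only at $(k,l)=(1,1)$, and takes value at least $14$ on every other lattice point, so the term $F(1,1;t):=\bigl(2A(1)A(2)B(1)-A(1)^2 B(2)\bigr)e^{-6t}$ governs $J(t)e^{6t}$ as $t\to\infty$. The first concrete step is therefore to evaluate this leading term. Substituting from \eqref{A_k}--\eqref{B_k} and reducing to the common denominator $2(p^2-1)^3(p^2-4)(4p^2-1)$ produces the numerator
\begin{equation*}
4(p^2-1)^2-(p^2-4)(4p^2-1)=9p^2,
\end{equation*}
so that
\begin{equation*}
F(1,1;t)=\frac{9p^2\sin^2(\pi/p)\sin(2\pi/p)}{2(p^2-1)^3(p^2-4)(4p^2-1)}\,e^{-6t}.
\end{equation*}
Since $(p^2-1)^3(p^2-4)(4p^2-1)<p^6\cdot p^2\cdot 4p^2=4p^{10}$, the leading term by itself exceeds $\frac{9\sin^2(\pi/p)\sin(2\pi/p)}{8p^8}\,e^{-6t}$, giving the desired scale of the lower bound.

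The second part of the plan is to control the tail $J(t)-F(1,1;t)$ from below by zero. Following the template of Lemmas \ref{cd_signs}--\ref{lem_cdsigns2}, I would first catalogue the signs of $A(k)$ and $B(k)$: both change sign each time $k$ crosses a multiple of $p$, with exceptional extended values at $k=p$ (for $A$) and $k=2p$ (for $B$). This yields a sign pattern for the two products $A(k)A(k+l)B(l)$ and $A(k)A(l)B(k+l)$ on each rectangle $[pa,p(a+1)]\times[pb,p(b+1)]$; inside each rectangle the anti-diagonal $k+l=p(a+b+1)$ splits it into triangles of opposite sign, exactly as in the $J_{11}$ and $J_{12}$ analyses in Section \ref{s4}. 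Within each rectangle, or between adjacent rectangles, I would pair a negative summand at $(k,l)$ with a positive summand at a suitably translated index, use the monotonicity of $|A|$ and $|B|$ away from their sign intervals to bound the ratio of absolute values by a constant strictly less than $1$, and sum the resulting geometric-type estimates over rectangles to conclude that each residual block is nonnegative. The exponential factor $e^{-[k^2+l^2+(k+l)^2]t}$ always favours the positive summand placed closer to the origin, which is the analogue of the factor $e^{6t(a+b+1)ps}>1$ used repeatedly in Section \ref{s4}.

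The principal obstacle, as I anticipate it, is that the summand is a \emph{difference} of two products whose sign patterns are not perfectly aligned, rather than a single product as in the $J_1$ case. Consequently a pair of triangles sharing a hypotenuse may require two separate matchings---one for the $2A(k)A(k+l)B(l)$ piece and another for the $A(k)A(l)B(k+l)$ piece---or a finer recombination of indices so that what survives after cancellation is manifestly nonnegative. Boundary rectangles meeting the axes, and cells containing the special values $k=p$, $l=p$, $k+l=2p$ where the extended formulas \eqref{A_k}--\eqref{B_k} apply, will need individual bookkeeping; by analogy with Section \ref{s4}, I expect this step to expand into a sequence of several lemmas, one per region type in the $(k,l)$-plane (small $k,l$; $k<p<l$ and symmetric; both $k,l>p$; and the exceptional boundary cells). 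Once all residual blocks are shown to be nonnegative, the bound $J(t)>F(1,1;t)\geq\frac{9\sin^2(\pi/p)\sin(2\pi/p)}{8p^8}\,e^{-6t}$ follows immediately, which translates via Theorem~\ref{th_tJ_est} into the required estimate \eqref{tJ_est0} for $\tilde J(t)$.
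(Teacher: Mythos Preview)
Your plan matches the paper's proof in overall architecture: extract the $(1,1)$ term, catalogue the signs of $A$ and $B$ on $p$-intervals, tile the lattice into triangles along anti-diagonals $k+l=p(a+b+1)$, and show the residual contributes nonnegatively by pairing. The paper does exactly this, writing $J(t)=2I(t)+L(t)$ with $I$ carrying the $A(k)A(k+l)B(l)$ piece and $L$ carrying the $-A(k)A(l)B(k+l)$ piece, and then running separate monotonicity-of-ratio arguments on each, including cross-compensation where positive triangles of $L$ absorb certain negative triangles of $I$ (the set $H$ in the paper). Your anticipation that the two pieces need independent matchings and occasional recombination is exactly right.

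The one concrete device you are missing, and which saves considerable casework near the origin, is the symmetrization $(k,l)\leftrightarrow(l,k)$. Pairing the two off-diagonal summands gives
\[
2\bigl(A(k)A(k+l)B(l)+A(l)A(k+l)B(k)-A(k)A(l)B(k+l)\bigr)e^{-2(k^2+l^2+kl)t},
\]
and after substituting the explicit formulas this collapses to a single rational expression $K(x,y)$ (with $x=k/p$, $y=l/p$) whose numerator contains the factor $12+xy-(x+y)^2$. This closed form is manifestly positive on the first few triangles near the origin and on the infinite family of boundary triangles along the axes with $a\ge 3$, disposing of precisely the region where your ``difference of two products'' worry is sharpest. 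Without this identity you would have to do the balancing between $2I$ and $L$ by hand in the square $(0,p)\times(0,p)$, where the $L$-summand is negative for $k+l<p$; it can be done, but the symmetrization is the clean way.

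One minor point: your bound $F(1,1;t)>\dfrac{9\sin^2(\pi/p)\sin(2\pi/p)}{8p^8}e^{-6t}$ has $8p^8$ in the denominator, and this is also what the paper's Lemma~\ref{AB_first_traingle_lem} actually proves. The $4p^8$ in the theorem statement is an inessential constant (the downstream application to $\tilde J(t)=\tfrac{4p^2}{\pi^2}J(t)$ needs only the $8p^8$ version to yield the stated $\alpha$).
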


The proof of Theorem \ref{th_J_est} is similar to the proof of Theorem \ref{thJ_1est}. First let us determine how the signs of the summands in  \eqref{J_series} are distributed.

We will need the following lemma, directly following from definitions \eqref{A_k} and \eqref{B_k} of functions $A(k)$ and $B(k)$:
\begin{lem}\label{AB_signs}
Let $A(k), k\in \mathbb{N}$ be the function, defined in \eqref{A_k} and $B(k), k\in \mathbb{N}$ be the function, defined in \eqref{B_k}. Then
\begin{equation*}
\begin{split}
   i&)\;  A(k)=0\quad \text {\rm for}\quad k=pl \quad\text{\rm where}\quad l\in \mathbb{N}\setminus\{1\},\\
   \hphantom{i)\;} &B(k)=0 \quad \text {\rm for}\quad k=pl \quad\text{\rm where}\quad l\in \mathbb{N}\setminus\{2\},\\
   ii&)\;  A(k)>0\quad \text{\rm for}\quad k\in (0,p)\cup (p,2p)\cup \{ \cup_{l\in \mathbb{N}}((2l+1)p,(2l+2)p)\\
   &B(k)>0\quad \text{\rm for}\quad k\in (0,p)\cup \{ \cup_{l\in \mathbb{N}}((2l+1)p,(2l+2)p)\}\\
   iii&)\;  A(k)<0\quad \text{\rm for} \quad k \in \cup_{l\in \mathbb{N}}(2lp,(2l+1)p)\\
   &B(k)<0\quad \text{\rm for}\quad k\in (p,2p)\cup (2p,3p)\cup_{l\in \mathbb{N}}(2lp,(2l+1)p)
\end{split}
\end{equation*}
\end{lem}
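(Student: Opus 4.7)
The plan is to read off the signs directly from the explicit formulas \eqref{A_k} and \eqref{B_k}, splitting each assertion into the case $k = pl$ (zero value), the exceptional case $k = p$ (for $A$) or $k = 2p$ (for $B$) where a separate defining value is supplied, and the generic case where the quotient is nonzero.

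First I would treat $A(k)$. For $k\ne p$ the sign of $A(k)$ is the product of the signs of $\sin(\pi k/p)$ and of $(p^2-k^2)^{-1}$. The zero set of $\sin(\pi k/p)$ on $\mathbb{N}$ is exactly $\{pl : l\in\mathbb{N}\}$; this gives statement (i) for $A$ once we note that the value $A(p)=\pi/(2p^2)$ is stipulated separately and is positive. For nonzero values, $\sin(\pi k/p)>0$ precisely on the intervals $\bigcup_{l\ge 0}(2lp,(2l+1)p)$ and is negative on $\bigcup_{l\ge 0}((2l+1)p,(2l+2)p)$, while $p^2-k^2$ is positive for $k<p$ and negative for $k>p$. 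A short case check — positive/positive on $(0,p)$, negative/negative on each $((2l+1)p,(2l+2)p)$ with $l\ge 0$, and positive/negative on each $(2lp,(2l+1)p)$ with $l\ge 1$ — then yields parts (ii) and (iii) for $A$.

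For $B(k)$ the reasoning is entirely parallel. Since $k>0$, the sign of $B(k)$ for $k\ne 2p$ is the sign of $\sin(\pi k/p)/(4p^2-k^2)$. The zeros of $\sin(\pi k/p)$ give (i) for $B$ once one notes that the special value $B(2p)=-\pi/(2p)$ is set by hand and is negative. For nonzero values, $4p^2-k^2>0$ iff $k<2p$, so combining with the sign pattern of $\sin(\pi k/p)$ gives: positive on $(0,p)$ (positive over positive) and on each $((2l+1)p,(2l+2)p)$ with $l\ge 1$ (negative over negative), and negative on $(p,2p)$ (negative over positive) and on each $(2lp,(2l+1)p)$ with $l\ge 1$ (positive over negative). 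Reading off these unions reproduces (ii) and (iii) for $B$.

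Since the entire proof is just unpacking the defining formulas and doing a two-factor sign table, no step is an obstacle; the only thing to be careful about is the bookkeeping at the boundary points $k=p$ and $k=2p$, where one must check that the separately prescribed values $A(p)>0$ and $B(2p)<0$ are consistent with extending the sign pattern from the adjacent open intervals. No additional machinery is required.
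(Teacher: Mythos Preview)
Your proposal is correct and matches the paper's approach exactly: the paper does not give a separate proof but simply states that the lemma follows directly from the defining formulas \eqref{A_k} and \eqref{B_k}, and your sign-table argument is precisely the unpacking of that remark. The only minor comment is that the interval $(2p,3p)$ in part (iii) for $B$ is already contained in $\cup_{l\in\mathbb{N}}(2lp,(2l+1)p)$ (with $l=1$), so the statement as written has a harmless redundancy that your case analysis correctly reproduces.
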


The next two statements follow directly from Lemma \ref{AB_signs}:

\begin{lem}\label{lem_signs1}
The signs of function $A(k)A(k+l)B(l)$ from \eqref{J_series} are distributed as follows (see Figure \ref{J21fig}):

i) In every square $$\{ (k,l)\in (pa, p(a+1))\times
(pb,p(b+1))\},$$ where $a\in \mathbb{N}, b\in \mathbb{N}, b\geq 2$
\begin{equation}\label{1reg1}
     {\rm sign}(A(k)A(k+l)B(l))=
       \begin{cases}
   -, & {\rm if}\; k+l<p(a+b+1) \\
   +, & {\rm if}\; k+l>p(a+b+1) \\
 \end{cases}
\end{equation}

ii) In every set $$\{ (k,l)\in (pa, p(a+1))\times(pb,p(b+1))\},$$ where $a\in \mathbb{N}$, $b\in\{0,1\}$
\begin{equation}\label{1reg2}
     {\rm sign}(A(k)A(k+l)B(l))=
       \begin{cases}
   +, & {\rm if}\; k+l<p(a+b+1) \\
   -, & {\rm if}\; k+l>p(a+b+1) \\
 \end{cases}
\end{equation}
iii) In every set $\{(k,l)\in(0,p)\times (pa,p(a+1))\}$, $a\in \mathbb{N}, a\geq2$
\begin{equation}\label{1reg3}
     {\rm sign}(A(k)A(k+l)B(l))=
       \begin{cases}
   +, & {\rm if}\; k+l<p(a+1) \\
   -, & {\rm if}\; k+l>p(a+1) \\
 \end{cases}
 \end{equation}
iv) In the square $\{ (k,l)\in (0, p)\times(p,2p)\}$
\begin{equation}\label{1reg4}
     {\rm sign}(A(k)A(k+l)B(l))=
       \begin{cases}
   -, & {\rm if}\; k+l<2p \\
   +, & {\rm if}\; k+l>2p \\
 \end{cases}
\end{equation}

v) For all $ (k,l)\in (0,p)\times (0,p)$
\begin{equation}\label{1reg5}
     A(k)A(k+l)B(l)>0.
     \end{equation}
\end{lem}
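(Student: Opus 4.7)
\emph{Proof plan.} The statement is a sign chart on the lattice of squares $(ap,(a+1)p)\times(bp,(b+1)p)\subset\mathbb{R}_+^2$, and the plan is to obtain it by a direct application of Lemma \ref{AB_signs} to each of the three factors of $A(k)A(k+l)B(l)$. The first step is to reformulate Lemma \ref{AB_signs} as a parity rule. Inspection of the explicit formulas \eqref{A_k} and \eqref{B_k} shows that if $k$ lies in the $j$-th interval $(jp,(j+1)p)$, then $\operatorname{sign}A(k)=(-1)^{j+1}$ for $j\ge 1$ and $\operatorname{sign}B(k)=(-1)^{j+1}$ for $j\ge 2$, while the small-index exceptions are
\[
A(k)>0 \text{ on }(0,p),\qquad B(k)>0 \text{ on }(0,p),\qquad B(k)<0 \text{ on }(p,2p).
\]
Moreover, given $k\in(ap,(a+1)p)$ and $l\in(bp,(b+1)p)$, the sum $k+l$ lies either in the $(a+b)$-th or the $(a+b+1)$-th interval according as $k+l<p(a+b+1)$ or $k+l>p(a+b+1)$, so the third factor $A(k+l)$ also fits into the same parity framework.

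Once this dictionary is set up, each of cases (i)--(v) reduces to multiplying three signs. In the generic case (i), where $a\ge 1$ and $b\ge 2$, all three factors obey the parity rule, and the exponent of $-1$ in $\operatorname{sign}(A(k)A(k+l)B(l))$ equals $(a+1)+(b+1)+(a+b+1)=2a+2b+3$ in the first sub-case and $(a+1)+(b+1)+(a+b+2)=2a+2b+4$ in the second, producing $-1$ and $+1$ respectively and reproducing \eqref{1reg1}. Case (ii) is the same computation, with the ``special'' value of $\operatorname{sign}B(l)$ inserted by hand for $b=0$ or $b=1$; case (iii) substitutes $\operatorname{sign}A(k)=+1$ for $a=0$ while keeping the parity rules for the remaining two factors; case (iv) mixes both exceptions $a=0$ and $b=1$; and case (v) is the trivial all-positive situation on $(0,p)\times(0,p)$, where additionally $k+l\in(0,2p)$ keeps $A(k+l)>0$ as well.

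The argument contains no analytic content beyond the sign classification of Lemma \ref{AB_signs}, so there is no genuine obstacle: the only point requiring care is to remember that the parity rule $(-1)^{j+1}$ for $B$ begins only at $j=2$, whereas for $A$ it begins already at $j=1$. Once these two small-index exceptions are tabulated, the case-by-case verification is immediate. The five regions (i)--(v) exhaust all squares $(ap,(a+1)p)\times(bp,(b+1)p)$ with $a,b\ge 0$ on which none of $A(k)$, $A(k+l)$, $B(l)$ vanishes, so nothing further is required.
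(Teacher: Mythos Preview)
Your proposal is correct and is precisely the approach the paper takes: the paper states only that this lemma ``follows directly from Lemma~\ref{AB_signs}'' and gives no further argument, so your parity bookkeeping simply spells out that direct verification. The only thing to note is that your write-up already is more detailed than what the paper provides.
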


\begin{lem}\label{lem_signs2}
The signs of function $(-A(k)A(l)B(k+l))$ from \eqref{J_series} are distributed as follows (see Figure \ref{J22fig}):

i) In every square $$\{(k,l)\in (pa, p(a+1))\times
(pb,p(b+1))\},$$ where $a,b\in \mathbb{N}$
\begin{equation}\label{sq_ab_g2}
     {\rm sign}(-A(k)A(l)B(k+l))=
       \begin{cases}
   +, & {\rm if}\; k+l<p(a+b+1) \\
   -, & {\rm if}\; k+l>p(a+b+1) \\
 \end{cases}
\end{equation}

ii) In every set $$\{(k,l)\in (pa, p(a+1))\times
(0,p)\cup(0,p)\times (pa,p(a+1))\},$$ where $a\in \mathbb{N}, a\geq2$,
\begin{equation}\label{2.2}
     {\rm sign}(-A(k)A(l)B(k+l))=
       \begin{cases}
   -, & {\rm if}\; k+l<p(a+1) \\
   +, & {\rm if}\; k+l>p(a+1) \\
 \end{cases}
\end{equation}

iii) For all $(k,l)\in (p,2p)\times (0,p)\cup(0,p)\times(p,2p)$
\begin{equation}\label{2.4}
     -A(k)A(l)B(k+l)>0,
     \end{equation}
iv) In the square $\{(k,l)\in (0, p)\times(0,p)\}$,
\begin{equation}\label{2.2}
     {\rm sign}(-A(k)A(l)B(k+l))=
       \begin{cases}
   -, & {\rm if}\; k+l<p \\
   +, & {\rm if}\; k+l>p \\
 \end{cases}
\end{equation}
\end{lem}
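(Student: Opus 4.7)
\medskip

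\noindent\textbf{Proof plan for Lemma \ref{lem_signs2}.} The lemma asserts a pure sign distribution for the product $-A(k)A(l)B(k+l)$, so the plan is to reduce it to a mechanical case analysis driven by Lemma \ref{AB_signs}. The key preparatory step is to record, for each integer interval $(pj,p(j+1))$, a concise formula for the sign of $A$ and of $B$ on that interval, including the exceptional intervals where the two functions fail to follow a clean alternating pattern.

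First I would tabulate, on the basis of Lemma \ref{AB_signs}, the following sign rules. For $k\in(pa,p(a+1))$ one has $\operatorname{sign}A(k)=+$ when $a=0$ or $a=1$ and $\operatorname{sign}A(k)=(-1)^{a+1}$ for every $a\ge 1$ (note the overlap at $a=1$), so in fact $\operatorname{sign}A(k)=(-1)^{a+1}$ holds for every $a\ge 1$. For $k\in(pj,p(j+1))$ one has $\operatorname{sign}B(k)=+$ when $j=0$, $\operatorname{sign}B(k)=-$ when $j=1$ or $j=2$, and $\operatorname{sign}B(k)=(-1)^{j+1}$ for $j\ge 2$. Thus the only true exception is $B$ on $(p,2p)$, where the pattern $(-1)^{j+1}$ gives the wrong sign. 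I would also note $B(2p)=-\pi/(2p)<0$, which handles the boundary point arising in part iii) when $k+l=2p$.

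Next I would check each clause by plugging these signs into $-A(k)A(l)B(k+l)$:

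\emph{Case i).} With $a,b\ge 1$, Lemma \ref{AB_signs} gives $\operatorname{sign}A(k)A(l)=(-1)^{a+b+2}=(-1)^{a+b}$. Since $a+b\ge 2$, the ``left half'' $k+l\in(p(a+b),p(a+b+1))$ uses the non-exceptional rule $\operatorname{sign}B(k+l)=(-1)^{a+b+1}$, giving $-A(k)A(l)B(k+l)=-(-1)^{2(a+b)+1}=+$; the ``right half'' $k+l\in(p(a+b+1),p(a+b+2))$ gives $\operatorname{sign}B(k+l)=(-1)^{a+b}$ and hence $-A(k)A(l)B(k+l)=-(-1)^{2(a+b)}=-$, exactly matching \eqref{sq_ab_g2}.

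\emph{Case ii).} With $a\ge 2$ and $l\in(0,p)$ one has $A(l)>0$ and $\operatorname{sign}A(k)=(-1)^{a+1}$. For $k+l\in(pa,p(a+1))$ (so $j=a\ge 2$, no exception) $\operatorname{sign}B(k+l)=(-1)^{a+1}$, and the product is $-$; for $k+l\in(p(a+1),p(a+2))$ (so $j=a+1\ge 3$) one gets $\operatorname{sign}B(k+l)=(-1)^a$ and the product is $+$. The symmetric set $(0,p)\times(pa,p(a+1))$ is identical by the symmetry $(k,l)\mapsto(l,k)$ of $-A(k)A(l)B(k+l)$.

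\emph{Case iii).} Here $A(k),A(l)>0$, and $k+l$ runs through $(p,3p)$, possibly equal to $2p$. In all three sub-intervals $(p,2p)$, $\{2p\}$, $(2p,3p)$ we have $B(k+l)<0$, so $-A(k)A(l)B(k+l)>0$.

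\emph{Case iv).} With $A(k),A(l)>0$, the sign of $-A(k)A(l)B(k+l)$ is just $-\operatorname{sign}B(k+l)$, which is $-$ on $(0,p)$ and $+$ on $(p,2p)$, matching the stated dichotomy at $k+l=p$.

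The only real obstacle is bookkeeping: keeping straight that $A$ behaves ``extra-positive'' on $(p,2p)$ while $B$ behaves ``extra-negative'' on $(p,2p)\cup(2p,3p)$, so that in case iii) the product never vanishes in sign and the asymmetry between cases iii) and iv) is accounted for. Once the sign tables above are fixed, every verification is a one-line parity computation, and no analytic estimates are needed.
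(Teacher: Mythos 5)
Your verification is correct, and it follows the same approach the paper intends: the paper does not prove this lemma explicitly, merely stating that it "follows directly from Lemma \ref{AB_signs}," and your sign-table plus parity bookkeeping is precisely that routine deduction spelled out. (One small imprecision: since the formula $(-1)^{j+1}$ fails for $B$ at both $j=0$ and $j=1$, not only on $(p,2p)$, but your actual case-by-case verification uses the correct explicit signs on those low intervals, so nothing is affected.)
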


\begin{figure}[h!]
\begin{multicols}{2}
\hfill
\includegraphics[scale=0.7]{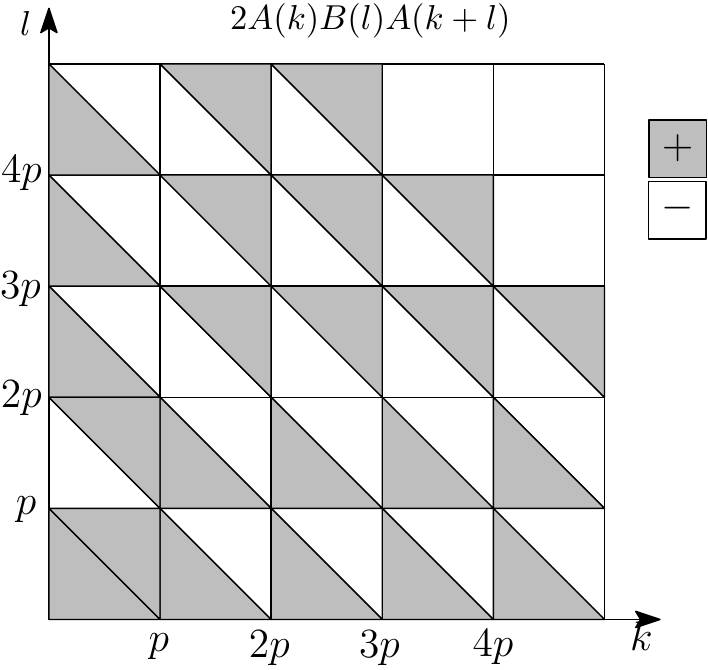}
\hfill
\caption{Signs of $A(k)A(k+l)B(l)$}
\label{J21fig}
\hfill
\includegraphics[scale=0.7]{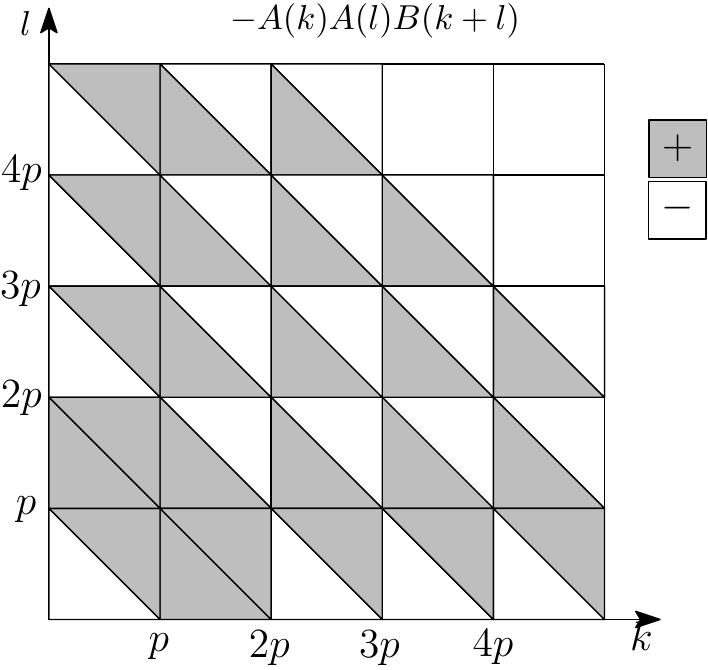}
\hfill
\caption{Signs of $(-A(k)A(l)B(k+l))$}\label{J22fig}
\end{multicols}
\end{figure}


\subsection{Beginning of Theorem \ref{th_J_est} proof.}

As in Subsection \ref{s4.4} let us denote the set of all points
from $\bN \times \bN$ belonging to the triangle with vertices at
points $(k_1,l_1),(k_2,l_2), (k_3,l_3)$  by
$\{(k_1,l_1),(k_2,l_2), (k_3,l_3)\}$ (the boundary of the triangle
is not included in this set).

For each set $F\subset \bN \times \bN$ we denote by $J(F,t)$ the
sum of the summands in \eqref{J_series} with $(k,l)\in F$.

One of our nearest aims is to prove the following statement:
\begin{lem}\label{AB_first_traingle_lem}
The following inequality is true:
\begin{equation}\label{AB_first_triangle_ineq}
\begin{split}
&J(\{(0,0),(0,p),(p,0)\},t)+\\
&J(\{(0,p),(0,2p),(p,p)\},t)+J(\{(p,0),(2p,0),(p,p)\},t)+\\
&J(\{(0,2p),(0,3p),(p,2p)\},t)+ J(\{(2p,0),(3p,0),(2p,p)\},t)+\\
&\sum_{a=3}^{+\infty}\left(J(\{(p,pa),(p,p(a+1)),(0,p(a+1))\},t)\right.\\
&\hphantom{\sum_{a=3}^{+\infty}}\left.+J(\{(pa,p),(p(a+1),p),(p(a+1),0)\},t)\right)>
\frac{9\sin^2{\frac{\pi}{p}}\sin{\frac{2\pi}{p}}}{8p^8}e^{-6t}\,\,
\forall t>0.
\end{split}
\end{equation}
\end{lem}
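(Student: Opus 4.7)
The key observation is that the exponent $k^{2}+l^{2}+(k+l)^{2}$ attains its unique minimum value $6$ on $\mathbb{N}\times\mathbb{N}$ at $(k,l)=(1,1)$, which for $p\geq 3$ lies in the interior of the first triangle $\{(0,0),(0,p),(p,0)\}$. (For $p=2$ the target bound is trivially zero since $\sin(2\pi/p)=0$, so we may assume $p\geq 3$.) My strategy is to extract this single summand, show that it by itself already exceeds the target bound, and then bound all other summands in the listed triangles.

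First, I would evaluate the $(1,1)$ summand in closed form. Using \eqref{A_k}, \eqref{B_k}, the double-angle identity $\sin(2\pi/p)=2\sin(\pi/p)\cos(\pi/p)$, and the polynomial identity $4(p^{2}-1)^{2}-(p^{2}-4)(4p^{2}-1)=9p^{2}$, a direct computation yields
\[
\bigl[2A(1)A(2)B(1)-A(1)^{2}B(2)\bigr]e^{-6t}=\frac{9p^{2}\sin^{2}(\pi/p)\sin(2\pi/p)}{2(p^{2}-1)^{3}(p^{2}-4)(4p^{2}-1)}\,e^{-6t},
\]
which is manifestly positive. Showing this exceeds the target $\tfrac{9\sin^{2}(\pi/p)\sin(2\pi/p)}{8p^{8}}e^{-6t}$ reduces to the polynomial inequality $29p^{8}-67p^{6}+67p^{4}-29p^{2}+4>0$, which is valid for all $p\geq 2$ (indeed, $p^{6}(29p^{2}-67)>0$ for $p\geq 2$ and the remaining terms are positive).

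Next I would control the other summands. Inside the first triangle, I would pair each $(k,l)$ with its mirror $(l,k)$; the combined contribution equals $2A(k+l)[A(k)B(l)+A(l)B(k)]-2A(k)A(l)B(k+l)$, whose leading order in $1/p$ cancels, so one must track sub-leading corrections via the explicit formulas for $A$ and $B$, using the monotonicity of $A,B$ on $(0,p)$. For the finite mixed triangles $\{(0,p),(0,2p),(p,p)\}$, $\{(p,0),(2p,0),(p,p)\}$, $\{(0,2p),(0,3p),(p,2p)\}$, $\{(2p,0),(3p,0),(2p,p)\}$, I would fix the signs of the two summand types via Lemmas~\ref{lem_signs1} and~\ref{lem_signs2} and then pair each negative summand with a corresponding positive one by a change of variables within the triangle, comparing coefficient ratios in the style of Lemma~\ref{J_12poslem1}. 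For the infinite family indexed by $a\geq 3$, an analogous pairing as in Lemma~\ref{J_11poslem}, combined with monotonicity of the functions $c_{1},d_{1}$-type factors used there, shows positivity of each individual triangle and provides geometric decay in $a$ that guarantees summability.

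The main obstacle will be the tightness of Step 1: the ratio $\tfrac{4p^{10}}{(p^{2}-1)^{3}(p^{2}-4)(4p^{2}-1)}=1+O(1/p^{2})$, so the margin of the $(1,1)$ summand over the target vanishes as $p\to\infty$. Consequently, each pairing estimate for the remaining summands must be sharp enough that the cumulative negative contribution is $o(1)$ relative to the target; crude comparisons that lose an $O(1)$ factor will not suffice. The most delicate sub-case is the nearly canceling $(k,l)+(l,k)$ pairs in the first triangle itself, for which the sub-leading corrections in both $\sin(\pi n/p)=\pi n/p+O(1/p^{3})$ and $(p^{2}-n^{2})^{-1}=p^{-2}+O(p^{-4})$ must be retained in the estimates.
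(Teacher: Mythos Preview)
Your Step~1 is exactly the paper's first step: the diagonal $(k,k)$ terms in the first triangle are computed in closed form, shown to be non-negative for all $1\le k<p$, and the $k=1$ term alone already exceeds the target bound.

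The gap is in everything after that. You correctly set up the symmetrization $(k,l)\leftrightarrow(l,k)$, but then plan an asymptotic analysis in $1/p$ and worry that the margin of the $(1,1)$ term over the target is only $O(1/p^{2})$, forcing ``sharp'' pairing estimates on the remaining terms. This worry is unfounded, because the symmetrized off-diagonal contribution factors completely. Writing $A(k)A(k+l)B(l)+A(l)A(k+l)B(k)-A(k)A(l)B(k+l)$ over a common denominator and simplifying gives
\[
\frac{3p^{2}\,kl(k+l)\bigl(12p^{2}+kl-(k+l)^{2}\bigr)\,\sin\tfrac{\pi k}{p}\sin\tfrac{\pi l}{p}\sin\tfrac{\pi(k+l)}{p}}
{(p^{2}-k^{2})(p^{2}-l^{2})(p^{2}-(k+l)^{2})(4p^{2}-k^{2})(4p^{2}-l^{2})(4p^{2}-(k+l)^{2})}.
\]
In the scaled variables $x=k/p$, $y=l/p$, the sign of this quantity is read off from the signs of $\sin\pi x\,\sin\pi y\,\sin\pi(x+y)$, of the polynomial $12+xy-(x+y)^{2}$, and of the six quadratics in the denominator. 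A direct case check shows it is \emph{non-negative on every triangle in the statement}: on the first five triangles both numerator and denominator are positive, and on each tail triangle ($a\ge 3$) one has $12+xy-(x+y)^{2}<0$, which exactly compensates the sign flip of the sine product there. Hence every off-diagonal symmetrized term contributes $\ge 0$, and the inequality follows from the $(1,1)$ term with no competition whatsoever. No $1/p$ expansion, no sub-leading corrections, and no comparison-of-ratios arguments in the style of Lemmas~\ref{J_12poslem1} or~\ref{J_11poslem} are needed here; the ``main obstacle'' you identify disappears once the algebra is pushed through to the closed form above.
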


Then we shall show that the remaining part of the sum $J(t)$ is positive, which will complete the proof of Theorem \ref{th_J_est}.
\begin{proof}
First, let us consider the summands, corresponding to $(k,l)\in\{(0,0),(0,p),(p,0)\}$, $k=l$:
\begin{equation}\label{k_eq_l_sum}
\begin{split}
&(2A(k)A(2k)B(k)-A^2(k)B(2k))e^{-6k^2t}=\\
&\frac{9p^2k^3\sin^2\frac{\pi k}{p}\sin\frac{2\pi
k}{p}}{2(p^2-k^2)^3(p^2-4k^2)(4p^2-k^2)}\cdot e^{-6k^2t}
\end{split}
\end{equation}
Obviously, for $k\in[1,p-1]$ the expression in the right hand side
of \eqref{k_eq_l_sum} is not negative. Therefore,
\begin{equation}\label{k_eq_l_sum_ineq}
\begin{split}
&\sum_{k=1}^{p-1}(2A(k)A(2k)B(k)-A^2(k)B(2k))e^{-6k^2t}>\\
&(2A(1)A(2)B(1)-A^2(1)B(2))e^{-6t}>\frac{9\sin^2{\frac{\pi}{p}}\sin{\frac{2\pi}{p}}}{8p^8}e^{-6t}\,\,
\forall t>0.
\end{split}
\end{equation}






Next, let us group together the summands in \eqref{J_series}  corresponding to points with coordinates $(k,l)$ and $(l,k)$, $l\neq k$:
\begin{equation}\label{simm_sum}
\begin{split}
&\left(2A(k)A(k+l)B(l)-A(k)A(l)B(k+l)\right.\\
&\left.+2A(l)A(l+k)B(k)-A(l)A(k)B(l+k)\right)e^{-2(k^2+l^2+kl)t}=\\
&2(A(k)A(k+l)B(l)+A(l)A(k+l)B(k)-A(k)A(l)B(k+l))e^{-2(k^2+l^2+kl)t}=\\
&\frac{6p^2kl(k+l)(12p^2+kl-(k+l)^2)\sin\frac{\pi k}{p}\sin\frac{\pi l}{p}\sin\frac{\pi (k+l)}{p}\cdot e^{-2(k^2+l^2+kl)t}}{(p^2-k^2)(p^2-l^2)(p^2-(k+l)^2)(4p^2-k^2)(4p^2-l^2)(4p^2-(k+l)^2)}
\end{split}
\end{equation}

Let us perform the change of variables $(k,l)\to (x,y)$ by formulas $k=px, l=py$ and consider the following function:

\begin{equation}\label{Fxy}
\begin{split}
K(x,y):=\frac{6xy(x+y)(12+xy-(x+y)^2)\sin{\pi x}\sin{\pi y}\sin{\pi(x+y)}}{(1-x^2)(1-y^2)(1-(x+y)^2)(4-x^2)(4-y^2)(4-(x+y)^2)}.
\end{split}
\end{equation}
Obviously, for $(x,y)$ belonging to each from the sets
$$\{ x\in
(0,1), y\in (0,1): x+y<1\},$$
$$\{ x\in(1,2), y\in (0,1):
x+y<2\}\cup \{ x\in (0,1), y\in (1,2): x+y<2\},$$
$$\{ x\in
(2,3), y\in (0,1): x+y<3\} \cup \{ x\in (0,1), y\in (2,3):
x+y<3\}$$ both the numerator and the denominator of $K(x,y)$ are
positive.

Since for every square $(x,y)\in \{(a,a+1)\times(b,b+1)\}, a,b \in
\mathbb{N}$
\begin{equation*}
     {\rm sign}\sin{\pi x}\sin{\pi y}\sin{\pi(x+y)}=
       \begin{cases}
   +, & {\rm if}\; x+y< a+b+1 \\
   -, & {\rm if}\; x+y> a+b+1, \\
 \end{cases}
\end{equation*}
 and $12+xy-(x+y)^2$ is negative for $$(x,y)\in\{(3;+\infty)\times(0,1)\} \cup \{(0,1)\times(3;+\infty)\}$$
 such that $x+y>4$,\footnote{Indeed, function $f(x,y)=12-xy-(x+y)^2, (x,y)\in \mathbb{R}^2$ has a unique extremum
 at $(\hat{x},\hat{y})=(0,0)$, and it is maximum. Hence, maximum of $f$ on the set
 $$S=\{ (x,y)\in [3,\infty )\times[0,1]: x+y\ge 4\}\cup \{ (x,y)\in [0,1]\times[3,\infty ): x+y\ge 4\}$$
 is achieved on its boundary $\partial S$. It is easy to see that this maximum is achieved at points (3,1) and (1,3),
 and $\mbox{max}f|_S=-1$.} $K(x,y)$
 is positive for $(x,y)$ in every set $$\{(1,a);(1,a+1);(0,a+1)\}\cup
\{(a,1);(a+1,1);(a+1,0)\}, a\in \mathbb{N}, a\ge 3.$$

 Together with \eqref{k_eq_l_sum_ineq} this implies \eqref{AB_first_triangle_ineq}
\end{proof}

\section{The main step of Theorems \ref{th_tJ_est} and \ref{th_J_est} proof.}


Now let us rewrite the series $J(t)$ as
\begin{equation*}\label{J_series1}
{J}(t)=2I(t)+L(t),
\end{equation*}
where
\begin{equation}\label{I_series}
\begin{split}
&I(t)=\sum_{k,l=1}^{\infty}F_1(k,l;t),\\
&F_1(k,l;t):= A(k)A(k+l)B(l)e^{-(k^2+l^2+(k+l)^2)t},
\end{split}
\end{equation}
and
\begin{equation}\label{II_series}
\begin{split}
&L(t) = \sum_{k,l=1}^{\infty}F_2(k,l;t),\\
&F_{2}(k,l;t):=-A(k)A(l)B(k+l)e^{-(k^2+l^2+(k+l)^2)t}.
\end{split}
\end{equation}

\subsection{Positiveness of the $L(t)$ remaining part.}
Let us show, that the remaining part of the sum  $L(t)$ is
positive. Similarly to notation $J(F,t)$ introduced above, we
denote by $L(F,t)$ the sum of the summands in \eqref{II_series}
with $(k,l)\in F$ where $F\subset \bN \times \bN$.

First, we shall prove that $L(\{(m,n)\in\mathbb{N}\times\bN: m, n > p\},t)>0$:
\begin{lem}\label{II_pos1}
Let $a,b,p\in \mathbb{N}$. Then
\begin{equation}\label{II_1}
\begin{split}
& L(\{(pa, pb),(p(a+1), pb), (pa,p(b+1))\},t)+\\
& L(\{(pa, p(b+1)),(p(a+1), pb), (p(a+1),p(b+1))\},t)>0
 \end{split}
 \end{equation}
for every $t\ge 0$.
\end{lem}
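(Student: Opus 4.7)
The plan is to adapt the pairing argument from Lemma~\ref{J_12poslem1} (and from the second half of Lemma~\ref{J_11poslem}): inside the square $(pa,p(a+1))\times(pb,p(b+1))$ the signs of $F_2(k,l;t)$ flip across the anti-diagonal $k+l=p(a+b+1)$, with $F_2>0$ strictly below and $F_2<0$ strictly above. I will pair each lattice point of the lower triangle with its reflection in the upper one and show that every positive summand strictly dominates, in absolute value, its negative partner.

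I parametrize the lower triangle by $(K,L)=(pa+m,\,p(b+1)-m-s)$ and the upper triangle by $(K',L')=(pa+m+s,\,p(b+1)-m)$, with $(m,s)$ running through $\{m,s\ge 1,\; m+s\le p-1\}$; this gives a bijection and the sums $K+L=p(a+b+1)-s$, $K'+L'=p(a+b+1)+s$ are exact reflections. Two elementary computations then drive the estimate. First, a direct expansion gives
\[
[K'^{\,2}+L'^{\,2}+(K'+L')^{2}]-[K^{2}+L^{2}+(K+L)^{2}]=6sp(a+b+1),
\]
so the quotient of the Gaussian decay factors is $e^{6sp(a+b+1)t}\ge 1$ for $t\ge 0$. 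Second, using the identity $\sin(\pi(pn\pm r)/p)=\pm(-1)^{n}\sin(\pi r/p)$, one checks that the triple of sines appearing in $|A(K)A(L)B(K+L)|$ equals the triple appearing in $|A(K')A(L')B(K'+L')|$ (each is $\sin(\pi m/p)\sin(\pi(m+s)/p)\sin(\pi s/p)$), so they cancel in the ratio and leave only the purely algebraic factor
\[
\frac{(K'^{\,2}-p^{2})(L'^{\,2}-p^{2})}{(K^{2}-p^{2})(L^{2}-p^{2})}\cdot\frac{(K+L)\bigl((K'+L')^{2}-4p^{2}\bigr)}{(K'+L')\bigl((K+L)^{2}-4p^{2}\bigr)}.
\]

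All four denominators are positive: $a,b\ge 1$ forces $K,L>p$, and $K+L=p(a+b+1)-s\ge 3p-(p-1)>2p$. On the corresponding half-lines $k\mapsto k^{2}-p^{2}$ is strictly increasing and $k\mapsto k/(k^{2}-4p^{2})$ is strictly decreasing (its derivative equals $-(k^{2}+4p^{2})/(k^{2}-4p^{2})^{2}<0$), so both fractions above strictly exceed $1$. Multiplying by the exponential factor $\ge 1$ and summing over the common index set of $(m,s)$ then yields \eqref{II_1}. The one subtle point---and essentially the only place the plan can fail---is the simultaneous cancellation of the three sines; if their moduli did not coincide pair-by-pair, no amount of algebraic monotonicity would suffice. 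Everything else is bookkeeping.
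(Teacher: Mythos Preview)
Your argument is correct and is essentially the same as the paper's: you use the identical reflection pairing $(pa+m,\,p(b+1)-m-s)\leftrightarrow(pa+m+s,\,p(b+1)-m)$ and the same monotonicity of $k\mapsto 1/(k^2-p^2)$ on $(p,\infty)$ and $k\mapsto k/(k^2-4p^2)$ on $(2p,\infty)$. The paper abbreviates by passing directly to the sine-free quantities $A_1(k)=1/(k^2-p^2)$ and $B_1(k)=k/(k^2-4p^2)$, whereas you spell out explicitly that the three moduli $|\sin(\pi m/p)|,\,|\sin(\pi(m+s)/p)|,\,|\sin(\pi s/p)|$ match on both sides of the pairing and that the exponent gap is exactly $6sp(a+b+1)$; this is the same proof with the implicit cancellations made visible.
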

\begin{proof}
According to Lemma \ref{lem_signs2}, the summands in $$L(\{(pa, pb),(p(a+1),
pb), (pa,p(b+1))\},t)$$ are positive, and the summands in $$L(\{(pa,
p(b+1)),(p(a+1), pb), (p(a+1),p(b+1))\},t)$$ are negative. If we show that
the absolute values of positive summands are not less than the absolute
values of negative summands, it would imply \eqref{II_1}.

Let us perform the change of variables $k=pa+k_1,\; l=pb+l_1$ in \eqref{II_1}. Then
\begin{equation}\label{II_1postriang}
L(\{(pa, pb),(p(a+1), pb),
(pa,p(b+1))\},t)=\sum^{p-1}_{\genfrac{}{}{0pt}{}{k_1,l_1=1}{k_1+l_1<p}}
 F_2(pa+k_1,pb+l_1;t)
\end{equation}
\begin{equation}\label{II_1negtriang}
L(\{(pa,p(b+1)),(p(a+1), pb),
(p(a+1),p(b+1))\},t)=\sum^{p-1}_{\genfrac{}{}{0pt}{}{k_1,l_1=1}{k_1+l_1>p}}
F_2(pa+k_1,pb+m_1;t)
\end{equation}
Now let us perform  the change of variables  $s=p-k_1-l_1$,
$k=k_1$ in \eqref{II_1postriang} and the change of variables
$s=k_1+l_1-p$, $k=k_1-s$ in \eqref{II_1negtriang} and consider the
following relation:
\begin{equation} \label{II_pos_to_II_neg}
\begin{split}
&\frac{F_2(pa+k,p(b+1)-k-s;t)}{|F_2(pa+k+s,p(b+1)-k;t)|}=\\
&\frac{A_1(pa+k)A_1(p(b+1)-k-s)B_1(p(a+b+1)-s)}{|A_1(pa+k+s)A_1(p(b+1)-k)B_1(p(a+b+1)+s)|}\cdot e^{6(a+b+1)ps}>\\
&\frac{A_1(pa+k)A_1(p(b+1)-k-s)B_1(p(a+b+1)-s)}{|A_1(pa+k+s)A_1(p(b+1)-k)B_1(p(a+b+1)+s)|},
\end{split}
\end{equation}
where
\begin{equation}\label{A_1}
A_1(k):=\frac{1}{k^2-p^2}
\end{equation}
\begin{equation}\label{B_1}
B_1(k):=\frac{k}{k^2-4p^2}
\end{equation}

Since $A_1(k)$ is decreasing for $k>p$ and $B_1(k)$ is decreasing for $k>2p$,
the numerator of the fraction in the last line of \eqref{II_pos_to_II_neg} is
greater than the denominator, therefore, the fraction is greater than one and
$L(\{(pa, pb),(p(a+1),pb), (pa,p(b+1))\},t)$ is greater than
$|L(\{(pa,p(b+1)),(p(a+1), pb), (p(a+1),p(b+1))\},t)|$.
\end{proof}
\begin{lem}\label{L2lem}
The following inequality is true:
\begin{equation}\label{L2}
\begin{split}
& L(\{(0, p),(p, 0), (p,p)\},t)+\\
& \sum_{a=3}^{+\infty}L(\{(pa, 0),(p(a+1), 0), (pa,p)\},t)+\\
& \sum_{a=3}^{+\infty}L(\{(0,pa),(0,p(a+1)), (p,pa)\},t)>0.
 \end{split}
 \end{equation}
\end{lem}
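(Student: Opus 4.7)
The plan is to show that the positive summand $L(\{(0,p),(p,0),(p,p)\},t)$ strictly dominates the absolute values of the negative contributions. By the symmetry $F_2(k,l;t)=F_2(l,k;t)$, which follows from $A(k)A(l)B(k+l)=A(l)A(k)B(l+k)$, the two sums over $a$ in \eqref{L2} are equal, so it suffices to prove
\begin{equation*}
L(\{(0,p),(p,0),(p,p)\},t)>2\sum_{a=3}^{\infty}|L(\{(pa,0),(p(a+1),0),(pa,p)\},t)|\quad\forall\, t\ge 0.
\end{equation*}
I would parameterize the interior lattice points of $\{(0,p),(p,0),(p,p)\}$ as $(p-k_1,p-l_1)$ and those of the $a$-th negative triangle as $(pa+k_1,l_1)$, with $1\le k_1,l_1$ and $k_1+l_1\le p-1$ in both cases; the two parameter ranges coincide, providing a canonical term-by-term pairing.

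Using the formulas \eqref{A_k}, \eqref{B_k} together with $|\sin(\pi(pa+m)/p)|=|\sin(\pi(p-m)/p)|=|\sin(\pi m/p)|$, every sine factor cancels in the ratio $|F_2(pa+k_1,l_1;t)|/F_2(p-k_1,p-l_1;t)$, and a direct computation shows that the exponent in the negative summand exceeds the one in the positive summand by at least $12p^2+30p$ when $a\ge 3$, so $e^{-\Delta t}\le 1$ and it suffices to bound the ratio at $t=0$. In the variables $x=k_1/p$, $y=l_1/p$, $z=x+y\in (0,1)$, this ratio becomes
\begin{equation*}
f(x,y;a)=\frac{x(2-x)}{(a-1+x)(a+1+x)}\cdot\frac{y(2-y)}{(1-y)(1+y)}\cdot\frac{(a+z)\,z(4-z)}{(a-2+z)(a+2+z)(2-z)}.
\end{equation*}

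I would control $f$ in four steps: (i) $x/(1-y)\le 1$ thanks to $x+y<1$, which absorbs the singularity of $1/(1-y)$; (ii) the map $w\mapsto w/(w^2-4)$ is decreasing for $w>2$, giving $(a+z)/[(a-2+z)(a+2+z)]\le a/(a^2-4)$; (iii) $1/[(a-1+x)(a+1+x)]\le 1/(a^2-1)$; and (iv) the joint supremum of $\frac{y(2-y)}{1+y}\cdot\frac{z(4-z)}{2-z}$ over $0<y\le z<1$ is $12-6\sqrt{3}$, approached as $y\to\sqrt{3}-1$ and $z\to 1^-$. These combine to $f(x,y;a)\le (24-12\sqrt{3})\,a/[(a^2-1)(a^2-4)]$. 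A partial-fraction telescoping argument gives $\sum_{a=3}^{\infty}a/[(a^2-1)(a^2-4)]=1/8$, so $\sum_{a=3}^{\infty}f(x,y;a)\le (12-6\sqrt{3})/4\approx 0.40$, and doubling for the two symmetric series yields a domination bound of $\approx 0.80<1$, implying \eqref{L2}.

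The main obstacle is the singular factor $1/(1-y)$ coming from $|A(l_1)|/A(p-l_1)$, unbounded as $y\to 1$: controlling it forces one to use the geometric constraint $x<1-y$ to absorb it into the factor $x$ inherited from $|A(pa+k_1)|/A(p-k_1)$. After this key absorption, the remaining maximization of the product of two single-variable functions $\frac{y(2-y)}{1+y}$ and $\frac{z(4-z)}{2-z}$ is subtle since their individual maxima lie at different arguments, but the constraint $y\le z$ couples them, and the final telescoping identity provides the sharp tail estimate in $a$.
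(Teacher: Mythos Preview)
Your argument is correct. The overall architecture matches the paper's---symmetry reduction, a bijective pairing of positive and negative summands with sine cancellation, a rational-function bound on the ratio, and a tail estimate in $a$---but the tactical choices differ in two places.

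First, the pairing. The paper matches the positive summand at $(p-m,\,m+l)$ with the negative one at $(pa+m,\,l)$; under this pairing the sines cancel \emph{across} the $A$- and $B$-factors and the resulting ratio
\[
\frac{m(2p-m)}{(p(a-1)+m)(p(a+1)+m)}\cdot\frac{3p+l}{(p+l)^2}\cdot\frac{(p-m-l)(p+m+l)(pa+m+l)}{(p(a-2)+m+l)(p(a+2)+m+l)}
\]
has no singular factor at all, so the subsequent maximization is routine (the paper bounds $g(x)=x(2-x)(1-x^2)\le 9/16$). Your more symmetric pairing $(p-k_1,p-l_1)\leftrightarrow(pa+k_1,l_1)$ makes each sine match its counterpart directly, but at the price of the $1/(1-y)$ singularity, which you then absorb via $x<1-y$. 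Both work; the paper's pairing trades a less obvious correspondence for a tamer ratio, while yours keeps the correspondence transparent and pays for it with the absorption step.

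Second, the tail in $a$. The paper bounds $\sum_{a\ge 3} a/[(a^2-1)(a^2-4)]$ crudely by an integral comparison, arriving at $189/320\approx 0.59$. Your partial-fraction identity
\[
\frac{a}{(a^2-1)(a^2-4)}=\frac{1}{6}\Bigl(\frac{1}{a-2}-\frac{1}{a-1}-\frac{1}{a+1}+\frac{1}{a+2}\Bigr),
\]
which telescopes to $1/8$, is sharper and cleaner; this is what lets your slightly larger per-term constant $24-12\sqrt{3}$ (coming from the extra $(2-x)\le 2$ you absorb implicitly) still land safely at $\approx 0.80<1$.
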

\begin{proof}
Because of symmetry, inequality \eqref{L2} is equivalent to the following:
\begin{equation*}
\begin{split}
& L(\{(0, p),(p, 0), (p,p)\},t)+\\
& 2\sum_{a=3}^{+\infty}L(\{(pa, 0),(p(a+1), 0), (pa,p)\},t)>0.
\end{split}
\end{equation*}
Let us rewrite sum $L(\{(0, p),(p, 0), (p,p)\},t)$ as
\begin{equation}\label{Lpos1}
L(\{(0, p),(p, 0), (p,p)\},t) = \sum_{\substack{m,l=1
\\m+l<p}}^{p-1}F_2(p-m,l+m;t)
\end{equation}
and $L(\{(pa, 0),(p(a+1), 0), (pa,p)\},t)$ as
\begin{equation}\label{Lneg1}
L(\{(pa, 0),(p(a+1), 0), (pa,p)\},t) = \sum_{\substack{m,l=1
\\m+l<p}}^{p-1}F_2(pa+m,l;t).
\end{equation}
According to Lemma \ref{lem_signs2}, all the summands in \eqref{Lpos1} are positive,
and all the summands in \eqref{Lneg1} are negative. Let us consider the absolute value
of the ratio of the summands from \eqref{Lneg1} to the summands from \eqref{Lpos1} corresponding to the same $m,l$:
\begin{equation}\label{Lneq2}
\begin{split}
&\frac{|F_2(pa+m,l;t)|}{F_{2}(p-m,l+m;t)}\leq\frac{|A(pa+m)A(l)B(pa+m+l)|}{A(p-m)A(l+m)B(p+l)}=\\
&\frac{3p-l}{(p+l)^2}\cdot\frac{m(2p-m)}{(p(a-1)+m)(p(a+1)+m)}
\cdot\frac{(p-m-l)(p+m+l)(pa+m+l)}{(p(a-2)+m+l)(p(a+2)+m+l)}=\\
&f_1(x)f_2(y)f_3(z),
\end{split}
\end{equation}
where $x=m/p$, $y=l/p$, $z=x+y$, $x,y,z\in(0,1)$, and
$$f_1(x)=\frac{x(2-x)}{(a-1+x)(a+1+x)},$$
$$f_2(y)=\frac{3-y}{(1+y)^2},$$
$$f_3(z)=\frac{(1-z^2)(a+z)}{(a+2+z)(a-2+z)}.$$
It is easy to see, that functions $f_2(y)$ and $f_3(z)$ decrease, therefore,
\begin{equation*}
\begin{split}
&f_1(x)f_2(y)f_3(z)<f_1(x)f_2(0)f_3(x)=\\
&3\cdot\frac{x(2-x)(1-x^2)(a+x)}{(a+2+x)(a-2+x)(a-1+x)(a+1+x)}.
\end{split}
\end{equation*}
Let us consider function $g(x)=x(2-x)(1-x^2)$. Since
$g'(x)=2(2x-1)(x^2-x-1)$, it reaches its maximum on the interval
$(0,1)$ at $x=1/2$, $g(1/2)=9/16$.


Taking into account \eqref{Lneq2} and all other information
written below \eqref{Lneq2} we get:
\begin{equation*}
\begin{split}
&2\sum_{a=3}^{\infty}\frac{|F_2(pa+m,l;t)|}{F_{2}(p-m,l+m;t)}<
2\sum_{a=3}^{\infty}3\cdot\frac{x(2-x)(1-x^2)(a+x)}{(a+2+x)(a-2+x)(a-1+x)(a+1+x)}<\\
&\sum_{a=3}^{\infty}\frac{27a}{8(a^2-4)(a^2-1)}<\frac{27}{8}\cdot\frac{3}{5\cdot 8}+\frac{27}{8}\sum_{a=4}^{\infty}\frac{a}{(a^2-4)^2}<\\
&\frac{81}{320}+\frac{27}{8}\int_{3}^{+\infty}\frac{xdx}{(x^2-4)^2}=\frac{81}{320}+\frac{27}{80}=\frac{189}{320}<1,
\end{split}
\end{equation*}
which completes the proof of \eqref{L2}.
 \end{proof}
\subsection{Positiveness of some part of $I(t)$ }

Let us introduce set $H\subset \mathbb N\times\mathbb N$, defined as follows:
\begin{equation}\label{setH}
\begin{split}
H=&\{(0,3p),(p,3p),(p,2p)\}\cup\{(p,2p),(2p,2p)(2p,p)\}\cup\\
&\left[\,\bigcup_{a=1}^{\infty}\{(pa,2p),(pa,3p),(p(a+1),2p)\}\right]
\end{split}
\end{equation}
In this section we prove the positiveness of the sum
$I(\mathbb{N}\times\mathbb{N}\setminus{H},t)$.
\begin{lem}\label{I_partpos1}
Let $b$ be a natural number, $b\geq 3$. Then the following inequality holds:
\begin{equation}\label{I_pos}
\begin{split}
&I(\{(0, pb),(0, p(b+1)), (p,pb)\},t)+ \\
&\sum_{a=2}^{+\infty}I(\{(pa, pb),(pa, p(b+1)), (p(a+1),pb)\},t)>0, \,\, \forall t>0.
\end{split}
\end{equation}
\end{lem}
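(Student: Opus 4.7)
The plan is to mirror the pairing strategy used in Lemma \ref{II_pos1} and Lemma \ref{L2lem}. By Lemma \ref{lem_signs1}(iii) every summand of $I(\{(0,pb),(0,p(b+1)),(p,pb)\},t)$ is positive (since $b\ge 2$), and by Lemma \ref{lem_signs1}(i) every summand of each triangle $I(\{(pa,pb),(pa,p(b+1)),(p(a+1),pb)\},t)$ with $a\ge 2$ is negative. The goal is therefore to show that every positive lattice summand dominates, in absolute value, the entire column of negatives sitting above it.

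To implement this, I substitute $k=k_1$, $l=pb+l_1$ in the positive triangle and $k=pa+k_1$, $l=pb+l_1$ in the $a$-th negative triangle, with $(k_1,l_1)$ in each case ranging over $1\le k_1,l_1\le p-1$, $k_1+l_1<p$. The factor $B(pb+l_1)$ is common and cancels from the ratio of absolute values. A direct computation of the exponent gives
\begin{equation*}
\bigl[(pa+k_1)^2+(p(a+b)+k_1+l_1)^2\bigr]-\bigl[k_1^2+(pb+k_1+l_1)^2\bigr]=2pa\bigl[p(a+b)+2k_1+l_1\bigr]>0,
\end{equation*}
so the time-dependent prefactor in the ratio has the form $e^{-\gamma_a t}$ with $\gamma_a>0$, and it suffices to verify the inequality at $t=0$.

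At $t=0$, using $|A(pa+k_1)|=\sin(\pi k_1/p)/[(p(a-1)+k_1)(p(a+1)+k_1)]$ and the analogous formula at $p(a+b)+k_1+l_1$, and setting $x=k_1/p\in(0,1)$, $z=(k_1+l_1)/p\in(0,1)$, the ratio reduces to
\begin{equation*}
\frac{|F_1(pa+k_1,pb+l_1;0)|}{F_1(k_1,pb+l_1;0)}=\frac{(1-x^2)\bigl((b+z)^2-1\bigr)}{\bigl((a+x)^2-1\bigr)\bigl((a+b+z)^2-1\bigr)}\le\frac{b(b+2)}{(a^2-1)((a+b)^2-1)},
\end{equation*}
by monotonicity of numerator and denominator in $x$ and $z$.

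The main step is the uniform estimate $S(b)<1$ for $b\ge 3$, where
\begin{equation*}
S(b):=\sum_{a=2}^{\infty}\frac{b(b+2)}{(a^2-1)((a+b)^2-1)}.
\end{equation*}
A four-term partial-fraction decomposition, in which the residues at $a=1$ and $a=-b-1$ differ only by sign and likewise at $a=-1$ and $a=-b+1$, produces two telescoping tails and leads after summation to the closed form
\begin{equation*}
S(b)=\frac{H_{b+2}}{2}-\frac{(b+2)(H_b-3/2)}{2(b-2)},\qquad H_n=1+\tfrac12+\cdots+\tfrac1n.
\end{equation*}
Direct computation gives $S(3)=37/120$ and $S(4)=7/20$; an elementary analysis using $H_{b+2}=H_b+1/(b+1)+1/(b+2)$ shows that $S(b)$ is monotone increasing in $b$ with $\lim_{b\to\infty}S(b)=3/4$. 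Hence $S(b)\le 3/4<1$ for every $b\ge 3$, which establishes the desired strict inequality pointwise in $(k_1,l_1)$ and therefore proves the lemma.
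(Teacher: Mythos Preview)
Your proof is correct and follows essentially the same strategy as the paper: identify the sign pattern via Lemma~\ref{lem_signs1}, pair each positive summand $F_1(k_1,pb+l_1;t)$ against the tower of negatives $F_1(pa+k_1,pb+l_1;t)$, drop the exponential factor (which you justify by the explicit exponent computation), and bound the resulting ratio in the variables $x=k_1/p$, $z=(k_1+l_1)/p$.

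The only point of divergence is the last step. The paper bounds the ratio by
\[
\frac{b(b+2)}{(a^2-1)(a+b)(a+b+2)}<\frac{1}{(a-1)(a+1)}
\]
(using monotonicity of the ratio $f_2$ rather than of its numerator and denominator separately, which gives the slightly sharper $(a+b)(a+b+2)$ in place of your $(a+b)^2-1$) and then telescopes $\sum_{a\ge 2}\tfrac{1}{a^2-1}=\tfrac34$. Your route via the partial-fraction closed form for $S(b)$ is correct and reaches the same bound $3/4$, but it is considerably more work than needed: since $b(b+2)\le (a+b)^2-1$ for $a\ge 1$, your own estimate already gives $S(b)\le\sum_{a\ge 2}\tfrac{1}{a^2-1}=\tfrac34$ in one line, and the monotonicity/limit analysis can be dispensed with entirely.
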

\begin{proof}
According to Lemma \ref{lem_signs1}, the summands in $I(\{(0, pb),(0, p(b+1)), (p,pb)\},t)$ are positive and the summands in $I(\{(pa, pb),(pa, p(b+1)), (p(a+1),pb),t\})$, $b\geq 3$, $a\geq 2$, are negative. Let us
rewrite these sums as
\begin{equation}\label{I_pink}
I(\{(0, pb),(0, p(b+1)), (p,pb)\},t) = \sum_{\substack{k,l=1
\\k+l<p}}^{p-1}F_1(k,bp+l;t),
\end{equation}
and
\begin{equation}\label{I_green}
I(\{(pa, pb),(pa, p(b+1)), (p(a+1),pb),t\}) = \sum_{\substack{k,l=1\\
k+l<p}}^{p-1}F_1(pa+k,bp+l;t),
\end{equation}
and consider the absolute value of the ratio of negative summands from \eqref{I_green} to positive summands from \eqref{I_pink}
corresponding to the same $(k,l)$:
\begin{equation}\label{ratio_green_to_pink}
\begin{split}
\frac{|F_1(pa+k,pb+l;t)|}{F_1(k,bp+l;t)}<&\frac{|A(pa+k)A(p(a+b)+k+l)|}{A(k)A(pb+k+l)}=\\
&f_1(x)\cdot f_2(z),
\end{split}
\end{equation}
where $x = k/p$, $z = (k+l)/p$, $x,z \in(0,1)$ and
\begin{equation*}
f_1(x)=\frac{(x+1)(1-x)}{(a-1+x)(a+1+x)},
\end{equation*}
\begin{equation*}
f_2(z)=\frac{b-1+z}{a+b-1+z}\cdot\frac{b+1+z}{a+b+1+z}.
\end{equation*}
Since
$$f_1'(x)=-\frac{2a(1+ax+x^2)}{(a+1+x)^2(a-1+x)^2}<0, \,\, x\in(0,1),$$
function $f_1(x)$ decreases on $(0,1)$, and $f_1(x)<f_1(0)$.

It is easy to see, that $f_2(z)$ increases when $z\in(0,1)$,
therefore, taking into account \eqref{ratio_green_to_pink} and
information below this inequality, we get that
\begin{equation*}
\frac{|F_1(pa+k,pb+l;t)|}{F_1(k,bp+l;t)}<f_1(0)\cdot f_2(1) =
\frac{b(b+2)}{(a-1)(a+1)(a+b)(a+b+2)},
\end{equation*}
and
\begin{equation*}
\begin{split}
&\sum_{a=2}^{+\infty}\frac{|F_1(pa+k,pb+l;t)|}{F_1(k,bp+l;t)}<\sum_{a=2}^{+\infty}\frac{b(b+2)}{(a-1)(a+1)(a+b)(a+b+2)}<\\
&\sum_{a=2}^{+\infty}\frac{1}{(a-1)(a+1)}= \sum_{a=2}^{+\infty}\frac{1}{2}\left(\frac{1}{a-1}-\frac{1}{a+1}\right)=\\
&\lim_{n\to\infty}\sum_{a=2}^{n}\frac{1}{2}\left(1-\frac{1}{3}+\frac{1}{2}-\frac{1}{4}+...+
\frac{1}{n-2}-\frac{1}{n}+\frac{1}{n-1}-\frac{1}{n+1}\right)=\\
&\lim_{n\to\infty}\sum_{a=2}^{n}\frac{1}{2}\left(1+\frac{1}{2}-\frac{1}{n}-\frac{1}{n+1}\right)=\frac{3}{4}<1
\end{split}
\end{equation*}
\end{proof}
\begin{lem}\label{I_partpos}
The following inequalities hold:
\begin{equation}\label{I_pos}
 \frac{1}{3}I(\{(0, p),(p, 0), (p,p)\},t)+ I(\{(p, p),(2p, 0), (2p,p)\},t)>0
\end{equation}
\begin{equation}\label{II_pos}
 \frac{1}{15}I(\{(0, p),(p, 0), (p,p)\},t)+I(\{(2p,p),(3p,0),(3p,p)\},t)>0
 \end{equation}
\end{lem}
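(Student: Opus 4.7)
\medskip

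\noindent\textbf{Proof proposal for Lemma \ref{I_partpos}.}
The plan is to adapt the summand-by-summand comparison strategy already used in this section: reparametrize the three triangles by a common index set, compute the ratio of absolute values of summands using \eqref{A_k}, \eqref{B_k}, and show the ratio is bounded above by $1/3$ (respectively $1/15$) everywhere in the parameter range.

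First, by Lemma \ref{lem_signs1}, part (v), all summands in $T_1:=\{(0,p),(p,0),(p,p)\}$ are positive, while part (ii) (with $a=1$, $b=0$ and $a=2$, $b=0$ respectively) shows that all summands in $T_2:=\{(p,p),(2p,0),(2p,p)\}$ and $T_3:=\{(2p,p),(3p,0),(3p,p)\}$ are negative. Parametrize them uniformly by writing
$$
T_1:\ (k,l)=(p-i,p-j),\qquad T_2:\ (k,l)=(2p-i,p-j),\qquad T_3:\ (k,l)=(3p-i,p-j),
$$
where $(i,j)$ ranges over $\{1,\dots,p-1\}^2$ with $i+j<p$. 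The factor $B(l)=B(p-j)>0$ is then identical in all three sums and cancels in any ratio. A direct evaluation of $A$ at the relevant arguments, using $\sin(\pi(np-i)/p)=(-1)^{n-1}\sin(\pi i/p)$, gives
$$
A(p-i)=\frac{\sin(\pi i/p)}{i(2p-i)},\quad |A(2p-i)|=\frac{\sin(\pi i/p)}{(p-i)(3p-i)},\quad |A(3p-i)|=\frac{\sin(\pi i/p)}{(2p-i)(4p-i)},
$$
and analogous formulas for $A(2p-i-j)$, $|A(3p-i-j)|$, $A(4p-i-j)$. Moreover, since for $T_2$ and $T_3$ the exponents $k^2+l^2+(k+l)^2$ strictly exceed those of $T_1$ (by $p(8p-4i-2j)$ and $4p(5p-2i-j)$ respectively), the exponential factor is bounded above by $1$ for all $t\ge 0$. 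Setting $x=i/p$ and $z=(i+j)/p$, with $0<x<z<1$, one obtains
$$
\frac{|F_1(T_2)(i,j;t)|}{F_1(T_1)(i,j;t)}\le \frac{x(2-x)(1-z)(3-z)}{(1-x)(3-x)(2-z)(4-z)},\qquad
\frac{|F_1(T_3)(i,j;t)|}{F_1(T_1)(i,j;t)}\le \frac{x(1-z)}{(4-x)(5-z)}.
$$

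The remaining task is an elementary calculus exercise. For the first ratio, differentiation of $h(z)=(1-z)(3-z)/((2-z)(4-z))$ yields $h'(z)=-2(z^2-5z+7)/((2-z)(4-z))^2<0$ on $[0,1]$ (discriminant $25-28<0$), so $h(z)<h(x)$ for $z>x$. Substituting $z=x$ telescopes the bound to $x/(4-x)$, which is increasing in $x$ and bounded by $(p-1)/(3p+1)<1/3$ as $x\le (p-1)/p$. For the second ratio, monotonicity of $(1-z)/(5-z)$ (its derivative equals $-4/(5-z)^2<0$) and the trivial bounds $x(1-x)\le 1/4$, $(4-x)(5-x)\ge 12$ on $[0,1]$ give the crude but sufficient estimate $x(1-z)/((4-x)(5-z))\le 1/48<1/15$. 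Summing the pointwise inequalities over $(i,j)$ yields
$$
-I(T_2)\le \tfrac{1}{3}I(T_1),\qquad -I(T_3)\le \tfrac{1}{15}I(T_1),
$$
which are exactly \eqref{I_pos} and \eqref{II_pos}.

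The only slightly delicate step is the careful sign and factorization bookkeeping that produces the closed forms for the $A$-ratios; once those are correct, both inequalities reduce to one-variable monotonicity arguments of the same flavor as Lemmas \ref{J1series_sum1pos}--\ref{J_11poslem}. The main conceptual point is that because $T_1$, $T_2$, $T_3$ share the factor $B(p-j)$ and the $A$-ratios depend only on $x$ and $z$, the comparison can be carried out uniformly in $t$ without invoking the exponential damping at all.
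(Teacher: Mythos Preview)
Your proof is correct and follows essentially the same approach as the paper's: the same reparametrization of the three triangles by $(p-i,p-j)$, $(2p-i,p-j)$, $(3p-i,p-j)$, cancellation of the common factor $B(p-j)$, reduction of the $A$-ratios to the same rational expressions in $x=i/p$ and $z=(i+j)/p$, and the key observation that replacing $z$ by $x$ (using monotonicity of the $z$-factor) collapses the first ratio to $x/(4-x)<1/3$. The only cosmetic difference is in the second inequality, where you bound $x(1-x)/((4-x)(5-x))$ by $1/48$ via $x(1-x)\le 1/4$ and $(4-x)(5-x)\ge 12$, whereas the paper splits it as $\frac{x}{4-x}\cdot\frac{1-x}{5-x}<\frac{1}{3}\cdot\frac{1}{5}=\frac{1}{15}$; both are fine.
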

\begin{proof}
According to Lemma \ref{lem_signs1}, the summands in $I(\{(0,
p),(p, 0), (p,p)\},t)$ are positive and the summands in $I(\{(pa,
p),(p(a+1), 0), (p(a+1),p)\},t)$, $a=1, 2$, are negative. Let us
rewrite these sums as
\begin{equation*}
I(\{(0, p),(p, 0), (p,p)\},t) = \sum_{\substack{k,l=1
\\k+l<p}}^{p-1}F_1(p-k,p-l;t),
\end{equation*}
and
\begin{equation*}
I(\{(pa, p),(p(a+1), 0), (p(a+1),p)\},t) = \sum_{\substack{k,l=1\\
k+l<p}}^{p-1}F_1(p(a+1)-k,p-l;t),
\end{equation*}
and consider the ratio of a negative summand to a positive summand
corresponding to the same $(k,l)$:
\begin{equation}\label{Iratio}
\begin{split}
&\frac{|F_1(p(a+1)-k,p-l;t)|}{F_1(p-k,p-l;t)}=\\
&\frac{|A((a+1)p-k)A((a+2)p-k-l)|}{A(p-k)A(2p-k-l)}\cdot e^{-2ap((3+a)p-2k-l)t}<\\
&f_1(x)\cdot f_2(x+y),
\end{split}
\end{equation}
where $x = k/p$, $y = l/p$, $x,y \in(0,1)$, $x+y<1$ and
\begin{equation*}
f_1(x)=\frac{x(2-x)}{(a-x)((a+2)-x)},
\end{equation*}
\begin{equation*}
f_2(x+y)=\frac{1-x-y}{a+1-x-y}\cdot\frac{3-x-y}{a+3-x-y}.
\end{equation*}
It is easy to see, that function $f_2(x+y)$ decreases as a
function of $y$ on $(0,1)$, therefore
\begin{equation*}
f_2(x+y)<f_2(x+0) = \frac{1-x}{a+1-x}\cdot\frac{3-x}{a+3-x},
\end{equation*}
and
\begin{equation}\label{rel_a}
f_1(x)f_2(x+y)<f_1(x)f_2(x)=\frac{x(2-x)(1-x)(3-x)}{(a-x)(a+2-x)(a+1-x)(a+3-x)}.
\end{equation}
When $a=1$,
\begin{equation}\label{rel_1}
f_1(x)f_2(x)=\frac{x}{4-x}<\frac{1}{3},
\end{equation}
and for $a=2$
\begin{equation}\label{rel_2}
f_1(x)f_2(x)=\frac{x}{4-x}\cdot\frac{1-x}{5-x}<\frac{1}{3}\cdot\frac{1}{5}=\frac{1}{15}.
\end{equation}
Relations \eqref{Iratio}, \eqref{rel_1}, and \eqref{rel_2} imply,
\eqref{I_pos}, \eqref{II_pos}.
\end{proof}
\begin{lem}\label{I_partpos2}
The following inequality holds:
\begin{equation}\label{I_red_green}
I(\{(p, p),(2p, p), (p,2p)\},t)+ \sum_{a=4}^{+\infty}I(\{(p, pa),(2p, pa), (p,p(a+1))\},t)>0
\end{equation}
\end{lem}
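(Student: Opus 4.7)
The plan is to follow the template already used in Lemmas \ref{I_partpos1} and \ref{I_partpos}: determine the signs of the summands via Lemma \ref{lem_signs1}, pair each negative summand with a positive one sharing the same local indices, bound the ratio of absolute values by an explicit rational function of $a$, and sum over $a\geq 4$. The triangle $\{(p,p),(2p,p),(p,2p)\}$ sits inside the square $(p,2p)\times(p,2p)$, which in the notation of Lemma \ref{lem_signs1} is case (ii) with $a=b=1$, so the condition $k+l<3p$ forces its summands to be positive. Each triangle $\{(p,pa),(2p,pa),(p,p(a+1))\}$ with $a\geq 4$ lies in the square $(p,2p)\times(pa,p(a+1))$, which is case (i) with $b=a\geq 2$, so the condition $k+l<p(a+2)$ forces its summands to be negative.

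Parametrize both the positive triangle and each negative triangle uniformly by $k=p+k_1$, $l=pa+l_1$ with $k_1,l_1\geq 1$ and $k_1+l_1\leq p-1$, taking $a=1$ for the positive one and $a\geq 4$ for the negative ones. A direct expansion shows that the difference of quadratic exponents equals $2(a-1)p[(a+2)p+k_1+2l_1]$, which is positive for $a\geq 4$, so the exponential factor in the ratio of absolute values of paired summands is less than $1$ and may be discarded. Using \eqref{A_k}, \eqref{B_k} together with the identity $\sin(\pi(np+m)/p)=(-1)^n\sin(\pi m/p)$ to cancel all sine factors, the ratio reduces to
\begin{equation*}
\frac{|F_1(p+k_1,pa+l_1;t)|}{F_1(p+k_1,p+l_1;t)}\leq h_1(z)\,h_2(y),
\end{equation*}
where $y=l_1/p$, $z=(k_1+l_1)/p\in(0,1)$ and
\begin{equation*}
h_1(z)=\frac{(1+z)(3+z)}{(a+z)(a+2+z)},\qquad h_2(y)=\frac{(a+y)(1-y)(3+y)}{(1+y)(a-2+y)(a+2+y)}.
\end{equation*}

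Logarithmic differentiation shows that $h_1$ is increasing on $(0,1)$ for $a\geq 2$, whence $h_1(z)<h_1(1)=8/[(a+1)(a+3)]$. I expect $h_2$ to be monotone decreasing on $(0,1)$ for $a\geq 4$: its logarithmic derivative is
\begin{equation*}
\frac{1}{a+y}+\frac{1}{3+y}-\frac{1}{1-y}-\frac{1}{1+y}-\frac{1}{a+2+y}-\frac{1}{a-2+y},
\end{equation*}
and the dominating term $-1/(1-y)$ already beats the positive contributions at $y=0$ when $a\geq 4$ and only grows in magnitude, while the positive terms decrease. Granting this, $h_2(y)<h_2(0)=3a/[(a-2)(a+2)]$, so the pairwise ratio is bounded by $24a/[(a-2)(a+1)(a+2)(a+3)]$.

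Since $(a-2)(a+1)(a+2)(a+3)\geq a^4$ for $a\geq 3$, this bound is at most $24/a^3$; a handful of explicit terms for $a=4,5,6,7,8$ (summing to approximately $0.50$) together with the crude tail estimate $\sum_{a\geq 9}24/a^3<\int_{8}^{\infty}24x^{-3}\,dx=3/16$ give a total strictly less than $1$, which proves \eqref{I_red_green}. The main obstacle I anticipate is verifying the monotonicity of $h_2$; unlike the analogous functions in Lemmas \ref{I_partpos1} and \ref{I_partpos}, $h_2$ is a six-factor rational function whose derivative is not manifestly of one sign, and this is where the bulk of the careful calculation will have to go.
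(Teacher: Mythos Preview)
Your argument is correct and follows essentially the same route as the paper: same sign analysis, same parametrization, same ratio $|B(pa+l)A(p(a+1)+k+l)|/(B(p+l)A(2p+k+l))$, and the same explicit-terms-plus-integral tail estimate. The one tactical difference is that the paper bounds your $h_2=f_1$ factor by factor---$(a+y)/(1+y)$ and $(1-y)/(a-2+y)$ decreasing, $(3+y)/(a+2+y)$ increasing---to get $4a/((a-2)(a+3))$ and hence a combined bound $32a/((a-2)(a+1)(a+3)^2)$, completely sidestepping the global monotonicity question you flag as the main obstacle; your monotonicity claim is in fact immediate from $1/(a+y)+1/(3+y)\le 1/4+1/3<1\le 1/(1-y)$, so either way the ``obstacle'' is trivial.
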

\begin{proof}
According to Lemma \ref{lem_signs1}, the summands in $I(\{(p, p),(2p, p), (p,2p)\},t)$ are positive and the summands in $I(\{(p, pa),(2p, pa), (p,p(a+1))\},t)$, $a\geq 4$, are negative. Let us
rewrite these sums as
\begin{equation}\label{I_red}
I(\{(p, p),(2p, p), (p,2p)\},t) = \sum_{\substack{k,l=1
\\k+l<p}}^{p-1}F_1(p+k,p+l;t),
\end{equation}
and
\begin{equation}\label{I_dgreen}
I(\{(p, pa),(2p, pa), (p,p(a+1))\},t) = \sum_{\substack{k,l=1\\
k+l<p}}^{p-1}F_1(p+k,pa+l;t),
\end{equation}
and consider the absolute value of the ratio of negative summands from \eqref{I_dgreen} to positive summands from \eqref{I_red}
corresponding to the same $(k,l)$:
\begin{equation}\label{ratio_dgreen_to_red}
\begin{split}
\frac{|F_1(p+k,pa+l;t)|}{F_1(p+k,p+l;t)}<&\frac{|B(pa+l)A(p(a+1)+k+l)|}{B(p+l)A(2p+k+l)}=\\
&f_1(x)\cdot f_2(z),
\end{split}
\end{equation}
where $x = l/p$, $z = (k+l)/p$, $x,z \in(0,1)$ and
\begin{equation*}
f_1(x)=\frac{(a+x)}{(1+x)}\cdot\frac{1-x}{a-2+x}\cdot\frac{3+x}{a+2+x},
\end{equation*}
\begin{equation*}
f_2(z)=\frac{1+z}{a+z}\cdot\frac{3+z}{a+2+z}.
\end{equation*}
It is easy to see, that for $a\geq4$ the first two multiples in
$f_1(x)$ are decreasing functions, and the last one is increasing
function as well as all multiples in $f_2(z)$, when $x,z\in(0,1)$.
Therefore,
$$
   f_1(x)<\frac{4a}{(a-2)(a+3)},\; x\in (0,1);\quad
   f_2(z)<\frac{8}{(a+1)(a+3)},\; z\in (0,1)
$$
and
\begin{equation*}
\frac{|F_1(p+k,pa+l;t)|}{F_1(p+k,p+l;t)}<f_1(x)\cdot
f_2(z)<\frac{32a}{(a-2)(a+1)(a+3)^2}.
\end{equation*}
Note that $(a-2)(a+1)(a+3)^2=a^4+5a^3+a^2-21a-18>a^4$ for $a\ge 4$
and therefore
\begin{equation*}
\begin{split}
&\sum_{a=4}^{\infty}\frac{|F_1(p+k,pa+l;t)|}{F_1(p+k,p+l;t)}<\sum_{a=4}^{\infty}\frac{32a}{(a-2)(a+1)(a+3)^2}<\\
&32\left(\frac{4}{2\cdot 5\cdot 49}+\frac{5}{3\cdot 6\cdot
64}+\frac{6}{4\cdot 7\cdot 81}+\frac{7}{5\cdot 8\cdot
100}+\frac{8}{6\cdot 9\cdot 121}+\sum_{a=9}^\infty
\frac{1}{a^3}\right)=\\
&\frac{64}{5\cdot 49}+\frac{5}{3\cdot 6\cdot2}+\frac{48}{7\cdot 81}+\frac{7}{5\cdot
25}+\frac{32\cdot 4}{33\cdot 99}+\sum_{a=9}^\infty
\frac{32}{a^3}<\\
&\frac{4\cdot 16}{5\cdot48}+\frac{1}{6}+\frac{49}{7\cdot77}+\frac{7}{119}+\frac{4}{96}+\sum_{a=9}^\infty
\frac{32}{a^3}=\\
&\frac{1}{3}+\frac{1}{6}+\frac{1}{11}+\frac{1}{17}+\frac{1}{24}+\int_8^\infty
\frac{32dx}{x^3}<1
\end{split}
\end{equation*}
which completes the proof of \eqref{I_red_green}
\end{proof}
\begin{lem}\label{I_partpos3}
The following inequality holds:
\begin{equation}\label{I_blue_yellow}
\begin{split}
&I(\{(0, 2p),(p, p), (p,2p)\},t)+ \sum_{a=3}^{+\infty}I(\{(p(a-1), 2p),(pa, p), (pa,2p)\},t)+\\
&I(\{(p, 3p),(p, 4p), (2p,3p)\},t)>0
\end{split}
\end{equation}
\end{lem}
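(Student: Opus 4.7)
The plan is to proceed exactly as in Lemmas \ref{I_partpos2} and \ref{I_partpos3}: parametrize the single positive triangle and each negative triangle by a common pair $(m,l)$ with $1\le m,l\le p-1$ and $m+l<p$, then bound the ratio of absolute values of negative to positive summands with matching indices and sum over all negative triangles to obtain a total strictly less than $1$.

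First I would verify the sign pattern using Lemma \ref{lem_signs1}. The triangle $\{(0,2p),(p,p),(p,2p)\}$ sits in the square $(0,p)\times(p,2p)$ with $k+l>2p$ in its interior, so by case (iv) all its summands $F_1(k,l;t)$ are positive. Each triangle $\{(p(a-1),2p),(pa,p),(pa,2p)\}$ for $a\ge 3$ sits in the square $((a-1)p,ap)\times(p,2p)$, which belongs to case (ii) of Lemma \ref{lem_signs1} with interior satisfying $k+l>(a+1)p$, so its summands are negative. The triangle $\{(p,3p),(p,4p),(2p,3p)\}$ sits in the square $(p,2p)\times(3p,4p)$, covered by case (i), and has interior with $k+l<5p$, so its summands are negative.

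Next I parametrize the positive triangle as $(p-m,2p-l)$, the $a$-th negative triangle as $(ap-m,2p-l)$, and the last negative triangle as $(p+m,3p+l)$, all with $1\le m,l$ and $m+l<p$. For each fixed $(m,l)$ I compare the negative summand with the positive one. In every case the difference of the quadratic exponents $(k^2+l^2+(k+l)^2)$ is nonnegative (the arguments of $A$ and $B$ in the negative triangle are larger in absolute value), so the Gaussian factor can be dropped. Writing out $A(p\pm m)$, $A(3p-m-l)$, $A((a+2)p-m-l)$, $B(2p-l)$ and $B(3p+l)$ with the trigonometric identities $\sin(\pi(np\pm r)/p)=\pm(-1)^n\sin(\pi r/p)$, the $\sin$'s cancel and I obtain
\[
\frac{|F_1(ap-m,2p-l;t)|}{F_1(p-m,2p-l;t)}\le\frac{x(2-x)}{(a-1-x)(a+1-x)}\cdot\frac{(2-z)(4-z)}{(a+1-z)(a+3-z)},
\]
where $x=m/p$, $z=(m+l)/p$ lie in $(0,1)$. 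Elementary monotonicity (both denominators decrease in their variable, both numerators are bounded by $1$ and $8$ respectively on $(0,1)$) gives the uniform bound $\le 8/[a^2(a^2-4)]$. Summing yields $\sum_{a=3}^\infty 8/[a^2(a^2-4)]<\tfrac{1}{4}$ by comparison with $\int_2^\infty 8x^{-4}\,dx$.

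For the third triangle a completely analogous computation gives
\[
\frac{|F_1(p+m,3p+l;t)|}{F_1(p-m,2p-l;t)}\le\frac{2-x}{2+x}\cdot\frac{(2-z)(4-z)}{(3+z)(5+z)}\cdot\frac{y(4-y)(3+y)}{(2-y)(1+y)(5+y)},
\]
with $y=l/p\in(0,1)$. The first factor is $\le 1$, the second is $\le 8/15$, and the third is an increasing function of $y$ on $(0,1)$ reaching $1$ at $y=1$; hence the whole ratio is bounded by $8/15<3/4$. Adding the two estimates gives a total strictly less than $1$, which means the positive summand at each $(m,l)$ dominates the sum of all the negative counterparts, proving \eqref{I_blue_yellow}.

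The main obstacle will be Triangle~3: unlike the purely $A$--$A$ case in the sum over $a$, here one must handle a $B$--$B$ quotient together with two $A$--$A$ quotients simultaneously, and the third factor above tends to $1$ at the corner $y\to 1$, so one has to work a bit harder (either by a careful monotonicity check, or by combining with the reserve from the small bound on the $a\ge 3$ sum) to secure a strict inequality. The routine computations follow the template already established in Lemmas \ref{I_partpos2}, \ref{I_partpos3}.
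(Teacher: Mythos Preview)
Your approach coincides with the paper's: same sign analysis via Lemma~\ref{lem_signs1}, same parametrizations $(p-m,2p-l)$, $(ap-m,2p-l)$, $(p+m,3p+l)$, and the same three-factor ratio for the last triangle bounded by $f_1(0)f_2(1)f_3(0)=8/15$ (the paper uses the slightly sharper $f_1(1)f_2(0)=8/[a(a-2)(a+1)(a+3)]$ for the $a$-sum, obtaining $<17/81$). One numerical slip to fix: your bound $8/[a^2(a^2-4)]$ actually sums to about $0.252$, not $<\tfrac14$, and the comparison with $\int_2^\infty 8x^{-4}\,dx$ goes the wrong way since $a^2(a^2-4)<a^4$; this is harmless because $0.252+8/15<1$ still yields the conclusion, but the stated inequality and its justification are incorrect as written.
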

\begin{proof}
According to Lemma \ref{lem_signs1}, the summands in $I(\{(0, 2p),(p, p), (p,2p)\},t)$ are positive and the summands in $I(\{(p(a-1), 2p),(pa, p), (pa,2p)\},t)$, $a\geq 3$, and $I(\{(p, 3p),(p, 4p), (2p,3p)\},t)$ are negative. Let us first consider $$I(\{(0, 2p),(p, p), (p,2p)\},t)$$ and $$I(\{(p(a-1), 2p),(pa, p), (pa,2p)\},t)$$ and rewrite these sums as
\begin{equation}\label{I_blue}
I(\{(0, 2p),(p, p), (p,2p)\},t) = \sum_{\substack{k,l=1
\\k+l<p}}^{p-1}F_1(p-k,2p-l;t),
\end{equation}
and
\begin{equation}\label{I_yellow}
I(\{(p(a-1), 2p),(pa, p), (pa,2p)\},t) = \sum_{\substack{k,l=1\\
k+l<p}}^{p-1}F_1(ap-k,2p-l;t).
\end{equation}
The absolute value of the ratio of negative summands from \eqref{I_yellow} to positive summands from \eqref{I_blue}
corresponding to the same $(k,l)$ can be estimated as follows:
\begin{equation}\label{ratio_yellow_to_blue}
\begin{split}
\frac{|F_1(ap-k,2p-l;t)|}{F_1(p-k,2p-l;t)}<&\frac{|A(ap-k)A(p(a+2)-k-l)|}{A(p-k)A(3p-k-l)}=\\
&f_1(x)\cdot f_2(z),
\end{split}
\end{equation}
where $x = k/p$, $z = (k+l)/p$, $x,z \in(0,1)$ and
\begin{equation*}
f_1(x)=\frac{x(2-x)}{(a-1-x)(a+1-x)},
\end{equation*}
\begin{equation*}
f_2(z)=\frac{2-z}{a+1-z}\cdot\frac{4-z}{a+3-z}.
\end{equation*}
It is easy to see, that for $a\geq3$ all multiples in $f_2(z)$ are
decreasing functions on $(0,1)$, therefore $f_2(z)<f_2(0)$. The
numerator $x(2-x)$ of $f_1(x)$ increases for ${x\in (0,1)}$, and its
denominator $(a-1-x)(a+1-x)$ decreases for $x\in (0,1)$ (since
$a\ge 3$). Therefore, $f_1(x)$ increases for $x\in (0,1)$, and
$f(x)<f_1(1)$. So, in virtue of~\eqref{ratio_yellow_to_blue},
\begin{equation*}
\frac{|F_1(ap-k,2p-l;t)|}{F_1(p-k,2p-l;t)}<f_1(x)\cdot
f_2(z)<f_1(1)f_2(0)=\frac{8}{a(a-2)(a+1)(a+3)}.
\end{equation*}
Since $a(a-2)(a+1)(a+3)=a^4+2a^3-5a^2-6a>a^4,$
\begin{equation}\label{ratio_yellow_to_blue_fin}
\begin{split}
&\sum_{a=3}^{\infty}\frac{|F_1(ap-k,2p-l;t)|}{F_1(p-k,2p-l;t)}<
\sum_{a=3}^{\infty}\frac{8}{a(a-2)(a+1)(a+3)}<\\
&8\left(\frac{1}{3\cdot 1\cdot 4\cdot
6\cdot}+\sum_{a=4}^\infty\frac{1}{a^4}\right)<\frac{1}{9}+\int_3^\infty
\frac{8dx}{x^4}<\frac{17}{81}.
\end{split}
\end{equation}
Now let us consider  $I(\{(0, 2p),(p, p), (p,2p)\},t)$ and
$I(\{(p, 3p),(p, 4p), (2p,3p)\},t)$ and rewrite the first sum as
\eqref{I_blue} and the second one as follows:
\begin{equation}\label{I_yellow1}
I(\{(p, 3p),(p, 4p), (2p,3p)\},t) = \sum_{\substack{k,l=1\\
k+l<p}}^{p-1}F_1(p+k,3p+l;t).
\end{equation}
The absolute value of the ratio of negative summands from
\eqref{I_yellow1} to positive summands from \eqref{I_blue}
corresponding to the same $(k,l)$ can be estimated in the
following way:
\begin{equation}\label{ratio_yellow_to_blue1}
\begin{split}
\frac{|F_1(p+k,3p+l;t)|}{F_1(p-k,2p-l;t)}<&\frac{|A(p+k)B(3p+l)A(4p+k+l)|}{A(p-k)B(2p-l)A(3p-k-l)}=\\
&f_1(x)\cdot f_2(y)\cdot f_3(z),
\end{split}
\end{equation}
where $x = k/p$, $y = l/p$, $z=x+y$, $x,y,z \in(0,1)$ and
\begin{equation*}
f_1(x)=\frac{2-x}{2+x},
\end{equation*}
\begin{equation*}
f_2(y)=\frac{4-y}{2-y}\cdot\frac{y}{1+y}\cdot\frac{3+y}{5+y},
\end{equation*}
\begin{equation*}
f_3(z)=\frac{(2-z)(4-z)}{(3+z)(5+z)}.
\end{equation*}
It is easy to see, that functions $f_1(x)$, $f_3(z)$ decrease, $x,z\in(0,1)$, and function $f_2(y)$
increases, $y\in(0,1)$. Therefore,
\begin{equation*}
\frac{|F_1(p+k,3p+l;t)|}{F_1(p-k,2p-l;t)}<f_1(x)\cdot f_2(y)\cdot f_3(z)<f_1(0)f_2(1)f_3(0)=\frac{8}{15},
\end{equation*}
which, together with \eqref{ratio_yellow_to_blue_fin}, completes the proof of the lemma.
\end{proof}

Lemmas \ref{I_partpos1}-\ref{I_partpos3} imply
\begin{cor}\label{cor0}
The following inequality is true:
\begin{equation}\label{LplusI}
I(\mathbb{N}\times\mathbb{N}\setminus{H},t)>0,
\end{equation}
where the set $H$ is defined in \eqref{setH}.
\end{cor}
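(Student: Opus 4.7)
The plan is to prove Corollary \ref{cor0} by summing the inequalities of Lemmas \ref{I_partpos1}--\ref{I_partpos3} together with their reflections under $(k,l)\mapsto(l,k)$, and checking that the resulting left-hand side equals $I(\mathbb{N}\times\mathbb{N}\setminus H,t)$ up to a manifestly positive surplus.

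First I would fix a partition of $\mathbb{N}\times\mathbb{N}$ into open sub-triangles: each square $(pa,p(a+1))\times(pb,p(b+1))$ is split by its anti-diagonal $k+l=p(a+b+1)$ into a lower-left and an upper-right half, while the lattice points on the grid lines $k=pa$ or $l=pb$ contribute zero to $F_1(k,l;t)$ by Lemma \ref{AB_signs}. Lemma \ref{lem_signs1} then assigns a constant sign to $F_1$ on each open sub-triangle, so the task reduces to a combinatorial matching problem: every negative sub-triangle of $F_1$ lying in $\mathbb{N}\times\mathbb{N}\setminus H$ must be covered by at least one positive sub-triangle in the same set, and no positive sub-triangle may be spent with total weight exceeding one.

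Next I would carry out the matching term-by-term. Summed over $b\geq 3$, Lemma \ref{I_partpos1} and its reflection dispose of every negative lower-left triangle of squares with $a\geq 2,\,b\geq 3$ (and their reflections) by pairing it with the positive lower-left triangle of the corresponding square in the $(0,p)$-strip. Lemma \ref{I_partpos2} and its reflection absorb the remaining negative lower-left triangles in the strip $a=1,\,b\geq 4$ (and the symmetric column) against the single positive triangle $\{(p,p),(2p,p),(p,2p)\}$ (and its reflection). Lemma \ref{I_partpos3} and its reflection handle exactly those negative lower-left triangles adjacent to the rows $b=2,3$ whose ``natural'' positive partner would have been a member of $H$, using instead the positive triangles $\{(0,2p),(p,p),(p,2p)\}$ and $\{(p,3p),(p,4p),(2p,3p)\}$ (and reflections). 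Finally, the two inequalities in Lemma \ref{I_partpos} dispose of the isolated upper-right negatives in $(p,2p)\times(0,p)$ and $(2p,3p)\times(0,p)$ (and their reflections) at the cost of fractions of the strictly positive central triangle $\{(0,p),(p,0),(p,p)\}$; the residual fraction of this triangle supplies the surplus that yields strict positivity.

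The main obstacle is purely combinatorial: one must verify that the positive reservoirs invoked above are pairwise disjoint, that their union is contained in $\mathbb{N}\times\mathbb{N}\setminus H$, and that every negative sub-triangle lying in $\mathbb{N}\times\mathbb{N}\setminus H$ is matched at least once. The set $H$ consists precisely of those negative sub-triangles whose matching partners would otherwise themselves lie in $H$ or have been reserved for the symmetrised analysis of $J(t)$ in Lemma \ref{AB_first_traingle_lem}; this explains both the curious shape of $H$ in \eqref{setH} and the fact that the corollary is a genuinely weaker statement than $I(t)>0$. The verification is completed by inspection of the sign diagrams analogous to Figures \ref{J21fig} and \ref{J22fig}, with the triangles of $H$ shaded out.
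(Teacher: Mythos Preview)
Your proposal rests on reflecting Lemmas \ref{I_partpos1}--\ref{I_partpos3} under $(k,l)\mapsto(l,k)$, but this symmetry does not exist for $I(t)$. The summand $F_1(k,l;t)=A(k)A(k+l)B(l)e^{-2(k^2+kl+l^2)t}$ satisfies $F_1(l,k;t)=A(l)A(k+l)B(k)e^{-2(k^2+kl+l^2)t}$, which is a different function because $A(k)/B(k)$ is not constant in $k$. Accordingly, the sign distribution in Lemma \ref{lem_signs1} is genuinely asymmetric: for instance on $(p,2p)\times(0,p)$ one has positive lower-left and negative upper-right (case~ii), while on the reflected square $(0,p)\times(p,2p)$ the pattern is reversed (case~iv). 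Hence a ``reflection'' of Lemma \ref{I_partpos2} would not give an inequality about $I$ on the reflected triangles, and the set $H$ in \eqref{setH} is itself not reflection-symmetric. Your covering argument therefore fails at every point where a reflected lemma is invoked.

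There is also a concrete misreading of Lemma \ref{I_partpos3}: you list $\{(p,3p),(p,4p),(2p,3p)\}$ among the \emph{positive} reservoirs, but by Lemma \ref{lem_signs1}\,i) this lower-left triangle (at $a=1$, $b=3$) carries $F_1<0$; in the paper it is one of the negative terms being absorbed, not a source of positivity. Similarly, your claimed ``symmetric column'' $b=1,\ a\ge 4$ consists of \emph{positive} lower-left triangles by Lemma \ref{lem_signs1}\,ii), so there is nothing to absorb there. The paper's argument works precisely because the four lemmas, taken as stated and without any reflection, already match each negative sub-triangle of $\mathbb{N}\times\mathbb{N}\setminus H$ against unused positive ones; the asymmetry of $H$ and of the lemma statements is essential. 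You need to redo the combinatorial bookkeeping with the actual (asymmetric) sign chart of Lemma \ref{lem_signs1}.
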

\subsection{Positiveness of the remaining part of $I(t)$ together with certain positive summands from $L(t)$}

The next three lemmas prove, that the positive sum $$L(\{(p,p),(2p,0), (2p,p)\},t)+L(\{(p,p),(0,2p), (p,2p)\},t)$$
compensates for the remaining negative sum $I(H,t)$, where $H$ is the set defined in \eqref{setH}.
\begin{lem}\label{I_plus_L_pos1}
The following inequality holds:
\begin{equation}\label{IL1}
\begin{split}
&\frac{1}{2}\left(L(\{(p,p),(2p,0), (2p,p)\},t)+L(\{(p,p),(0,2p), (p,2p)\},t)\right)+ \\
&2\sum_{a=1}^{+\infty}I(\{(pa, 2p),(pa, 3p), (p(a+1),2p)\},t)>0.
\end{split}
\end{equation}
\end{lem}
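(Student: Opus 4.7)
The plan is to follow the template established in Lemma \ref{L2lem} and Lemma \ref{I_partpos2}: symmetry reduction, parametrization, and term-wise comparison via monotone rational functions, with an extra ingredient to handle the smallest value of $a$.

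First, by the $(k,l)\leftrightarrow(l,k)$ symmetry of $F_2$, the two $L$-triangles in \eqref{IL1} give the same sum, so it is enough to prove
\begin{equation*}
L(\{(p,p),(2p,0),(2p,p)\},t)+2\sum_{a=1}^{\infty} I(\{(pa,2p),(pa,3p),(p(a+1),2p)\},t) > 0.
\end{equation*}
Lemma \ref{lem_signs2}(ii) shows that every summand in the $L$-triangle is positive, while Lemma \ref{lem_signs1}(i) (with $b=2$) shows that every summand in each $I_a$-triangle is negative, so the claim is equivalent to $2\sum_{a\ge 1}|I_a(t)|<L(t)$.

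Next, I parametrize both triangles by the same indices $(m,n)$ with $1\le m,n\le p-1$, $m+n<p$: write the $L$-summand as $F_2(2p-m,p-n;t)$ and the $I_a$-summand as $F_1(pa+m,2p+n;t)$. A direct calculation from \eqref{A_k}--\eqref{B_k}, after cancelling the common factor $\sin(\pi m/p)\sin(\pi n/p)\sin(\pi(m+n)/p)$, yields
\begin{equation*}
\frac{|F_1(pa+m,2p+n;t)|}{F_2(2p-m,p-n;t)}=K_a(x,y,z)\,e^{-\Delta_a t},
\end{equation*}
where $x=m/p$, $y=n/p$, $z=x+y$,
\begin{equation*}
K_a(x,y,z)=\frac{(1-x)(3-x)(4-y^2)(1-z)(5-z)}{(4+y)(a-1+x)(a+1+x)(a+1+z)(a+3+z)(3-z)},
\end{equation*}
and a short computation of the quadratic exponents gives
\begin{equation*}
\Delta_a=2p^2(a-1)(a+3)+2p\bigl[(2a+7)m+(a+8)n\bigr]\ge 0\quad\text{for }a\ge 1,
\end{equation*}
so the exponential factor can be dropped when bounding the ratio.

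Third, for the tail $a\ge 2$, each of the factors $(4-y^2)/(4+y)$, $(1-x)(3-x)$ and $(1-z)(5-z)/(3-z)$ is decreasing on $(0,1)$ and bounded by its value at $0$, while $(a-1+x)(a+1+x)(a+1+z)(a+3+z)\ge(a-1)(a+1)^2(a+3)$. This gives an explicit bound of the form $K_a\le C\,[(a-1)(a+1)^2(a+3)]^{-1}$, whose sum over $a\ge 2$ is controlled by integral comparison, as in the proof of Lemma \ref{I_partpos3}, and falls strictly below $1/4$.

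The main obstacle is the $a=1$ contribution, for which the denominator factor $(a-1+x)=x$ produces a $1/m$ singularity that forbids a purely term-wise constant bound by $K_1$. My plan here is to avoid normalising by $F_2$ at the source: the summand $|F_1(p+m,2p+n;t)|$ itself is uniformly bounded because the offending $1/m$ combines with $\sin(\pi m/p)$ via $\sin(\pi m/p)/m\le \pi/p$, giving
\begin{equation*}
|F_1(p+m,2p+n;t)|\le \frac{\pi^2}{p^2}\cdot\frac{(2p+n)\sin(\pi(m+n)/p)}{(2p+m)(2p+m+n)(4p+m+n)(4p+n)}\,e^{-\Delta_1 t}.
\end{equation*}
Summing this directly and comparing against the bulk of $L(t)$, whose dominant contribution comes from the regime $m,n\asymp p/2$ in which every algebraic factor of $F_2(2p-m,p-n;t)$ is of order unity, one obtains $|I_1(t)|<\tfrac14 L(t)$ by an estimate of the same flavour as those in Lemmas \ref{L2lem} and \ref{I_partpos2} but more delicate near the boundary $m=0$. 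Combined with the tail bound for $a\ge 2$ this gives $2\sum_{a\ge 1}|I_a(t)|<L(t)$ and proves \eqref{IL1}.
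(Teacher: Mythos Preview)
Your reduction by symmetry, your tail estimate for $a\ge 2$, and your formula for $K_a$ are all correct. The gap is entirely in the $a=1$ step. You correctly observe that your ratio $K_1$ has a $1/x$ pole, but the proposed remedy --- bound $|F_1(p+m,2p+n;t)|$ directly using $\sin(\pi m/p)/m\le\pi/p$ and then compare the \emph{total} $|I_1(t)|$ against the ``bulk'' of $L(t)$ --- is not carried out. A sum-to-sum comparison of this kind must track the $t$-dependence through the exponential weights, and the sentence ``by an estimate of the same flavour \ldots\ but more delicate near the boundary $m=0$'' is a description of what would be needed, not a proof. Since the whole point of the lemma is a strict inequality valid for every $t\ge 0$, this is a genuine hole.

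The paper sidesteps the difficulty by choosing a different pairing. Instead of your parametrization $F_2(2p-m,p-n)$ of the $L$-triangle, it uses the (equivalent, by symmetry) triangle $\{(p,p),(0,2p),(p,2p)\}$ parametrized as $F_2(p-k,\,p+k+l)$, and compares it with $F_1(ap+k,\,2p+l)$. The key is that $B$ enters $F_1(K,L)$ through $B(L)$ and enters $F_2(K,L)$ through $B(K+L)$; with this pairing both $B$-arguments equal $2p+l$ and cancel. What remains is a ratio of $A$-factors only,
\[
\frac{|F_1(ap+k,2p+l;t)|}{F_2(p-k,p+k+l;t)}\;<\;\frac{x}{a-1+x}\cdot\frac{2-x}{a+1+x}\cdot\frac{z}{a+1+z}\cdot\frac{2+z}{a+3+z},
\]
with $x=k/p$, $z=(k+l)/p$. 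For $a=1$ the first factor is $x/x=1$, so there is no singularity at all, and a single uniform bound $\frac{6}{a(a+1)(a+2)(a+4)}$ summed over $a\ge 1$ gives $<\tfrac12$. The entire $a=1$ obstacle you are fighting is an artifact of pairing the wrong $L$-summand with the $I$-summand; matching the $B$-arguments removes it.
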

\begin{proof}
Because of the symmetry,
$$
L(\{(p,p),(2p,0), (2p,p)\},t)=L(\{(p,p),(0,2p), (p,2p)\},t),
$$
therefore, inequality \eqref{IL1} is equivalent to the following:
\begin{equation}\label{IL11}
\frac{1}{2}\cdot L(\{(p,p),(0,2p),
(p,2p)\},t)+\sum_{a=1}^{+\infty}I(\{(pa, 2p),(pa, 3p),
(p(a+1),2p)\},t)>0
\end{equation}
Let us rewrite these sums as
\begin{equation}\label{L_red1}
L(\{(p,p),(0,2p), (p,2p)\},t) = \sum_{\substack{k,l=1
\\k+l<p}}^{p-1}F_2(p-k,p+k+l;t),
\end{equation}
and
\begin{equation}\label{I_grey1}
I(\{(pa, 2p),(pa, 3p), (p(a+1),2p)\},t) = \sum_{\substack{k,l=1\\
k+l<p}}^{p-1}F_1(ap+k,2p+l;t).
\end{equation}
The absolute value of the ratio of negative summands from \eqref{I_grey1} to positive summands from \eqref{L_red1}
corresponding to the same $(k,l)$ can be estimated as follows:
\begin{equation}\label{ratio_grey_to_red}
\begin{split}
\frac{|F_1(ap+k,2p+l;t)|}{F_2(p-k,p+k+l;t)}<&\frac{|A(ap+k)A(p(a+2)+k+l)|}{-A(p-k)A(p+k+l)}=\\
&f_1(x)\cdot f_2(z),
\end{split}
\end{equation}
where $x = k/p$, $z = (k+l)/p$, $x,z \in(0,1)$ and
\begin{equation*}
f_1(x)=\frac{x}{a-1+x}\cdot\frac{2-x}{a+1+x},
\end{equation*}
\begin{equation*}
f_2(z)=\frac{z}{a+1+z}\cdot\frac{2+z}{a+3+z}.
\end{equation*}
It is easy to see, that for $a\geq1$ all multiples in $f_2(z)$ are increasing functions on $(0,1)$,
therefore $f_2(z)<f_2(1)$. Next, since the first multiple in $f_1(x)$ is a non-decreasing function,
and the second is a decreasing function,
$$f_1(x)<\frac{1}{a}\cdot\frac{2}{a+1},$$
therefore,
\begin{equation*}
\frac{|F_1(ap+k,2p+l;t)|}{F_2(p-k,p+k+l;t)}<f_1(x)\cdot f_2(z)<\frac{6}{a(a+1)(a+2)(a+4)},
\end{equation*}
and since $a(a+1)(a+2)(a+4)>a^4$,
\begin{equation*}
\begin{split}
&\sum_{a=1}^{\infty}\frac{|F_1(ap+k,2p+l;t)|}{F_2(p-k,p+k+l;t)}<
\sum_{a=1}^{\infty}\frac{6}{a(a+1)(a+2)(a+4)}<\\
&\frac{6}{1\cdot 2\cdot 3\cdot 5\cdot}+\frac{6}{2\cdot 3\cdot
4\cdot
6\cdot}+\sum_{a=3}^\infty\frac{6}{a^4}<\frac{29}{120}+\int_2^\infty
\frac{6dx}{x^4}<\frac{1}{2}
\end{split}
\end{equation*}
which completes the proof of \eqref{IL11} and \eqref{IL1}.
\end{proof}
\begin{lem}\label{I_plus_L_pos2}
The following inequality holds:
\begin{equation}\label{IL2}
\begin{split}
&\frac{1}{5}\left(L(\{(p,p),(2p,0), (2p,p)\},t)+L(\{(p,p),(0,2p), (p,2p)\},t)\right)+ \\
&2I(\{(0, 3p),(p, 3p), (p,2p)\},t)>0.
\end{split}
\end{equation}
\end{lem}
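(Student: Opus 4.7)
The strategy parallels Lemma~\ref{I_plus_L_pos1}: dominate a negative triangle by a fixed fraction of a positive one, termwise. First, the two $L$-summands in \eqref{IL2} coincide by the $(k,l)\mapsto(l,k)$ symmetry of $F_{2}$, so dividing by $2$ reduces \eqref{IL2} to
\[
\tfrac{1}{5}\,L(\{(p,p),(0,2p),(p,2p)\},t)+I(\{(0,3p),(p,3p),(p,2p)\},t)>0.
\]
Both triangles have the same interior integer points under the common parameterization $\Lambda:=\{(k,m):1\le m<k\le p-1\}$, with the $L$-triangle swept by $(k_{1},l_{1})=(k,2p-m)$ and the $I$-triangle by $(k_{1},l_{1})=(k,3p-m)$. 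Lemmas~\ref{lem_signs1}--\ref{lem_signs2} then give $F_{2}(k,2p-m;t)>0$ and $F_{1}(k,3p-m;t)<0$ on $\Lambda$, and the goal is to prove the termwise estimate $|F_{1}(k,3p-m;t)|<\tfrac{1}{5}F_{2}(k,2p-m;t)$ for every $(k,m)\in\Lambda$.

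Next I will compute the ratio of these matched summands. Inserting the definitions \eqref{A_k}--\eqref{B_k} and using the elementary identities $|\sin(\pi(jp\pm n)/p)|=\sin(\pi n/p)$ to cancel the common sine factors, the ratio should collapse to
\[
\frac{|F_{1}(k,3p-m;t)|}{F_{2}(k,2p-m;t)}=\frac{(3p-m)^{2}(k-m)}{(2p+k-m)^{2}(5p-m)}\,\exp\bigl(-2p(5p+k-2m)\,t\bigr).
\]
For $(k,m)\in\Lambda$ and $p\ge 3$ one has $5p+k-2m\ge 3p+3>0$, so the exponential factor is $\le 1$ uniformly in $t\ge 0$. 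Writing $u=m/p\in(0,1)$ and $w=(k-m)/p\in(0,1)$, the termwise bound reduces to the purely algebraic inequality
\[
g(u,w):=\frac{(3-u)^{2}w}{(2+w)^{2}(5-u)}<\tfrac{1}{5}\qquad\text{for }(u,w)\in(0,1)^{2}.
\]

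The hard part will be this last inequality, because $\sup_{[0,1]^{2}}g=1/5$ is attained at the corner $(u,w)=(0,1)$, to which the integer grid $(m,k)=(1,p-1)$ tends as $p\to\infty$; the coefficient $\tfrac{1}{5}$ in \eqref{IL2} is therefore tight and no slack is available from the exponential factor at $t=0$. To handle it I will set $D(u,w):=(2+w)^{2}(5-u)-5w(3-u)^{2}$ and substitute $w=1-t$, obtaining after expansion
\[
D(u,1-t)=21u+15t-24ut+5t^{2}-5u^{2}-ut^{2}+5u^{2}t.
\]
The elementary bounds $21u-5u^{2}\ge 16u$ and $5u^{2}t-ut^{2}=ut(5u-t)\ge -ut$, both valid on $[0,1]^{2}$, give $D(u,1-t)\ge u(16-25t)+15t$, which is strictly positive for $t\le 16/25$ (the first summand is $\ge 0$) and bounded below by $16-10t>0$ for $t\in(16/25,1)$ (taking $u\le 1$). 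This proves the termwise estimate; summing over $\Lambda$ yields $|I|<L/5$, and \eqref{IL2} follows after multiplication by $2$.
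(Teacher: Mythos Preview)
Your proof is correct and follows essentially the same route as the paper: reduce by the $(k,l)\leftrightarrow(l,k)$ symmetry of $F_2$, match each summand of the $I$-triangle with one of the $L$-triangle (your pairing $(k,3p-m)\leftrightarrow(k,2p-m)$ is the paper's pairing $(p-k,3p-l)\leftrightarrow(p-k,2p-l)$ under the relabeling $k\mapsto p-k$, $m\mapsto l$), cancel the common $A(k)$ and sine factors, drop the exponential, and bound the resulting ratio by $\tfrac15$. The only difference is the final algebraic step: the paper writes the ratio in variables $x=l/p$, $z=(k+l)/p$ as $\dfrac{(3-x)^2}{5-x}\cdot\dfrac{1-z}{(3-z)^2}$, observes both factors are strictly decreasing on $(0,1)$, and evaluates at $(0,0)$ to get $\tfrac15$; your substitution $w=1-z$ turns the second factor into an increasing one, forcing the more laborious expansion of $D(u,1-t)$, but the conclusion is identical.
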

\begin{proof}
Because of the symmetry,
$$
L(\{(p,p),(2p,0), (2p,p)\},t)=L(\{(p,p),(0,2p), (p,2p)\},t),
$$
inequality \eqref{IL2} is equivalent to the following one:
\begin{equation}\label{IL21}
\frac{1}{5}L(\{(p,p),(0,2p), (p,2p)\},t)+I(\{(0, 3p),(p, 3p), (p,2p)\},t)>0
\end{equation}
Let us rewrite these sums as
\begin{equation}\label{L_red2}
L(\{(p,p),(0,2p), (p,2p)\},t) = \sum_{\substack{k,l=1
\\k+l<p}}^{p-1}F_2(p-k,2p-l;t),
\end{equation}
and
\begin{equation}\label{I_grey2}
I(\{(0, 3p),(p, 3p), (p,2p)\},t) = \sum_{\substack{k,l=1\\
k+l<p}}^{p-1}F_1(p-k,3p-l;t),
\end{equation}
and consider the ratio of negative summands from \eqref{I_grey2}
to positive summands from \eqref{L_red2} corresponding to the same
$(k,l)$:
\begin{equation}\label{ratio_grey_to_red2}
\begin{split}
\frac{|F_1(p-k,3p-l;t)|}{F_2(p-k,2p-l;t)}<&\frac{|B(3p-l)A(4p-k-l)|}{-A(2p-l)B(3p-k-l)}=\\
&f_1(x)\cdot f_2(z),
\end{split}
\end{equation}
where $x = l/p$, $z = (k+l)/p$, $x,z \in(0,1)$ and
\begin{equation*}
f_1(x)=\frac{(3-x)^2}{5-x},
\end{equation*}
\begin{equation*}
f_2(z)=\frac{1-z}{(3-z)^2}.
\end{equation*}
Since
$$f'_1(x)=-\frac{(3-x)(7-x)}{(5-x)^2}<0, \,\,x\in(0,1),$$
$$f'_2(z)=-\frac{1+z}{(3-x)^3}<0, \,\,z\in(0,1),$$
both functions decrease on $(0,1)$, therefore,
\begin{equation*}
\frac{|F_1(p-k,3p-l;t)|}{F_2(p-k,2p-l;t)}<f_1(x)f_2(z)<f_1(0)f_2(0)=\frac{1}{5},
\end{equation*}
which completes the proof of \eqref{IL21} and \eqref{IL2}.
\end{proof}
\begin{lem}\label{I_plus_L_pos3}
The following inequality holds:
\begin{equation}\label{IL3}
\begin{split}
&\frac{1}{9}(L(\{(p,p),(2p,0), (2p,p)\},t)+L(\{(p,p),(0,2p), (p,2p)\},t))+ \\
&2I(\{(p, 2p),(2p, 2p), (2p,p)\},t)>0.
\end{split}
\end{equation}
\end{lem}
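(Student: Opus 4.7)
The plan is to adapt the index-pairing strategy of Lemmas~\ref{I_plus_L_pos1} and~\ref{I_plus_L_pos2}, using a swap of indices that is crucial for producing a clean pointwise bound. Since the summand of $L$ is symmetric under $(k,l)\mapsto(l,k)$, one has
\[
L(\{(p,p),(2p,0),(2p,p)\},t) = L(\{(p,p),(0,2p),(p,2p)\},t) =: L(t),
\]
so \eqref{IL3} reduces to $\tfrac{2}{9}L(t) + 2I(t) > 0$, i.e.\ to the estimate $|I(t)| < L(t)/9$, where $I(t) := I(\{(p,2p),(2p,2p),(2p,p)\},t) < 0$ (all summands are negative by Lemma~\ref{lem_signs1}(ii) with $a=b=1$).

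Parametrize both sums by lattice points $(k,l)$ with $k,l\geq 1$ and $k+l<p$:
\[
L(t) = \sum F_2(p-k,\,2p-l;\,t), \qquad I(t) = \sum F_1(2p-k,\,2p-l;\,t).
\]
Because the triangle defining $I$ is invariant under $(k,l)\mapsto(l,k)$, a relabeling also gives $I(t) = \sum F_1(2p-l,\,2p-k;\,t)$. The decisive observation is that both
\[
F_1(2p-l,2p-k;t) = A(2p-l)\,A(4p-k-l)\,B(2p-k)\,e^{-\alpha t}
\]
and
\[
F_2(p-k,2p-l;t) = -A(p-k)\,A(2p-l)\,B(3p-k-l)\,e^{-\beta t}
\]
share the factor $A(2p-l)$, which therefore cancels. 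Using \eqref{A_k}, \eqref{B_k} to expand the remaining factors and cancelling the common sines, the ratio simplifies to
\[
\frac{|F_1(2p-l,2p-k;t)|}{F_2(p-k,2p-l;t)} = \frac{(2-x)^2(1-z)}{(4-x)(3-z)^2}\,e^{-p(10p-4k-2l)t},
\]
with $x = k/p$ and $z = (k+l)/p$.

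The functions $g(x) = (2-x)^2/(4-x)$ and $h(z) = (1-z)/(3-z)^2$ are strictly decreasing on $(0,1)$, with $g(0)h(0) = 1/9$; at any admissible lattice point one has $x\geq 1/p>0$ and $z\geq 2/p>0$, so the polynomial factor is strictly less than $1/9$. Since the exponential factor is at most $1$ (as $10p-4k-2l>0$), we obtain pointwise the strict inequality $|F_1(2p-l,2p-k;t)| < \tfrac{1}{9}F_2(p-k,2p-l;t)$. Summing over the finite index set yields $|I(t)| < L(t)/9$, which is the required bound.

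The main obstacle is identifying the correct pairing. The direct pairing $(k,l)\leftrightarrow(k,l)$ produces instead the ratio
\[
\frac{x(2-x)(1-y)(2-y)(3-y)(1-z)}{(1-x)(3-x)\,y\,(4-y)(3-z)^2},
\]
which is unbounded as $y\to 0$: there $F_1(2p-k,2p-l;t)$ contributes $B(2p-l)$ while $F_2(p-k,2p-l;t)$ contributes $A(2p-l)$, so $|B(2p-l)/A(2p-l)|$ introduces a factor of order $1/l$. Swapping the first two arguments of $F_1$ brings the matching $A(2p-l)$ into $F_1$, so that it cancels directly and the spurious $1/l$ never appears.
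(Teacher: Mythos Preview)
Your proof is correct and follows essentially the same route as the paper's. The paper parametrizes $L$ by the first triangle, writing $L=\sum F_2(2p-k,p-l;t)$ and comparing directly with $F_1(2p-k,2p-l;t)$, so that the common factor $A(2p-k)$ cancels; you instead take the symmetric $L$-triangle, $L=\sum F_2(p-k,2p-l;t)$, and swap the arguments of $F_1$ to cancel $A(2p-l)$. Up to the relabeling $k\leftrightarrow l$ this yields the identical ratio $\dfrac{(2-x)^2}{4-x}\cdot\dfrac{1-z}{(3-z)^2}<\dfrac{1}{9}$, and the same monotonicity argument concludes.
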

\begin{proof}
Because of the symmetry, $$L(\{(p,p),(2p,0), (2p,p)\},t)=L(\{(p,p),(0,2p), (p,2p)\},t)$$, therefore, inequality \eqref{IL3} is equivalent to the following:
\begin{equation}\label{IL31}
\frac{1}{9}L(\{(p,p),(2p,0), (2p,p)\},t)+I(\{(p, 2p),(2p, 2p), (2p,p)\},t)>0
\end{equation}
Let us rewrite these sums as
\begin{equation}\label{L_red3}
L(\{(p,p),(2p,0), (2p,p)\},t) = \sum_{\substack{k,l=1
\\k+l<p}}^{p-1}F_2(2p-k,p-l;t),
\end{equation}
and
\begin{equation}\label{I_grey3}
I(\{(p, 2p),(2p, 2p), (2p,p)\},t) = \sum_{\substack{k,l=1\\
k+l<p}}^{p-1}F_1(2p-k,2p-l;t),
\end{equation}
and consider the ratio of negative summands from \eqref{I_grey3} to positive summands from \eqref{L_red3}
corresponding to the same $(k,l)$:
\begin{equation}\label{ratio_grey_to_red3}
\begin{split}
\frac{|F_1(2p-k,2p-l;t)|}{F_2(2p-k,p-l;t)}<&\frac{|B(2p-l)A(4p-k-l)|}{-A(p-l)B(3p-k-l)}=\\
&f_1(x)\cdot f_2(z),
\end{split}
\end{equation}
where $x = l/p$, $z = (k+l)/p$, $x,z \in(0,1)$ and
\begin{equation*}
f_1(x)=\frac{(2-x)^2}{4-x},
\end{equation*}
\begin{equation*}
f_2(z)=\frac{1-z}{(3-z)^2}.
\end{equation*}
Since
$$f'_1(x)=-\frac{(6-x)(2-x)}{(4-x)^2}<0, \,\,x\in(0,1),$$
$$f'_2(z)=-\frac{1+z}{(3-x)^3}<0, \,\,z\in(0,1),$$
both functions decrease on $(0,1)$, therefore,
\begin{equation*}
\frac{|F_1(2p-k,2p-l;t)|}{F_2(2p-k,p-l;t)}<f_1(x)f_2(z)<f_1(0)f_2(0)=\frac{1}{9},
\end{equation*}
which completes the proof of \eqref{IL31}.
\end{proof}
Lemmas \ref{I_plus_L_pos1}-\ref{I_plus_L_pos1} imply
\begin{cor}\label{cor1}
The following inequality is true:
\begin{equation*}
I(H,t)+L(\{(p,p),(2p,0), (2p,p)\},t)+L(\{(p,p),(0,2p), (p,2p)\},t)>0.
\end{equation*}
where the set $H$ is defined in \eqref{setH}.
\end{cor}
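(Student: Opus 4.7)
The proof is essentially an accounting exercise combining the three lemmas \ref{I_plus_L_pos1}--\ref{I_plus_L_pos3} that were just established. The plan is as follows.

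First, I would split $I(H,t)$ into three parts corresponding exactly to the decomposition of $H$ in \eqref{setH}:
\begin{equation*}
I(H,t) = I(\{(0,3p),(p,3p),(p,2p)\},t) + I(\{(p,2p),(2p,2p),(2p,p)\},t) + \sum_{a=1}^{\infty} I(\{(pa,2p),(pa,3p),(p(a+1),2p)\},t).
\end{equation*}
Denote for brevity $\mathcal{L} := L(\{(p,p),(2p,0),(2p,p)\},t) + L(\{(p,p),(0,2p),(p,2p)\},t)$. By Lemma \ref{lem_signs2}(iii) together with the explicit description of the two triangles, each summand in $\mathcal{L}$ is strictly positive, so $\mathcal{L}>0$ for all $t\ge 0$.

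Next, dividing each of Lemmas \ref{I_plus_L_pos1}, \ref{I_plus_L_pos2}, \ref{I_plus_L_pos3} by $2$ and adding the resulting three inequalities, one gets
\begin{equation*}
\left(\tfrac{1}{4}+\tfrac{1}{10}+\tfrac{1}{18}\right)\mathcal{L} + I(H,t) > 0,
\end{equation*}
and a direct computation gives $\tfrac{1}{4}+\tfrac{1}{10}+\tfrac{1}{18} = \tfrac{73}{180} < 1$.

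Finally, since $\mathcal{L}>0$, adding the positive quantity $\left(1-\tfrac{73}{180}\right)\mathcal{L} = \tfrac{107}{180}\mathcal{L}$ to the left-hand side yields $\mathcal{L} + I(H,t) > 0$, which is exactly the claim of Corollary \ref{cor1}. There is no real obstacle here; the whole content of the argument is already built into the three preceding lemmas, and the only thing to verify is the arithmetic inequality $\tfrac{1}{4}+\tfrac{1}{10}+\tfrac{1}{18}<1$ together with the positivity of $\mathcal{L}$, both of which are immediate.
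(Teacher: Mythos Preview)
Your proof is correct and follows exactly the route the paper intends: the paper simply states that Lemmas \ref{I_plus_L_pos1}--\ref{I_plus_L_pos3} imply the corollary, and your argument makes this implication explicit by dividing each lemma by $2$, summing, checking $\tfrac{1}{4}+\tfrac{1}{10}+\tfrac{1}{18}=\tfrac{73}{180}<1$, and using the positivity of $\mathcal{L}$ (which indeed follows from Lemma \ref{lem_signs2}(iii), since both triangles lie in $(p,2p)\times(0,p)\cup(0,p)\times(p,2p)$). There is nothing to add.
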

Lemma \ref{AB_first_traingle_lem} together with Corollaries \ref{cor0} and \ref{cor1} completes the proof of Theorems \ref{th_J_est}, \ref{th_tJ_est} and \ref{thJ_24est}.

In their turn, Theorems \ref{thJ_1est}, \ref{thJ_24est} together with inequality \eqref{J_3_est} constitute the proof of Theorems \ref{main_est_th} and \ref{stabiliz_theorem}.

\end{document}